\newtheorem{theorem}{Theorem}[section]
\newtheorem{proposition}[theorem]{Proposition}
\newtheorem{lemma}[theorem]{Lemma}
\theoremstyle{definition}
\theoremstyle{remark}
\newtheorem{remark}[theorem]{Remark}
\numberwithin{equation}{section}
\newcommand{\mcg}{\mathrm{Mod}_g}
\newcommand{\mc}{\mathbf{g}}
\renewcommand{\tt}{\mathcal{T}_g}
\newcommand{\qt}{\mathcal{Q}\mathcal{T}_g}
\newcommand{\qut}{\mathcal{Q}^1\mathcal{T}_g}
\newcommand{\qum}{\mathcal{Q}^1\mathcal{M}_g}
\newcommand{\mf}{\mathcal{MF}_g}
\newcommand{\pmf}{\mathcal{PMF}_g}
\newcommand{\IT}{\Sigma}
\newcommand{\supp}{\mathrm{supp}}
\newcommand{\M}{\mathrm{M}}
\newcommand{\sh}{\smash{\widehat{s}}}
\newcounter{count}
\newcounter{counterk} 
\newcounter{counterl} 
\newcounter{counterc} 
\newcounter{countercc} 
\newcounter{countere} 
\newcounter{counterd} 
\newcounter{countern} 
\newcounter{counterr}
\begin{document}

\title[Effective mapping class group dynamics II]{Effective mapping class group dynamics II:\\ Geometric intersection numbers}

\author{Francisco Arana--Herrera}
\address{Department of Mathematics, Stanford University, 450 Jane Stanford Way, Stanford, CA 94305, USA.}
\email{farana@stanford.edu}

\begin{abstract}
	We show that the action of the mapping class group on the space of closed curves of a closed surface effectively tracks the corresponding action on Teichmüller space in the following sense: for all but quantitatively few mapping classes, the information of how a mapping class moves a given point of Teichmüller space determines, up to a power saving error term, how it changes the geometric intersection numbers of a given closed curve with respect to arbitrary geodesic currents. Applications include an effective estimate describing the speed of convergence of Teichmüller geodesic rays to the boundary at infinity of Teichmüller space, an effective estimate comparing the Teichmüller and Thurston metrics along mapping class group orbits of Teichmüller space, and, in the sequel, effective estimates for countings of filling closed geodesics on closed, negatively curved surfaces.
\end{abstract}

\maketitle


\thispagestyle{empty}

\tableofcontents

\section{Introduction}

Much progress has been made in the last 40 years towards understanding the dynamics of the mapping class group on different spaces: Teichmüller space \cite{ABEM12}, the space of singular measured foliations \cite{Mas85,Mir08b}, the space of geodesic currents \cite{Mir16,ES16,RS19}. Nevertheless, effective estimates describing these dynamics have remained quite elusive, at least until very recently. The first quantitative estimates with power saving error terms for countings of mapping class group orbits of simple closed curves were proved by Eskin, Mirzakhani, and Mohammadi in \cite{EMM19}. The first quantitative estimates with power saving error terms for countings of mapping class group orbits of Teichmüller space were proved by the author in the prequel \cite{Ara20b}. The main goal of this paper is to further expand our understanding of the effective dynamics of the mapping class group by studying how the corresponding action on the space of closed curves changes their geometric intersection numbers.

More concretely, we show that the action of the mapping class group on the space of closed curves of a closed surface effectively tracks the corresponding action on Teichmüller space in the following sense: for all but quantitatively few mapping classes, the information of how a mapping class moves a given point of Teichmüller space determines, up to a power saving error term, how it changes the geometric intersection numbers of a given closed curve with respect to arbitrary geodesic currents. This description provides a new perspective for studying the action of the mapping class group on the space of closed curves of a closed surface in terms of the corresponding action on Teichmüller space.

This perspective is very useful for tackling a wide variety of related effective counting problems. In the sequel \cite{Ara21a}, we combine the main result of this paper with theorems in the prequel \cite{Ara20b} to prove quantitative estimates with power saving error terms for countings of filling closed geodesics on closed, negatively curved surfaces. These estimates complement the effective estimates of Eskin, Mirzakhani, and Mohammadi for countings of simple closed curves \cite{EMM19}, effectivize asymptotic counting results of Mirzakhani, Erlandsson, and Souto \cite{Mir16,ES16}, and solve an open problem advertised by Wright \cite[Problem 18.2]{Wri19} for a generic class of closed curves. 

The perspective introduced in this paper can also be used to shed new light on the geometry and dynamics of Teichmüller space. Using the main theorem of this paper we show that all but quantitatively few Teichmüller geodesic segments joining a point in Teichmüller space to points in a mapping class group orbit of Teichmüller space converge at an effective rate to the boundary at infinity of Teichmüller space. This result corresponds to an effective version of a theorem of Masur \cite{Ma82b}. We also prove a completely new effective estimate comparing the Teichmüller and Thurston metrics along mapping class group orbits of Teichmüller space. In the sequel \cite{Ara21a}, we combine this result with theorems in the prequel \cite{Ara20b}  to prove quantitatives estimates with power saving error terms for countings of mapping class group orbits of Teichmüller space with respect to the Thurston metric, effectivizing asymptotic counting results of Rafi and Souto \cite{RS19}.

This paper addresses the strictly topological question of estimating the geometric intersection numbers of closed curves on surfaces using geometry and dynamics. More concretely, the proof of the main theorem of this paper is based on a novel combination of ideas from three different sources: flat geometry of quadratic differentials, dynamics of straight line flows on surfaces, and Teichmüller dynamics. Using the flat geometry of quadratic differentials we reduce the problem of estimating the geometric intersection numbers of closed curves to a question about the equidistribution rate of long horizontal segments of quadratic differentials. We study this question using the renormalization dynamics of the Teichmüller geodesic flow.

\subsection*{Statement of the main theorem.} For the rest of this section we fix an integer $g \geq 2$ and a connected, oriented, closed surface $S_g$ of genus $g$. Denote by $\mcg$ the mapping class group of $S_g$. Denote by $\tt$ the Teichmüller space of marked complex structures on $S_g$. Denote by $d_\mathcal{T}$ the Teichmüller metric on $\mathcal{T}_g$. Consider the marking changing action of $\mcg$ on $\tt$. This action is properly discontinuous and preserves the Teichmüller metric.

Denote by $\qut$ the Teichmüller space of marked, unit area holomorphic quadratic differentials on $S_g$ and by $\pi \colon \qut \to \tt$ the natural projection to $\mathcal{T}_g$. Denote by $\mathcal{MF}_g$ the space of singular measured foliations on $S_g$ and by $\Re(q),\Im(q) \in \mf$ the vertical an horizontal foliations of $q \in \qut$. 

Denote by $\mathcal{C}_g$ the space of geodesic currents on $S_g$ Denote by $i(\cdot,\cdot)$ the geometric intersection number pairing on $\mathcal{C}_g$. Following Thurston \cite{T80} and Bonahon \cite{Bon88}, we interpret closed curves and singular measured foliations on $S_g$ as elements of $\mathcal{C}_g$.

Denote by $\Delta \subseteq \mathcal{T}_g \times \mathcal{T}_g$ the diagonal of $\mathcal{T}_g \times \mathcal{T}_g$. Let $S(X) := \pi^{-1}(X)$ for every $X \in \mathcal{T}_g$. Consider the maps $q_s, q_e \colon \mathcal{T}_g \times \mathcal{T}_g - \Delta \to \mathcal{Q}^1\mathcal{T}_g$  which to every pair  of distinct points $X,Y \in \mathcal{T}_g$ assign the quadratic differentials $q_s(X,Y) \in S(X)$ and $q_e(X,Y) \in S(Y)$ corresponding to the tangent directions at $X$ and $Y$ of the unique Teichmüller geodesic segment from $X$ to $Y$. 

In the prequel \cite{Ara20b} we showed there exist constants $A = A(g) > 0$ and $\kappa = \kappa(g) > 0$ depending only on $g$ such that for every $X,Y \in \mathcal{T}_g$,
\begin{equation}
\label{eq:max_growth}
|\{\mc \in  \mcg \colon d_\mathcal{T}(X,\mc.Y) \leq R \}| = A \cdot e^{(6g-6)R} + O_{X,Y}\big(e^{(6g-6-\kappa)}\big).
\end{equation}
Let $X,Y \in \mathcal{T}_g$, $C > 0$, and $\kappa > 0$. Motivated by (\ref{eq:max_growth}), we say that a subset of mapping classes $\M \subseteq \mcg$ is $(X,Y,C,\kappa)$-sparse if the following bound holds for every $R > 0$,
\[
|\{\mc \in  \M \colon d_\mathcal{T}(X,\mc.Y) \leq R \}| \leq C \cdot e^{(6g-6-\kappa)R}.
\] 

The following effective estimate for the geometric intersection numbers of closed curves in a given mapping class group orbit with respect to arbitrary geodesic currents is the main result of this paper. A stronger version of this result will be introduced in \S 9 as Theorem \ref{theo:main_strong}. A version of this result yielding stronger conclusions for simple closed curves will be introduced in \S 9 as Theorem \ref{theo:main_unif}.

\begin{theorem}
	\label{theo:main} 
	There exists a constant $\kappa = \kappa(g) > 0$ such that the following holds. For every $X,Y \in \mathcal{T}_g$ and every closed curve $\beta$ on $S_g$, there exists a constant $C = C(X,Y,\beta) > 0$ and an $(X,Y,C,\kappa)$-sparse subset $M = M(X,Y,\beta) \subseteq \mcg$ such that for every geodesic current $\alpha$ on $S_g$ and every $\mc \in \mcg \setminus M$, if $r := d_\mathcal{T}(X,\mc.Y)$, $q_s := q_s(X,\mc.Y)$, and $q_e := q_e(X,\mc.Y)$, then
	\[
	i(\alpha,\mc.\beta) = i(\alpha,\Re(q_s)) \cdot i(\mc.\beta,\Im(q_e)) \cdot e^r + O_{X,Y,\alpha,\beta}\left(e^{(1-\kappa)r}\right).
	\]
\end{theorem}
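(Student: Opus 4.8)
The plan is to interpret the geometric intersection number $i(\alpha, \mathbf{g}.\beta)$ as the mass of a measure built from the flat structure of the Teichmüller geodesic from $X$ to $\mathbf{g}.Y$, and then to estimate that mass by an equidistribution argument. Concretely, write $q_s = q_s(X,\mathbf{g}.Y)$ and let $q_t$ denote the image of $q_s$ under the Teichmüller geodesic flow at time $t \in [0,r]$, so $q_r$ agrees with $q_e$ up to the natural identifications. On the flat surface $(S_g, q_s)$ the curve $\beta$ is freely homotopic to a flat geodesic representative built from horizontal and vertical segments, and the horizontal length of that representative with respect to $q_t$ grows like $e^t \cdot i(\mathbf{g}.\beta, \Im(q_e))$ once $t$ is of order $r$ (the vertical part being exponentially damped); this is the flat-geometry reduction alluded to in the introduction, and it is where the factor $i(\mathbf{g}.\beta, \Im(q_e)) \cdot e^r$ enters. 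Meanwhile $i(\alpha, \cdot)$, applied to a long horizontal segment of $q_t$, is governed by how that segment equidistributes with respect to the transverse measure on $S_g$ determined by $\alpha$; the normalization of that transverse measure contributes the factor $i(\alpha, \Re(q_s))$.

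The key steps, in order, would be: \emph{(1)} Fix $X, Y, \beta$ and choose a flat representative of $\beta$ on $q_s(X,\mathbf{g}.Y)$; decompose the computation of $i(\alpha, \mathbf{g}.\beta)$ into a ``horizontal'' main term plus a ``vertical/corner'' remainder, and show the remainder is $O(e^{(1-\kappa)r})$ uniformly outside a sparse set. \emph{(2)} Reduce the main term to estimating $\int_\gamma$ of the transverse measure of $\alpha$ along a union of long horizontal $q_t$-segments $\gamma$, i.e. to an effective equidistribution statement for long horizontal segments of a quadratic differential with respect to a fixed geodesic current. \emph{(3)} Prove that effective equidistribution by pushing the configuration into $\mathcal{Q}^1\mathcal{M}_g$ via the Teichmüller flow, using the spectral gap / exponential mixing of the Teichmüller geodesic flow together with the prequel's orbit-counting estimate \eqref{eq:max_growth} to control, for all but an $(X,Y,C,\kappa)$-sparse set of $\mathbf{g}$, the deviation of the time-$r$ orbit point from being ``generic'' for the equidistribution (this is exactly where the sparse exceptional set $M$ is produced: it is the set of $\mathbf{g}$ whose associated Teichmüller geodesic spends too long deep in the thin part or otherwise fails the relevant Diophantine-type condition). \emph{(4)} Track constants: the equidistribution limit is $i(\alpha, \Re(q_s))$ times the $q_t$-horizontal length of $\beta$'s representative, and the latter is $i(\mathbf{g}.\beta, \Im(q_e)) \cdot e^r$ up to lower-order terms; assemble and simplify to obtain the displayed formula, absorbing all errors into $O_{X,Y,\alpha,\beta}(e^{(1-\kappa)r})$.

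I expect the main obstacle to be step \emph{(3)}: converting the abstract exponential mixing of the Teichmüller flow into a genuinely \emph{effective}, pointwise (not merely measure-theoretic) equidistribution statement for a \emph{single} long horizontal segment, with a power-saving error, while simultaneously identifying the exceptional mapping classes and bounding their count by $C \cdot e^{(6g-6-\kappa)R}$. The difficulty is twofold: first, a single horizontal segment is a very thin object, so one must either thicken it (integrating over a transversal and exploiting the product structure of the flow boxes) or run a two-variable argument trading horizontal length against flow time, and control the resulting error terms near the boundary of $\mathcal{Q}^1\mathcal{M}_g$ and near the thin part of moduli space; second, the dependence of the exceptional set on $\beta$ (but its sparseness being uniform in $\alpha$) forces the Diophantine condition to be phrased intrinsically in terms of the geodesic $X \to \mathbf{g}.Y$ and its excursions, and then fed back through \eqref{eq:max_growth}. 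A secondary technical point is the flat-geometry step \emph{(1)}: ensuring the ``corner'' contributions from the singularities of $q_s$ and from the non-geodesic pieces of $\beta$'s representative are genuinely lower order uniformly in $\mathbf{g}$, which requires a priori bounds on the flat geometry of $q_s(X,\mathbf{g}.Y)$ that again hold only outside a sparse set.
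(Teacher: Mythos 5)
Your outline matches the paper's architecture at the top level (flat-geometric reduction, long horizontal segments produced by the Teichm\"uller flow, an effective equidistribution input, and a sparse exceptional set coming from recurrence/thin-part control), but several load-bearing steps are either garbled or missing. First, the bookkeeping of the main term is wrong as stated: the horizontal length at time $t$ of a flat representative transported from $q_s$ is $e^t$ times its $\Re(q_s)$-transverse measure, not $e^t\cdot i(\mc.\beta,\Im(q_e))$. In the paper the roles are the opposite of what you describe: it is $\alpha$ (first approximated by closed curves) that receives a rectangular decomposition on $q_s$, whose horizontal segments become long at $q_e$ with total length $e^r\, i(\alpha,\Re(q_s))$ (Proposition \ref{prop:rect_int}), while $\mc.\beta$ enters through an immersed collar and bump function on $q_e$ whose area integral is $i(\mc.\beta,\Im(q_e))$ and whose line integrals count crossings (Proposition \ref{prop:bump_4}). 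Your swapped version also runs into a real problem for the statement as given: a general geodesic current $\alpha$ does not determine a transverse measure on $S_g$, so ``equidistribution of a long horizontal segment with respect to the transverse measure determined by $\alpha$'' is not defined; the paper avoids this by proving the estimate for closed curves $\alpha$ first (Theorem \ref{theo:prelim_1}) and upgrading to arbitrary currents by Bonahon's density of weighted closed curves, with an error term ($i(\alpha,q_s)$, hence $i(\alpha,X)$) that is continuous in $\alpha$ so the estimate survives the limit. Some such reduction is indispensable and absent from your plan.

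Second, the equidistribution mechanism you invoke (exponential mixing of the Teichm\"uller flow, thickening the segment over a transversal) is not what makes the argument work, and the difficulty you correctly identify is resolved differently: following Athreya--Forni, a single horizontal segment is treated as a current with controlled weighted Sobolev norm (Lemma \ref{lem:path_sov_bound}), and the power-saving decay comes from the spectral gap of the cocycle on the bundle of currents along an orbit that returns often enough to a fixed compact set (Lemma \ref{lem:spec_gap}, Theorem \ref{theo:equid_3}); no mixing or thickening is used. Correspondingly, the sparse set is not extracted from \eqref{eq:max_growth} itself but from two separate counting inputs: the Eskin--Mirzakhani--Rafi bound on geodesics spending a definite fraction of time outside a compact set of the principal stratum (Theorem \ref{theo:thin_traj}, used for the recurrence and itinerary conditions, depending only on $X,Y$), and the prequel's thin-sector estimates plus a new ``horizontally thin sector'' refinement (Theorems \ref{theo:sparse_1} and \ref{theo:sparse_4}) to keep $\ell_{\min}(q_s)$, $\ell_{\min}(q_e)$ and the minimal slope $v_{\mc.\beta}(q_e)$ bounded below by $e^{-\kappa r}$; it is only this last condition that makes $M$ depend on $\beta$. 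Without these quantitative lower bounds feeding into the error of Theorem \ref{theo:prelim_2}, the ``corner/vertical'' remainders you mention cannot be shown to be $O(e^{(1-\kappa)r})$, so your step (1) is not merely a secondary technical point but requires exactly this additional counting machinery.
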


Theorem \ref{theo:main} can be interpreted as follows: for all but quantitatively few mapping classes $\mc \in \mcg$, to estimate the geometric intersection number $i(\alpha,\mc.\beta)$, it is not necessary to know how $\alpha$ and $\mc.\beta$ interact between themselves, but rather, it is enough to know how they independently interact with objects determined by the action of $\mc$ on $\mathcal{T}_g$.

Every negatively curved metric on $S_g$ induces a geodesic current whose geometric intersection number with any closed curve is equal to the length of the unique geodesic representative of the curve with respect to the negatively curved metric. Thus, Theorem \ref{theo:main} provides effective estimates for the lengths of closed geodesics of a given topological type on any closed, negatively curved surface.

\begin{remark}
In this paper, whenever we write expressions of the form $q_s(X,\mc.Y)$ or $q_e(X,\mc.Y)$ for $X,Y \in \mathcal{T}_g$ and $\mc \in \mcg$ as in the statement of Theorem \ref{theo:main}, we will always assume $\mc.Y \neq X$. For any pair of points $X,Y \in \mathcal{T}_g$, the number of mapping classes $\mc \in \mcg$ such that $\mc.Y = X$ can be bounded uniformly in terms of $g$. Thus, making this assumption will never affect our results as any such mapping classes can always be included in the sparse subsets of $\mcg$ to be discarded.
\end{remark}

\subsection*{Effective convergence to the boundary at infinity of Teichmüller space.} By uniformization, $\mathcal{T}_g$ can be canonically identified with the Teichmüller space of marked hyperbolic structures on $S_g$. Given a closed curve $\beta$ on $S_g$ and a marked hyperbolic structure $X \in \mathcal{T}_g$, denote by $\ell_{\beta}(X)$ the length of the unique geodesic representative of $\beta$ with respect to $X$. Following Thurston \cite{T80}, the length $\ell_{\lambda}(X)$ of a singular measured foliation $\lambda \in \mathcal{MF}_g$ with respect to a marked hyperbolic structure $X \in \mathcal{T}_g$ can be defined in an analogous way. Denote by $\{a_t\}_{t \in \mathbf{R}}$ the Teichmüller geodesic flow on $\qut$. Denote by $\pmf := \mf / \mathbf{R}^+$ the space of projective singular measured foliations on $S_g$. 

In \cite{Ma82b}, Masur showed that if a marked quadratic differential $q \in \qut$ is such that its vertical foliation $\Re(q) \in \mf$ is uniquely ergodic, then the corresponding Teichmüller geodesic ray $\{\pi(a_t q)\}_{t \in \mathbf{R}^+} \subseteq \mathcal{T}_g$ converges in the Thurston compactification to the projective measured foliation $[\Re(q)] \in \pmf$. More concretely, if $q \in \qut$ is such that $\Re(q) \in \mf$ is uniquely ergodic, then, for every simple closed curve $\beta$ on $S_g$, the following asymptotic holds as $t \to +\infty$,
\[
\ell_{\beta}(\pi(a_t q)) \sim i(\beta,\Re(q)) \cdot \ell_{\Im(q)}(\pi(a_t q)).
\]

Using Theorem \ref{theo:main} we will prove the following effectivization of Masur's theorem. A stronger version of this result will be introduced in \S9 as Theorem \ref{theo:main_2_unif}. A version of this result that holds for non-simple closed curves will be introduced in \S9 as Theorem \ref{theo:main_2_strong}.

\begin{theorem}
	\label{theo:main_2}
	There exists a constant $\kappa = \kappa(g) > 0$ such that the following holds. For every $X,Y \in \mathcal{T}_g$ there exists a constant $C = C(X,Y) > 0$ and an $(X,Y,C,\kappa)$-sparse subset of mapping classes $M = M(X,Y) \subseteq \mcg$ such that for every simple closed curve $\beta$ on $S_g$ and every $\mc \in \mcg \setminus M$, if $r := d_\mathcal{T}(X,\mc.Y)$ and $q_s := q_s(X,\mc.Y)$, then
	\[
	\ell_{\beta}(\pi(a_r q_s)) = i(\beta,\Re(q_s)) \cdot \ell_{\Im(q_s)}(\pi(a_r q_s)) + O_{X,Y,\beta}\left(e^{(1-\kappa)r}\right).
	\]
\end{theorem}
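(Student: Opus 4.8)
The reduction of Theorem~\ref{theo:main_2} to Theorem~\ref{theo:main} is essentially formal once one runs the relevant Teichmüller geodesic segment backwards; the only genuine obstacle, discussed at the end, is the uniformity of the exceptional set in $\beta$. Fix $X,Y \in \mathcal{T}_g$ and $\mc \in \mcg$ with $\mc.Y \ne X$, and set $r := d_\mathcal{T}(X,\mc.Y)$ and $q_s := q_s(X,\mc.Y) \in S(X)$. Since $q_s$ is the initial tangent direction of the Teichmüller geodesic segment from $X$ to $\mc.Y$, whose length is $r$, we have $\pi(a_r q_s) = \mc.Y$, so the three quantities in Theorem~\ref{theo:main_2} are $\ell_\beta(\mc.Y)$, $i(\beta,\Re(q_s))$ and $\ell_{\Im(q_s)}(\mc.Y)$. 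Let $\iota \colon \qut \to \qut$ denote the orientation-reversing involution, so that $\pi \circ \iota = \pi$, $\iota(a_t q) = a_{-t}\iota(q)$, and $\iota$ exchanges the vertical and horizontal foliations of any unit-area quadratic differential; recall also that $\Re(a_r q_s) = e^{r}\Re(q_s)$ and $\Im(a_r q_s) = e^{-r}\Im(q_s)$, the forward Teichmüller flow scaling the vertical foliation by $e^{r}$ and the horizontal one by $e^{-r}$. The geodesic segment from $\mc.Y$ to $X$ then carries $q_s(\mc.Y,X) = \iota(a_r q_s)$ at $\mc.Y$ and $q_e(\mc.Y,X) = \iota(q_s)$ at $X$, so that the vertical foliation of $q_s(\mc.Y,X)$ is $e^{-r}\Im(q_s)$ and the horizontal foliation of $q_e(\mc.Y,X)$ is $\Re(q_s)$. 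Finally, writing $L_Z \in \mathcal{C}_g$ for the Liouville current of $Z \in \mathcal{T}_g$ — so that $i(\mu,L_Z) = \ell_\mu(Z)$ for every closed curve and every $\mu \in \mf$, and $\mc.L_Z = L_{\mc.Z}$ — the $\mcg$-invariance of $i(\cdot,\cdot)$ gives $\ell_\beta(\mc.Y) = i(\beta,L_{\mc.Y}) = i(\mc^{-1}.\beta,L_Y)$.

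The next step is to apply Theorem~\ref{theo:main} with the roles of $X$ and $Y$ interchanged, with geodesic current $L_Y$, with the closed curve $\beta$, and with $\mc^{-1}$ in the role of the mapping class. Because $\mc \mapsto \mc^{-1}$ is a bijection of $\mcg$ carrying $\{\mc : d_\mathcal{T}(X,\mc.Y) \le R\}$ onto $\{\nu : d_\mathcal{T}(Y,\nu.X) \le R\}$ for every $R > 0$, the $\mc \mapsto \mc^{-1}$ preimage of the sparse subset produced by Theorem~\ref{theo:main}, enlarged by the finitely many $\mc$ with $\mc.Y = X$, is an $(X,Y,C,\kappa)$-sparse subset $M \subseteq \mcg$, and the distance $d_\mathcal{T}(Y,\mc^{-1}.X)$ featuring in that application equals $r$. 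Pushing the quadratic differentials $q_s(Y,\mc^{-1}.X)$ and $q_e(Y,\mc^{-1}.X)$ forward by the isometry $\mc$ identifies them with $q_s(\mc.Y,X)$ and $q_e(\mc.Y,X)$, whose foliations were computed above; using the equivariances $\mc.\Re(q) = \Re(\mc.q)$ and $\mc.\Im(q) = \Im(\mc.q)$, the conclusion of Theorem~\ref{theo:main} becomes, for $\mc \in \mcg \setminus M$,
\[
i(\mc^{-1}.\beta,\, L_Y) \;=\; i\bigl(L_Y,\, e^{-r}\,\mc^{-1}.\Im(q_s)\bigr)\cdot i\bigl(\mc^{-1}.\beta,\, \mc^{-1}.\Re(q_s)\bigr)\cdot e^{r} \;+\; O_{X,Y,\beta}\bigl(e^{(1-\kappa)r}\bigr),
\]
where the error absorbs the dependence on $L_Y$, which depends only on $Y$.

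To conclude, I invoke the $\mcg$-invariance and bilinearity of $i(\cdot,\cdot)$ together with $\mc.L_Y = L_{\mc.Y}$: the first factor on the right equals $e^{-r}\,i(L_{\mc.Y},\Im(q_s)) = e^{-r}\,\ell_{\Im(q_s)}(\mc.Y)$, while the second equals $i(\beta,\Re(q_s))$, so the two powers of $e^{\pm r}$ cancel. Since $i(\mc^{-1}.\beta,L_Y) = \ell_\beta(\mc.Y)$ and $\mc.Y = \pi(a_r q_s)$, this is precisely
\[
\ell_\beta\bigl(\pi(a_r q_s)\bigr) = i(\beta,\Re(q_s))\cdot \ell_{\Im(q_s)}\bigl(\pi(a_r q_s)\bigr) + O_{X,Y,\beta}\bigl(e^{(1-\kappa)r}\bigr).
\]

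The step I expect to be the real obstacle is that $M$ is required to be independent of $\beta$, whereas the argument above, applied verbatim, produces an exceptional set depending on $\beta$. This is where the hypothesis that $\beta$ be simple is used: in the proof of Theorem~\ref{theo:main} the curve $\beta$ enters only as the transversal datum against which one measures the equidistribution rate of the long horizontal segments of $q_s$, and the relevant rates are uniform over transversals of uniformly bounded topological complexity; since there are only finitely many $\mcg$-orbits of simple closed curves, one obtains a single exceptional set valid for all simple closed curves simultaneously. Feeding this strengthening of Theorem~\ref{theo:main} — recorded later as Theorem~\ref{theo:main_unif} — into the reduction above, rather than Theorem~\ref{theo:main} itself, yields the $\beta$-independent set $M = M(X,Y)$ demanded by the statement. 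Establishing this uniformity is the crux; the remainder is the bookkeeping with $\mcg$-equivariance and the elementary scaling of vertical and horizontal foliations along Teichmüller geodesics carried out above.
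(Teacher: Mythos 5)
Your reduction of Theorem \ref{theo:main_2} to a uniform-in-$\beta$ version of Theorem \ref{theo:main} is correct and is exactly the paper's bookkeeping: swap $X$ and $Y$, replace $\mc$ by $\mc^{-1}$, feed in the Liouville current of $Y$ as the geodesic current, use the identities $\Re(q_s(Y,\mc^{-1}.X)) = \mc^{-1}.\Im(q_e(X,\mc.Y))$, $\Im(q_e(Y,\mc^{-1}.X)) = \mc^{-1}.\Re(q_s(X,\mc.Y))$ together with $\Im(q_s) = e^{r}\Im(q_e)$ and $\pi(a_r q_s) = \mc.Y$, and note that inverting a sparse set preserves sparseness. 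You also correctly identify where the real content lies: the exceptional set must not depend on $\beta$.

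The gap is that you do not prove this uniform version (Theorem \ref{theo:main_unif}), and the justification you sketch for it does not work. You argue that the equidistribution rates are ``uniform over transversals of uniformly bounded topological complexity'' and that, since there are only finitely many $\mcg$-orbits of simple closed curves, one exceptional set serves all simple closed curves. But the $\beta$-dependence of the sparse set in Theorem \ref{theo:main} does not factor through the $\mcg$-orbit of $\beta$: it enters through Theorem \ref{theo:sparse_4}, which discards the mapping classes $\mc$ for which $v_{\mc.\beta}(q_e)$ or $\ell_{\min}(q_e)$ is small, and this condition depends on the specific curve $\beta$, not merely on its topological type. If $\beta' = \mathbf{h}.\beta$, the exceptional set for $\beta'$ is essentially the translate $M(X,Y,\beta)\mathbf{h}^{-1}$, which is sparse relative to the pair $(X,\mathbf{h}.Y)$; taking the union over the infinitely many curves in a single $\mcg$-orbit destroys sparseness, so finiteness of orbits gives you nothing. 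The paper's actual mechanism is different and exploits an asymmetry in Theorem \ref{theo:main_strong}: the sparse set there depends on the closed-curve slot but not on the geodesic-current slot. One therefore applies Theorem \ref{theo:main_strong} with the arbitrary curve $\beta$ placed in the \emph{current} slot, paired against a fixed finite family $\{\gamma_j\}$ of simple closed curves coming from a Dehn--Thurston coordinate system (Proposition \ref{prop:DT_int}); the union of the finitely many resulting sparse sets depends only on $(X,Y)$. Knowing $i(\gamma_j,\mc.\beta)$ for all $j$ pins down the Dehn--Thurston coordinates of $\mc.\beta$ up to an error of size $\ell_\beta(Y)e^{(1-\kappa)r}$, and the Lipschitz property of $\alpha \mapsto i(\alpha,\cdot)$ in these coordinates (Proposition \ref{prop:ml_lip}) then yields the estimate against arbitrary currents, in particular against the Liouville current needed for Theorem \ref{theo:main_2}. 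This is precisely where simplicity of $\beta$ is used: $\mc.\beta$ must lie in $\mf$ to have Dehn--Thurston coordinates at all, a point your ``bounded complexity of transversals'' heuristic does not capture. Without this (or some substitute) argument, your proof establishes only Theorem \ref{theo:main_2_strong}, i.e.\ the version with $M = M(X,Y,\beta)$.
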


\subsection*{Comparing the Teichmüller and Thurston metrics.} In analogy with how the Teichmüller metric quantifies the minimal dilation among quasiconfomal maps between marked complex structures, the Thurston metric, introduced by Thurston in \cite{Thu98}, quantifies the minimal Lipschitz constant among Lipschitz maps between marked hyperbolic structures. More concretely, for every pair of marked hyperbolic structures $X,Y \in \mathcal{T}_g$,
\[
d_\mathrm{Thu}(X,Y) := \log\left( \inf_{f \colon X \to Y} \mathrm{Lip}(f)\right),
\]
where the infimum runs over all Lipschitz maps $f \colon X \to Y$ in the homotopy class given by the markings of $X$ and $Y$, and where $\mathrm{Lip}(f)$ denotes the Lipschitz constant of such a map.

Denote by $\mathcal{S}_g$ the set of all simple closed curves on $S_g$. For every $X \in \mathcal{T}_g$ consider the function $D_X \colon \mf \to \smash{\mathbf{R}^+}$ which to every singular measured foliation $\lambda \in \mf$ assigns the value
\begin{equation*}
D_X(\lambda) := \sup_{\beta \in \mathcal{S}_g}\left( \frac{i(\beta,\lambda)}{\ell_\beta(X)}\right).
\end{equation*}

Using Theorem \ref{theo:main} we will prove the following effective estimate comparing the Teichmüller and Thurston metrics along mapping class group orbits of Teichmüller space. A stronger version of this result will be introduced in \S9 as Theorem \ref{theo:main_3_strong}.

\begin{theorem}
	\label{theo:main_3}
	There exists a constant $\kappa = \kappa(g) > 0$ such that the following holds. For every $X,Y \in \mathcal{K}$ there exists a constant $C =C(X,Y) > 0$ and an $(X,Y,C,\kappa)$-sparse subset $M = M(X,Y) \subseteq \mcg$ such that for every $\mc \in \mcg \setminus M$, if $r := d_\mathcal{T}(X,\mc.Y)$, $q_s := q_s(X,\mc.Y)$, and $q_e := q_e(X,\mc.Y)$, then
	\[
	d_\mathrm{Thu}(X,\mc.Y) = d_\mathcal{T}(X,\mc.Y) + \log D_X(\Re(q_s)) + \log \ell_{\Im(q_e)}(\mc.Y) + O_{X,Y}\left(e^{-\kappa r}\right).
	\]
\end{theorem}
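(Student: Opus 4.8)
The plan is to combine Thurston's description of his metric in terms of simple closed curves with the effective version of Masur's theorem in the uniform form of Theorem~\ref{theo:main_2_unif}, which is itself deduced from Theorem~\ref{theo:main}. Recall Thurston's identity \cite{Thu98}: for all $X,Z\in\tt$,
\[
d_\mathrm{Thu}(X,Z)=\log\sup_{\beta\in\mathcal{S}_g}\frac{\ell_\beta(Z)}{\ell_\beta(X)}.
\]
I would apply it with $Z=\mc.Y$. Fix $X,Y\in\tt$ and let $M=M(X,Y)$ be the union of the sparse set supplied by Theorem~\ref{theo:main_2_unif} with the finitely many mapping classes satisfying $\mc.Y=X$; this set is still $(X,Y,C,\kappa)$-sparse. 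Fix $\mc\in\mcg\setminus M$ and set $r=d_\mathcal{T}(X,\mc.Y)$, $q_s=q_s(X,\mc.Y)$, $q_e=q_e(X,\mc.Y)$, so that $\mc.Y=\pi(a_rq_s)$ and $q_e=a_rq_s$.

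Since $\mc.Y$ lies on the Teichm\"uller geodesic ray issued from $X$ in the direction $q_s$, the uniform effective Masur estimate gives, uniformly over all $\beta\in\mathcal{S}_g$,
\[
\ell_\beta(\mc.Y)=i(\beta,\Re(q_s))\cdot\ell_{\Im(q_s)}(\mc.Y)+O_{X,Y}(\ell_\beta(X)\,e^{(1-\kappa)r}).
\]
Dividing by $\ell_\beta(X)$, observing that $\ell_{\Im(q_s)}(\mc.Y)$ does not depend on $\beta$, taking the supremum over $\beta\in\mathcal{S}_g$, and using the elementary bound $\lvert\sup_\beta u_\beta-\sup_\beta v_\beta\rvert\le\sup_\beta\lvert u_\beta-v_\beta\rvert$ together with the definition of $D_X$, I obtain
\[
e^{d_\mathrm{Thu}(X,\mc.Y)}=\ell_{\Im(q_s)}(\mc.Y)\cdot D_X(\Re(q_s))+O_{X,Y}(e^{(1-\kappa)r}).
\]
Since $q_e=a_rq_s$ and the Teichm\"uller geodesic flow multiplies the horizontal foliation by $e^{-t}$, we have $\Im(q_s)=e^{r}\Im(q_e)$, hence $\ell_{\Im(q_s)}(\mc.Y)=e^{r}\ell_{\Im(q_e)}(\mc.Y)$, and therefore
\[
e^{d_\mathrm{Thu}(X,\mc.Y)}=e^{r}\,\ell_{\Im(q_e)}(\mc.Y)\cdot D_X(\Re(q_s))+O_{X,Y}(e^{(1-\kappa)r}).
\]

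It would then remain to take logarithms, for which the main term must be bounded above and below by positive multiples of $e^{r}$, with constants depending only on $X$ and $Y$. This is soft: $\Re\colon S(X)\to\mf$ is continuous with compact image avoiding the zero foliation, and $D_X$ is continuous and strictly positive on $\mf\setminus\{0\}$ --- indeed $D_X(\lambda)=\max\{\,i(\mu,\lambda):\mu\in\mf,\;\ell_\mu(X)=1\,\}$, a maximum over a compact set --- so $D_X(\Re(q_s))$ is bounded away from $0$ and $\infty$ in terms of $X$ alone; and since $\mc.Y$ has the same image as $Y$ in $\mm$, while for any $Z\in\tt$ the length $\ell_{\Im(q)}(Z)$ of the horizontal foliation of a unit-area differential $q\in S(Z)$ is bounded above and below in terms of the image of $Z$ in $\mm$ (by compactness of $S(Z)$ and continuity), the factor $\ell_{\Im(q_e)}(\mc.Y)$ is bounded away from $0$ and $\infty$ in terms of $Y$ alone. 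Taking logarithms then gives
\[
d_\mathrm{Thu}(X,\mc.Y)=r+\log D_X(\Re(q_s))+\log\ell_{\Im(q_e)}(\mc.Y)+O_{X,Y}(e^{-\kappa r}),
\]
which is the claim since $r=d_\mathcal{T}(X,\mc.Y)$. In the paper this would be carried out at the level of the sharper Theorem~\ref{theo:main_3_strong}, of which the statement above is a special case.

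The one genuine ingredient is Theorem~\ref{theo:main_2_unif}: the effective Masur estimate must hold simultaneously for \emph{all} $\beta\in\mathcal{S}_g$, with the exceptional sparse set independent of $\beta$ and --- crucially for the argument above --- with an error term whose dependence on $\beta$ is at most linear in $\ell_\beta(X)$, so that it survives division by $\ell_\beta(X)$ and the passage to the supremum defining the Thurston metric. Extracting this uniformity from Theorem~\ref{theo:main} is the main obstacle; everything else is routine.
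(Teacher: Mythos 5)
Your proposal is correct and follows essentially the same route as the paper: Thurston's supremum formula, the uniform effective Masur estimate of Theorem \ref{theo:main_2_unif} divided by $\ell_\beta(X)$ (using $\ell_\beta(Y)\preceq_{X,Y}\ell_\beta(X)$ to make the error uniform in $\beta$), passage to the supremum, the identity $\Im(q_s)=e^r\Im(q_e)$, and a logarithm step justified by the compactness lower bound $D_X(\Re(q_s))\cdot\ell_{\Im(q_e)}(\mc.Y)\succeq_{X,Y}1$ together with the mean value theorem. The one ingredient you flag as the real content — the $\beta$-uniformity of Theorem \ref{theo:main_2_unif} — is exactly how the paper proceeds as well (via Dehn--Thurston coordinates and the Lipschitz property of intersection numbers), so nothing is missing.
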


\begin{remark}
	The terms $\log D_X(\Re(q_s))$ and $\log \ell_{\Im(q_e)}(\mc.Y)$ in Theorem \ref{theo:main_3} can be bounded uniformly away from $\pm \infty$ when $X$ and $Y$ vary over a compact subset of $\mathcal{T}_g$. Thus, Theorem \ref{theo:main_3} is in tension with the fact that the Teichmüller and Thurston metrics are not expected to differ by a bounded amount.
\end{remark}

\subsection*{Sketch of the proof of Theorem \ref{theo:main}.} By work of Bonahon \cite{Bon88}, weighted closed curves are dense in the space of geodesic currents. Thus, to prove Theorem \ref{theo:main}, it is enough to consider the case where $\alpha$ is a closed curve. Geometric intersection numbers of closed curves can be estimated using the flat geometry of quadratic differentials: the number of transverse intersections between any pair of closed flat geodesics of a quadratic differential is a good approximation of the geometric intersection number of their isotopy classes. See Proposition \ref{prop:flat_int}. In particular, given $\mc \in \mcg$, we can estimate $i(\alpha, \mc.\beta)$ by considering flat geodesic representatives of $\alpha$ and $\mc.\beta$ with respect to the singular flat metric induced by $q_e:= q_e(X,\mc.Y)$ on $S_g$. To estimate the number of transverse intersections between these flat geodesic representatives we proceed in several steps.

First, we use the flat geometry of $q_s := q_s(X,\mc.Y)$ to construct a rectangular decomposition of $\alpha$ on $q_s$, i.e., a concatenation of horizontal and vertical segments of $q_s$ that suitably approximate a flat geodesic representative of $\alpha$ on $q_s$. Transporting this rectangular decomposition to $q_e$ using the Teichmüller geodesic flow yields a rectangular decomposition of $\alpha$ on $q_e$ with long horizontal segments and short vertical segments. We show that the number of transverse intersections between the horizontal segments of this decomposition and any flat geodesic representative of $\mc.\beta$ on $q_e$ is a good approximation of the quantity we are trying to estimate. See Proposition \ref{prop:rect_int}.

Second, we use the flat geometry of $q_e$ to estimate the number of such intersections by constructing an immersed collar around a flat geodesic representative of $\mc.\beta$ on $q_e$ supporting a sufficiently regular bump function whose integral along the maximal horizontal segments of the collar is constant. We show that if we consider the Lebesgue measure on the horizontal segments of the rectangular decomposition of $\alpha$ on $q_e$ constructed above, the integral of the bump function of $\mc.\beta$ with respect to this measure is a good approximation of the quantity we are trying to estimate. See Proposition \ref{prop:bump_4}.

Third, we estimate this integral using the dynamics of the horizontal foliation of $q_e$. More precisely, we use the fact that, under suitable recurrence conditions on the corresponding Teichmüller geodesic, horizontal segments of $q_e$ equidistribute at an effective rate towards the singular flat area form of $q_e$. Following the general approach of Athreya and Forni \cite{AF08}, we prove a version of this fact suitable to our purposes. See Theorem \ref{theo:equid_3}. Putting together the approximations above yields an estimate with the desired leading term. See Theorems \ref{theo:prelim_1} and \ref{theo:prelim_2}.

To ensure the relevant Teichmüller geodesics satisfy the desired recurrence conditions, we discard a sparse subset of mapping classes. The estimates of Eskin, Mirzakhani, and Rafi on the number of thin Teichmüller geodesic segments joining a point in Teichmüller space to points in a mapping class group orbit of Teichmüller space \cite{EMR19} play a crucial role in this step.

It remains to control the quality of the approximations. The error terms of the approximations are large when either the length of the shortest saddle connections of $q_s$ or $q_e$ is small, or when the minimal slope of a flat geodesic representative of $\mc.\beta$ on $q_e$ is small. Using methods introduced in the prequel \cite{Ara20b}, we discard a sparse subset of mapping classes to control these quantities in a way that guarantees the approximations have a power saving error term. See Theorems \ref{theo:sparse_1} and \ref{theo:sparse_4}. 

\subsection*{Organization of the paper.} In \S 2 we discuss some aspects of the flat geometry of quadratic differentials. In \S 3 we introduce a method for constructing rectangular decompositions of flat geodesics of quadratic differentials. In \S 4 we describe a procedure for constructing immersed collars and bump functions of flat geodesics of quadratic differentials. In \S 5 we show the horizontal segments of a quadratic differential equidistribute at an effective rate towards the singular flat area form of the quadratic differential under suitable recurrence conditions on the corresponding Teichmüller geodesic. In \S 6 we combine results from \S2 -- 5 to prove a preliminary quantitative estimate for the geometric intersection numbers of closed curves with respect to arbitrary geodesic currents. In \S 7 we show the recurrence conditions needed to apply this preliminary estimate hold in most cases of interest. In \S 8 we show how to control the error terms in this preliminary estimate. In \S9 we combine results from \S 6 -- 8 to prove Theorems \ref{theo:main}, \ref{theo:main_2}, and \ref{theo:main_3}, as well as the different versions of them alluded to above.

\subsection*{Notation.} Let $A,B \in \mathbf{R}$ and $*$ be a set of parameters. We write $A \preceq_* B$ if there exists a constant $C= C(*) > 0$ depending only on $*$ such that $A \leq C \cdot B$. We write $A \asymp_* B$ if $A \preceq_* B $ and $B \preceq_* A$. We write $A = O_*(B)$ if there exists a constant $C = C(*) > 0$ depending only on $*$ such that $|A| \leq C \cdot B$.

\subsection*{Acknowledgments.} The author is very grateful to Alex Wright and Steve Kerckhoff for their invaluable advice, patience, and encouragement. The author would also like to thank Alex Eskin, Ian Frankel, Jayadev Athreya, and Jon Chaika for very helpful and enlightening conversations. This work got started while the author was participating in the \textit{Dynamics: Topology and Numbers} trimester program at the Hausdorff Research Institute for Mathematics (HIM). The author is very grateful for the HIM's hospitality and for the hard work of the organizers of the trimester program.

\section{Flat geometry of quadratic differentials}

\subsection*{Outline of this section.} In this section we discuss some aspects of the flat geometry of quadratic differentials. The main result of this section is Proposition \ref{prop:flat_int}, which shows that the number of transverse intersections between any pair of closed flat geodesics of a quadratic differential is a good approximation of the geometric intersection number of their isotopy classes. Proposition \ref{eq:int_bound}, which bounds the number of intersections between non-parallel straight line segments of quadratic differentials, will also play an important role in later sections of this paper.

\subsection*{Quadratic differentials.} Let $X$ be a closed Riemann surface and $K$ be its canonical bundle. A \textit{quadratic differential} $q$ on $X$ is a holomorphic section of the symmetric square $K \vee K$. In local coordinates, $q = f(z) \thinspace dz^2$ for some holomorphic function $f(z)$. If $X$ has genus $g$, the number of zeroes of $q$ counted with multiplicity is $4g-4$. The zeroes of $q$ are also called \textit{singularities}. We sometimes denote quadratic differentials by $(X,q)$ to keep track of the Riemann surface they are defined on.

A \textit{half-translation structure} on a surface $S$ is an atlas of charts to $\mathbf{C}$ on the complement of a finite set of points $\Sigma \subseteq S$ whose transition functions are of the form $z \mapsto \pm z + c$ with $c \in \mathbf{C}$. Every quadratic differential $q$ gives rise to a half-translation structure on the Riemann surface it is defined on by considering local coordinates on the complement of the zeroes of $q$ for which $q = dz^2$. Viceversa, every half-translation structure induces a quadratic differential on its underlying surface by pulling back the differential $dz^2$ on the corresponding charts. 

The notion of \textit{straight line} makes sense for a surface endowed with a half translation structure and in particular for a closed Riemann surface endowed with a quadratic differential $q$. A \textit{cylinder curve} of $q$ is a closed straight line intersecting no zeroes. A \textit{saddle connection} of a $q$ is a straight line segment joining two zeroes and having no zeroes in its interior. The notions of \textit{absolute value of slope} and \textit{parallelism} of straight line segments also make sense in this context.

Pulling back the standard Euclidean metric on $\mathbf{C}$ using the charts of a half-translation structure induces a singular flat metric on the underlying surface. In particular, every quadratic differential $q$ gives rise to a singular flat metric on the Riemann surface $X$ it is defined on. This metric is smooth away from the zeroes of $q$ and has a singularity of cone angle $(k+2) \pi$ at every zero of order $k$. The diameter of $q$, denoted $\mathrm{diam}(q)$, is the diameter of $X$ with respect to this metric. Denote by $A_q$ the singular flat area form induced by $q$ on $X$. The area of $q$, denoted $\mathrm{Area}(q)$, is the area of $X$ with respect to $A_q$. Denote by $\ell_{\alpha}(q)$ the flat length of a saddle connection $\alpha$ of $q$ and by $\ell_{\min}(q)$ the flat length of the shortest saddle connections of $q$. Denote $\smash{\ell_{\min}^\dagger(q)} := \min\{1,\ell_{\min}(q)\}$. Denote by $\ell_{\beta}(q)$ the flat length of a closed curve $\beta$ on $X$ and by $\mathrm{sys}(q)$ the flat length of the shortest not null-homotopic closed curves on $X$. 

Pulling back the measured foliations corresponding to the $1$-forms $dx$ and $dy$ on $\mathbf{C}$ using the charts of a half-translation structure induces a pair of singular measured foliation on the underlying surface. For the half translation structure induced by a quadratic differential $q$, we denote these singular measured foliations by $\Re(q)$ and $\Im(q)$, and refer to them as the \textit{vertical} and \textit{horizontal} foliations of $q$. Segments of leaves of  $\Re(q)$ and $\Im(q)$ are called \textit{vertical} and \textit{horizontal}, respectively.

\subsection*{Flat geodesics.} Closed geodesics with respect to the singular flat metric induced by a quadratic differential on its underlying Riemann surface can be described explicitly as follows.

\begin{proposition}
	\label{prop:flat_geod_1}
	Let $q$ be a quadratic differential on a closed Riemann surface $X$. Then, a closed geodesic with respect to the singular flat metric induced by $q$ on $X$ must either be a cylinder curve or a concatenation of saddle connections meeting at angles $\geq \pi$ on both sides. 
\end{proposition}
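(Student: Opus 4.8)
The plan is to distinguish two cases according to whether the closed geodesic $\gamma$ meets the zero set $\Sigma$ of $q$, and to exploit that on $X \setminus \Sigma$ the metric induced by $q$ is genuinely Euclidean while at a zero of order $k$ it is the metric of a Euclidean cone of total angle $(k+2)\pi \geq 3\pi$. Away from $\Sigma$ every geodesic arc is, by the very definition of the half-translation structure, a straight line segment. Hence if $\gamma \cap \Sigma = \emptyset$, then $\gamma$ is globally a closed straight line meeting no zero of $q$, which is precisely a cylinder curve, and we are done.

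Suppose now $\gamma \cap \Sigma \neq \emptyset$. First I would note that a closed geodesic meets $\Sigma$ only finitely often: otherwise it would return to some cone point along arbitrarily short subarcs, forcing a geodesic loop of arbitrarily small length based at the apex of a Euclidean cone, which does not exist. Thus the finitely many points of the parameter circle mapping into $\Sigma$ cut $\gamma$ into finitely many geodesic arcs, each with endpoints in $\Sigma$ and interior disjoint from $\Sigma$. By the previous paragraph each such arc is a straight line segment joining two zeros of $q$ with no zero in its interior, i.e.\ a saddle connection, so $\gamma$ is a concatenation of saddle connections.

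It remains to verify the angle condition, which I expect to be the only substantive point. Fix $p \in \gamma \cap \Sigma$. In a small metric disk $D$ around $p$ the surface is a Euclidean cone of total angle $\theta_p = (k+2)\pi$, and $\gamma$ enters and leaves $p$ along the two geodesic rays determined by the saddle connections of $\gamma$ adjacent to $p$; these rays divide $D$ into two sectors of angles $\theta_1, \theta_2 \geq 0$ with $\theta_1 + \theta_2 = \theta_p$. If, say, $\theta_1 < \pi$, then developing that sector isometrically into the plane and replacing the subarc of $\gamma$ lying inside it by the Euclidean straight segment between the two points where $\gamma$ meets $\partial D$ produces a strictly shorter curve with the same endpoints, contradicting that $\gamma$ is locally length-minimizing. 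Hence $\theta_1, \theta_2 \geq \pi$, i.e.\ the saddle connections of $\gamma$ meet at $p$ at angle $\geq \pi$ on both sides, as claimed. The main obstacle, such as it is, lies in making this last shortcut argument precise --- one checks, using $\theta_1 < \pi$, that the straight segment stays inside the developed sector and that the replacement does not change the arc up to endpoint-fixing homotopy --- but this is a routine computation in a Euclidean cone; note that it is only consistent because every cone angle here is at least $3\pi > 2\pi$, so both sides can genuinely have angle $\geq \pi$.
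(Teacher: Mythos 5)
Your proof is correct. The paper states Proposition \ref{prop:flat_geod_1} without proof, as a standard fact about flat metrics with cone singularities, and your argument---straightness of geodesics away from the zeros, finiteness of the intersection with the zero set via the non-existence of arbitrarily short geodesic loops based at a cone point, and the develop-and-shortcut argument in a sector of angle $< \pi$ for the angle condition---is exactly the standard proof one would supply. The only superfluous point is the appeal to endpoint-fixing homotopy at the end: since a closed geodesic is locally length-minimizing, every sufficiently short subarc through the cone point is a shortest path between its endpoints, and the strictly shorter chord in the developed sector contradicts this directly.
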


We refer to flat geodesics of quadratic differentials that are not cylinder curves as \textit{singular flat geodesics}. Let us recall the following standard fact. This fact implies that if $q$ is a quadratic differential on a closed Riemann surface $X$ of genus $g \geq 2$, then $\ell_{\min}(q) \leq \mathrm{sys}(q)$.

\begin{proposition}
	\label{prop:flat_rep}
	Let $q$ be a quadratic differential on a closed Riemann surface $X$ of genus $g \geq 2$. Then, in any homotopy class of closed curves of $X$ there exists a flat geodesic representative. Moreover, this representative is unique except when it is homotopic to a cylinder curve, in which case there exists a full cylinder worth of flat geodesic representatives bounded by singular flat geodesics.
\end{proposition}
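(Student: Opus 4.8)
The plan is to exploit the fact that the singular flat metric induced by $q$ is \emph{nonpositively curved}. Since $q$ has a zero of order $k \geq 1$ at each of its singularities, the corresponding cone angle $(k+2)\pi$ is at least $3\pi > 2\pi$, while away from the singularities the metric is Euclidean; hence $X$ with this metric is a compact, locally $\mathrm{CAT}(0)$ geodesic space, its universal cover $\widetilde{X}$ is a proper $\mathrm{CAT}(0)$ space, and $\pi_1(X)$ acts freely, properly discontinuously, and cocompactly on $\widetilde{X}$ by isometries. An essential free homotopy class of closed curves on $X$ corresponds to a non-trivial conjugacy class $[\gamma]$ in $\pi_1(X)$, and the infimum $L$ of flat lengths of curves in the class equals the translation length $|\gamma| := \inf_{x \in \widetilde{X}} d(x, \gamma x)$.

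I would first produce the geodesic representatives. Because the action is free and $\widetilde{X}$ is proper and cocompact, every non-trivial $\gamma$ is a semisimple, axial isometry. The structure theorem for such isometries identifies the minimal set $\mathrm{Min}(\gamma) = \{x \in \widetilde{X} : d(x,\gamma x) = |\gamma|\}$ as a non-empty, closed, convex, $\langle\gamma\rangle$-invariant subset that splits isometrically as a product $Y \times \mathbf{R}$, with $\gamma$ acting as the identity on $Y$ and as translation by $|\gamma|$ on the $\mathbf{R}$-factor, the lines $\{y\} \times \mathbf{R}$ being exactly the axes of $\gamma$. Each axis projects to a closed geodesic in $X$ freely homotopic to the given class and of flat length $|\gamma| = L$, which yields existence and, via Proposition \ref{prop:flat_geod_1}, the stated dichotomy for the local form of each representative. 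Conversely, any flat geodesic representative of the class lifts to a complete $\gamma$-invariant geodesic, i.e.\ an axis of $\gamma$, so the set of flat geodesic representatives of the class is precisely the image of $Y \times \mathbf{R}$ in $X$.

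Next I would analyze the two cases. If $Y$ is a single point the representative is unique. If $Y$ contains two distinct points, convexity gives a geodesic segment in $Y$, so $\mathrm{Min}(\gamma)$ contains a flat strip $[0,w] \times \mathbf{R}$ with $w > 0$; enlarging $[0,w]$ to a maximal such interval (possible since compactness of $X$ bounds the width), this strip descends to an isometrically embedded flat cylinder in $X$ foliated by closed geodesic representatives of the class, exhibiting the class as a cylinder curve. It remains to see that the two boundary geodesics of the maximal strip are \emph{singular}: the key local fact is that a flat geodesic of $q$ missing all singularities has, on each side, a one-parameter family of locally parallel flat geodesics and hence lies in the interior of an embedded flat cylinder, so a boundary geodesic of a maximal flat strip must pass through a singularity and is therefore a concatenation of saddle connections by Proposition \ref{prop:flat_geod_1}. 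The converse in the uniqueness clause is then immediate: if the class is a cylinder curve, a closed geodesic representative disjoint from the singularities comes with a full flat cylinder of parallel representatives.

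The main obstacle I anticipate is not the $\mathrm{CAT}(0)$ structure theory, which is standard, but the passage from the abstract product splitting of $\mathrm{Min}(\gamma)$ back to concrete flat geometry: one must check that $Y$, a convex subset of the flat surface $\widetilde{X}$, is genuinely a Euclidean region foliated by parallel lines, that the maximal flat strip embeds in $X$, and that its boundary geodesics necessarily run through singularities. All three points follow from the metric being honestly Euclidean away from the cone points together with the angle condition at the cone points, but they call for a careful local argument rather than a black-box citation.
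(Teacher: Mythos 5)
Your argument is essentially correct, and it follows what is probably the standard modern route; note that the paper itself offers no proof of Proposition \ref{prop:flat_rep} at all, quoting it as a classical fact, so there is no internal argument to compare against. The $\mathrm{CAT}(0)$ skeleton is sound: cone angles $(k+2)\pi\geq 3\pi$ make the flat metric locally $\mathrm{CAT}(0)$, the universal cover is proper and the deck action is free and cocompact, so every nontrivial deck transformation is semisimple, hence axial with positive translation length, and $\mathrm{Min}(\gamma)=Y\times\mathbf{R}$ gives existence, the identification of all representatives with projected axes, and the uniqueness/cylinder dichotomy according to whether $Y$ is a point. Two small caveats: you should state explicitly that the claim concerns essential (non-null-homotopic) closed curves, and the parenthetical ``compactness of $X$ bounds the width'' is not by itself a proof that a maximal strip exists, since the quotient cylinder is a priori only locally isometrically immersed and so the naive area bound does not immediately apply; one needs the standard flat-surface fact that parallel closed non-singular geodesics in the same class are pairwise disjoint or equal, so that their union is an embedded open cylinder of finite width, whose boundary leaves must meet zeroes (your pushing argument) and are therefore concatenations of saddle connections as in Proposition \ref{prop:flat_geod_1}. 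You flag exactly these points yourself, and they are routine local Euclidean arguments, so the proposal stands as a correct proof outline of this standard result.
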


Let $q$ be a quadratic differential on a closed Riemann surface $X$ of genus $g \geq 2$. Identify the universal cover of $X$ with the open unit disk $\mathbf{D} \subseteq \mathbf{C}$ in the complex plane. The singular flat metric induced by $q$ on $X$ lifts to a singular flat metric on $\mathbf{D}$ whose geodesics can be characterized as in Proposition \ref{prop:flat_geod_1}. We say a curve on $\mathbf{D}$ is \textit{simple} if it does not intersect itself. We say two simple curves $\alpha$ and $\beta$ on $\mathbf{D}$ form a \textit{bigon} if there exists an embedded closed disk in $\mathbf{D}$ whose boundary is the union of an arc of $\alpha$ and an arc of $\beta$ intersecting at exactly two points. A direct application of the Jordan curve theorem and the Gauss-Bonnet theorem yields the following result.

\begin{proposition}
	\label{prop:flat_geod_2}
	Let $q$ be a quadratic differential on a closed Riemann surface $X$ of genus $g \geq 2$. Identify the universal cover of $X$ with the open unit disk $\mathbf{D} \subseteq \mathbf{C}$ on the complex plane and lift the singular flat metric induced by $q$ on $X$ to a singular flat metric on $\mathbf{D}$. Then, flat geodesics on $\mathbf{D}$ are simple. Furthermore, pairs of flat geodesics on $\mathbf{D}$ do not form bigons.
\end{proposition}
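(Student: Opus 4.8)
The plan is to prove both assertions by contradiction, in each case exhibiting an embedded disk in $\mathbf{D}$ bounded by pieces of the geodesic(s) and then deriving an impossible Gauss--Bonnet identity for it. The two geometric inputs I would use throughout are: first, the lifted metric on $\mathbf{D}$ is flat away from its cone points, and every such cone point is a lift of a zero of $q$ of order $k \geq 1$, hence has cone angle $(k+2)\pi \geq 3\pi$, so every cone point lying in the interior of such a disk contributes at most $2\pi - 3\pi = -\pi$ to the Gauss--Bonnet sum; second, by Proposition \ref{prop:flat_geod_1}, along a singular flat geodesic consecutive saddle connections meet at angle $\geq \pi$ on \emph{both} sides, so every corner of our disk located at a singularity of the geodesic has interior angle $\geq \pi$ on the side facing the disk, contributing a non-positive exterior-angle term $\pi - (\text{angle}) \leq 0$. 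Lifts of cylinder curves are straight Euclidean segments, hence automatically simple and incapable of forming bigons, so in both parts it suffices to treat singular flat geodesics.

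For simplicity, suppose a singular flat geodesic $\gamma$ on $\mathbf{D}$ is not simple; I would pick $s < t$ with $\gamma(s) = \gamma(t) =: p$ and $t - s$ minimal among all such pairs (the minimum is attained because geodesics are locally injective), so that $\gamma$ is injective on $(s,t)$ and $\gamma|_{[s,t]}$ traces a Jordan curve in $\mathbf{D}$, which by the Jordan curve theorem bounds an embedded closed disk $D \subseteq \mathbf{D}$. Applying Gauss--Bonnet to $D$: the curvature integral over the smooth locus and the geodesic curvature of $\partial D$ both vanish, interior cone points contribute $\leq -\pi$ each, corners of $\partial D$ at singularities of $\gamma$ contribute $\leq 0$ each, and the single remaining corner, at $p$, contributes $\pi - \alpha_p \leq \pi$, where $\alpha_p \geq 0$ is the interior angle of $D$ there. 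The total is therefore $\leq \pi$, contradicting $2\pi\chi(D) = 2\pi$. For the bigon assertion, assume flat geodesics $\alpha \ne \beta$ on $\mathbf{D}$ bound a bigon $D$ with $\partial D = a \cup b$, $a \subseteq \alpha$, $b \subseteq \beta$, and $a \cap b = \{p,q\}$ the two vertices. Applying Gauss--Bonnet to $D$, the only possibly positive contributions are the two vertex corners, totalling $(\pi - \alpha_p) + (\pi - \alpha_q) \leq 2\pi$, while interior cone points and corners along $a$ or $b$ contribute $\leq 0$; so the total is $\leq 2\pi$, with equality forcing in particular that $D$ contain no cone points in its interior or on its boundary arcs. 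In the equality case $D$ would then be a flat Euclidean disk whose boundary consists of two straight Euclidean segments meeting at the two distinct points $p$ and $q$, which is impossible, so the total is actually $< 2\pi$, again contradicting $2\pi\chi(D) = 2\pi$.

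The main obstacle is not any deep difficulty but the careful accounting of boundary corners: one has to separate the corners coming from singularities of the geodesics, which are controlled by the ``$\geq \pi$ on both sides'' conclusion of Proposition \ref{prop:flat_geod_1} applied on the side interior to $D$, from the genuinely new corners created by the self-intersection point or by the bigon vertices, which are not governed by that conclusion but still contribute at most $\pi$ each simply because interior angles of an embedded disk are non-negative. Once this split is made, the simplicity statement falls out immediately (one uncontrolled corner cannot supply the required $2\pi$), and the bigon statement needs only the elementary observation that two distinct straight Euclidean segments meet in at most one point to rule out the single borderline configuration.
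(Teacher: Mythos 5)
Your argument is correct and is essentially the paper's own route: the paper proves this proposition by exactly the combination you carry out, namely the Jordan curve theorem to produce an embedded disk from a self-intersection or bigon, and Gauss--Bonnet with the cone-angle bound $(k+2)\pi \geq 3\pi$ at interior cone points and the angle-$\geq\pi$ condition of Proposition \ref{prop:flat_geod_1} at boundary corners. One small remark: lifts of cylinder curves are not ``automatically'' simple or bigon-free merely because they are straight (a geodesic avoiding the cone points of a singular flat metric can still self-intersect, and two such lines can a priori cobound a disk containing cone points), but your Gauss--Bonnet argument applies verbatim to them with no singularity corners along the boundary, so the separate case split is unnecessary rather than a gap.
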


\subsection*{Geometric intersection numbers.} Let $S$ be a closed surface. The \textit{geometric intersection number} $i(\alpha, \beta)$ of a pair of closed curves $\alpha$ and $\beta$ on $S$ is defined as the minimal number of intersections among pairs of transverse closed curves homotopic to $\alpha$ and $\beta$. 

Let $q$ be a quadratic differential on a closed Riemann surface $X$ of genus $g \geq 2$. A pair of flat geodesics $\alpha$ and $\beta$ on $X$ might intersect at a zero of $q$ and/or might share an arc. We refer to such intersections as \textit{non-transverse}. These intersections can be homotoped away if and only if the corresponding lifts of $\alpha$ and $\beta$ to $\mathbf{D}^2$ have unlinked endpoints on the boundary at infinity $\mathbf{S}^1 := \partial \mathbf{D}$. See Figure \ref{fig:non_transverse} for examples. Denote by $I(\alpha,\beta)$ the number of transverse intersections between $\alpha$ and $\beta$. The following proposition is the main result of this section. 

\begin{figure}[h]
	\centering
	\begin{subfigure}[b]{0.4\textwidth}
		\centering
		\includegraphics[width=0.6\textwidth]{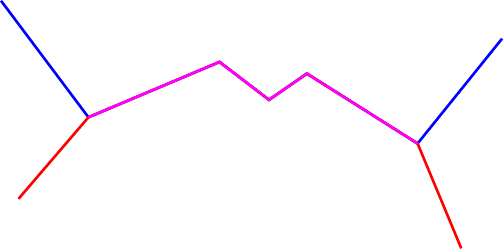}
		\caption{Can be homotoped away.}
	\end{subfigure}
	\quad \quad \quad
	~ 
	\begin{subfigure}[b]{0.4\textwidth}
		\centering
		\includegraphics[width=0.6\textwidth]{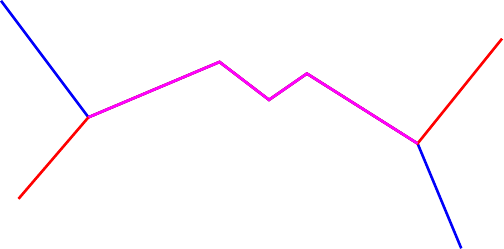}
		\caption{Cannot be homotoped away.}
	\end{subfigure}
	\caption{Examples of non-transverse intersections of flat geodesics.} 
	\label{fig:non_transverse}
\end{figure}

\begin{proposition}
	\label{prop:flat_int}
	Let $q$ be a quadratic differential on a closed Riemann surface $X$ of genus $g \geq 2$. Suppose $\alpha$ and $\beta$ are closed flat geodesics of $q$ with $n$ and $m$ saddle connections counted with mutiplicity and the convention that $n = 0$ and/or $m = 0$ if $\alpha$ and/or $\beta$ are cylinder curves. Then,
	\[
	I(\alpha,\beta) \leq i(\alpha,\beta) \leq I(\alpha,\beta) + n \cdot m.
	\]
\end{proposition}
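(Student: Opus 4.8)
The plan is to work in the universal cover $\mathbf{D}$ and count intersections of lifts, exploiting Proposition \ref{prop:flat_geod_2}, which guarantees that flat geodesics in $\mathbf{D}$ are simple and that no two of them bound a bigon. First I would recall the standard bigon criterion: for closed curves on $S$ realized by simple, pairwise-bigon-free arcs in the universal cover (in the sense that any two distinct lifts are either disjoint or cross without bounding a bigon), the number of transverse intersections on $S$ realizes the geometric intersection number. Concretely, the transverse intersection points of $\alpha$ and $\beta$ on $X$ are in bijection with orbits of crossing points of lifts $\widetilde\alpha,\widetilde\beta$ in $\mathbf{D}$ under the deck group, and the absence of bigons means none of these crossings can be removed by a homotopy. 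This gives the lower bound $I(\alpha,\beta)\le i(\alpha,\beta)$, and in fact it shows that $I(\alpha,\beta)$ counts exactly the ``essential'' (unremovable) crossings.

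For the upper bound, the point is that the only obstruction to $I(\alpha,\beta)$ equalling $i(\alpha,\beta)$ is the presence of non-transverse intersections — intersections at a zero of $q$ or along a shared subarc — and I must bound how many extra crossings a generic perturbation resolving these can introduce. I would argue as follows. Take flat geodesic representatives $\alpha$ and $\beta$ with $n$ and $m$ saddle connections respectively (a cylinder curve contributes $0$). A minimal-position representative pair can be obtained by pushing $\alpha$ and $\beta$ off each other near their non-transverse intersections. Each non-transverse intersection of the flat geodesics is localized either at a zero of $q$ where a saddle connection of $\alpha$ meets a saddle connection of $\beta$, or along a maximal common arc, whose endpoints are again points where a saddle connection endpoint of one geodesic meets the other; in either case the local combinatorial data is indexed by an (unordered incidence of a) saddle connection of $\alpha$ together with a saddle connection of $\beta$. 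Perturbing $\alpha$ within a small neighborhood of its flat geodesic representative, any single such local configuration can be resolved by introducing at most one new transverse crossing per pair of saddle connections of $\alpha$ and $\beta$ meeting there (a standard finger-move/smoothing computation), and distinct saddle-connection pairs are resolved in disjoint neighborhoods. Since there are $n$ saddle connections of $\alpha$ and $m$ of $\beta$, counted with multiplicity, the total number of newly created crossings is at most $n\cdot m$. Hence $i(\alpha,\beta)\le I(\alpha,\beta)+n\cdot m$.

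The main obstacle I anticipate is making the upper-bound perturbation argument genuinely uniform and combinatorial rather than case-by-case: one must check that a single global perturbation of $\alpha$ simultaneously removes \emph{all} non-transverse intersections while creating at most one extra crossing for each ordered incidence of a saddle connection of $\alpha$ with a saddle connection of $\beta$, with no hidden interactions between the local resolutions or with already-transverse intersections elsewhere. I would handle this by (i) choosing the perturbation supported in a union of disjoint small disks, one around each zero of $q$ lying on $\alpha\cap\beta$ and one around each endpoint of each maximal shared arc, so that outside these disks $\alpha$ is unchanged and no new intersections occur; and (ii) inside each disk, invoking Proposition \ref{prop:flat_geod_2} lifted to $\mathbf{D}$ — the no-bigon property forces the local picture to be a ``fan'' of saddle-connection strands with consistent cyclic order, so the resolution is forced and the count is exactly bounded by the number of $\alpha$-strand/$\beta$-strand pairs meeting in that disk, which summed over all disks is at most $n\cdot m$. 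Finally, one observes the resulting representatives are in minimal position, again by the no-bigon criterion applied in $\mathbf{D}$, so the perturbed count is $i(\alpha,\beta)$, completing the inequality.
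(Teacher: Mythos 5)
Your proposal is correct and follows essentially the same route as the paper: the lower bound via lifts to $\mathbf{D}$ using Proposition \ref{prop:flat_geod_2} (simplicity and absence of bigons, endpoints at infinity unchanged under homotopy), and the upper bound by resolving the at most $n \cdot m$ non-transverse intersections, each indexed by a pair of saddle connections and contributing at most one new transverse crossing. Note only that your final minimal-position claim is unnecessary (and need not hold when a removable intersection is resolved with a crossing): since $i(\alpha,\beta)$ is a minimum over transverse representatives, exhibiting one representative pair with at most $I(\alpha,\beta)+n\cdot m$ crossings already gives the upper bound.
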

	
\begin{proof}
	Notice that there are at most $n \cdot m$ non-transverse intersections between $\alpha$ and $\beta$. One can homotope these intersections to obtain a pair of transverse closed curves with at most $I(\alpha,\beta) + n \cdot m$ intersections. This proves the upper bound. By Proposition \ref{prop:flat_geod_2}, lifts of $\alpha$ and $\beta$ to $\mathbf{D}$ are simple and do not form bigons. In particular, as homotopies of $\alpha$ and $\beta$ on $X$ do not change the endpoints on the boundary at infinity $\mathbf{S}^1 = \partial \mathbf{D}$ of their lifts, the transverse intersections of $\alpha$ and $\beta$ cannot be homotoped away; see Figure \ref{fig:intersection}. This proves the lower bound.
\end{proof}

\begin{figure}[h!]
	\centering
	\includegraphics[width=.3\textwidth]{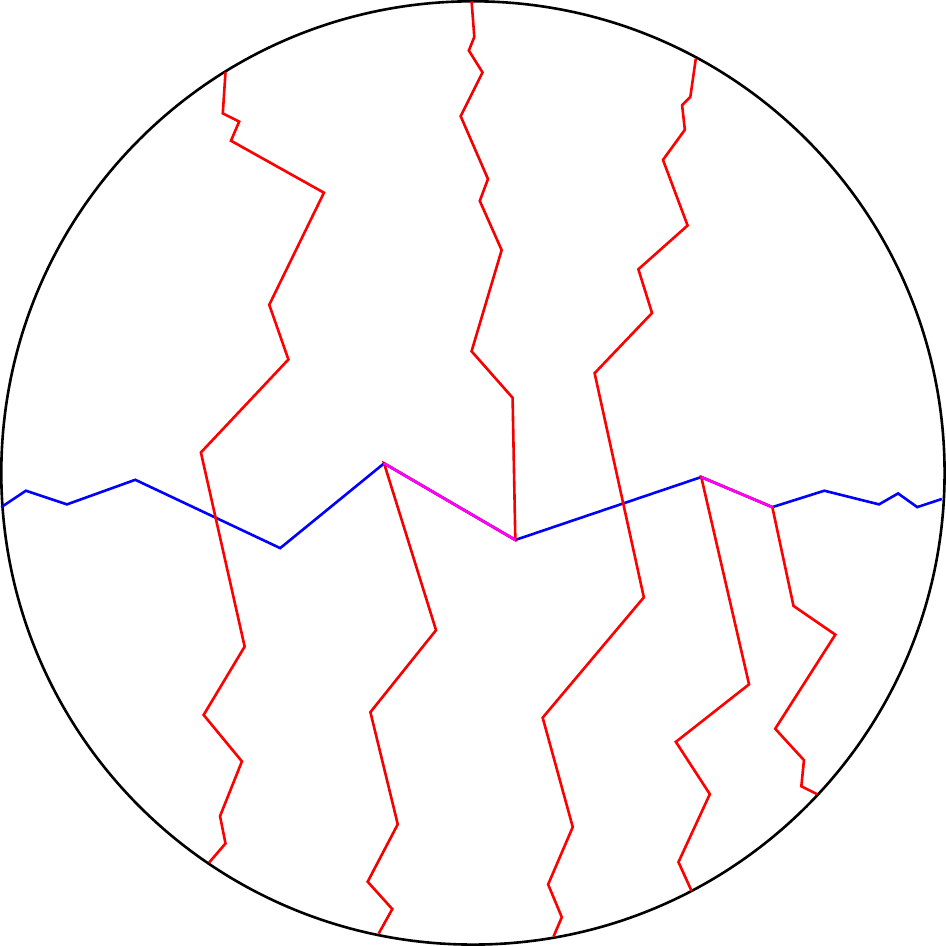}
	\caption{Proof of the lower bound of Proposition \ref{prop:flat_int}.} \label{fig:intersection} 
\end{figure}

Propositions \ref{prop:flat_rep}  and \ref{prop:flat_int} will allow us to estimate geometric intersection numbers of closed curves using the flat geometry of quadratic differentials.

\subsection*{Intersections of straight line segments.} We end this section with an estimate on the number of intersections between non-parallel straight line segments of quadratic differentials. This estimate will play an important role in several proofs of this paper.

\begin{proposition}
	\label{eq:int_bound}
	Let $q$ be a quadratic differential on a closed Riemann surface $X$. Suppose $\beta$ and $\sigma$ are non-parallel straight line segments on $X$ with $\ell_{\sigma}(q) \leq \mathrm{sys}(q)/2$. Then,
	\[
	\# (\beta \cap \sigma) \leq 1 + 2 \ell_{\beta}(q)/\mathrm{sys}(q).
	\]
\end{proposition}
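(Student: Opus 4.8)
The plan is to lift both segments to the universal cover $\mathbf{D}$ and count intersections there, using that two non-parallel straight lines in the Euclidean plane meet in at most one point. Fix a developing map for the half-translation structure; each straight line segment of $q$ develops to an honest Euclidean line segment, and since $\beta$ and $\sigma$ are non-parallel, any lift of $\beta$ and any lift of $\sigma$ to $\mathbf{D}$ intersect in at most one point. So $\#(\beta \cap \sigma)$ is bounded by the number of distinct lifts of $\sigma$ (inside a fundamental domain around a fixed lift of $\beta$) that actually meet that fixed lift $\tilde\beta$ of $\beta$.

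The key step is to bound this count using the length of $\beta$. First I would subdivide $\beta$ into consecutive subarcs $\beta_1, \dots, \beta_k$ each of flat length $\leq \mathrm{sys}(q)/2$, which can be done with $k \leq 1 + 2\ell_\beta(q)/\mathrm{sys}(q)$ pieces. It then suffices to show that each subarc $\beta_j$ meets $\sigma$ at most once. Suppose for contradiction $\beta_j$ meets $\sigma$ at two points $p, p'$. Consider the sub-subarc $\beta_j'$ of $\beta_j$ between $p$ and $p'$, and the sub-subarc $\sigma'$ of $\sigma$ between $p$ and $p'$; both have flat length at most $\ell_{\beta_j}(q) \leq \mathrm{sys}(q)/2$ and $\ell_\sigma(q) \leq \mathrm{sys}(q)/2$ respectively. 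Their concatenation is a closed curve of flat length $\leq \mathrm{sys}(q)$. If this closed curve is not null-homotopic, this already contradicts the definition of $\mathrm{sys}(q)$ as soon as the inequality is strict; to handle the boundary case and the null-homotopic case, I would instead argue in the cover: if $\beta_j$ meets $\sigma$ twice, then a fixed lift $\tilde\beta_j$ meets two distinct lifts $\tilde\sigma$, $\gamma.\tilde\sigma$ of $\sigma$ (they are distinct because $\tilde\beta_j$, being a straight line segment, meets each lift of $\sigma$ at most once), where $\gamma \in \pi_1(X)$ is the deck transformation carrying one to the other. Concatenating an arc of $\tilde\beta_j$ with arcs of the two lifts of $\sigma$ produces, after projecting, a closed loop representing $\gamma$ of flat length at most $\ell_{\beta_j}(q) + 2\ell_\sigma(q) \leq \mathrm{sys}(q)/2 + \mathrm{sys}(q) $ — this is too weak, so I would be more careful and use only the portion of $\tilde\sigma$ and $\gamma.\tilde\sigma$ between their intersections with $\tilde\beta_j$, giving total length $\leq \ell_{\beta_j}(q) + \ell_\sigma(q) \leq \mathrm{sys}(q)$, hence $\gamma = 1$, hence the two lifts coincide, a contradiction.

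Putting this together, $\beta$ meets $\sigma$ in at most $k$ points where $k \leq 1 + 2\ell_\beta(q)/\mathrm{sys}(q)$, which is the claimed bound. The main obstacle is handling the subtlety at the extremal case of the length inequality and ensuring the concatenated loop one builds in the cover genuinely has flat length at most $\mathrm{sys}(q)$ while representing a nontrivial deck transformation; the clean way around it is to choose the subdivision so that each piece has length \emph{strictly} less than $\mathrm{sys}(q)/2$ when $\ell_\sigma(q) = \mathrm{sys}(q)/2$, or more simply to note that a straight segment meeting two distinct lifts of $\sigma$ produces a \emph{geodesic} bigon-free configuration whose existence forces a short nontrivial loop, contradicting the systole. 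A secondary point to be careful about is that $\sigma$ and $\beta$ are segments, not closed curves, so one must track endpoints and not accidentally count a tangency or shared endpoint as a transverse crossing; since the statement only asks for $\#(\beta \cap \sigma)$ as a set-theoretic count of intersection points, the subdivision argument above handles this directly without needing transversality.
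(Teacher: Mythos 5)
Your argument is correct in substance, but it takes a genuinely different route from the paper's, and it needs the two repairs you yourself flag. The paper's proof closes up the arc of $\beta$ between two \emph{consecutive} intersections with the arc of $\sigma$ between them and observes that the resulting closed curve has total turning angle at most $\pi$, so by Gauss--Bonnet it cannot be null-homotopic; hence its length is at least $\mathrm{sys}(q)$, which immediately gives that consecutive intersections are at distance at least $\mathrm{sys}(q)-\ell_{\sigma}(q)\geq \mathrm{sys}(q)/2$ along $\beta$. Because essentiality comes from the turning-angle estimate rather than from a length comparison, no extremal case ever arises. You instead subdivide $\beta$ and detect essentiality via a nontrivial deck transformation, which forces you to confront the boundary case $\ell_{\beta_j}(q)+\ell_{\sigma}(q)=\mathrm{sys}(q)$; your first proposed fix (pieces of length strictly less than $\mathrm{sys}(q)/2$) does work, and one checks that the number of pieces can still be taken at most $1+2\ell_{\beta}(q)/\mathrm{sys}(q)$, so the count survives. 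Two points to tighten. First, the claim that a lift of $\beta$ meets a lift of $\sigma$ at most once should not be justified by ``developing to an honest Euclidean segment'': the universal cover is a flat surface with cone points, not the Euclidean plane, and the developing map is neither injective nor defined across the cone points; the correct justification is that two intersection points would bound a geodesic bigon, excluded by Gauss--Bonnet as in Proposition \ref{prop:flat_geod_2} (equivalently, by uniqueness of geodesics in the nonpositively curved universal cover, since non-parallel segments cannot overlap). Second, your alternative fix (``a bigon-free configuration forces a short nontrivial loop'') is too vague to resolve the extremal case; the strict subdivision is the argument to keep. In the end the paper's turning-angle argument buys a shorter proof with no case analysis, while your covering-space argument avoids quantifying angles at the cost of this strictness bookkeeping.
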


\begin{proof}
	Traversing $\beta$ between any two consecutive intersections with $\sigma$ and connecting these intersections along $\sigma$ yields a simple closed curve with turning angle at most $\pi$. See Figure \ref{fig:int_bound} for an example. In particular, by the Gauss-Bonnet theorem, this curve is not null-homotopic. It follows that the distance $\ell$ along $\beta$ between any two consecutive intersections with $\sigma$ satisfies
	\[
	\ell \geq \mathrm{sys}(q) - \ell_{\sigma}(q) \geq \mathrm{sys}(q)/2.
	\]
	As this holds for every pair of consecutive intersections of $\beta$ with $\sigma$, 
	\[
	\ell_{\beta}(q) \geq (\# (\beta \cap \sigma) - 1) \cdot \mathrm{sys}(q)/2.
	\]
	Rearranging terms in this bound we conclude
	\[
	\# (\beta \cap \sigma) \leq 1 + 2 \ell_{\beta}(q)/\mathrm{sys}(q). \qedhere
	\]
	\begin{figure}[h!]
		\centering
		\includegraphics[width=.15\textwidth]{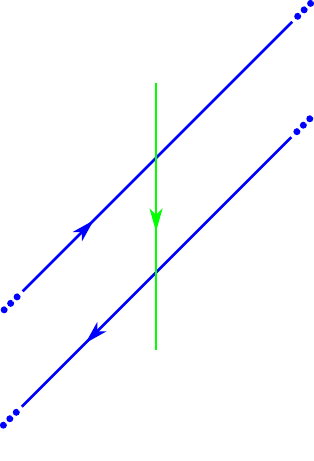}
		\caption{Proof of Proposition \ref{eq:int_bound}.} \label{fig:int_bound} 
	\end{figure}
\end{proof}

\section{Rectangular decompositions of flat geodesics}

\subsection*{Outline of this section} In this section we introduce a method for constructing rectangular decompositions of flat geodesics of quadratic differentials. The main result of this section is Proposition \ref{prop:rect_int}, which shows that these decompositions can be used to estimate the number of transverse intersections between pairs of flat geodesics of quadratic differentials. We first consider the case of cylinder curves and then the case of saddle connections and singular flat geodesics.

\subsection*{Rectangular decompositions of cylinder curves.} Let $q$ be a quadratic differential on a closed Riemann surface $X$ of genus $g \geq 2$ and $\alpha$ be a cylinder curve of $q$. By a \textit{rectangular decomposition} of $\alpha$ we mean a concatenation of vertical and horizontal segments of $q$ that closely approximates $\alpha$. We construct a rectangular decomposition of $\alpha$ by snugly covering it with finitely many embedded, flat, non-singular rectangles and then drawing a rectangular decomposition as in Figure \ref{fig:rect_decomp_cyl}.

\begin{figure}[h!]
	\centering
	\includegraphics[width=.4\textwidth]{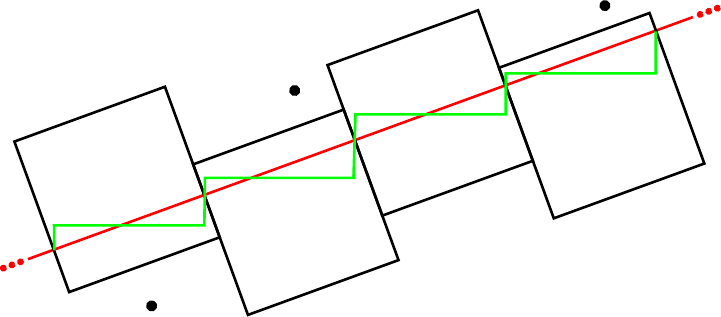}
	\caption{Constructing a rectangular decomposition of a cylinder curve.} \label{fig:rect_decomp_cyl} 
\end{figure}

Fix an orientation on $\alpha$ and consider a unit speed parametrization $\alpha \colon [0,\ell_{\alpha}(q)] \to X$ consistent with this orientation. Starting from any point of $\alpha$ we can flow orthogonal to $\alpha$ at unit speed for a maximal interval of times until we hit a zero of $q$. For every $t \in [0,\ell_{\alpha}(q)]$ denote by $[s_{\min}(\alpha(t)),s_{\max}(\alpha(t))]$ this interval. For every $t \in [0,\ell_{\alpha}(q)]$ and every $s \in [s_{\min}(\alpha(t)),s_{\max}(\alpha(t))]$ denote by $p_s(\alpha(t)) \in X$ the point reached by flowing orthogonal to $\alpha$ at unit speed from $\alpha(t)$ for time $s$.

To construct the desired rectangular decomposition we first find embedded, flat, non-singular rectangles around $\alpha$. See Figure \ref{fig:embedded_square} for an example. Recall that $\ell_{\min}(q)$ denotes the length of the shortest saddle connections of $q$. Fix $t_0,t_1 \in [0,\ell_{\alpha}(q)]$ with $0 < t_1 - t_0 \leq \ell_{\min}(q)/4$. Consider the parameters 
\begin{gather*}
a := \min_{t \in [t_0,t_1]} s_{\min}(\alpha(t)) < 0, \\
b := \max_{t \in [t_0,t_1]} s_{\max}(\alpha(t)) > 0. 
\end{gather*} 
Notice that it can never be the case that $a \geq - \ell_{\min}(q)/8$ and $b \leq \ell_{\min}(q)/8$, else $q$ would have a saddle connection of length $\leq \ell_{\min}(q)/2$. Consider $\Delta \in (-\ell_{\min}(q)/8,\ell_{\min}(q)/8)$ given by
\[
\Delta := \left \lbrace
\begin{array}{c l}
a/2 + \ell_{\min}(q)/8 & \text{if } a \geq - \ell_{\min}(q)/8,\\
-\ell_{\min}(q)/8 + b/2 & \text{if } b \leq \ell_{\min}(q)/8,\\
0 & \text{if } a < - \ell_{\min}(q)/8 \text{ and } b > \ell_{\min}(q)/8.
\end{array} \right. 
\]
Using this parameter we define a map $R \colon [t_0,t_1] \times [-\ell_{\min}(q)/8,\ell_{\min}(q)/8] \to X$ as follows,
\[
R(t,s) := p_{s + \Delta}(\alpha(t)). \label{eq:square}
\]
By construction, this map is a flat immersion whose image contains no zeroes of $q$. Furthermore, as the following proposition shows, this map is an embedding. Denote by $R(\alpha,t_0,t_1) \subseteq X$ the embedded, flat, non-singular rectangle obtained as the image of this map.

\begin{figure}[h!]
	\centering
	\includegraphics[width=.15\textwidth]{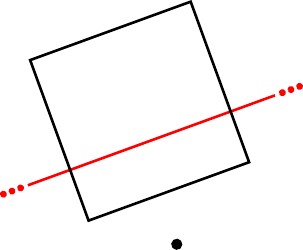}
	\caption{Embedded, flat, non-singular rectangle around a cylinder curve.} \label{fig:embedded_square} 
\end{figure}

\begin{proposition}
	\label{prop:embed}
	The map $R \colon [t_0,t_1] \times [-\ell_{\min}(q)/8,\ell_{\min}(q)/8] \to X$ is an embedding.
\end{proposition}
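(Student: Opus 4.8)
The plan is to prove that $R$ is injective; since its domain is compact, $X$ is Hausdorff, and $R$ is a flat immersion, injectivity immediately upgrades to the conclusion that $R$ is a smooth embedding. I will use only the following facts already in place: $R$ is a flat immersion whose image avoids the zeroes of $q$, so $R$ is a local isometry for the singular flat metric and in particular preserves the length of paths; $0 < t_1 - t_0 \le \ell_{\min}(q)/4$; and $\ell_{\min}(q) \le \mathrm{sys}(q)$ since $g \ge 2$.

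First I would establish an auxiliary fact: every straight line segment $\sigma$ on $X$ with $\ell_{\sigma}(q) < \mathrm{sys}(q)$ is simple. If instead $\sigma$ had a self-intersection, I would lift $\sigma$ to a flat geodesic $\tilde{\sigma}$ on the universal cover $\mathbf{D}$; by Proposition \ref{prop:flat_geod_2}, $\tilde{\sigma}$ is simple, so the two parameters realizing the self-intersection of $\sigma$ correspond to distinct points of $\mathbf{D}$ lying over the same point of $X$, hence differing by a non-trivial deck transformation. The subarc of $\sigma$ between them would then be a not null-homotopic loop of length $< \mathrm{sys}(q)$, contradicting the definition of $\mathrm{sys}(q)$. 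Applying this to the vertical fibres $s \mapsto R(t,s)$ (straight segments orthogonal to $\alpha$ of flat length $\ell_{\min}(q)/4 < \mathrm{sys}(q)$) and to the horizontal fibres $t \mapsto R(t,s)$ (straight segments parallel to $\alpha$ of flat length $t_1 - t_0 \le \ell_{\min}(q)/4 < \mathrm{sys}(q)$) shows that every fibre of $R$ in either direction is embedded.

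Now suppose $R(t,s) = R(t',s') =: p$ with $(t,s) \ne (t',s')$. If $t = t'$ then $s \ne s'$ and $p$ is a double point of the vertical fibre through $t$, which is impossible; symmetrically $s = s'$ is impossible. So $t \ne t'$ and $s \ne s'$. Consider the horizontal segment $\tau := R\big([\min(t,t'),\max(t,t')] \times \{s\}\big)$ from $p = R(t,s)$ to $p' := R(t',s)$, and the vertical segment $\sigma := R\big(\{t'\} \times [\min(s,s'),\max(s,s')]\big)$ from $p' = R(t',s)$ to $R(t',s') = p$. Since $t \ne t'$ and the horizontal fibre through $s$ is embedded, $p' \ne p$, so $\{p,p'\} \subseteq \sigma \cap \tau$ and therefore $\#(\tau \cap \sigma) \ge 2$. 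On the other hand, $R$ carries the perpendicular sides of the coordinate rectangle to perpendicular, hence non-parallel, straight segments, and $\ell_{\sigma}(q) = |s - s'| \le \ell_{\min}(q)/4 \le \mathrm{sys}(q)/2$, so Proposition \ref{eq:int_bound} gives $\#(\tau \cap \sigma) \le 1 + 2\,\ell_{\tau}(q)/\mathrm{sys}(q) \le 1 + 2\cdot(\ell_{\min}(q)/4)/\mathrm{sys}(q) \le 3/2$, hence $\#(\tau \cap \sigma) \le 1$. This contradiction shows $R$ is injective.

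The one slightly delicate point is the passage from this two-dimensional embedding statement to the one-dimensional estimate of Proposition \ref{eq:int_bound}: the actual content is ruling out an interior crossing between a horizontal and a vertical fibre of $R$, and once the individual fibres are known to be embedded this is exactly the non-parallel-segments bound. Everything else is bookkeeping with the explicit size constraints built into the rectangle, none of which requires the precise value of the recentering parameter $\Delta$.
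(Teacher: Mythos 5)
Your argument is correct, but it takes a different route from the paper's. The paper argues in one stroke: given two identified points of the rectangle, it joins them by an (L-shaped) path inside the rectangle, observes that the image is a closed curve of flat length at most $\ell_{\min}(q)/2$ with small total turning, assumes simplicity without loss of generality, and invokes Gauss--Bonnet to conclude the curve is essential, forcing the contradiction $\mathrm{sys}(q) \leq \ell_{\min}(q)/2 < \ell_{\min}(q) \leq \mathrm{sys}(q)$. You instead modularize: first you prove that any straight segment of length less than $\mathrm{sys}(q)$ is simple by lifting to the universal cover and using the simplicity of flat geodesics on $\mathbf{D}$ (Proposition \ref{prop:flat_geod_2}), which handles the cases $t = t'$ or $s = s'$ and also guarantees $R(t,s) \neq R(t',s)$; then, in the mixed case, you produce a perpendicular pair of short straight segments $\tau$ and $\sigma$ sharing two distinct points and contradict the intersection bound of Proposition \ref{eq:int_bound}, since $1 + 2\ell_{\tau}(q)/\mathrm{sys}(q) \leq 3/2 < 2$. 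The underlying mechanism is the same in both proofs (short loops with small turning cannot be null-homotopic on a nonpositively curved flat surface), but yours buries the Gauss--Bonnet input inside the two already-established propositions, which buys you two things: you avoid the paper's slightly delicate ``without loss of generality the curve is simple'' step, replacing it with the lifting argument, and the case analysis makes explicit exactly which size constraints on the rectangle ($t_1 - t_0 \leq \ell_{\min}(q)/4$ and width $\ell_{\min}(q)/4$, together with $\ell_{\min}(q) \leq \mathrm{sys}(q)$) are used. The price is a longer argument; the paper's direct loop-plus-Gauss--Bonnet proof is shorter and is the template it reuses for Propositions \ref{prop:collar_embed} and \ref{prop:collar_embed_2}, whereas your fibrewise reduction would need minor adaptation there because in those constructions the shearing parameter $\Delta(t)$ is no longer constant, so the ``horizontal'' fibres are only piecewise straight.
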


\begin{proof}
	Suppose for the sake of contradiction that two points in $[t_0,t_1] \times [-\ell_{\min}(q)/8,\ell_{\min}(q)/8]$ get identified through $R$. Consider a path between these points as in Figure \ref{fig:embed}. The image of this path under $R$ is a closed curve of flat length $\leq \ell_{\min}(q)/2$ and total turning angle $\leq \pi$. Without loss of generality we can assume this curve is simple. Then, by the Gauss-Bonnet theorem, this curve is not null-homotopic. It follows that $\mathrm{sys}(q) \leq \ell_{\min}(q)/2$, which is a contradiction.
\end{proof}

\begin{figure}[h!]
	\centering
	\includegraphics[width=.1\textwidth]{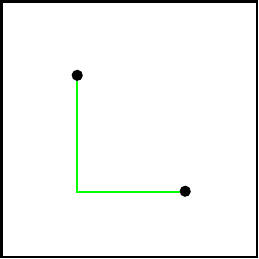}
	\caption{Proof of Proposition \ref{prop:embed}.} \label{fig:embed} 
\end{figure}

Let $0 =: t_0 < t_1 < \dots < t_{n-1} < t_n := \ell_\alpha(q)$ be a partition of $[0,\ell_{\alpha}(q)]$ such that $t_{i} - t_{i-1} = \ell_{\min}(q)/4$ for every $i \in \{1,\dots,n-1\}$ and $t_n - t_{n-1} \leq \ell_{\min}(q)/4$. In particular, $n \preceq \ell_{\alpha}(q)/ \ell_{\min}(q)$. Consider the finite collection of embedded, flat, non-singular rectangles in $X$ given by $\{R(\alpha,t_i,t_{i+1})\}_{i=0}^{n-1}$. Denote $\alpha_i := \alpha([t_i,t_{i+1}]) \subseteq R(\alpha,t_i,t_{i+1})$ for every $i \in \{0,\dots,n-1\}$. In each rectangle $R(\alpha,t_i,t_{i+1})$ consider a rectangular decomposition of $\alpha_i$ as in Figure \ref{fig:rect_decomp}; there exists a whole interval worth of such decompositions except when $\alpha_i$ is vertical or horizontal. Joining these rectangular decompositions yields a rectangular decomposition of $\alpha$. Denote by $i(\alpha,\Re(q))$ the total transverse measure of $\alpha$ with respect to $\Re(q)$; this quantity is equal to the geometric intersection number of $\alpha$ and $\Re(q)$ as defined by Thurston \cite[Exposé 5]{FLP12}.

\begin{figure}[h!]
	\centering
	\includegraphics[width=.15\textwidth]{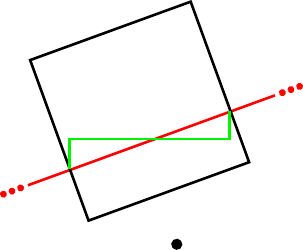}
	\caption{Rectangular decomposition in an embedded, flat, non-singular rectangle.} \label{fig:rect_decomp} 
\end{figure}

\begin{proposition}
	\label{prop:rect_decomp_cyl}
	Let $q$ be a quadratic differential on a closed Riemann surface $X$ of genus $g \geq 2$ and $\alpha$ be a cylinder curve of $q$. The construction above yields a rectangular decomposition of $\alpha$ with $n \preceq \ell_{\alpha}(q)/ \ell_{\min}(q)$ horizontal and vertical segments of length $\leq \ell_{\min}(q)/2$. The sum of the lengths of the horizontal segments of this decomposition is equal to $i(\alpha,\Re(q))$.
\end{proposition}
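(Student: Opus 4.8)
I would check the four assertions in the order stated, the one carrying real content being the final identity. \emph{Well-definedness.} As recalled in the construction, each $R(\alpha,t_i,t_{i+1})$ is an embedded (Proposition \ref{prop:embed}), flat, non-singular rectangle containing the arc $\alpha_i := \alpha([t_i,t_{i+1}])$; this uses $t_{i+1}-t_i \le \ell_{\min}(q)/4$, which holds by the choice of partition. In the flat chart supplied by $R$ this is a genuine Euclidean rectangle with $\alpha_i$ a segment parallel to one pair of sides, and since $|\Delta| < \ell_{\min}(q)/8$ the rectangle extends at least $\ell_{\min}(q)/8$ beyond $\alpha_i$ on at least one of its two sides. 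A monotone staircase of horizontal and vertical segments of $q$ joining $\alpha(t_i)$ to $\alpha(t_{i+1})$, as in Figure \ref{fig:rect_decomp} and chosen to bend toward the side with room, deviates from $\alpha_i$ by at most $\tfrac12(t_{i+1}-t_i) \le \ell_{\min}(q)/8$, hence lies inside the rectangle. Concatenating these staircases over $i = 0,\dots,n-1$ produces a closed curve $\widehat\alpha$ --- the pieces agree at the points $\alpha(t_i)$ --- and the straight-line homotopies inside the rectangles, which fix their endpoints, glue together (again at the points $\alpha(t_i)$) into a homotopy from $\alpha$ to $\widehat\alpha$. Thus $\widehat\alpha$ is a rectangular decomposition of $\alpha$.

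\emph{Count and length bound.} The partition has $n-1$ intervals of length exactly $\ell_{\min}(q)/4$, so $(n-1)\,\ell_{\min}(q)/4 \le \ell_\alpha(q)$; since a cylinder curve is not null-homotopic we have $\ell_\alpha(q) \ge \mathrm{sys}(q) \ge \ell_{\min}(q)$ (the second inequality by Proposition \ref{prop:flat_rep}), whence $n \preceq \ell_\alpha(q)/\ell_{\min}(q)$, and since each rectangle contributes a bounded number of segments the total number of segments of $\widehat\alpha$ is $\preceq \ell_\alpha(q)/\ell_{\min}(q)$. Each staircase lies in the axis-aligned bounding box of $\alpha_i$, whose sides have lengths $(t_{i+1}-t_i)|\cos\theta|$ and $(t_{i+1}-t_i)|\sin\theta|$ with $\theta$ the (constant) slope angle of $\alpha$; both are $\le \ell_{\min}(q)/4 \le \ell_{\min}(q)/2$, so every horizontal or vertical segment of $\widehat\alpha$ has flat length $\le \ell_{\min}(q)/2$.

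\emph{Horizontal total.} Since $\Re(q)$ carries the transverse measure $|dx|$ in the natural charts of $q$, we have $i(\alpha,\Re(q)) = \int_\alpha |dx| = \sum_{i=0}^{n-1}\int_{\alpha_i}|dx|$, as the arcs $\alpha_i$ partition $\alpha$. Inside each rectangle the staircase replacing $\alpha_i$ is monotone in the horizontal coordinate and shares its endpoints with $\alpha_i$, so the total flat length of its horizontal segments equals the total variation of $x$ along $\alpha_i$, namely $\int_{\alpha_i}|dx|$ --- when $\alpha_i$ is horizontal or vertical the staircase is $\alpha_i$ itself and this is immediate. Summing over $i$ shows the total length of the horizontal segments of $\widehat\alpha$ equals $i(\alpha,\Re(q))$. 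I anticipate no genuine obstacle: granting Proposition \ref{prop:embed}, the proof is a sequence of elementary checks. The fussiest step is confirming that a staircase of horizontal and vertical segments can always be fit inside the (possibly lopsided) rectangle --- precisely what the shift $\Delta$ and the mesh bound $\ell_{\min}(q)/4$ are designed to guarantee --- while the only place the conclusion is a genuine equality rather than an estimate is the monotonicity observation pinning the horizontal total to $i(\alpha,\Re(q))$.
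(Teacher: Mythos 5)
Your proposal is correct and follows essentially the same route as the paper: Proposition \ref{prop:rect_decomp_cyl} is stated there as a summary of the construction (partition with mesh $\ell_{\min}(q)/4$, embedded rectangles via Proposition \ref{prop:embed}, staircases as in Figure \ref{fig:rect_decomp}), and your write-up simply makes explicit the checks implicit in that construction — that the staircases fit inside the possibly shifted rectangles, the counting and length bounds, and the monotonicity argument identifying the horizontal total with $i(\alpha,\Re(q))$.
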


\subsection*{Rectangular decompositions of saddle connections.} Let $q$ be a quadratic differential on a closed Riemann surface $X$ of genus $g \geq 2$ and $\alpha$ be a saddle connection of $q$. We construct a rectangular decomposition of $\alpha$ using a procedure similar to the one introduced above. See Figure \ref{fig:rect_decomp_sad} for an example. 

\begin{figure}[h!]
	\centering
	\includegraphics[width=.4\textwidth]{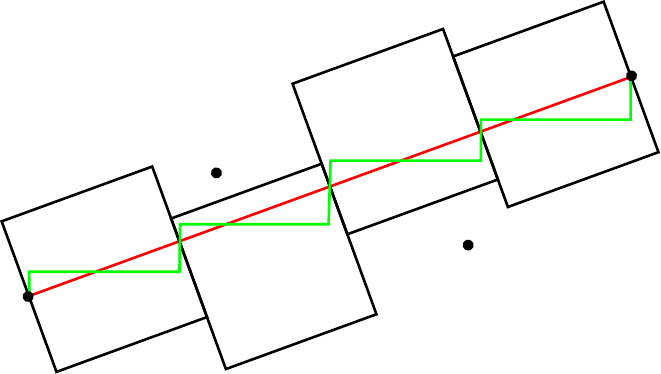}
	\caption{Constructing a rectangular decomposition of a saddle connection.} \label{fig:rect_decomp_sad} 
\end{figure}

Fix an orientation on $\alpha$ and consider a unit speed parametrization $\alpha \colon [0,\ell_{\alpha}(q)] \to X$ consistent with this orientation. Let $t_0,t_1 \in [0,\ell_{\alpha}(q)]$ with $0 < t_1 - t_0 \leq \ell_{\min}(q)/4$. Define a flat immersion $R \colon [t_0,t_1] \times [-\ell_{\min}(q)/8,\ell_{\min}(q)/8] \to X$ in the same way as above; it makes sense to flow perperdicular to $\alpha$ starting from any of its endpoints. Notice that if $t_0 = 0$ or $t_1 = \ell_{\alpha}(q)$, then $\Delta = 0$. An argument similar to the one used in the proof of Proposition \ref{prop:embed} shows that the map $R$ is actually an embedding. Just as above, denote by $R(\alpha,t_0,t_1) \subseteq X$ the embedded, flat rectangle obtained as the image of this map. The only situation in which this rectangle contains a singularity is when $t_0 = 0$ or $t_1 = \ell_{\alpha}(q)$, in which case the only singularity it contains is one of the endpoints of $\alpha$.

Let $0 =: t_0 < t_1 < \dots < t_{n-1} < t_n := \ell_\alpha(q)$ be a partition of $[0,\ell_{\alpha}(q)]$ such that $t_{i} - t_{i-1} = \ell_{\min}(q)/4$ for every $i \in \{1,\dots,n-1\}$ and $t_n - t_{n-1} \leq \ell_{\min}(q)/4$. In particular, $n \preceq \ell_{\alpha}(q)/ \ell_{\min}(q)$. Consider the collection of embedded, flat rectangles in $X$ given by $\{R(\alpha,t_i,t_{i+1})\}_{i=0}^{n-1}$. Denote $\alpha_i := \alpha([t_i,t_{i+1}]) \subseteq R(\alpha,t_i,t_{i+1})$ for every $i \in \{0,\dots,n-1\}$. In each rectangle $R(\alpha,t_i,t_{i+1})$ consider a rectangular decomposition of $\alpha_i$ as in Figures \ref{fig:rect_decomp} or \ref{fig:rect_decomp_2}; there exists a whole interval worth of such decompositions except when $\alpha_i$ is vertical or horizontal. Joining these rectangular decompositions yields a rectangular decomposition of $\alpha$. Denote by $i(\alpha,\Re(q))$ the total transverse measure of $\alpha$ with respect to $\Re(q)$.

\begin{figure}[h!]
	\centering
	\includegraphics[width=.15\textwidth]{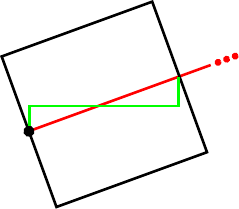}
	\caption{Rectangular decomposition in an embedded, flat rectangle.} \label{fig:rect_decomp_2} 
\end{figure}

\begin{proposition}
	\label{prop:rect_decomp_sad}
	Let $q$ be a quadratic differential on a closed Riemann surfaces $X$ of genus $g \geq 2$ and $\alpha$ be a saddle connection of $q$. The construction above yields a rectangular decomposition of $\alpha$ with $n \preceq \ell_{\alpha}(q)/ \ell_{\min}(q)$ horizontal and vertical segments of length $\leq \ell_{\min}(q)/2$. The sum of the lengths of the horizontal segments of this decomposition is equal to $i(\alpha,\Re(q))$.
\end{proposition}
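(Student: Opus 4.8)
The plan is to verify directly the three assertions of the proposition from the construction just described, proceeding in the same way one would for Proposition~\ref{prop:rect_decomp_cyl}. First I would confirm that the construction really produces a rectangular decomposition of $\alpha$, i.e.\ a concatenation of bona fide horizontal and vertical segments of $q$. The one point needing justification is that each rectangle $R(\alpha,t_i,t_{i+1})$ is embedded; this is the analogue of Proposition~\ref{prop:embed} and the argument carries over unchanged: two points of $[t_i,t_{i+1}]\times[-\ell_{\min}(q)/8,\ell_{\min}(q)/8]$ identified by $R$ would yield, via a path as in Figure~\ref{fig:embed}, a closed curve of flat length $\leq \ell_{\min}(q)/2$ and total turning $\leq \pi$, hence not null-homotopic by Gauss--Bonnet, forcing $\mathrm{sys}(q)\leq \ell_{\min}(q)/2$, contrary to Proposition~\ref{prop:flat_rep}. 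The only difference from the cylinder case is that when $t_i=0$ or $t_{i+1}=\ell_\alpha(q)$ one has $\Delta=0$ and the rectangle meets a zero of $q$ at an endpoint of $\alpha$ lying on its boundary; this affects neither the embedding argument nor the possibility of drawing a rectangular decomposition of $\alpha_i$ inside $R(\alpha,t_i,t_{i+1})$, since $\alpha_i$ is a straight segment of $q$ parallel to a side of this flat rectangle and $\Re(q),\Im(q)$ restrict there to parallel families of straight lines. I would then fix, once and for all, such a decomposition of each $\alpha_i$ that is monotone in the horizontal coordinate in the direction traversed by $\alpha_i$ (when $\alpha_i$ is vertical it is taken to be $\alpha_i$ itself); concatenating over $i$ gives the rectangular decomposition of $\alpha$.

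Next I would bound the combinatorics. Since the partition $0=t_0<\dots<t_n=\ell_\alpha(q)$ has every gap but the last equal to $\ell_{\min}(q)/4$, it has $\preceq \ell_\alpha(q)/\ell_{\min}(q)$ subintervals, each contributing an absolutely bounded number of segments, so the total number of horizontal and vertical segments is $\preceq \ell_\alpha(q)/\ell_{\min}(q)$. Each such segment lies inside a rectangle both of whose side lengths are $\leq \ell_{\min}(q)/4$, hence its horizontal extent (and likewise its vertical extent) is at most $\ell_{\min}(q)/4+\ell_{\min}(q)/4=\ell_{\min}(q)/2$; this gives the stated length bound, and relabelling the total number of segments as $n$ matches the notation of the statement.

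Finally — and this is the only step that is not purely formal — I would compute the total horizontal length. Let $(v_x,v_y)$ be the holonomy vector of the straight saddle connection $\alpha$, so that $i(\alpha,\Re(q))=|v_x|$, since $\Re(q)$ is locally $|dx|$ and $\alpha$ is monotone in $x$ (or vertical). Writing $(v_x^{(i)},v_y^{(i)})$ for the holonomy vector of $\alpha_i$, all the $\alpha_i$ are parallel to $\alpha$, so the $v_x^{(i)}$ share a common sign and $\sum_i v_x^{(i)}=v_x$. By the monotone choice made above, the horizontal segments produced inside $R(\alpha,t_i,t_{i+1})$ have total flat length exactly $|v_x^{(i)}|$, and summing over $i$ gives total horizontal length $\sum_i |v_x^{(i)}|=\big|\sum_i v_x^{(i)}\big|=|v_x|=i(\alpha,\Re(q))$. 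The main thing to be careful about is precisely this last identity: the decompositions of the individual $\alpha_i$ must be compatibly oriented so that no horizontal cancellation occurs in the sum, which is why monotonicity is built into the choice; the behaviour near the two endpoints of $\alpha$, where a rectangle abuts a cone point, is harmless as it concerns only the endpoints of $\alpha$ itself and imposes no constraint on the interior geometry.
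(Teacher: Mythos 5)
Your proposal is correct and follows essentially the same route as the paper, which treats the proposition as a direct consequence of the construction: the embedding of each rectangle is handled exactly as in Proposition~\ref{prop:embed}, the count and length bounds come from the partition with gaps $\ell_{\min}(q)/4$, and the horizontal total is the transverse measure $i(\alpha,\Re(q))$ accumulated monotonically across the rectangles. Your explicit remark that the staircase inside each rectangle must be chosen monotone so no horizontal cancellation occurs is a nice articulation of what the paper leaves implicit in Figures~\ref{fig:rect_decomp} and~\ref{fig:rect_decomp_2}.
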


\subsection*{Rectangular decompositions of singular flat geodesics.} Let $q$ be a quadratic differential on a closed Riemann surface $X$ of genus $g \geq 2$ and $\alpha$ be a singular flat geodesic of $q$. We can construct a rectangular decomposition of $\alpha$ by joining rectangular decompositions of the saddle connections of $\alpha$ constructed using the method introduced above; concatenations of consecutive vertical or horizontal segments are allowed. Notice that $\alpha$ has $\preceq \ell_{\alpha}(q)/\ell_{\min}(q)$ saddle connections. Denote by $i(\alpha,\Re(q))$ the total transverse measure of $\alpha$ with respect to $\Re(q)$; this quantity is equal to the geometric intersection number of $\alpha$ and $\Re(q)$ as defined by Bonahon \cite{Bon88}.

\begin{proposition}
	\label{prop:rect_decomp_sing}
	Let $q$ be a quadratic differential on a closed Riemann surface $X$ of genus $g \geq 2$ and $\alpha$ be a singular flat geodesic of $q$. The construction above yields a rectangular decomposition of $\alpha$ with $n \preceq \ell_{\alpha}(q)/ \ell_{\min}(q)$ horizontal and vertical segments of length $\leq \ell_{\min}(q)/2$. The sum of the lengths of the horizontal segments of this decomposition is equal to $i(\alpha,\Re(q))$.
\end{proposition}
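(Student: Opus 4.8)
The plan is to reduce everything to Proposition \ref{prop:rect_decomp_sad} by splitting $\alpha$ into its constituent saddle connections. By Proposition \ref{prop:flat_geod_1}, write $\alpha$ as a cyclic concatenation $\alpha = \alpha_1 \ast \cdots \ast \alpha_k$ of saddle connections of $q$ meeting at angles $\geq \pi$ on both sides. Since every saddle connection of $q$ has flat length at least $\ell_{\min}(q)$, summing the lengths of the $\alpha_j$ gives $k \leq \ell_{\alpha}(q)/\ell_{\min}(q)$; this is the bound on the number of saddle connections recorded in the discussion preceding the statement.

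Next, I would apply Proposition \ref{prop:rect_decomp_sad} to each $\alpha_j$, equipped with the orientation induced from $\alpha$, to obtain a rectangular decomposition of $\alpha_j$ with $n_j \preceq \ell_{\alpha_j}(q)/\ell_{\min}(q)$ horizontal and vertical segments of $q$, each of length $\leq \ell_{\min}(q)/2$, and with the sum of the lengths of its horizontal segments equal to $i(\alpha_j,\Re(q))$. The key compatibility point is that in the construction underlying Proposition \ref{prop:rect_decomp_sad} the shift parameter $\Delta$ vanishes on the two extreme rectangles of $\alpha_j$, so the rectangular decomposition of $\alpha_j$ begins and ends precisely at the two zeroes of $q$ joined by $\alpha_j$. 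Hence the decompositions of $\alpha_1,\dots,\alpha_k$ fit together, in cyclic order, into a single closed concatenation of horizontal and vertical segments of $q$ — a rectangular decomposition of $\alpha$ in the relaxed sense allowing consecutive segments of the same type at the junctions — which closely approximates $\alpha$ because each piece closely approximates the corresponding $\alpha_j$.

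Finally, I would tally the three asserted properties. The total number of segments is $\sum_{j=1}^{k} n_j \preceq \sum_{j=1}^{k} \ell_{\alpha_j}(q)/\ell_{\min}(q) = \ell_{\alpha}(q)/\ell_{\min}(q)$; every segment still has length $\leq \ell_{\min}(q)/2$, since no merging is performed across the junctions; and the sum of the lengths of the horizontal segments equals $\sum_{j=1}^{k} i(\alpha_j,\Re(q))$. It then remains to identify this last sum with the total transverse measure $i(\alpha,\Re(q))$ of $\alpha$ with respect to $\Re(q)$, which is simply additivity of that measure along the decomposition $\alpha = \alpha_1 \ast \cdots \ast \alpha_k$: the restriction to $\alpha$ of the transverse measure of $\Re(q)$ is a non-atomic finite Borel measure, so the finitely many zeroes at which the $\alpha_j$ meet carry no mass and the contributions of the pieces add. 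I do not expect a genuine obstacle here; the only two points needing care are the compatibility of the pieces (handled by $\Delta = 0$ at saddle-connection endpoints) and this additivity, and both are routine.
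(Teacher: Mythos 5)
Your proposal is correct and follows essentially the same route as the paper, which simply joins the rectangular decompositions of the saddle connections of $\alpha$ furnished by Proposition \ref{prop:rect_decomp_sad} (allowing consecutive segments of the same type at the junctions) and uses that $\alpha$ has $\preceq \ell_{\alpha}(q)/\ell_{\min}(q)$ saddle connections. Your added observations — that $\Delta = 0$ at the endpoints of each saddle connection so the pieces fit together at the zeroes, and that the transverse measure is additive over the saddle connections — are exactly the routine points implicit in the paper's construction.
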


\subsection*{The Teichmüller geodesic flow.} The group $\mathrm{SL}(2,\mathbf{R})$ acts naturally on half-translation structures by postcomposing the corresponding charts with the linear action of $\mathrm{SL}(2,\mathbf{R})$ on $\mathbf{C} =\mathbf{R}^2$. In particular, $\mathrm{SL}(2,\mathbf{R})$ acts naturally on quadratic differentials. The \textit{Teichmüller geodesic flow} on quadratic differentials corresponds to the action of the diagonal subgroup $\{a_t\}_{t \in \mathbf{R}} \subseteq \mathrm{SL}(2,\mathbf{R})$ given by
\begin{equation}
\label{eq:diag}
a_t := \left( 
\begin{array}{c c}
e^t & 0 \\
0 & e^{-t}
\end{array} 
\right).
\end{equation}

The vertical and horizontal segments of a quadratic differential are preserved by the Teichmüller geodesic flow. Flowing for time $t \in \mathbf{R}$ dilates horizontal segments by $e^t$ and contracts vertical segments by $e^{-t}$. More generally, the straight line segments of a quadratic differential are preserved by the Teichmüller geodesic flow. Given $t \in \mathbf{R}$, a quadratic differential $q$, and a straight line segment $\gamma$ of $q$, denote by $A_t \gamma$ the transport of $\gamma$ to $a_t q$. By Proposition \ref{prop:flat_geod_1}, the flat geodesics of a quadratic differentials are also preserved by the Teichmüller geodesic flow. Given $t \in \mathbf{R}$, a quadratic differential $q$, and a flat geodesic $\alpha$ of $q$, denote by $A_t\alpha$ the transport of $\alpha$ to $a_tq$.

\subsection*{Geometric intersection numbers and rectangular decompositions.} Recall that if $\alpha$ an $\beta$ are flat geodesics of a quadratic differential, then $I(\alpha,\beta)$ denotes the number of transverse intersections between $\alpha$ and $\beta$ as defined in \S 2. The following proposition shows that that the rectangular decompositions constructed above can be used to estimate the number of transverse intersections between pairs of flat geodesics of quadratic differentials. The statement incorporates the Teichmüller geodesic flow motivated by applications in later sections.

\begin{proposition}
	\label{prop:rect_int}
	Let $q$ be a quadratic differential on a closed Riemann surface $X$ of genus $g \geq 2$ and $\alpha$ be a flat geodesic of $q$. Then, there exists a sequence $\smash{\{\gamma_i\}_{i=1}^n}$ of $n \preceq \ell_{\alpha}(q)/\ell_{\min}(q)$ horizontal segments of $q$ of lengths $\ell_{\gamma_i}(q) \leq \ell_{\min}(q)/2$, such that $\sum_{i=1}^n \ell_{\gamma_i}(q) = i(\alpha,\Re(q))$, and satisfying the following property. Let $t \in \smash{\mathbf{R}^+}$, $\beta$ be a flat geodesic of $a_t q$ with no horizontal segments, and $\smash{\{\beta_j\}_{j=1}^m}$ be the sequence of saddle connections of $\beta$ or $\beta$ itself if $\beta$ is a cylinder curve. Then,
	\[
	\bigg\vert I(A_t \alpha,\beta) - \sum_{i=1}^k \sum_{j=1}^m \#(A_t\gamma_i \cap \beta_j)\bigg\vert \preceq \frac{\ell_\alpha(q) \cdot \ell_{\beta}(a_t q)}{\ell_{\min}(q) \cdot \ell_{\min}(a_t q)}.
	\]
\end{proposition}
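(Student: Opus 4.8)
The plan is to first produce the sequence of horizontal segments $\{\gamma_i\}_{i=1}^n$ by invoking the appropriate rectangular decomposition result from this section: Proposition \ref{prop:rect_decomp_cyl} if $\alpha$ is a cylinder curve, Proposition \ref{prop:rect_decomp_sad} if $\alpha$ is a saddle connection, and Proposition \ref{prop:rect_decomp_sing} if $\alpha$ is a singular flat geodesic. In every case this yields a rectangular decomposition of $\alpha$ into $n \preceq \ell_\alpha(q)/\ell_{\min}(q)$ horizontal and vertical segments, each of flat length $\leq \ell_{\min}(q)/2$, with the horizontal segments $\gamma_1,\dots,\gamma_k$ satisfying $\sum_{i=1}^k \ell_{\gamma_i}(q) = i(\alpha,\Re(q))$; I will write $\delta_1,\dots,\delta_{n-k}$ for the vertical ones. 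Since the Teichmüller geodesic flow preserves straight line segments and the decomposition as a concatenation, for $t > 0$ the curve $A_t\alpha$ is homotopic to the concatenation of the $A_t\gamma_i$ (now of length $e^t\ell_{\gamma_i}(q) \leq e^t\ell_{\min}(q)/2$) and the $A_t\delta_i$ (now of length $e^{-t}\ell_{\delta_i}(q) \leq \ell_{\min}(q)/2$).

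Next I would relate $I(A_t\alpha,\beta)$ to the transverse intersections of $\beta$ with this decomposition. Every transverse intersection of $A_t\alpha$ with $\beta$ occurs along one of the segments $A_t\gamma_i$ or $A_t\delta_i$ and along one of the $\beta_j$, so up to the error introduced by intersections near the endpoints where consecutive segments meet, $I(A_t\alpha,\beta)$ differs from $\sum_{i}\sum_j \#(A_t\gamma_i\cap\beta_j) + \sum_i\sum_j\#(A_t\delta_i\cap\beta_j)$ by at most a term controlled by the number of segments $n$ times the number of pieces $m$ (each corner can absorb or create at most a bounded number of spurious or cancelled intersections); here $m \preceq \ell_\beta(a_tq)/\ell_{\min}(a_tq)$ since $\beta$ has $\preceq \ell_\beta(a_tq)/\ell_{\min}(a_tq)$ saddle connections, or $m=1$ in the cylinder case. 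So it remains to bound the vertical contribution $\sum_i\sum_j\#(A_t\delta_i\cap\beta_j)$, and to bound $n\cdot m$.

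To bound the vertical contribution I would apply Proposition \ref{eq:int_bound} on the quadratic differential $a_tq$: each $A_t\delta_i$ has length $\leq \ell_{\min}(q)/2$, and since $\ell_{\min}(a_tq) \leq \mathrm{sys}(a_tq)$ and — crucially — flowing contracts $\delta_i$ so that $\ell_{A_t\delta_i}(a_tq) = e^{-t}\ell_{\delta_i}(q) \leq e^{-t}\ell_{\min}(q)/2 \leq \ell_{\min}(q)/2 \leq \mathrm{sys}(q)/2$; I will need the comparison $\ell_{A_t\delta_i}(a_tq) \leq \mathrm{sys}(a_tq)/2$, which I would get from the fact that flowing can only decrease $\mathrm{sys}$ by at most... — more carefully, I would instead note $\ell_{A_t\delta_i}(a_tq) = e^{-t}\ell_{\delta_i}(q)$, and argue that a vertical segment this short is either parallel to the relevant $\beta_j$ (in which case $\beta$ would have a horizontal segment, contrary to hypothesis, once one checks the $\delta_i$ are vertical hence horizontal... wait, vertical) — the point is the $\delta_i$ are \emph{vertical} segments of $a_tq$, and $\beta$ has no \emph{horizontal} segments, so $\beta_j$ and $A_t\delta_i$ are non-parallel unless $\beta_j$ is vertical, but a vertical $\beta_j$ would be a vertical saddle connection which still may be handled since then $A_t\delta_i$ and $\beta_j$ are parallel vertical segments meeting in at most a point or along a shared arc contributing no transverse intersection. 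Granting non-parallelism, Proposition \ref{eq:int_bound} gives $\#(A_t\delta_i\cap\beta_j) \leq 1 + 2\ell_{\beta_j}(a_tq)/\mathrm{sys}(a_tq)$; summing over $i$ and $j$ and using $\sum_j \ell_{\beta_j}(a_tq) = \ell_\beta(a_tq)$, $(n-k) \preceq \ell_\alpha(q)/\ell_{\min}(q)$, $\mathrm{sys}(a_tq) \geq \ell_{\min}(a_tq)$, and $m \preceq \ell_\beta(a_tq)/\ell_{\min}(a_tq)$, this is $\preceq \ell_\alpha(q)\,\ell_\beta(a_tq)/(\ell_{\min}(q)\,\ell_{\min}(a_tq))$, absorbing the $+1$ terms via $n\cdot m$. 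Finally $n\cdot m \preceq \ell_\alpha(q)\,\ell_\beta(a_tq)/(\ell_{\min}(q)\,\ell_{\min}(a_tq))$ directly. Collecting the corner error, the vertical contribution, and $n\cdot m$ gives the claimed bound. The main obstacle is the bookkeeping at the corners and the degenerate parallel/non-transverse cases; the technical heart is the application of Proposition \ref{eq:int_bound}, whose hypotheses ($\ell_\sigma(q) \leq \mathrm{sys}(q)/2$, non-parallelism) must be verified carefully for the contracted vertical segments $A_t\delta_i$ against the pieces $\beta_j$ on the flowed surface $a_tq$.
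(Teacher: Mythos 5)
Your overall plan coincides with the paper's: take the horizontal segments supplied by Propositions \ref{prop:rect_decomp_cyl}, \ref{prop:rect_decomp_sad}, and \ref{prop:rect_decomp_sing}, and control the discrepancy through intersections of $\beta$ with the short vertical data of the decomposition via Proposition \ref{eq:int_bound} applied on $a_tq$. However, the central comparison step is asserted rather than proved. You claim that every transverse intersection of $A_t\alpha$ with $\beta$ ``occurs along one of the segments $A_t\gamma_i$ or $A_t\delta_i$'' and that the remaining discrepancy is a corner error $\preceq n\cdot m$. But $A_t\alpha$ is the flat geodesic, not the rectangular path homotopic to it, so its intersections with $\beta$ do not lie on those segments; and there is no a priori reason that intersection counts of $\beta$ with two homotopic curves differ only by a bounded amount per corner: extra or missing crossings occur whenever a piece of $\beta$ separates the geodesic from its rectangular approximation inside one of the flat rectangles. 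The actual content of the proof is a rectangle-by-rectangle analysis: transport each embedded rectangle $R(\alpha_*,t_0,t_1)$ by $a_t$, classify the segments of $\beta$ in $A_tR$ as good (crossing the geodesic piece $A_t\alpha_*^\dagger$ and the horizontal segment of the decomposition equally often, away from the boundary) or bad, and show that every bad segment must meet one of the two vertical segments of the decomposition in $A_tR$, each of length $\leq e^{-t}\ell_{\min}(q)/2$. That geometric observation is exactly what converts the discrepancy into the quantity you then propose to bound; your proposal skips it, and without it the ``$O(nm)$ corner'' bookkeeping has no justification.

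Second, your application of Proposition \ref{eq:int_bound} on $a_tq$ requires $\ell_{A_t\delta_i}(a_tq)\leq \mathrm{sys}(a_tq)/2$, and you only verify non-parallelism; the attempted justification of the length hypothesis is abandoned mid-sentence. The missing inequality is short: $a_t$ contracts any saddle connection by a factor of at most $e^{-t}$, so $\ell_{\min}(a_tq)\geq e^{-t}\ell_{\min}(q)$, hence $\ell_{A_t\delta_i}(a_tq)\leq e^{-t}\ell_{\min}(q)/2\leq \ell_{\min}(a_tq)/2\leq \mathrm{sys}(a_tq)/2$. With that inequality and the good/bad segment analysis above, your error accounting (vertical crossings plus the $+1$ terms, each $\preceq \ell_\alpha(q)\,\ell_\beta(a_tq)/(\ell_{\min}(q)\,\ell_{\min}(a_tq))$ after summing over the $\preceq \ell_\alpha(q)/\ell_{\min}(q)$ rectangles and the $m$ pieces of $\beta$) does produce the stated bound, so both gaps are repairable, but as written they are gaps.
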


\begin{proof}
	Consider a rectangular decomposition of $\alpha$ on $q$ constructed using the methods introduced above. Let $\smash{\{\gamma_i\}_{i=1}^n}$ with $n \preceq \ell_{\alpha}(q)/\ell_{\min}(q)$ be the sequence of horizontal segments of this rectangular decomposition. By construction, $\ell_{\gamma_i}(q) \leq \ell_{\min}(q)/2$ for every $i \in \{1,\dots,n\}$ and $\sum_{i=1}^n \ell_{\gamma_i}(q) = i(\alpha,\Re(q))$. It remains to check that $\smash{\{\gamma_i\}_{i=1}^n}$ satisfies the desired property. Let $t \in \smash{\mathbf{R}^+}$, $\beta$ be a flat geodesic of $a_t q$ with no horizontal segments, and $\smash{\{\beta_j\}_{j=1}^m}$ be the sequence of saddle connections of $\beta$ or $\beta$ itself if $\beta$ is a cylinder curve.
	
	 Denote by $\smash{\alpha_*}$ one of the saddle connections of $\alpha$ or $\alpha$ itself if $\alpha$ is a cylinder curve. We assume that $\smash{\alpha_*}$ is neither vertical nor horizontal. These simpler cases can be dealt with appealing to similar arguments. Let $R := \smash{R(\alpha_*,t_0,t_1)} \subseteq X$ be one of the embedded flat rectangles used in the construction of the rectangular decomposition of $\alpha_*$. Transporting this rectangle to $a_t q$ yields an embedded flat parallelogram $A_t R$ on the corresponding Riemann surface. Drawing the segments of the flat geodesic $\beta$ that intersect this parallelogram yields a picture as in Figure \ref{fig:rect_decomp_teich}.
	
	\begin{figure}[h!]
		\centering
		\begin{tabular}{c c c}
			\includegraphics[scale=.8]{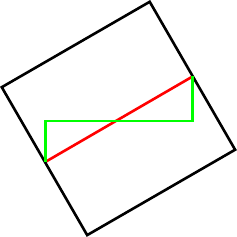} & \begin{tabular}{c} $\xrightarrow{\left(\begin{array}{c c}
				e^t & 0 \\
				0 & e^{-t}
				\end{array}\right)} $  \\ \\ \\ \\ \\ \\ \end{tabular}& \begin{tabular}{c} \includegraphics[scale=.8]{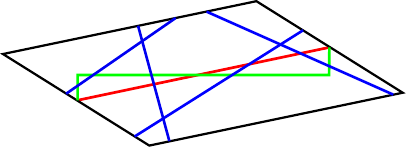} \\[1.2cm] \end{tabular}
		\end{tabular}
		\vspace{-1.6cm}
		\captionsetup{width=\linewidth}
		\caption{Transporting an embedded flat rectangle using the Teichmüller geodesic flow.} \label{fig:rect_decomp_teich} 
	\end{figure}

	Denote $\smash{\alpha_*^\dagger} := \alpha_*([t_0,t_1])\subseteq R$ and $\smash{A_t \alpha_*^\dagger} \subseteq A_t R$ the transport of $\alpha_*^\dagger$ to $a_t q$. Each segment of $\beta$ in $A_t R$ fits into exactly one of two categories. It is either a \textit{good segment}, meaning it does not intersect $\smash{A_t \alpha_*^\dagger}$ on the boundary of $A_t R$ and intersects $\smash{A_t \alpha_*^\dagger}$ the same number of times it intersects the horizontal segment of the rectangular decomposition of $\smash{A_t \alpha_*^\dagger}$ in $A_t R$, or it is a \textit{bad segment}, meaning it intersects $\smash{A_t \alpha_*^\dagger}$ on the boundary of $A_t R$ or it does not intersect $\smash{A_t \alpha_*^\dagger}$ the same number of times it intersects the horizontal segment of the rectangular decomposition of $\smash{A_t \alpha_*^\dagger}$ in $A_t R$. See Figures \ref{fig:good_seg} and \ref{fig:bad_seg} for pictures of all the possible cases in each category.
	
	\begin{figure}[h]
		\centering
		\begin{subfigure}[b]{0.4\textwidth}
			\centering
			\includegraphics[width=0.6\textwidth]{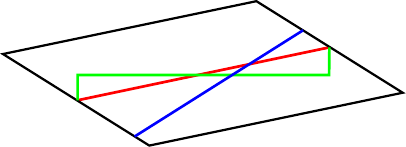}
			\caption{Case I.}
		\end{subfigure}
		\quad \quad \quad
		~ 
		\begin{subfigure}[b]{0.4\textwidth}
			\centering
			\includegraphics[width=0.6\textwidth]{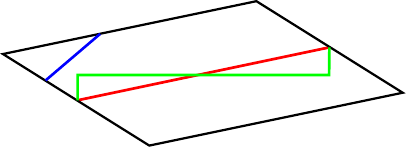}
			\caption{Case II.}
		\end{subfigure}
		\caption{Good segments.} 
		\label{fig:good_seg}
	\end{figure}

	\begin{figure}[h]
		\centering
		\begin{subfigure}[b]{0.4\textwidth}
			\centering
			\includegraphics[width=0.6\textwidth]{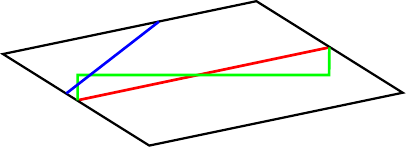}
			\caption{Case I.}
		\end{subfigure}
		\quad \quad \quad
		~ 
		\begin{subfigure}[b]{0.4\textwidth}
			\centering
			\includegraphics[width=0.6\textwidth]{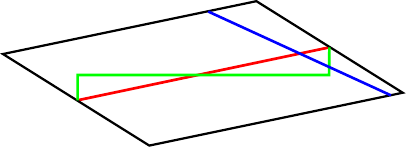}
			\caption{Case II.}
		\end{subfigure} 
		\\[.5cm]
		\begin{subfigure}[b]{0.4\textwidth}
			\centering
			\includegraphics[width=0.6\textwidth]{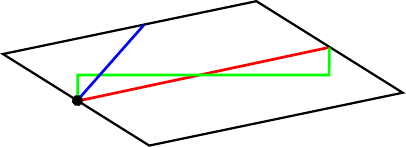}
			\caption{Case III.}
		\end{subfigure}
		\quad \quad \quad
		~ 
		\begin{subfigure}[b]{0.4\textwidth}
			\centering
			\includegraphics[width=0.6\textwidth]{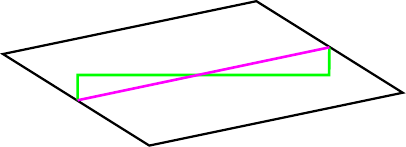}
			\caption{Case IV.}
		\end{subfigure}
				\\[.5cm]
		\begin{subfigure}[b]{0.4\textwidth}
			\centering
			\includegraphics[width=0.6\textwidth]{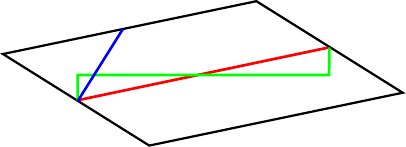}
			\caption{Case V.}
		\end{subfigure}
		\quad \quad \quad
		~ 
		\begin{subfigure}[b]{0.4\textwidth}
			\centering
			\includegraphics[width=0.6\textwidth]{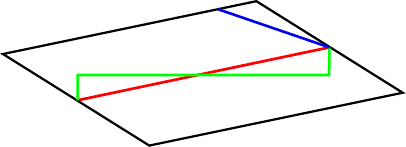}
			\caption{Case VI.}
		\end{subfigure}
		\caption{Bad segments.} 
		\label{fig:bad_seg}
	\end{figure}
	
	For good segments, the quantities we are trying to compare are equal. For bad segments the situation is much different. In cases I and II the quantities we are trying to compare differ by $1$. Cases III and IV correspond to non-transverse intersections between $A_t \alpha$ and $\beta$. We consider case V as bad to avoid double counting intersections shared by two consecutive rectangles. We consider case VI as bad to avoid missing intersections on the boundary of $a_t R$. We show there are few bad segments.
	
	Let $j \in \{1,\dots,m\}$ and $\beta_j$ be a saddle connection of $\beta$ or $\beta$ itself if $\beta$ is a cylinder curve. We assume that $\beta_j$ is not vertical. This simpler case can be dealt with using similar arguments. We bound the number of bad segments of $\beta_j$ in $A_t R$. Notice that bad segments always intersect a vertical segment of the rectangular decomposition of $A_t \smash{\alpha_*^\dagger}$ in $A_t R$. There are two such segments and each one has length $\leq  e^{-t} \cdot \ell_{\min}(q)/2$. It is enough then to bound the number of intersections of $\beta_j$ with any vertical segment $\sigma$ of $a_t q$ of length $\leq e^{-t} \cdot \ell_{\min}(q)/2 \leq \ell_{\min}(a_t q) / 2 \leq \mathrm{sys}(a_t q) / 2$. By Proposition \ref{eq:int_bound},
	\[
	\# (\beta_j \cap \sigma) \leq 1 + 2 \ell_{\beta_j}(a_tq)/\mathrm{sys}(a_tq) \preceq \ell_{\beta_j}(a_tq)/\ell_{\min}(a_tq). 
	\]
	
	Adding these contributions over all $j \in \{1,\dots,m\}$ and all embedded flat rectangles used to construct the rectangular decomposition of $\alpha$ on $q$, of which there are $\preceq \ell_{\alpha}(q)/\ell_{\min}(q)$ many, we obtain the desired estimate.
\end{proof}

\section{Immersed collars and bump functions of flat geodesics}

\subsection*{Outline of this section} In this section we describe a procedure for constructing immersed collars and bump functions of flat geodesics of quadratic differentials. The main result of this section is Proposition \ref{prop:bump_4}, which shows the bump functions we construct can be integrated to estimate the number of intersections between horizontal segments and flat geodesics of quadratic differentials. We first introduce Abelian differentials and weighted Sobolev spaces of functions on Riemann surfaces. We then proceed to construct immersed collars and bump functions of flat geodesics, first for cylinder curves and then for saddle connections and singular flat geodesics.

\subsection*{Abelian differentials.} Let $X$ be a closed Riemann surface and $K$ be its canonical bundle. An \textit{Abelian differential} $\omega$ on $X$ is a holomorphic section of $K$. In local coordinates, $\omega = f(z) \thinspace dz$ for some holomorphic function $f$. If $X$ has genus $g$, the number of zeroes of $\omega$ counted with multiplicity is $2g-2$. The zeroes of $\omega$ are also called \textit{singularities}. We sometimes denote Abelian differentials by $(X,\omega)$ to keep track of the Riemann surface they are defined on.

A \textit{translation structure} on a surface $S$ is an atlas of charts to $\mathbf{C}$ on the complement of a finite set of points $\Sigma \subseteq S$ whose transition functions are of the form $z \mapsto z + c$ with $c \in \mathbf{C}$. Every Abelian differential $\omega$ gives rise to a translation structure on the Riemann surface it is defined on by considering local coordinates on the complement of the zeroes of $\omega$ for which $\omega = dz$. Viceversa, every translation structure induces an Abelian differential on its underlying surface by pulling back the differential $dz$ on the corresponding charts. 

The notion of \textit{straight line} makes sense for a surface endowed with a translation structure and in particular for a closed Riemann surface endowed with an Abelian differential $\omega$. A \textit{cylinder curve} of $\omega$ is a closed straight line intersecting no zeroes. A \textit{saddle connection} of a $\omega$ is a straight line segment joining two zeroes and having no zeroes in its interior. The notions of \textit{slope} and \textit{parallelism} of straight line segments also make sense in this context.

Pulling back the standard Euclidean metric on $\mathbf{C}$ using the charts of a translation structure induces a singular flat metric on the underlying surface. In particular, every Abelian differential $\omega$ gives rise to a singular flat metric on the Riemann surface $X$ it is defined on. This metric is smooth away from the zeroes of $\omega$ and has a singularity of cone angle $(2k+2) \pi$ at every zero of order $k$. The diameter of $\omega$, denoted $\mathrm{diam}(\omega)$, is the diameter of $X$ with respect to this metric. Denote by $A_\omega$ the singular flat area form induced by $\omega$ on $X$. The area of $\omega$, denoted $\mathrm{Area}(\omega)$, is the area of $X$ with respect to $A_\omega$. Denote by $\ell_{\alpha}(\omega)$ the flat length of a saddle connection $\alpha$ of $\omega$ and by $\ell_{\min}(\omega)$ the flat length of the shortest saddle connections of $\omega$. Denote $\smash{\ell_{\min}^\dagger}(\omega) := \min\{1,\ell_{\min}(\omega)\}$.

Pulling back the $1$-forms $dx$ and $dy$ on $\mathbf{C}$ using the charts of a translation structure induces a pair of singular $1$-forms on the underlying surface. For the translation structure induced by an Abelian differential $\omega$ on a closed Riemann surface $X$ we denote these singular $1$-forms by $\mathrm{Re}(\omega)$ and $\mathrm{Im}(\omega)$. These 1-forms induce oriented singular measured foliations $\Re(\omega)$ and $\Im(\omega)$ on $X$. We refer to these as the \textit{vertical} and \textit{horizontal} foliations of $\omega$. Segments of the leaves of  $\Re(\omega)$ and $\Im(\omega)$ are called \textit{vertical} and \textit{horizontal}, respectively. Denote by $\partial x_\omega$ and $\partial y_\omega$ the pair of canonical continuous orthonormal vector fields induced by $\Im(\omega)$ and $\Re(\omega)$ on $X$ away from the zeroes of $\omega$. 

Every quadratic diffential $q$ on a closed Riemann surface $X$ induces a canonical double cover $h \colon Y \to X$ onto which $q$ pulls back to the square of an Abelian differential $\omega$. This cover is branched over the odd order zeroes of $q$. 

\subsection*{Weighted Sobolev spaces of functions.} Let $(X,\omega)$ be an Abelian differential. The space $L^2_\omega(X)$ is the space of measurable functions $\phi \colon X \to \mathbf{R}$ that are square integrable with respect to $A_\omega$. This is a Hilbert space when endowed with the norm
\[
\|\phi\|_{0,\omega} := \left(\int_S |\phi|^2 \thinspace dA_\omega \right)^{1/2}.
\]
The \textit{weighted Sobolev space of functions} $H_\omega^1(X) \subseteq L^2_\omega(X)$ is defined as
\begin{equation}
\label{eq:sob}
H_\omega^1(X) := \{\phi \in L^2_\omega(X) \ | \ \partial x_\omega \phi, \thinspace\partial y_\omega \phi \in L^2_\omega(X)\},
\end{equation}
where $\partial x_\omega \phi$ and $\thinspace \partial y_\omega \phi$ denote weak derivatives. This is a Hilbert space when endowed with the norm
\begin{equation}
\label{eq:sob_norm}
\|\phi\|_{1,\omega} := \|\phi\|_{0,\omega} + \|\partial x_\omega \phi\|_{0,\omega} + \|\partial y_\omega \phi\|_{0,\omega}.
\end{equation}

Let $(X,q)$ be a quadratic differential. Consider the canonical double cover $h \colon Y \to X$ onto which $q$ pulls back to the square of an Abelian differential $\omega$. Recall that $A_q$ denotes the singular flat area form induced by $q$ on $X$. The space $L^2_q(X)$ is the space of measurable functions $\phi \colon X \to \mathbf{R}$ that are square integrable with respect to $A_q$, or, equivalently, whose lift $\phi \circ h \colon Y \to \mathbf{R}$ belongs to $L_\omega^2(Y)$. This is a Hilbert space when endowed with the norm
\[
\|\phi\|_{0,q} := \|\phi \circ h\|_{0,\omega}.
\]
The \textit{weighted Sobolev space of functions} $H_q^1(X) \subseteq L^2_q(X)$ is defined as
\begin{equation}
\label{eq:sob_q}
H_q^1(X) := \{\phi \in L^2_q(X) \ | \ \phi \circ h\in H_\omega^1(Y)\}.
\end{equation}
This is a Hilbert space when endowed with the norm
\begin{equation}
\label{eq:sob_norm_q}
\|\phi\|_{1,q} := \|\phi \circ h\|_{1,\omega}.
\end{equation}

\subsection*{Immersed collars and bump functions of cylinder curves.} Let $q$ be a quadratic differential on a closed Riemann surface $X$ of genus $g \geq 2$ and $\beta$ be a non-horizontal cylinder curve of $q$. By an \textit{immersed collar} of $\beta$ we mean a piecewise linear immersion of a flat annulus into $X$ whose core curve is $\beta$. We construct an immersed collar of $\beta$ by horizontally shearing an immersed flat annulus around $\beta$ away from the zeroes of $q$ in a continuous piecewise linear way. See Figure \ref{fig:immersed_collar_cyl} for an example. 

\begin{figure}[h!]
	\centering
	\includegraphics[width=.15\textwidth]{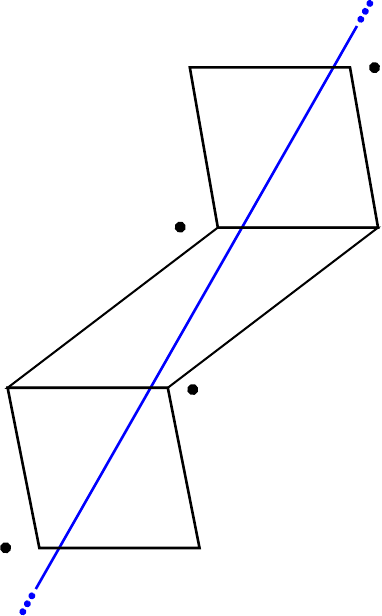}
	\caption{Constructing an immersed collar of a cylinder curve.} \label{fig:immersed_collar_cyl} 
\end{figure}

Fix an orientation on $\beta$ and consider a unit speed parametrization $\beta \colon [0,\ell_{\beta}(q)] \to X$ consistent with this orientation. For the rest of this discussion we identify the endpoints of the interval $[0,\ell_{\beta}(q)]$. Starting from any point of $\beta$ we can flow horizontally at unit speed for a maximal interval of times until we hit a zero of $q$. For every $t \in [0,\ell_{\beta}(q)]$ denote by $[s_{\min}(\beta(t)),s_{\max}(\beta(t))]$ this interval. For every $t \in [0,\ell_{\beta}(q)]$ and every $s \in [s_{\min}(\beta(t)),s_{\max}(\beta(t))]$ denote by $h_s(\beta(t)) \in X$ the point reached by flowing horizontally at unit speed from $\beta(t)$ for time $s$.

Notice that, for every $t \in [0,\ell_{\beta}(q)]$, it can never be the case that $s_{\min}(\beta(t)) \geq -\ell_{\min}(q)/8$ and $s_{\max}(\beta(t)) \leq \ell_{\min}(q)/8$, else $q$ would have a saddle connection of length $\leq \ell_{\min}(q)/4$. A similar argument shows that if $t_0,t_1 \in [0,\ell_{\beta}(q)]$ are two times such that $s_{\min}(\beta(t_i)) \geq -\ell_{\min}(q)/8$ or $s_{\max}(\beta(t_i)) \leq \ell_{\min}(q)/8$ then $|t_1  - t_0| \geq \ell_{\min}(q)/4$. In particular, there are $\preceq \ell_{\beta}(q)/\ell_{\min}(q)$ times $t \in [0,\ell_{\beta}(q)]$ such that $s_{\min}(\beta(t)) \geq -\ell_{\min}(q)/8$ or $s_{\max}(\beta(t)) \leq \ell_{\min}(q)/8$. For each one of these times define $\Delta(t) \in (-\ell_{\min}(q)/8,\ell_{\min}(q)/8)$ as
\[
\Delta(t) := \left \lbrace
\begin{array}{c l}
s_{\min}(\beta(t))/2 + \ell_{\min}(q)/8 & \text{if } s_{\min}(\beta(t)) \geq - \ell_{\min}(q)/8,\\
-\ell_{\min}(q)/8 + s_{\max}(\beta(t))/2 & \text{if } s_{\max}(\beta(t))\leq \ell_{\min}(q)/8.
\end{array} \right. 
\]
Denote by $\Delta \colon [0, \ell_{\beta}(q)] \to (-\ell_{\min}(q)/8,\ell_{\min}(q)/8)$ the linear interpolation of these values. 

Consider the immersed collar $\iota \colon [0, \ell_{\beta}(q)]  \times [-\ell_{\min}(q)/8,\ell_{\min}(q)/8] \to X$ of $\beta$ given by
\[
\iota(t,s) := h_{s+\Delta(t)}(\beta(t)).
\]
By construction, this map is an immersion whose image contains no zeroes of $q$. The following result, which can be proved using arguments similar to those in the proof of Proposition \ref{prop:embed}, quantifies the extent to which this map is an embedding.

\begin{proposition}
	\label{prop:collar_embed}
	Let $t_0,t_1 \in [0,\ell_{\beta}(q)]$ with $0 < |t_1 - t_0| \leq \ell_{\min}(q)/4$. Then, the restriction of the map $\iota \colon [0, \ell_{\beta}(q)]  \times [-\ell_{\min}(q)/8,\ell_{\min}(q)/8] \to X$ to $[t_0,t_1] \times [-\ell_{\min}(q)/8,\ell_{\min}(q)/8]$ is an embedding. In particular, the map $\iota$ is $n$ to $1$ for $n \preceq \ell_{\beta}(q)/\ell_{\min}(q)$.
\end{proposition}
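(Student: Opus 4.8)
The plan is to prove the local embedding statement by the same Gauss–Bonnet argument used in Proposition \ref{prop:embed}, and then deduce the global multiplicity bound from a covering/counting argument. First I would fix $t_0,t_1 \in [0,\ell_{\beta}(q)]$ with $0 < |t_1 - t_0| \leq \ell_{\min}(q)/4$ and suppose, for contradiction, that two distinct points $(t,s)$ and $(t',s')$ in $[t_0,t_1] \times [-\ell_{\min}(q)/8,\ell_{\min}(q)/8]$ have the same image under $\iota$. Since the restriction of $\Delta$ to $[t_0,t_1]$ is linear with values in $(-\ell_{\min}(q)/8,\ell_{\min}(q)/8)$, the composite $s \mapsto s + \Delta(t)$ keeps all horizontal displacements inside $(-\ell_{\min}(q)/4,\ell_{\min}(q)/4)$; concatenating the subarc of $\beta$ from $\beta(t)$ to $\beta(t')$ (of length $\leq \ell_{\min}(q)/4$, after identifying the endpoints of the interval) with the two horizontal flow segments realizing $\iota(t,s)$ and $\iota(t',s')$ produces a closed curve in $X$ of flat length $\leq \ell_{\min}(q)/4 + \ell_{\min}(q)/4 + \ell_{\min}(q)/4 < \mathrm{sys}(q)$, where the last inequality uses $\ell_{\min}(q) \leq \mathrm{sys}(q)$ from the remark following Proposition \ref{prop:flat_rep}. (If one wants the sharper constant as in Proposition \ref{prop:embed}, one observes the relevant closed curve actually has length $\leq \ell_{\min}(q)/2$ once the endpoint cancellation between the two horizontal segments is taken into account.) As in the proof of Proposition \ref{prop:embed}, after discarding any backtracking we may take this closed curve to be simple, and its total turning angle is $\leq \pi$; the Gauss–Bonnet theorem then forces it to be not null-homotopic, contradicting that its flat length is strictly less than $\mathrm{sys}(q)$. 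Hence $\iota$ restricted to $[t_0,t_1] \times [-\ell_{\min}(q)/8,\ell_{\min}(q)/8]$ is injective, and since it is a flat immersion it is an embedding.

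For the global multiplicity bound, partition $[0,\ell_{\beta}(q)]$ (with identified endpoints) into $n$ consecutive subintervals $[\tau_{k-1},\tau_k]$ each of length $\leq \ell_{\min}(q)/4$, where $n \preceq \ell_{\beta}(q)/\ell_{\min}(q)$; by the first part, $\iota$ is an embedding on each slab $[\tau_{k-1},\tau_k] \times [-\ell_{\min}(q)/8,\ell_{\min}(q)/8]$. Now suppose $N$ distinct points $(t_1,s_1),\dots,(t_N,s_N)$ all map to the same point $x \in X$. For any two of these, say with parameters $t_a$ and $t_b$, the closed curve built as above from the $\beta$-subarc joining $\beta(t_a)$ to $\beta(t_b)$ together with the two horizontal segments is null-homotopic if and only if the $\beta$-subarc has length $< \mathrm{sys}(q)$ and the resulting simple closed curve bounds — and when it is null-homotopic the Gauss–Bonnet/turning-angle argument forces the two horizontal flow segments to coincide and the $\beta$-subarc to be trivial, i.e. $(t_a,s_a) = (t_b,s_b)$. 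Therefore any two of the $N$ preimages must have their $t$-coordinates at $\beta$-distance $\geq \mathrm{sys}(q)/4 \succeq \ell_{\min}(q)$ apart (else the embedding statement on an appropriate slab would identify them), so the $t$-coordinates $t_1,\dots,t_N$ are $\succeq \ell_{\min}(q)$-separated in a circle of circumference $\ell_{\beta}(q)$, giving $N \preceq \ell_{\beta}(q)/\ell_{\min}(q)$, which is the claimed bound $n \preceq \ell_{\beta}(q)/\ell_{\min}(q)$.

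\textbf{Main obstacle.} The genuinely delicate point is the bookkeeping in the Gauss–Bonnet step: one must verify that the closed curve obtained by splicing the $\beta$-subarc to the two horizontal flow segments can be taken simple (or is a concatenation of finitely many simple loops, to which Gauss–Bonnet applies one at a time) and has total turning $\leq \pi$, taking proper account of the two corners where $\beta$ meets a horizontal direction and of possible cancellation at the common image point $x$. Everything else — the linearity of $\Delta$ keeping displacements controlled, the length estimates, and the separation-counting in the circle — is routine, and the whole argument is a direct adaptation of the proof of Proposition \ref{prop:embed}; the excerpt's own phrasing ("can be proved using arguments similar to those in the proof of Proposition \ref{prop:embed}") signals that this is the intended route.
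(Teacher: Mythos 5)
Your proposal is correct and takes essentially the same approach the paper intends: the paper gives no standalone proof of Proposition \ref{prop:collar_embed}, saying only that it follows by arguments similar to the Gauss--Bonnet/systole contradiction of Proposition \ref{prop:embed} (a short closed curve of controlled turning must be essential, contradicting $\ell_{\min}(q) \leq \mathrm{sys}(q)$), with the multiplicity bound coming from covering $[0,\ell_{\beta}(q)]$ by $\preceq \ell_{\beta}(q)/\ell_{\min}(q)$ slabs of width $\ell_{\min}(q)/4$ — which is exactly your argument. One small correction to your bookkeeping: the total turning of your spliced curve need not be $\leq \pi$ (it can approach $2\pi$ when $\beta$ is nearly horizontal), but this does not matter, since any simple subloop of it consists of one straight sub-arc of $\beta$ and one horizontal sub-arc meeting at two transverse corners, each of exterior angle strictly less than $\pi$ because $\beta$ is non-horizontal, so its total turning is $< 2\pi$ and Gauss--Bonnet still forces it to be essential; similarly, in the counting step the separation the first part actually gives between $t$-coordinates of distinct preimages is $> \ell_{\min}(q)/4$ (not $\mathrm{sys}(q)/4$), which already yields $n \preceq \ell_{\beta}(q)/\ell_{\min}(q)$.
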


The quintessential feature of the immersed collar $\iota$ is that it supports a sufficiently regular bump function that can be integrated to estimate the number of intersections between horizontal segments of $q$ and $\beta$. We construct this bump function in the following way. Fix a smooth symmetric function $\varphi \colon [-1,1] \to [0,1]$ with $\supp(\varphi) \subseteq (-1,1)$ and such that
\[
\int_{-1}^{1} \varphi(x) \thinspace dx = 1.
\]
For every $\epsilon > 0$ let $\varphi_\epsilon \colon [-\epsilon,\epsilon] \to [0,1/\epsilon]$ be the function given by $\varphi_\epsilon(x) := \varphi(x/\epsilon)/\epsilon$. Consider the function $\phi^* \colon [0,\ell_{\beta}(q)] \times [-\ell_{\min}(q)/8,\ell_{\min}(q)/8] \to [0,8/\ell_{\min}(q)]$ given by
\[
\phi^*(t,s) := \varphi_{\ell_{\min}(q)/8}(s).
\]
The immersed collar $\iota$ supports the non-negative bump function $\phi \colon X \to \mathbf{R}$ given by
\[
\phi(x) := \sum_{(t,s) \in \iota^{-1}(x)} \phi^*(t,s),
\]
where we interpret empty sums as having value $0$. 

By Proposition \ref{prop:collar_embed}, this function is a well defined, continuous, piecewise smooth, and in particular belongs to the weighted Sobolev space $H^1_q(X)$. Recall that $\Im(q)$ denotes the horizontal foliaton of $q$. Denote by $i(\beta,\Im(q))$ the total transverse measure of $\beta$ with respect to $\Im(q)$. By construction,
\[
\int_X \phi \thinspace dA_q = i(\beta,\Im(q)).
\]

Recall that $\ell_{\min}^\dagger(q) := \min\{1,\ell_{\min}(q)\}$. Denote $v_\beta(q) := i(\beta,\Im(q))/\ell_\beta(q)$. Let $\|\varphi\|_{\mathcal{C}^1}$ be the $\mathcal{C}^1$ norm of $\varphi$. The following result quantifies the regularity of the bump function $\phi$.

\begin{proposition}
	\label{prop:regularity_1}
	The function $\phi \in H^1_q(X)$ satisfies
	\[
	\|\phi\|_{1,q} \preceq \mathrm{Area}(q)^{1/2} \cdot \ell_{\beta}(q) \cdot \smash{\ell_{\min}^\dagger}(q)^{-3} \cdot v_\beta(q)^{-1} \cdot \| \varphi\|_{\mathcal{C}^1}.
	\]
\end{proposition}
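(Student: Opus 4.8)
The plan is to estimate each of the three pieces of the norm $\|\phi\|_{1,q} = \|\phi\circ h\|_{1,\omega} = \|\phi\circ h\|_{0,\omega} + \|\partial x_\omega(\phi\circ h)\|_{0,\omega} + \|\partial y_\omega(\phi\circ h)\|_{0,\omega}$ separately, working upstairs on the canonical double cover where the metric is translation-invariant and one can compute with the explicit local picture of the immersed collar $\iota$. Throughout I will use that $\phi$ is supported on the image of $\iota$, that $\phi$ is $\preceq \ell_{\min}^\dagger(q)^{-1}$ pointwise (since $\phi^* \le 8/\ell_{\min}(q)$ and $\iota$ is $\preceq \ell_\beta(q)/\ell_{\min}(q)$ to one, so crudely $\phi \preceq \ell_\beta(q)/\ell_{\min}(q)^2$, though one must be careful about which bound is sharp where), and that $\int_X \phi\, dA_q = i(\beta,\Im(q))$.

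\emph{First, the $L^2$ term.} By Cauchy--Schwarz or directly, $\|\phi\|_{0,q}^2 = \int_X \phi^2\, dA_q \le \|\phi\|_\infty \int_X \phi\, dA_q$. Since $\phi^*(t,s) = \varphi_{\ell_{\min}(q)/8}(s) \preceq \ell_{\min}(q)^{-1}\|\varphi\|_{\mathcal{C}^0}$ and $\iota$ is at most $n \preceq \ell_\beta(q)/\ell_{\min}(q)$ to one by Proposition \ref{prop:collar_embed}, we get $\|\phi\|_\infty \preceq \ell_\beta(q)\cdot \ell_{\min}^\dagger(q)^{-2}\cdot\|\varphi\|_{\mathcal{C}^0}$. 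Combining with $\int_X\phi\,dA_q = i(\beta,\Im(q)) = v_\beta(q)\cdot\ell_\beta(q)$ yields $\|\phi\|_{0,q}^2 \preceq \ell_\beta(q)^2\cdot \ell_{\min}^\dagger(q)^{-2}\cdot v_\beta(q)\cdot\|\varphi\|_{\mathcal{C}^0}$, hence $\|\phi\|_{0,q} \preceq \ell_\beta(q)\cdot\ell_{\min}^\dagger(q)^{-1}\cdot v_\beta(q)^{1/2}\cdot\|\varphi\|_{\mathcal{C}^0}^{1/2}$. This is comfortably dominated by the claimed bound once one notes $v_\beta(q) \le 1$ (the transverse measure of $\beta$ is at most its flat length) and $\mathrm{Area}(q)^{1/2} \geq \ell_{\min}^\dagger(q)$ up to constants, since $\mathrm{Area}(q) \gtrsim \ell_{\min}(q)^2$ always and $\mathrm{Area}(q)\succeq 1$ when $\ell_{\min}(q)$ is large; the extra powers of $\ell_{\min}^\dagger(q)^{-1}$ and $v_\beta(q)^{-1}$ and $\mathrm{Area}(q)^{1/2}$ in the target are slack here.

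\emph{Second, the derivative terms.} Along horizontal leaves $\partial x_\omega(\phi\circ h)$ sees the derivative of $\Delta(t)$ in $t$ composed with the $s$-derivative of $\varphi_{\ell_{\min}/8}$ plus the direct $s$-derivative; the direct $s$-derivative is $\varphi_{\ell_{\min}/8}'(s) \preceq \ell_{\min}(q)^{-2}\|\varphi\|_{\mathcal{C}^1}$ and is the dominant contribution, since the slope of $\Delta$ is $O(1)$ (it interpolates linearly between values in an interval of length $\preceq\ell_{\min}(q)$ over $t$-intervals of length $\succeq\ell_{\min}(q)$). A bump with $s$-derivative $\preceq \ell_{\min}(q)^{-2}\|\varphi\|_{\mathcal{C}^1}$, supported on a region of $A_q$-area $\preceq \ell_\beta(q)\cdot\ell_{\min}(q)$ counted with multiplicity $\preceq\ell_\beta(q)/\ell_{\min}(q)$ — that is, total area $\preceq \ell_\beta(q)^2$ — contributes $\|\partial x_\omega(\phi\circ h)\|_{0,q}^2 \preceq \ell_{\min}(q)^{-4}\cdot\ell_\beta(q)^2\cdot\|\varphi\|_{\mathcal{C}^1}^2$, so $\|\partial x_\omega(\phi\circ h)\|_{0,q} \preceq \ell_\beta(q)\cdot\ell_{\min}^\dagger(q)^{-2}\cdot\|\varphi\|_{\mathcal{C}^1}$. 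The $\partial y_\omega$-derivative is tamer: $\phi^*$ does not depend on $t$ except through $\Delta(t)$, so $\partial y_\omega(\phi\circ h)$ again only picks up $\Delta'(t)\cdot\varphi'_{\ell_{\min}/8}$ and obeys the same bound. So both derivative terms are $\preceq \ell_\beta(q)\cdot\ell_{\min}^\dagger(q)^{-2}\cdot\|\varphi\|_{\mathcal{C}^1}$.

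\emph{Assembling.} The derivative terms dominate, giving $\|\phi\|_{1,q} \preceq \ell_\beta(q)\cdot\ell_{\min}^\dagger(q)^{-2}\cdot\|\varphi\|_{\mathcal{C}^1}$, which is even better than the target (which has $\ell_{\min}^\dagger(q)^{-3}$ and the extra factors $\mathrm{Area}(q)^{1/2}$ and $v_\beta(q)^{-1}\ge 1$) provided $\mathrm{Area}(q)^{1/2}\cdot v_\beta(q)^{-1}\cdot\ell_{\min}^\dagger(q)^{-1}\succeq 1$, which holds since $\mathrm{Area}(q)\succeq\ell_{\min}(q)^2$ and $v_\beta(q)\le 1$. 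The main obstacle I anticipate is bookkeeping the multiplicity of $\iota$ correctly in the area computations: one must not conflate the ``$n$ to $1$'' bound of Proposition \ref{prop:collar_embed} (which controls how $\phi$ is built as a sum) with the area of the image, and one must check that summing $\phi^*$ over the fiber of $\iota$ does not create cancellation-free pile-ups larger than the pointwise bound used above; doing this cleanly is where the extra power $\ell_{\min}^\dagger(q)^{-1}$ of slack in the statement presumably gets absorbed. I would also double-check whether the paper's convention forces the weaker exponent because a later application needs the bound in exactly that normalized form.
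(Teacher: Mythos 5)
There is a genuine gap, and it sits exactly where the statement's factor $v_\beta(q)^{-1}$ comes from: your claim that the $\partial y_\omega$-derivative is ``tamer'' and obeys the same bound as the horizontal derivative is false, and consequently your assembled bound $\|\phi\|_{1,q} \preceq \ell_\beta(q)\cdot\ell_{\min}^\dagger(q)^{-2}\cdot\|\varphi\|_{\mathcal{C}^1}$, with no dependence on $v_\beta(q)$, is wrong in general. The collar coordinates $(t,s)$ are not an orthonormal frame adapted to $(\partial x_q,\partial y_q)$: the map $\iota$ sends $\partial s$ to the horizontal direction $\pm\partial x_q$ and sends $\partial t$ (up to the $O(1)$ shear $\Delta'$) to the unit direction $\partial\beta$ along $\beta$, and $\beta$ is only assumed non-horizontal. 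Writing the slope of $\beta$ as $m$, one has $\partial y_q = \pm(1+1/m^2)^{1/2}\,\partial\beta \pm (1/m)\,\partial x_q$, so the vertical derivative of $\phi$ picks up a factor $(1+1/m^2)^{1/2} = \ell_\beta(q)/i(\beta,\Im(q)) = v_\beta(q)^{-1}$ on top of the $\ell_{\min}^{-2}$ coming from $\varphi'_{\ell_{\min}/8}$. Geometrically: $\phi$ is essentially constant along the horizontal translates of $\beta$ foliating the collar and varies across them; when $\beta$ is nearly horizontal the collar (of horizontal width $\sim\ell_{\min}$) has vertical thickness only $\sim m\,\ell_{\min}$, so a vertical segment crosses the full height $\sim\ell_{\min}^{-1}$ of the bump over a distance $\sim m\,\ell_{\min}$, forcing $\|\partial y_q\phi\|_\infty \sim (m\,\ell_{\min}^2)^{-1}$. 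Taking a cylinder curve of very small slope on a fixed flat surface keeps $\ell_\beta(q)$ and $\ell_{\min}(q)$ bounded while $v_\beta(q)\to 0$, so your $v_\beta$-free bound cannot hold; the factor $v_\beta(q)^{-1}$ in the proposition is not slack but the whole point. This is precisely the step the paper's proof supplies via the identity $\partial y_q = \pm(1+1/m^2)^{1/2}\partial\beta\pm(1/m)\partial x_q$ together with $(1+1/m^2)^{1/2}=v_\beta(q)^{-1}$.

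The rest of your outline is essentially sound and in places sharper than needed: the $L^2$ bound for $\phi$ itself, and the horizontal-derivative bound obtained by integrating $|\partial_s\phi^*|^2$ over the collar (with a Cauchy--Schwarz over the $\preceq\ell_\beta/\ell_{\min}$-fold fibers of $\iota$, the bookkeeping you rightly flag), would give a better power of $\ell_{\min}^\dagger(q)$ than the paper's cruder route $\|\cdot\|_{0,q}\preceq\mathrm{Area}(q)^{1/2}\cdot\|\cdot\|_\infty$; the paper is not optimizing these exponents. But to repair the proof you must redo the vertical term by changing frame from $(\partial\beta,\partial x_q)$ to $(\partial x_q,\partial y_q)$ as above, which reinstates the factor $v_\beta(q)^{-1}$ and, with the sup-norm route, the stated exponent $\ell_{\min}^\dagger(q)^{-3}$.
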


\begin{proof}
	We begin by collecting a couple of facts about the functions used to construct $\phi$. We denote $\sup$ norms by $\|\cdot \|_{\infty}$. Directly from the definition of $\Delta$ one can show that
	\begin{equation}
	\label{eq:delta_bd}
	\|\partial_t \Delta \|_\infty \preceq 1.
	\end{equation}
	Directly from the definition of $\phi^*$ one can show that
	\begin{equation}
	\label{eq:psi_bound}
	\|\phi^*\|_{\infty} \preceq \ell_{\min}(q)^{-1} \cdot \|\varphi\|_{\mathcal{C}^1}, \quad \partial_t \phi^* \equiv 0, \quad \|\partial_s \phi^*\|_\infty \preceq \ell_{\min}(q)^{-2} \cdot \|\varphi\|_{\mathcal{C}^1}.
	\end{equation}
	
	Let $\partial x_q$ and $\partial y_q$ be a choice of measurable orthonormal vector fields on $X$ defined away from the zeroes of $q$ and tangent to the singular foliations $\Im(q)$ and $\Re(q)$, respectively. To prove the desired estimate, we first bound the $\sup$ norms $\|\phi\|_{\infty}$, $\|\partial x_q \phi\|_{\infty}$, and $\|\partial y_q \phi \|_\infty$. 
	
	Directly from the definition of $\phi$, Proposition \ref{prop:collar_embed}, and (\ref{eq:psi_bound}), one can check that
	\begin{equation}
	\label{eq:bd_1}
	\|\phi\|_{\infty} \preceq \ell_{\beta}(q) \cdot \ell_{\min}(q)^{-2} \cdot \|\varphi\|_{\mathcal{C}^1}.
	\end{equation}
	Notice that if $\Delta \equiv 0$ then $\iota$ identifies the $\partial t$ vectors on its domain with unit vectors parallel to $\beta$ on its image. Denote these vectors by $\partial \beta$. More generally, if $\Delta \not\equiv 0$, (\ref{eq:delta_bd}) ensures $\iota$ identifies the vectors $\partial \beta$ on its image with vectors on its domain of the form
	\[
	\partial t + a \cdot \partial s, \quad |a| \preceq 1.
	\]
	Notice that, up to sign, $\iota$ identifies the $\partial s$ vectors on its domain with the vectors $\partial x_q$ on its image. From these facts, Proposition \ref{prop:collar_embed}, and (\ref{eq:psi_bound}),  it follows that
	\begin{equation}
	\label{eq:bd_2}
	\|\partial \beta \phi \|_{\infty} \preceq \ell_{\beta}(q) \cdot \ell_{\min}(q)^{-3} \cdot \|\varphi\|_{\mathcal{C}^1}, \quad \|\partial x_q \phi \|_{\infty} \preceq \ell_{\beta}(q) \cdot \ell_{\min}(q)^{-3} \cdot \|\varphi\|_{\mathcal{C}^1}.
	\end{equation}
	Let $m > 0$ be the absolute value of the slope of $\beta$. A direct computation shows that
	\[
	\partial y_q = \pm(1 + 1/m^2)^{1/2} \thinspace \partial \beta \pm (1/m) \thinspace \partial x_q.
	\]
	Using this equality, the bound $1/m \leq (1+1/m^2)^{1/2}$, and (\ref{eq:bd_2}), we deduce
	\begin{equation}
	\label{eq:bd_3}
	\| \partial y_q \phi \|_{\infty} \preceq (1+ 1/m^2)^{1/2} \cdot \ell_{\beta}(q) \cdot \ell_{\min}(q)^{-3} \cdot \|\varphi\|_{\mathcal{C}^1}.
	\end{equation}
	
	By general measure theory considerations,
	\[
	\|\phi\|_{1,q} \preceq \mathrm{Area}(q)^{1/2} \cdot (\|\phi\|_\infty + \|\partial x_q \phi\|_\infty + \|\partial y_q \phi\|_{\infty}).
	\]
	This together with (\ref{eq:bd_1}), (\ref{eq:bd_2}), and (\ref{eq:bd_3}) implies
	\begin{equation}
	\label{eq:bd_4}
	\|\phi\|_{1,q} \preceq \mathrm{Area}(q)^{1/2} \cdot (1+ 1/m^2)^{1/2} \cdot \ell_{\beta}(q) \cdot \smash{\ell_{\min}^\dagger}(q)^{-3} \cdot \|\varphi\|_{\mathcal{C}^1}.
	\end{equation}
	A direct computation shows that
	\[
	(1+1/m^2)^{1/2} = \ell_{\beta}(q)/i(\beta,\Im(q))= v_\beta(q)^{-1}.
	\]
	From this and (\ref{eq:bd_4}) we conclude
	\[
	\|\phi\|_{1,q} \preceq \mathrm{Area}(q)^{1/2}  \cdot \ell_{\beta}(q) \cdot \smash{\ell_{\min}^\dagger}(q)^{-3} \cdot v_\beta(q) \cdot \|\varphi\|_{\mathcal{C}^1}. \qedhere
	\]
\end{proof}

Recall that $\Re(q)$ denotes the vertical foliation of $q$ endowed with its natural transverse measure. The following proposition shows that the function $\phi$ can be integrated to estimate the number of intersections between horizontal segments of $q$ and $\beta$.

\begin{proposition}
	\label{prop:bump_int_1}
	Let $\gamma$ be a horizontal segment of $q$. Then,
	\[
	\bigg\vert \# (\gamma \cap \beta) -\int_\gamma \phi \thinspace d\Re(q) \bigg\vert \preceq \frac{\ell_{\beta}(q)}{\ell_{\min}(q)}.
	\]
\end{proposition}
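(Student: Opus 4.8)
The plan is to compare the count $\#(\gamma\cap\beta)$ with the integral $\int_\gamma \phi\, d\Re(q)$ by exploiting that $\phi$ is built from the immersed collar $\iota$, which near each intersection point of $\gamma$ with $\beta$ looks like a flat rectangle with a bump function of total horizontal integral $1$. First I would set up the preimage: since $\gamma$ is horizontal, its lift to the domain $[0,\ell_\beta(q)]\times[-\ell_{\min}(q)/8,\ell_{\min}(q)/8]$ of $\iota$ (via the local sheets, using Proposition \ref{prop:collar_embed}) is a union of horizontal segments in the domain, along which $\Delta(t)$ is essentially constant and the integrand $\phi^*(t,s)=\varphi_{\ell_{\min}(q)/8}(s)$ is a fixed bump of total mass $1$ in the $s$-variable. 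Thus each full horizontal crossing of the collar contributes exactly $1$ to the integral, up to the Jacobian factor coming from reparametrizing by $d\Re(q)$ rather than arclength along $\gamma$.

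Next I would match crossings of the collar with intersections of $\gamma$ with $\beta$. A horizontal segment of $\gamma$ that crosses the collar all the way from one side $s=-\ell_{\min}(q)/8$ to the other $s=+\ell_{\min}(q)/8$ meets the core curve $\beta$ (which sits at $s=-\Delta(t)$, a graph over $t$ with $|\Delta|<\ell_{\min}(q)/8$) exactly once, and contributes $1$ to the integral; so these crossings cancel in the difference. The discrepancy comes only from (a) crossings of the collar that do not run all the way across — i.e. that enter or exit through a vertical side of a local rectangle — and (b) intersections of $\gamma$ with $\beta$ that occur near the endpoints of $\gamma$ or are otherwise not part of a full crossing. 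To bound these I would use that each such exceptional crossing must meet one of the vertical boundary segments of the local embedded rectangles of Proposition \ref{prop:collar_embed}; such a vertical segment has length $\le \ell_{\min}(q)/4 \le \mathrm{sys}(q)/2$, so Proposition \ref{eq:int_bound} bounds the number of times $\gamma$ hits it by $\preceq \ell_\gamma(q)/\mathrm{sys}(q)$. Wait—that is not quite the bound we want; instead I would count exceptional crossings by the number of vertical boundary segments encountered as one traverses $\beta$, which by the collar-embedding discussion is $\preceq \ell_\beta(q)/\ell_{\min}(q)$, and each such boundary segment can be crossed by $\gamma$ only $\preceq 1 + \ell_\gamma(q)/\mathrm{sys}(q)$ times. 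Here is the subtlety I would have to resolve: the bound in the statement is uniform in $\gamma$, depending only on $\ell_\beta(q)/\ell_{\min}(q)$, so I cannot afford a term proportional to $\ell_\gamma(q)$. The resolution is that $\gamma$ has \emph{no} horizontal subsegment parallel to the vertical boundary segments of the collar (those boundary segments are vertical, $\gamma$ is horizontal), and more importantly a horizontal $\gamma$ can exit the collar through a given vertical side at most a bounded number of times relative to how the $t$-coordinate ranges — in fact each bad crossing corresponds to a distinct point where $\gamma$ meets the zero-set-induced cut locus of the collar, and these are indexed by the $\preceq \ell_\beta(q)/\ell_{\min}(q)$ places where $s_{\min},s_{\max}$ of $\beta$ are small.

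So the key steps, in order, are: (1) lift $\gamma$ to the domain of $\iota$ and decompose it into arcs, each lying in one local embedded rectangle; (2) observe that an arc crossing a rectangle from one horizontal side to the other contributes exactly $1$ both to $\#(\gamma\cap\beta)$ (restricted to that rectangle, counting the core-curve crossing) and to $\int\phi\,d\Re(q)$, since $\int\varphi=1$ and $d\Re(q)$ pulls back to $ds$ along horizontal segments; (3) bound the number of \emph{incomplete} arcs — those touching a vertical side of some local rectangle — by $\preceq \ell_\beta(q)/\ell_{\min}(q)$, using that the vertical sides are indexed by the controlled set of times where $\beta$ comes $\ell_{\min}(q)$-close to a zero, together with the no-bigon / Gauss–Bonnet type argument of Proposition \ref{eq:int_bound} applied to the short vertical sides to see that each contributes boundedly; (4) conclude that $\#(\gamma\cap\beta)$ and $\int_\gamma\phi\,d\Re(q)$ differ by the sum of these exceptional contributions, hence by $\preceq \ell_\beta(q)/\ell_{\min}(q)$. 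The main obstacle is step (3): making precise that the ``bad'' part of the overlap between $\gamma$ and the collar is controlled by the combinatorial complexity of $\beta$ inside its own collar (i.e. $\preceq\ell_\beta(q)/\ell_{\min}(q)$) and is \emph{not} proportional to $\ell_\gamma(q)$, which requires carefully tracking how a horizontal line can repeatedly enter and leave the immersed (non-embedded) collar and pairing each re-entry with a passage near a zero of $q$.
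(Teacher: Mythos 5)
There is a genuine gap in your step (3), and it stems from a geometric mix-up about the collar. The sides $\{t_0\}\times[-\ell_{\min}(q)/8,\ell_{\min}(q)/8]$ of the local rectangles are ``vertical'' only in the $(t,s)$-domain: under $\iota$ they map to the \emph{horizontal} flow segments $s\mapsto h_{s+\Delta(t_0)}(\beta(t_0))$, i.e.\ to horizontal fibers of the collar, while it is the constant-$s$ sides that are transverse to the horizontal direction. So your claim that ``those boundary segments are vertical, $\gamma$ is horizontal'' is backwards, and the transversality count you build on it (via Proposition \ref{eq:int_bound} applied to $\gamma$ against these sides) does not apply — indeed it cannot, since $\gamma$ and the fibers are parallel. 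Your fallback, that each bad crossing is ``indexed by the $\preceq \ell_\beta(q)/\ell_{\min}(q)$ places where $s_{\min},s_{\max}$ of $\beta$ are small,'' is also not the right mechanism: a bad situation can occur with $\gamma$ ending on a strand of $\beta$ far from every zero, where $\Delta\equiv 0$; the shearing locus of the collar is irrelevant to where the discrepancies occur.

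The correct resolution — which also removes the $\ell_\gamma(q)$-dependence you rightly worried about — uses the parallelism you misread as transversality. Since $\gamma$ and each fiber are horizontal segments lying on the same horizontal leaf, a fiber whose $\varphi$-support meets $\gamma$ either has its support entirely contained in $\gamma$ (in which case it contributes exactly $1$ to the integral and its core point $\beta(t)$ contributes exactly $1$ to $\#(\gamma\cap\beta)$, so nothing is lost), or it must contain one of the two \emph{endpoints} of $\gamma$. Hence all bad situations are localized at the endpoints of $\gamma$, and their number is at most, for each endpoint, the number of fibers through that point, i.e.\ the number of times $\beta$ meets a horizontal segment of length $\leq \ell_{\min}(q)/2 \leq \mathrm{sys}(q)/2$ centered there. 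Proposition \ref{eq:int_bound}, applied with $\beta$ (whose saddle connections are non-horizontal, hence non-parallel to that short horizontal segment) as the long curve, bounds this by $1+2\ell_\beta(q)/\mathrm{sys}(q)\preceq \ell_\beta(q)/\ell_{\min}(q)$, uniformly in $\gamma$. This is the argument the paper gives; your steps (1)–(2) are consistent with it, but without the endpoint localization your step (3) does not close.
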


\begin{proof}
	Directly from the construction of $\phi$ we see that the quantities being compared coincide except when one of the following \textit{bad situations} happens: $\gamma$ intersects $\beta$ but $\gamma$ does not completely cross the immersed collar $\iota$, or $\gamma$ intersects the immersed collar $\iota$ but $\gamma$ does not intersect $\beta$. See Figure \ref{fig:immersed_collar_int} for examples. These bad situations can happen multiple times. See Figure \ref{fig:immersed_collar_int_mult} for an example. 
	
		\begin{figure}[h]
		\centering
		\begin{subfigure}[b]{0.4\textwidth}
			\centering
			\includegraphics[width=0.5\textwidth]{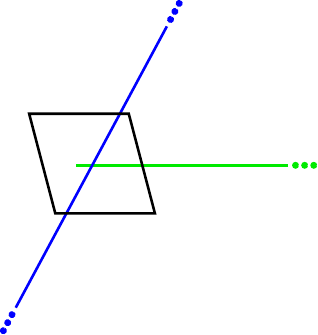}
			\caption{Case I.}
		\end{subfigure}
		\quad \quad \quad
		~ 
		\begin{subfigure}[b]{0.4\textwidth}
			\centering
			\includegraphics[width=0.5\textwidth]{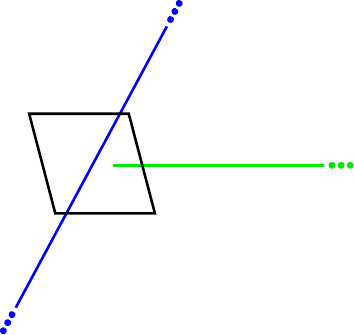}
			\caption{Case II.}
		\end{subfigure}
		\caption{Bad situations.} 
		\label{fig:immersed_collar_int}
	\end{figure}
	
	\begin{figure}[h!]
		\centering
		\includegraphics[width=.2\textwidth]{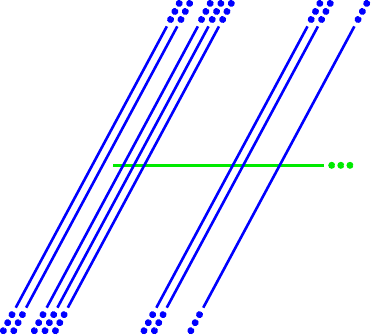}
		\caption{Multiple bad situations.} \label{fig:immersed_collar_int_mult} 
	\end{figure}
	
	Notice that, by construction, the support of $\phi$ is contained in horizontal segments of length $\ell_{\min}(q)/4$ across $\beta$. Thus, the number of times the bad situations can happen is bounded by twice, once per each endpoint of $\gamma$, the number of times $\beta$ can intersect a horizontal segment of length $\ell_{\min}(q)/2 \leq \mathrm{sys}(q)/2$. By Proposition \ref{eq:int_bound}, this quantity is bounded above by $1 + 2 \ell_{\beta}(q)/\mathrm{sys}(q)\preceq \ell_{\beta}(q)/\ell_{\min}(q)$.
\end{proof}

We summarize the main properties of the constructions above in the following proposition. This statement is tailored to applications in later sections. In particular, we restrict to unit area quadratic differentials and consider $\|\varphi\|_{\mathcal{C}^1}$ as a universal constant.

\begin{proposition}
	\label{eq:bump_1}
	Let $q$ be a unit area quadratic differential on a closed Riemann surface $X$ of genus $g \geq 2$ and $\beta$ be a cylinder curve of $q$. Then, there exists a non-negative, continuous, piecewise smooth function $\phi \in H_q^1(X)$ satisfying
	\begin{enumerate}
		\item $\displaystyle \int_X \phi \thinspace dA_q = i(\beta,\Im(q))$,
		\item $\|\phi\|_{1,q} \preceq \ell_{\beta}(q) \cdot \smash{\ell_{\min}^\dagger}(q)^{-3} \cdot v_\beta(q)^{-1}$,
	\end{enumerate}
	and such that for every horizontal segment $\gamma$ of $q$,
	\[
	\bigg\vert \# (\gamma \cap \beta) -\int_\gamma \phi \thinspace d\Re(q) \bigg\vert \preceq \frac{\ell_{\beta}(q)}{\ell_{\min}(q)}.
	\]
\end{proposition}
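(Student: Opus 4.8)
The plan is to assemble Proposition \ref{eq:bump_1} directly from the three ingredients already established for a non-horizontal cylinder curve $\beta$ of $q$: the immersed collar $\iota$, the bump function $\phi$ it supports, and the two quantitative estimates Propositions \ref{prop:regularity_1} and \ref{prop:bump_int_1}. The one genuinely new case to address is when $\beta$ is a horizontal cylinder curve of $q$, which was excluded from the construction above; this is the step I expect to require the most care, though it should reduce to a degenerate version of the same picture.

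First I would dispose of the horizontal case. If $\beta$ is horizontal, then $i(\beta,\Im(q)) = 0$ and $\# (\gamma \cap \beta) = 0$ for every horizontal segment $\gamma$ of $q$, since $\gamma$ and $\beta$ are parallel and a horizontal segment meeting a horizontal cylinder curve would have to lie along it, contributing no transverse intersections; moreover such a $\beta$ can be isotoped off any horizontal segment within the horizontal cylinder. Thus one may simply take $\phi \equiv 0$: it is trivially non-negative, continuous, piecewise smooth, lies in $H^1_q(X)$ with $\|\phi\|_{1,q} = 0$, satisfies $\int_X \phi \, dA_q = 0 = i(\beta,\Im(q))$, and makes the final displayed inequality read $0 \leq \ell_\beta(q)/\ell_{\min}(q)$, which holds. (Alternatively one observes $v_\beta(q)^{-1} = +\infty$ in this case, so item (2) is vacuous; phrasing it via $\phi \equiv 0$ avoids this.)

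For the main case, where $\beta$ is a non-horizontal cylinder curve, I would take $\phi$ to be precisely the bump function constructed above from the immersed collar $\iota(t,s) = h_{s+\Delta(t)}(\beta(t))$ and the profile $\phi^*(t,s) = \varphi_{\ell_{\min}(q)/8}(s)$. Property (1), $\int_X \phi \, dA_q = i(\beta,\Im(q))$, was already recorded after the construction of $\phi$: it follows because $\iota$ is a local isometry off the zeroes, $\int \varphi_{\ell_{\min}(q)/8} = 1$ on each horizontal fiber, and integrating the fiberwise constant-mass-one function over $\beta$ (measured transversely to $\Im(q)$, i.e.\ by $\Re(q)$-length, which is exactly $i(\beta,\Im(q))$) gives the claim; regularity and membership in $H^1_q(X)$ come from Proposition \ref{prop:collar_embed}. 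Property (2) is Proposition \ref{prop:regularity_1} with $\mathrm{Area}(q) = 1$ and $\|\varphi\|_{\mathcal{C}^1}$ absorbed into the implied constant, noting the correction that the final bound there should read with $v_\beta(q)^{-1}$ (as in the line $(1+1/m^2)^{1/2} = v_\beta(q)^{-1}$). The final intersection estimate is exactly Proposition \ref{prop:bump_int_1}.

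The only subtlety worth flagging is bookkeeping consistency between the $v_\beta(q)$ and $v_\beta(q)^{-1}$ factors across Propositions \ref{prop:regularity_1} and \ref{eq:bump_1}: since $v_\beta(q) = i(\beta,\Im(q))/\ell_\beta(q) = (1+1/m^2)^{-1/2} \le 1$, the correct Sobolev bound carries $v_\beta(q)^{-1} = (1+1/m^2)^{1/2}$, and this is what propagates into item (2). Apart from reconciling that factor, no new estimates are needed — the proof is a direct quotation of the preceding three results together with the trivial horizontal case.
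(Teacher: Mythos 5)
Your proposal is correct and follows essentially the same route as the paper: Proposition \ref{eq:bump_1} is stated there precisely as a summary of the collar construction together with Propositions \ref{prop:collar_embed}, \ref{prop:regularity_1}, and \ref{prop:bump_int_1}, specialized to $\mathrm{Area}(q)=1$ with $\|\varphi\|_{\mathcal{C}^1}$ treated as a universal constant, and you correctly note that the statement of Proposition \ref{prop:regularity_1} (with $v_\beta(q)^{-1}$) is what propagates, the stray $v_\beta(q)$ in the last line of its proof being a typo. Your separate treatment of the horizontal cylinder case via $\phi\equiv 0$ is a reasonable extra, consistent with the fact that the construction (and all later applications, where $\beta$ has no horizontal segments) implicitly assumes $\beta$ non-horizontal.
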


\subsection*{Immersed collars and bump functions of saddle connections.} Let $q$ be a quadratic differential on a closed Riemann surface $X$ of genus $g \geq 2$ and $\beta$ be a non-horizontal saddle connection of $q$. We construct an immersed collar of $\beta$ by following a procedure similar to the one introduced above. See Figure \ref{fig:immersed_collar_saddle} for an example. To simplify the notation, we assume $q$ only has simple zeroes. All the constructions that follow can be adapted to the general case via minor modifications.

\begin{figure}[h!]
	\centering
	\includegraphics[width=.35\textwidth]{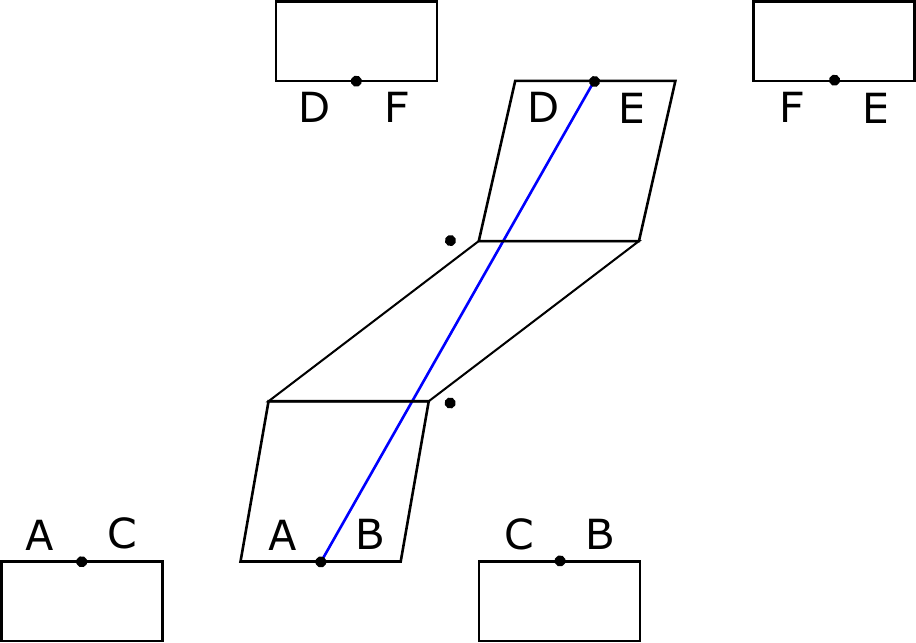}
	\caption{Constructing an immersed collar of a saddle connection.} \label{fig:immersed_collar_saddle} 
\end{figure}

Fix an orientation on $\beta$ and consider a unit speed parametrization $\beta \colon [0,\ell_{\beta}(q)] \to X$ consistent with this orientation. Starting from any point in the interior of $\beta$ we can flow horizontally at unit speed for a maximal interval of times until we hit a zero of $q$. Starting from any of the two endpoints of $\beta$ we can flow horizontally at unit speed along the two singular leaves of $\Im(q)$ closest to $\beta$ for a maximal interval of times until we hit a zero of $q$. For every $t \in [0,\ell_{\beta}(q)]$ denote by $[s_{\min}(\beta(t)),s_{\max}(\beta(t))]$ the corresponding interval. For every $t \in [0,\ell_{\beta}(q)]$ and every $s \in [s_{\min}(\beta(t)),s_{\max}(\beta(t))]$ denote by $h_s(\beta(t)) \in X$ the point reached by flowing horizontally at unit speed from $\beta(t)$ for time $s$.

Notice that, for every $t \in [0,\ell_{\beta}(q)]$, it can never be the case that $s_{\min}(\beta(t)) \geq -\ell_{\min}(q)/8$ and $s_{\max}(\beta(t)) \leq \ell_{\min}(q)/8$, else $q$ would have a saddle connection of length $\leq \ell_{\min}(q)/4$. A similar argument shows that if $t_0,t_1 \in [0,\ell_{\beta}(q)]$ are two times such that $s_{\min}(\beta(t_i)) \geq -\ell_{\min}(q)/8$ or $s_{\max}(\beta(t_i)) \leq \ell_{\min}(q)/8$ then $|t_1  - t_0| \geq \ell_{\min}(q)/4$. In particular, there are $\preceq \ell_{\beta}(q)/\ell_{\min}(q)$ times $t \in [0,\ell_{\beta}(q)]$ such that $s_{\min}(\beta(t)) \geq -\ell_{\min}(q)/8$ or $s_{\max}(\beta(t)) \leq \ell_{\min}(q)/8$. For each of ones of these times define $\Delta(t) \in (-\ell_{\min}(q)/8,\ell_{\min}(q)/8)$ as
\[
\Delta(t) := \left \lbrace
\begin{array}{c l}
s_{\min}(\beta(t))/2 + \ell_{\min}(q)/8 & \text{if } s_{\min}(\beta(t)) \geq - \ell_{\min}(q)/8,\\
-\ell_{\min}(q)/8 + s_{\max}(\beta(t))/2 & \text{if } s_{\max}(\beta(t))\leq \ell_{\min}(q)/8.
\end{array} \right. 
\]
Fix $\Delta(0) := 0$ and $\Delta(\ell_{\beta}(q)) := 0$. Denote by $\Delta \colon [0, \ell_{\beta}(q)] \to (-\ell_{\min}(q)/8,\ell_{\min}(q)/8)$ the linear interpolation of these values. 

We construct a singular rectangle with two cone points as in Figure \ref{fig:sing_rect}. Consider the rectangles
\begin{gather*}
R := [0,\ell_{\beta}(q)] \times [-\ell_{\min}(q)/8,\ell_{\min}(q)/8],\\
R_{0,0}:= [-\ell_{\min}(q)/8,0] \times [-\ell_{\min}(q)/8,\ell_{\min}(q)/8], \\
R_{0,1}:= [-\ell_{\min}(q)/8,0] \times [-\ell_{\min}(q)/8,\ell_{\min}(q)/8], \\
R_{1,0} := [0,\ell_{\min}(q)/8] \times [-\ell_{\min}(q)/8,\ell_{\min}(q)/8], \\
R_{1,1} := [0,\ell_{\min}(q)/8] \times [-\ell_{\min}(q)/8,\ell_{\min}(q)/8]. 
\end{gather*}
On these rectangles consider the equivalence relation $\sim$ generated by the identifications
\begin{gather*}
(0,s) \in R \sim (0,s) \in R_{0,0} \text{ if } s \leq 0, \\
(0,s) \in R \sim (0,s) \in R_{0,1} \text{ if } s \geq 0, \\
(0,s) \in R_{0,0} \sim (0,-s) \in R_{0,1} \text{ if } s \geq 0, \\
(\ell_{\beta}(q),s) \in R \sim (0,s) \in R_{1,0} \text{ if } s \leq 0, \\
(\ell_{\beta}(q),s) \in R \sim (0,s) \in R_{1,1} \text{ if } s \geq 0, \\
(0,s) \in R_{0,0} \sim (0,-s) \in R_{1,1} \text{ if } s \geq 0.
\end{gather*}
Consider the singular rectangle with two cone points
\[
R_* := R \sqcup R_{0,0} \sqcup R_{0,1} \sqcup R_{1,0} \sqcup R_{1,1} \thinspace/ \sim.
\]

\begin{figure}[h!]
	\centering
	\includegraphics[width=.25\textwidth]{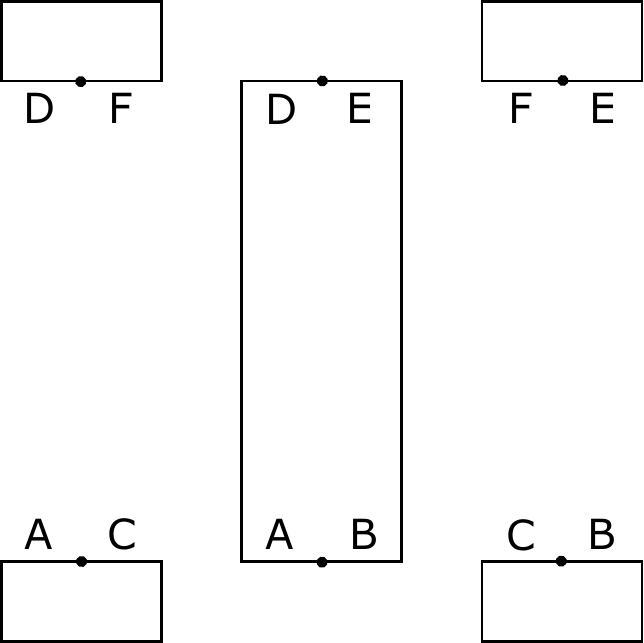}
	\caption{Singular rectangle with two cone points.} \label{fig:sing_rect} 
\end{figure}

Consider the three singular leaves of $\Re(q)$ at $\beta(0)$. Among the two singular leaves that are farthest from $\beta$, denote by $\sigma_{0,0}$ the singular leaf that is closest counterclockwise from $\beta$ and by $\sigma_{0,1}$ the singular leaf that is closest clockwise from $\beta$. Consider unit speed parametrizations $\sigma_{0,j} \colon [0,\ell_{\min}(q)/8] \to X$ of the initial segments of these leaves. For every $t \in [0,\ell_{\min}(q)/4]$ and every $s \in [-\ell_{\min}(q)/8,\ell_{\min}(q)/8]$ denote by $h_s(\sigma_{0,j}(t)) \in X$ the point reached by flowing horizontally at unit speed from $\sigma_{0,j}(t)$ for time $s$; when $t = 0$ we flow along the singular leaves of $\Im(q)$ closest to $\sigma_{0,j}$. These flows are well defined. Indeed, if this was not the case, then $q$ would have a saddle connection of length $\leq \ell_{\min}(q)/4$.

Analogously, consider the three singular leaves of $\Re(q)$ at $\beta(\ell_{\beta}(q))$. Among the two singular leaves that are farthest from $\beta$, denote by $\sigma_{1,0}$ the singular leaf that is closest clockwise from $\beta$ and by $\sigma_{1,1}$ the singular leaf that is closest counterclockwise from $\beta$. Consider unit speed parametrizations $\sigma_{1,j} \colon [0,\ell_{\min}(q)/8] \to X$ of the initial segments of these leaves. For every $t \in [0,\ell_{\min}(q)/4]$ and every $s \in [-\ell_{\min}(q)/8,\ell_{\min}(q)/8]$ denote by $h_s(\sigma_{1,j}(t)) \in X$ the point reached by flowing horizontally at unit speed from $\sigma_{1,j}(t)$ for time $s$; when $t = 0$ we flow along the singular leaves of $\Im(q)$ closest to $\sigma_{1,j}$. The same argument used above shows these flows are well defined.

Consider the immersed collar $\iota \colon R_* \to X$ of $\beta$ given by
\[
\iota(t,s) := \left\lbrace \begin{array}{c l}
h_{s+\Delta(t)}(\beta(t)) & \text{ if $(t,s) \in R$}, \\
h_s(\sigma_{0,j}(-t))&  \text{ if $(t,s) \in R_{0,j}$ for $j \in \{0,1\}$},\\
h_s(\sigma_{1,j}(t))& \text{ if $(t,s) \in R_{1,j}$ for $j \in \{0,1\}$}.
\end{array}  \right.
\]
By construction, this map is an immersion whose image contains exactly the zeroes of $q$ corresponding to the endpoints of $\beta$. The following result, which can be proved using arguments similar to those in the proof of Proposition \ref{prop:embed}, quantifies the extent to which this map is an embedding. 

\begin{proposition}
	\label{prop:collar_embed_2}
	The restriction of the map $\iota \colon R_* \to X$ to $R_{i,j}$  is an embedding for every $i,j \in \{0,1\}$. In addition, if $t_0,t_1 \in [0,\ell_{\beta}(q)]$ are such that $0 < |t_1 - t_0| \leq \ell_{\min}(q)/4$, then the restriction of $\iota$ to $[t_0,t_1] \times [-\ell_{\min}(q)/8,\ell_{\min}(q)/8] \subseteq R$ is an embedding. In particular, $\iota$ is $n$ to $1$ for $n \preceq \ell_{\beta}(q)/\ell_{\min}(q)$.
\end{proposition}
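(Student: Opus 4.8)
The plan is to run the argument of Proposition~\ref{prop:embed} on each of the claimed pieces. For each piece I will assume $\iota$ identifies two distinct points of it, join them by a short path in the domain, push the path forward to a closed curve on $X$ of flat length strictly less than $\mathrm{sys}(q)$ with controlled turning, and then obtain a contradiction from the Gauss--Bonnet theorem together with the inequality $\mathrm{sys}(q)\ge\ell_{\min}(q)$ recorded after Proposition~\ref{prop:flat_geod_1}; the only cone points that can appear are the endpoints $\beta(0),\beta(\ell_\beta(q))$ of $\beta$, and since their cone angles are $\ge 2\pi$ they only reinforce the Gauss--Bonnet inequality. Since each piece is compact and $X$ is Hausdorff, injectivity of $\iota$ on a piece already gives that the restriction is an embedding, so it suffices to prove injectivity.

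First I would treat a piece $R_{i,j}$. There the defining formula presents $\iota$ as horizontal flow applied to the unit-speed vertical segment $\sigma_{i,j}$, so $\iota|_{R_{i,j}}$ sends the coordinate vector fields to an orthonormal pair tangent to $\Im(q)$ and $\Re(q)$; hence it is a flat immersion preserving flat lengths of paths, and by construction its image contains no zero of $q$ other than possibly the one at the relevant endpoint of $\beta$. As $R_{i,j}$ has sides $\ell_{\min}(q)/8$ and $\ell_{\min}(q)/4$, joining two points with the same image by a path made of one horizontal and one vertical segment in the rectangle coordinates produces a closed curve on $X$ of flat length $\le 3\ell_{\min}(q)/8<\mathrm{sys}(q)$ and total turning $\le\pi$, which we may take to be simple exactly as in Proposition~\ref{prop:embed}. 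By Gauss--Bonnet such a curve cannot be null-homotopic, contradicting $\mathrm{sys}(q)\ge\ell_{\min}(q)$; so $\iota|_{R_{i,j}}$ is injective.

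Next consider a sub-rectangle $P_0:=[t_0,t_1]\times[-\ell_{\min}(q)/8,\ell_{\min}(q)/8]\subseteq R$ with $0<|t_1-t_0|\le\ell_{\min}(q)/4$. The one new feature relative to Proposition~\ref{prop:embed}, and the part of the proposition I expect to require the most care, is the shear $\Delta$: it prevents $\iota|_{P_0}$ from being an isometric rectangle, so a naive $L$-shaped path in the domain need not map to a short curve with few corners. I would circumvent this by routing the connecting arc through $\beta$ itself. Given $p=(t,s)$, $p'=(t',s')$ in $P_0$ with $\iota(p)=\iota(p')$, flow horizontally from $\iota(p)$ back to $\beta(t)$ (flat length $|s+\Delta(t)|<\ell_{\min}(q)/4$ because $|\Delta|<\ell_{\min}(q)/8$, and this segment meets no zero of $q$ except possibly its endpoint $\beta(t)$, by construction of $\Delta$), travel along $\beta$ from $\beta(t)$ to $\beta(t')$ (flat length $|t-t'|\le\ell_{\min}(q)/4$, a single straight segment since $\beta$ is a saddle connection), and flow horizontally from $\beta(t')$ to $\iota(p')=\iota(p)$. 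The resulting closed curve has flat length $<3\ell_{\min}(q)/4<\mathrm{sys}(q)$. Since $\beta(t)$ and $\beta(t')$ both lie on the unique horizontal leaf through $\iota(p)=\iota(p')$, after passing to a simple sub-loop this curve is a concatenation of two geodesic segments -- an arc of a horizontal leaf and an arc of $\beta$ -- meeting at two corners, i.e.\ a geodesic bigon; such a curve bounds no disk by Gauss--Bonnet (cf.\ Proposition~\ref{prop:flat_geod_2}, with the cone point $\beta(0)$ or $\beta(\ell_\beta(q))$ appearing as one of the two corners when $t_0=0$ or $t_1=\ell_\beta(q)$, its cone angle $\ge 2\pi$ only strengthening the inequality), hence is essential, contradicting $\mathrm{sys}(q)\ge\ell_{\min}(q)$. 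Thus $\iota|_{P_0}$ is injective.

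Finally, for the multiplicity bound I would partition $[0,\ell_\beta(q)]$ into $N\preceq\ell_\beta(q)/\ell_{\min}(q)$ subintervals of length $\le\ell_{\min}(q)/4$, covering $R$ by $N$ sub-rectangles of the type just handled; together with the four pieces $R_{i,j}$ this covers $R_*$ by $N+4$ sets on each of which $\iota$ is injective, so every point of $X$ has at most $N+4$ preimages. Since $\beta$ is a saddle connection we have $\ell_\beta(q)\ge\ell_{\min}(q)$, so $N+4\preceq\ell_\beta(q)/\ell_{\min}(q)$ and $\iota$ is $n$ to $1$ with $n\preceq\ell_\beta(q)/\ell_{\min}(q)$. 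To summarize, the routine ingredients are the short-path/Gauss--Bonnet mechanism of Proposition~\ref{prop:embed} and the bookkeeping around the cone points, while the genuinely new point is the choice of connecting arc for $P_0$: going through the straight saddle connection $\beta$ and using $|\Delta|<\ell_{\min}(q)/8$ is what keeps the pushed-forward curve both short and of bigon type.
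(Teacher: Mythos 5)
Your argument is correct and is essentially the approach the paper intends: the paper gives no written proof of Proposition \ref{prop:collar_embed_2}, asserting only that it follows by arguments similar to Proposition \ref{prop:embed}, which is exactly what you carry out piece by piece, with the short-closed-curve/Gauss--Bonnet contradiction against $\mathrm{sys}(q)\geq\ell_{\min}(q)$ and the covering argument for the multiplicity bound. Your one genuine addition — routing the connecting arc through $\beta$ itself to neutralize the shear $\Delta$ on the central rectangle, so the pushed-forward loop is short and of bigon type — is a sound way to supply the detail the paper omits.
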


The quintessential feature of the immersed collar $\iota$ is that it supports a one parameter family of pairs of sufficiently regular bump functions that can be integrated to estimate the number of intersections between horizontal segments of $q$ and $\beta$. We construct these families in the following way. Fix a smooth symmetric function $\varphi \colon [-1,1] \to [0,1]$ with $\supp(\varphi) \subseteq (-1,1)$, constant in a neighborhood of $0$, and such that
\[
\int_{-1}^{1} \varphi(x) \thinspace dx = 1.
\]
For every $\epsilon > 0$ let $\varphi_\epsilon \colon [-\epsilon,\epsilon] \to [0,1/\epsilon]$ be the function given by $\varphi_\epsilon(s) := \varphi(s/\epsilon)/\epsilon$. Fix a smooth function $\psi \colon [0,1] \to [0,1]$ with $\supp(\psi) \subseteq [0,1)$ and $\supp(1-\psi) \subseteq (0,1]$. For every $\delta > 0$ let $\psi_\delta  \colon [0,\delta] \to [0,1]$ be the function given by $\psi_\delta(t) := \psi(t/\delta)$. For every $0 < \delta < \ell_{\min}(q)/8$ consider the function $\phi_{0,\delta}^* \colon R_* \to [0,8/\ell_{\min}(q)]$ given by
\[
\phi_{0,\delta}^*(t,s) := \left\lbrace \begin{array}{c l}
\varphi_{\ell_{\min}(q)/8}(s) & \text{ if $(t,s) \in R$ and $t \in [\delta, \ell_{\beta}(q)- \delta]$}, \\
(1-\psi_\delta(t)) \cdot \varphi_{\ell_{\min}(q)/8}(s) & \text{ if $(t,s) \in R$ and $t \in [0,\delta]$}, \\
(1-\psi_\delta(\ell_{\beta}(q)-t)) \cdot \varphi_{\ell_{\min}(q)/8}(s) & \text{ if $(t,s) \in R$ and $t \in [\ell_\beta(q) - \delta, \ell_{\beta}(q)]$}, \\
0 & \text{ if $(t,s) \in R_{i,j}$ for $i,j \in \{0,1\}$}.
\end{array} \right.
\]
For every $0 < \delta < \ell_{\min}(q)/8$ consider the function $\phi_{1,\delta}^* \colon R_* \to [0,8/\ell_{\min}(q)]$ given by
\[
\phi_{1,\delta}^*(t,s) := \left\lbrace \begin{array}{c l}
\varphi_{\ell_{\min}(q)/8}(s) & \text{ if $(t,s) \in R$}, \\
\psi_\delta(-t) \cdot \varphi_{\ell_{\min}(q)/8}(s) & \text{ if $(t,s) \in R_{0,j}$ for $j \in \{0,1\}$ and $t \in [-\delta,0]$}, \\
\psi_\delta(t) \cdot \varphi_{\ell_{\min}(q)/8}(s) & \text{ if $(t,s) \in R_{1,j}$ for $j \in \{0,1\}$ and $t \in [0,\delta]$}, \\
0 & \text{ if $(t,s) \in R_{i,j}$ for $i,j \in \{0,1\}$ and $|t| \geq \delta$}.
\end{array} \right.
\]
For every $i \in \{0,1\}$ and every $0<\delta < \ell_{\min}(q)/8$, the immersed collar $\iota$ supports the non-negative bump function $\phi_{i,\delta} \colon X \to \mathbf{R}$ given by
\[
\phi_{i,\delta}(x) := \sum_{(t,s) \in \iota^{-1}(x)} \phi_{i,\delta}^*(t,s),
\]
where we interpret empty sums as having value $0$. 

By Proposition \ref{prop:collar_embed_2}, these functions are well defined, continuous, piecewise smooth, and in particular belong to the weighted Sobolev space $H^1_q(X)$. Denote by $i(\beta,\Im(q))$ the total transverse measure of $\beta$ with respect to $\Im(q)$.  By construction, 
\begin{enumerate}
	\item $0 \leq \phi_{0,\delta} \leq \phi_{1,\delta}$,
	\item $\displaystyle \int_X \phi_{0,\delta} \thinspace dA_q \leq i(\beta,\Im(q)) \leq \int_X \phi_{1,\delta} \thinspace dA_q$,
	\item $\displaystyle \int_X (\phi_{1,\delta} -\phi_{0,\delta}) \thinspace dA_q \preceq \delta \cdot \ell_{\min}(q)$.
\end{enumerate}

Recall that $\ell_{\min}^\dagger(q) := \min\{1,\ell_{\min}(q)\}$. Denote $v_\beta(q) := i(\beta,\Im(q))/\ell_{\beta}(q)$. Let $\|\varphi\|_{\mathcal{C}^1}$ and $\|\psi\|_{\mathcal{C}^1}$ be the $\mathcal{C}^1$ norms of $\varphi$ and $\psi$. The following result, which can be proved using arguments similar to those in the proof of Proposition \ref{prop:regularity_1}, quantifies the regularity of the bump functions $\phi_{i,\delta}$.

\begin{proposition}
	\label{prop:regularity_2}
	For every $i \in \{0,1\}$ and every $0 < \delta < \smash{\ell_{\min}^\dagger}(q)/8$,
	\[
	\|\phi_{i,\delta}\|_{1,q} \preceq \mathrm{Area}(q)^{1/2} \cdot \ell_{\beta}(q) \cdot\smash{\ell_{\min}^\dagger}(q)^{-3} \cdot v_\beta(q)^{-1} \cdot \delta^{-1} \cdot \| \varphi\|_{\mathcal{C}^1} \cdot \| \psi\|_{\mathcal{C}^1}.
	\]
\end{proposition}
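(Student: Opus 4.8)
The plan is to mimic the proof of Proposition \ref{prop:regularity_1} line by line, accounting for the two new features of the present setup: the auxiliary functions $\phi_{i,\delta}^*$ now depend on the longitudinal parameter $t$ through the cutoffs $\psi_\delta$, so $\partial_t \phi_{i,\delta}^*$ no longer vanishes identically and is where the factor $\delta^{-1}\|\psi\|_{\mathcal{C}^1}$ is born; and the immersed collar $\iota$ now carries the extra rectangles $R_{i,j}$ glued at the endpoints of $\beta$, on which the coordinate vector fields relate to $\Re(q)$ and $\Im(q)$ differently than on $R$.

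I would first record $\sup$-norm estimates on the building blocks, in place of (\ref{eq:delta_bd}) and (\ref{eq:psi_bound}). From the definition of $\Delta$ as a linear interpolation of values lying in $(-\ell_{\min}(q)/8,\ell_{\min}(q)/8)$ at parameters spaced $\succeq \ell_{\min}(q)$ apart, one gets $\|\partial_t \Delta\|_\infty \preceq 1$. From $\varphi_\epsilon(s) = \varphi(s/\epsilon)/\epsilon$ and $\psi_\delta(t) = \psi(t/\delta)$ one has $\|\varphi_{\ell_{\min}(q)/8}\|_\infty \preceq \ell_{\min}(q)^{-1}\|\varphi\|_{\mathcal{C}^1}$, $\|\partial_s \varphi_{\ell_{\min}(q)/8}\|_\infty \preceq \ell_{\min}(q)^{-2}\|\varphi\|_{\mathcal{C}^1}$, $\|\psi_\delta\|_\infty \le 1$, and $\|\partial_t \psi_\delta\|_\infty \preceq \delta^{-1}\|\psi\|_{\mathcal{C}^1}$. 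Feeding these through the product rule in each case of the definitions of $\phi_{0,\delta}^*$ and $\phi_{1,\delta}^*$, and noting that $\psi$-factors occur only in strips $\{|t| \le \delta\}$ near the endpoints of $\beta$, yields
\[
\|\phi_{i,\delta}^*\|_\infty \preceq \ell_{\min}(q)^{-1}\|\varphi\|_{\mathcal{C}^1}, \quad \|\partial_s \phi_{i,\delta}^*\|_\infty \preceq \ell_{\min}(q)^{-2}\|\varphi\|_{\mathcal{C}^1}, \quad \|\partial_t \phi_{i,\delta}^*\|_\infty \preceq \delta^{-1}\ell_{\min}(q)^{-1}\|\varphi\|_{\mathcal{C}^1}\|\psi\|_{\mathcal{C}^1}.
\]

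Next I would pass to the $\sup$ norms of $\phi_{i,\delta}$ and of $\partial x_q \phi_{i,\delta}$, $\partial y_q \phi_{i,\delta}$ for a measurable orthonormal frame $\partial x_q, \partial y_q$ tangent to $\Im(q), \Re(q)$. By Proposition \ref{prop:collar_embed_2}, $\iota$ is $n$-to-$1$ with $n \preceq \ell_\beta(q)/\ell_{\min}(q)$, and $\phi_{i,\delta}$ is the sum of $\phi_{i,\delta}^*$ over at most $n$ preimages, so $\|\phi_{i,\delta}\|_\infty \preceq \ell_\beta(q)\ell_{\min}(q)^{-2}\|\varphi\|_{\mathcal{C}^1}$ as in (\ref{eq:bd_1}). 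As in the proof of Proposition \ref{prop:regularity_1}, on $R$ the map $\iota$ sends $\partial s$ to $\pm\partial x_q$ and $\partial t$ to $\partial\beta + \Delta'(t)\partial x_q$ with $|\Delta'|\preceq 1$ (so $\partial\beta$ pulls back to $\partial t - \Delta'(t)\partial s$), while on each $R_{i,j}$, since the leaves $\sigma_{i,j}$ are vertical, $\iota$ sends $\partial s$ to $\pm\partial x_q$ and $\partial t$ to $\pm\partial y_q$; using $\partial y_q = \pm(1+1/m^2)^{1/2}\partial\beta \pm (1/m)\partial x_q$ with $m$ the absolute slope of $\beta$ and $1/m \le (1+1/m^2)^{1/2}$ one obtains
\[
\|\partial x_q \phi_{i,\delta}\|_\infty \preceq \ell_\beta(q)\ell_{\min}(q)^{-3}\|\varphi\|_{\mathcal{C}^1}, \qquad \|\partial y_q \phi_{i,\delta}\|_\infty \preceq (1+1/m^2)^{1/2}\,\ell_\beta(q)\,\delta^{-1}\ell_{\min}(q)^{-2}\|\varphi\|_{\mathcal{C}^1}\|\psi\|_{\mathcal{C}^1},
\]
the extra $\delta^{-1}$ coming from the $\partial_t \phi_{i,\delta}^*$ term, which is supported on the $\delta$-strips of $R$ for $\phi_{0,\delta}$ and on the $R_{i,j}$ for $\phi_{1,\delta}$. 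Finally, applying the general measure-theoretic bound $\|\phi_{i,\delta}\|_{1,q} \preceq \mathrm{Area}(q)^{1/2}(\|\phi_{i,\delta}\|_\infty + \|\partial x_q \phi_{i,\delta}\|_\infty + \|\partial y_q \phi_{i,\delta}\|_\infty)$, observing that the $\partial y_q$-term dominates because $\delta < \smash{\ell_{\min}^\dagger}(q)/8$ and $(1+1/m^2)^{1/2}\ge 1$, and using $\ell_{\min}(q)^{-2} \le \smash{\ell_{\min}^\dagger}(q)^{-3}$ together with the identity $(1+1/m^2)^{1/2} = \ell_\beta(q)/i(\beta,\Im(q)) = v_\beta(q)^{-1}$, gives the claimed bound.

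The step I expect to require the most care is the bookkeeping for $\partial_t \phi_{i,\delta}^*$: one must verify that it is supported in the width-$\delta$ strips near the endpoints of $\beta$, that it is of size $\preceq \delta^{-1}\ell_{\min}(q)^{-1}\|\varphi\|_{\mathcal{C}^1}\|\psi\|_{\mathcal{C}^1}$ there, and — crucially — that on the auxiliary rectangles $R_{i,j}$ the $t$-direction is the vertical direction $\partial y_q$, so that this contribution lands in $\|\partial y_q \phi_{i,\delta}\|_\infty$ without picking up an extra slope factor (which in any case would be harmless, since $(1+1/m^2)^{1/2}\ge 1$). Everything else is a routine repetition of the proof of Proposition \ref{prop:regularity_1}.
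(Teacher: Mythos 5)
Your proposal is correct and is exactly the argument the paper intends: it repeats the proof of Proposition \ref{prop:regularity_1}, with the only new ingredients being the bound $\|\partial_t \phi_{i,\delta}^*\|_\infty \preceq \delta^{-1}\ell_{\min}(q)^{-1}\|\varphi\|_{\mathcal{C}^1}\|\psi\|_{\mathcal{C}^1}$ on the width-$\delta$ strips and the observation that on the end rectangles $R_{i,j}$ the coordinate directions are exactly $\pm\partial y_q$ and $\pm\partial x_q$, so no extra slope factor beyond $(1+1/m^2)^{1/2}=v_\beta(q)^{-1}$ appears. The bookkeeping (multiplicity $n \preceq \ell_\beta(q)/\ell_{\min}(q)$ from Proposition \ref{prop:collar_embed_2}, absorption of the milder terms using $\delta < \smash{\ell_{\min}^\dagger}(q)/8$ and $\ell_{\min}(q)^{-2} \leq \smash{\ell_{\min}^\dagger}(q)^{-3}$, and the final measure-theoretic bound by $\mathrm{Area}(q)^{1/2}$ times the sup norms) is all sound.
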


The following result, which can be proved using arguments similar to those in the proof of Proposition \ref{prop:bump_int_1}, shows that the functions $\phi_{i,\delta}$ can be integrated to estimate the number of intersections between horizontal segments of $q$ and $\beta$.

\begin{proposition}
	\label{prop:bump_int_2}
	Let $\gamma$ be a horizontal segment of $q$. Then, for every $0 < \delta < \ell_{\min}(q)/8$,
	\begin{gather*}
	\int_{\gamma} \phi_{0,\delta} \thinspace d\Re(q) - \#(\gamma \cap \beta)\preceq \frac{\ell_{\beta}(q)}{\ell_{\min}(q)}, \\
	\#(\gamma \cap \beta) - \int_{\gamma} \phi_{1,\delta} \thinspace d\Re(q) \preceq \frac{\ell_{\beta}(q)}{\ell_{\min}(q)}.
	\end{gather*}
\end{proposition}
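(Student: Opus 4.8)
The plan is to repeat the proof of Proposition \ref{prop:bump_int_1}, additionally keeping track of the cutoffs $\psi_\delta$ and of the corner rectangles $R_{i,j}$ near the two cone points at the endpoints of $\beta$. Fix a horizontal segment $\gamma$ of $q$. Since $\iota$ maps the $s$-direction of $R_*$ to horizontal segments of $q$ and $\gamma$ is horizontal, every portion of $\gamma$ lying in the image of $\iota$ lifts through $\iota^{-1}$ to a subsegment of a leaf $\{t = t_0\} \subseteq R$ or of a horizontal leaf of one of the $R_{i,j}$; I will call such a lift a \emph{strand}, and \emph{complete} if it covers the whole $s$-interval $[-\ell_{\min}(q)/8, \ell_{\min}(q)/8]$. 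Since $\Delta(t_0) \in (-\ell_{\min}(q)/8, \ell_{\min}(q)/8)$, a complete strand in $R$ at a parameter $t_0 \in (0, \ell_\beta(q))$ meets $\beta$ exactly once, at the interior point $s = -\Delta(t_0)$; and as $\int \varphi_{\ell_{\min}(q)/8} = 1$ and $d\Re(q)$ restricts to arc length along $\gamma$, for such a strand $\int_{\text{strand}} \phi_{1,\delta}\, d\Re(q) = 1 = \#(\text{strand}\cap\beta)$, while $\int_{\text{strand}} \phi_{0,\delta}\, d\Re(q) = 1 - \psi_\delta(t_0) \leq 1 = \#(\text{strand}\cap\beta)$, with equality once $t_0$ lies at distance $\geq \delta$ from both endpoints of $\beta$. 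Hence both $\int_\gamma \phi_{i,\delta}\, d\Re(q)$ agree with $\#(\gamma\cap\beta)$ except for contributions of the \emph{bad strands}: those cut off by an endpoint of $\gamma$, those contained in a corner rectangle, those straddling the gluings $t = 0$ or $t = \ell_\beta(q)$, and the complete strands in $R$ within $\delta$ of an endpoint of $\beta$.

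The strands cut off by an endpoint of $\gamma$ are controlled exactly as in Proposition \ref{prop:bump_int_1}: the support of each $\phi_{i,\delta}$ is contained in horizontal segments of flat length $\ell_{\min}(q)/4$ crossing $\beta$, and, for $\phi_{1,\delta}$, also crossing the singular leaves $\sigma_{i,j}$ of flat length $\ell_{\min}(q)/8$; so, for each of the two endpoints of $\gamma$, the number of such strands is at most the number of times $\beta$ or an $\sigma_{i,j}$ meets a fixed horizontal segment of length $\ell_{\min}(q)/2 \leq \mathrm{sys}(q)/2$, which by Proposition \ref{eq:int_bound} is $\preceq \ell_\beta(q)/\ell_{\min}(q)$; each such strand changes either integral by at most $\int \varphi_{\ell_{\min}(q)/8} = 1$, so the total cut-off contribution to either of the two differences is $\preceq \ell_\beta(q)/\ell_{\min}(q)$ in absolute value. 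For the remaining bad strands — those near a cone point or in a corner rectangle — I would verify, case by case in the spirit of Figures \ref{fig:immersed_collar_int} and \ref{fig:immersed_collar_int_mult}, that the error they produce always has a fixed sign: $\phi_{0,\delta}$ can only \emph{undercount}, since it vanishes on the corner rectangles and is damped by $1 - \psi_\delta$ near the endpoints of $\beta$, so on each such strand $\int_{\text{strand}}\phi_{0,\delta}\,d\Re(q) \leq \#(\text{strand}\cap\beta)$; while $\phi_{1,\delta}$ can only \emph{overcount}, because its $\psi_\delta$-weighted extension of $\varphi_{\ell_{\min}(q)/8}(s)$ into $R_{0,0}, R_{0,1}, R_{1,0}, R_{1,1}$ is exactly what is required so that every crossing of $\beta$ happening within distance $\delta$ of — or through — a cone point is registered with total weight $\geq 1$, giving $\int_{\text{strand}}\phi_{1,\delta}\,d\Re(q) \geq \#(\text{strand}\cap\beta)$. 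These two one-sided statements are what make the conclusion a pair of one-directional inequalities rather than a symmetric bound.

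Summing over all strands then yields $\int_\gamma \phi_{0,\delta}\, d\Re(q) - \#(\gamma\cap\beta) \leq (\text{cut-off contribution}) \preceq \ell_\beta(q)/\ell_{\min}(q)$ and, symmetrically, $\#(\gamma\cap\beta) - \int_\gamma \phi_{1,\delta}\, d\Re(q) \preceq \ell_\beta(q)/\ell_{\min}(q)$, which is the proposition. I expect the main obstacle to be precisely the fixed-sign verification in the previous paragraph: one must check that the somewhat intricate gluing pattern defining $R_*$, together with the two cutoff families, genuinely forces the sign of the error near the cone points, with no leftover finite contribution of the wrong sign. Once that is pinned down, the rest of the argument is a routine transcription of the proof of Proposition \ref{prop:bump_int_1}.
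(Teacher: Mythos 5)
Your proposal is correct and takes essentially the approach the paper intends: it transcribes the bad-situation count from the proof of Proposition \ref{prop:bump_int_1} (bounding, via Proposition \ref{eq:int_bound}, the strands cut off at the two endpoints of $\gamma$ against crossings of $\beta$ and of the short vertical leaves $\sigma_{i,j}$ with a horizontal segment of length $\ell_{\min}(q)/2$), with the one-sidedness coming from the design of $\phi_{0,\delta}$ and $\phi_{1,\delta}$. The fixed-sign verification you flag as the main obstacle is in fact immediate: on $R$ the density of $\phi_{1,\delta}$ is the full undamped $\varphi_{\ell_{\min}(q)/8}(s)$, so every complete strand through a crossing already carries weight exactly $1$, and the $\psi_\delta$-damping and corner-rectangle extension only decrease $\phi_{0,\delta}$ and increase $\phi_{1,\delta}$ (their real purpose being continuity at the cone points and the two-sided mass bound in Proposition \ref{eq:bump_2}(2)), so no further case analysis is needed.
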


We summarize the main properties of the constructions above in the following proposition. This statement is tailored to applications in later sections. In particular, we restrict to unit area quadratic differentials and consider $\|\varphi\|_{\mathcal{C}^1}$ and $\|\psi\|_{\mathcal{C}^1}$ as universal constants.

\begin{proposition}
	\label{eq:bump_2}
	Let $q$ be a unit area quadratic differential on a closed Riemann surface $X$ of genus $g \geq 2$ and $\beta$ be a non-horizontal saddle connection of $q$. Then, for every $0 < \delta < \smash{\ell_{\min}^\dagger}(q)/8$, there exists a pair of non-negative, continuous, piecewise smooth functions $\phi_{0,\delta}, \thinspace \phi_{1,\delta} \in H_q^1(X)$ satisfying
	\begin{enumerate}
		\item $0 \leq \phi_{0,\delta} \leq \phi_{1,\delta}$,
		\item $\displaystyle \int_X \phi_{0,\delta} \thinspace dA_q \leq i(\beta,\Im(q)) \leq \int_X \phi_{1,\delta} \thinspace dA_q$,
		\item $ \displaystyle \int_X (\phi_{1,\delta} -\phi_{0,\delta}) \thinspace dA_q \preceq \delta \cdot \ell_{\min}(q)$,
		\item $\|\phi_{i,\delta}\|_{1,q} \preceq \ell_{\beta}(q) \cdot \smash{\ell_{\min}^\dagger}(q)^{-3} \cdot v_\beta(q)^{-1} \cdot \delta^{-1}$ for $i \in \{0,1\}$,
	\end{enumerate}
	and such that for every horizontal segment $\gamma$ of $q$,
	\begin{gather*}
	\int_{\gamma} \phi_{0,\delta} \thinspace d\Re(q) - \#(\gamma \cap \beta)\preceq \frac{\ell_{\beta}(q)}{\ell_{\min}(q)}, \\
	\#(\gamma \cap \beta) - \int_{\gamma} \phi_{1,\delta} \thinspace d\Re(q) \preceq \frac{\ell_{\beta}(q)}{\ell_{\min}(q)}.
	\end{gather*}
\end{proposition}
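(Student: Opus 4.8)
The plan is to simply assemble the constructions and estimates already carried out in this section. First I would let $\phi_{0,\delta},\phi_{1,\delta}\colon X \to \mathbf{R}$ be the bump functions associated to the non-horizontal saddle connection $\beta$ and a parameter $0 < \delta < \smash{\ell_{\min}^\dagger}(q)/8$, constructed above via the immersed collar $\iota \colon R_* \to X$. Since $\smash{\ell_{\min}^\dagger}(q) \leq \ell_{\min}(q)$, the hypothesis $\delta < \smash{\ell_{\min}^\dagger}(q)/8$ implies $\delta < \ell_{\min}(q)/8$, so every statement proved above in the latter range of $\delta$ is available. By Proposition \ref{prop:collar_embed_2} the immersed collar is $n$ to $1$ with $n \preceq \ell_{\beta}(q)/\ell_{\min}(q)$, so the functions $\phi_{i,\delta}$ are well defined; being finite sums of compositions of the continuous, piecewise smooth functions $\varphi_{\ell_{\min}(q)/8}$ and $\psi_\delta$ with the piecewise linear immersion $\iota$, they are continuous and piecewise smooth, hence belong to $H^1_q(X)$, and their non-negativity is immediate from that of $\varphi$ and $\psi$.

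Next I would observe that properties (1), (2), and (3) were already verified by construction in the discussion immediately preceding Proposition \ref{prop:regularity_2}: item (1) follows from the pointwise inequality $\phi_{0,\delta}^* \leq \phi_{1,\delta}^*$ on $R_*$; item (2) holds because, away from the two singular leaves emanating from the endpoints of $\beta$, both $\phi_{i,\delta}$ restrict over the collar to the same profile $\varphi_{\ell_{\min}(q)/8}(s)$ whose fiberwise integral is $1$, so $\phi_{0,\delta}$ underestimates and $\phi_{1,\delta}$ overestimates the horizontal transverse measure $i(\beta,\Im(q))$; and item (3) follows from the same observation together with the fact that the region of $R_*$ on which $\phi_{0,\delta}^*$ and $\phi_{1,\delta}^*$ disagree has $A_q$-area $\preceq \delta \cdot \ell_{\min}(q)$.

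For property (4) I would invoke Proposition \ref{prop:regularity_2} directly: since $q$ has unit area, $\mathrm{Area}(q)^{1/2} = 1$, and since $\varphi$ and $\psi$ have been fixed once and for all, $\|\varphi\|_{\mathcal{C}^1}$ and $\|\psi\|_{\mathcal{C}^1}$ are universal constants that may be absorbed into the implied constant, yielding $\|\phi_{i,\delta}\|_{1,q} \preceq \ell_{\beta}(q) \cdot \smash{\ell_{\min}^\dagger}(q)^{-3} \cdot v_\beta(q)^{-1} \cdot \delta^{-1}$ for $i \in \{0,1\}$. Finally, the two displayed inequalities in the statement are exactly the content of Proposition \ref{prop:bump_int_2}. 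There is no substantive obstacle: the analytic work — the embeddedness estimates for the collar, the weighted Sobolev bounds, and the comparison of $\int_\gamma \phi_{i,\delta} \thinspace d\Re(q)$ with $\#(\gamma \cap \beta)$ — is carried out in Propositions \ref{prop:collar_embed_2}, \ref{prop:regularity_2}, and \ref{prop:bump_int_2}; the only point requiring care is the bookkeeping that the parameter range $0 < \delta < \smash{\ell_{\min}^\dagger}(q)/8$ in this statement is contained in the range $0 < \delta < \ell_{\min}(q)/8$ used in those propositions, which it is.
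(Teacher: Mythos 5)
Your proposal is correct and matches the paper exactly: the paper states this proposition as a summary of the preceding constructions, with items (1)--(3) recorded ``by construction,'' item (4) given by Proposition \ref{prop:regularity_2} after specializing to unit area and absorbing $\|\varphi\|_{\mathcal{C}^1}$, $\|\psi\|_{\mathcal{C}^1}$ into the implied constant, and the two displayed inequalities given by Proposition \ref{prop:bump_int_2}. Your bookkeeping that $0 < \delta < \ell_{\min}^\dagger(q)/8$ sits inside the range $0 < \delta < \ell_{\min}(q)/8$ used in those propositions is the only check needed, and you handled it correctly.
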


\subsection*{Bump functions of singular flat geodesics.} Let $q$ be a quadratic differential on a closed Riemann surface $X$ of genus $g \geq 2$ and $\beta$ be a singular flat geodesic of $q$ with no horizontal saddle connections. Denote by $\smash{\{\beta_j\}_{j=1}^m}$ be the sequence of saddle connections of $\beta$. Let $v_\beta(q) := \min_{i=1,\dots,m} v_{\beta_j}(q)$. Adding up the bump functions provided by Proposition \ref{eq:bump_2} for the saddle connections $\smash{\{\beta_j\}_{j=1}^m}$ yields a one parameter family of pairs of sufficiently regular bump functions that can be integrated to estimate the number of intersections between horizontal segments of $q$ and $\beta$.

\begin{proposition}
	\label{eq:bump_3}
	Let $q$ be a unit area quadratic differential on a closed Riemann surface $X$ of genus $g \geq 2$ and $\beta$ be a singular flat geodesic of $q$ with no horizontal saddle connections. Denote by $\{\beta_j\}_{j=1}^m$ the sequence of saddle connections of $\beta$. Then, for every $0 < \delta < \smash{\ell_{\min}^\dagger}(q)/8$, there exists a pair of non-negative, continuous, piecewise smooth functions $\phi_{0,\delta}, \thinspace \phi_{1,\delta} \in H_q^1(X)$ satisfying
	\begin{enumerate}
		\item $0 \leq \phi_{0,\delta} \leq \phi_{1,\delta}$,
		\item $\displaystyle \int_X \phi_{0,\delta} \thinspace dA_q \leq i(\beta,\Im(q)) \leq \int_X \phi_{1,\delta} \thinspace dA_q$,
		\item $ \displaystyle \int_X (\phi_{1,\delta} -\phi_{0,\delta}) \thinspace dA_q \preceq \delta \cdot \ell_{\beta}(q)$,
		\item $\|\phi_{i,\delta}\|_{1,q} \preceq \ell_{\beta}(q) \cdot\smash{\ell_{\min}^\dagger}(q)^{-3} \cdot v_\beta(q)^{-1} \cdot \delta^{-1}$ for $i \in \{0,1\}$,
	\end{enumerate}
	and such that for every horizontal segment $\gamma$ of $q$,
	\begin{gather*}
	\int_{\gamma} \phi_{0,\delta} \thinspace d\Re(q) - \sum_{j=1}^m \#(\gamma \cap \beta_j)\preceq \frac{\ell_{\beta}(q)}{\ell_{\min}(q)}, \\
	\sum_{j=1}^m \#(\gamma \cap \beta_j) - \int_{\gamma} \phi_{1,\delta} \thinspace d\Re(q) \preceq \frac{\ell_{\beta}(q)}{\ell_{\min}(q)}.
	\end{gather*}
\end{proposition}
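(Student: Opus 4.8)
The plan is to build $\phi_{0,\delta}$ and $\phi_{1,\delta}$ by superposing, over the saddle connections $\{\beta_j\}_{j=1}^m$ of $\beta$, the bump functions provided by Proposition \ref{eq:bump_2}. First, for each $j \in \{1,\dots,m\}$, since $\beta$ has no horizontal saddle connections, $\beta_j$ is a non-horizontal saddle connection of $q$, so Proposition \ref{eq:bump_2} supplies functions $\phi_{0,\delta}^{(j)}, \phi_{1,\delta}^{(j)} \in H_q^1(X)$ valid for the same range $0 < \delta < \smash{\ell_{\min}^\dagger}(q)/8$. Set $\phi_{i,\delta} := \sum_{j=1}^m \phi_{i,\delta}^{(j)}$ for $i \in \{0,1\}$. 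Each summand is non-negative, continuous, and piecewise smooth, hence so is $\phi_{i,\delta}$, and it lies in $H_q^1(X)$ since that space is a vector space. The ordering $0 \leq \phi_{0,\delta} \leq \phi_{1,\delta}$ follows summand by summand from item (1) of Proposition \ref{eq:bump_2}.

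Next I would verify the integral estimates. By the additivity of the transverse measure over the saddle connections of $\beta$, we have $i(\beta,\Im(q)) = \sum_{j=1}^m i(\beta_j,\Im(q))$, so summing item (2) of Proposition \ref{eq:bump_2} over $j$ gives item (2) here. For item (3), summing item (3) of Proposition \ref{eq:bump_2} yields
\[
\int_X (\phi_{1,\delta} - \phi_{0,\delta}) \thinspace dA_q \preceq \sum_{j=1}^m \delta \cdot \ell_{\min}(q) = \delta \cdot m \cdot \ell_{\min}(q) \preceq \delta \cdot \ell_{\beta}(q),
\]
where the last step uses that $\beta$ has $m \preceq \ell_{\beta}(q)/\ell_{\min}(q)$ saddle connections (as recorded in the discussion preceding Proposition \ref{prop:rect_decomp_sing}), so $m \cdot \ell_{\min}(q) \preceq \ell_{\beta}(q)$. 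The horizontal-segment estimates follow similarly: for a horizontal segment $\gamma$ of $q$, summing the two inequalities of Proposition \ref{eq:bump_2} over $j$ gives, using $\ell_{\beta_j}(q) \leq \ell_{\beta}(q)$ when convenient but more precisely $\sum_j \ell_{\beta_j}(q) = \ell_{\beta}(q)$,
\[
\int_{\gamma} \phi_{0,\delta} \thinspace d\Re(q) - \sum_{j=1}^m \#(\gamma \cap \beta_j) \preceq \sum_{j=1}^m \frac{\ell_{\beta_j}(q)}{\ell_{\min}(q)} = \frac{\ell_{\beta}(q)}{\ell_{\min}(q)},
\]
and symmetrically for the $\phi_{1,\delta}$ inequality.

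It remains to bound $\|\phi_{i,\delta}\|_{1,q}$. Here the main subtlety is that naively applying the triangle inequality to $\sum_j \phi_{i,\delta}^{(j)}$ and inserting the bound $\|\phi_{i,\delta}^{(j)}\|_{1,q} \preceq \ell_{\beta_j}(q) \cdot \smash{\ell_{\min}^\dagger}(q)^{-3} \cdot v_{\beta_j}(q)^{-1} \cdot \delta^{-1}$ would give $\sum_j \ell_{\beta_j}(q) \cdot v_{\beta_j}(q)^{-1}$, and we must see this is $\preceq \ell_{\beta}(q) \cdot v_\beta(q)^{-1}$ with $v_\beta(q) = \min_j v_{\beta_j}(q)$. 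But this is immediate: $v_{\beta_j}(q)^{-1} \leq v_\beta(q)^{-1}$ for every $j$, so $\sum_j \ell_{\beta_j}(q) \cdot v_{\beta_j}(q)^{-1} \leq v_\beta(q)^{-1} \sum_j \ell_{\beta_j}(q) = v_\beta(q)^{-1} \cdot \ell_{\beta}(q)$. Thus
\[
\|\phi_{i,\delta}\|_{1,q} \leq \sum_{j=1}^m \|\phi_{i,\delta}^{(j)}\|_{1,q} \preceq \ell_{\beta}(q) \cdot \smash{\ell_{\min}^\dagger}(q)^{-3} \cdot v_\beta(q)^{-1} \cdot \delta^{-1},
\]
which is item (4). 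The one point deserving a remark is that the $\phi_{i,\delta}^{(j)}$ may have overlapping supports where consecutive saddle connections of $\beta$ meet at a common zero; this does not affect any of the estimates above since everything in sight is an inequality proved by summation, and the immersed collars from Proposition \ref{eq:bump_2} are designed so that their contributions at a shared endpoint add up compatibly with the transverse-measure count. I expect this bookkeeping at the shared endpoints — checking that the $\psi_\delta$-cutoffs in the definition of $\phi_{i,\delta}^{(j)}$ interact correctly so that items (2) and the $\gamma$-estimates really do add — to be the only place requiring genuine care; everything else is a routine superposition argument.
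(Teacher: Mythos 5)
Your construction is exactly the paper's: it defines $\phi_{i,\delta}$ by summing the bump functions of Proposition \ref{eq:bump_2} over the saddle connections $\{\beta_j\}_{j=1}^m$, and all four items plus the horizontal-segment estimates follow by the same additive bookkeeping ($m \preceq \ell_\beta(q)/\ell_{\min}(q)$, $\sum_j \ell_{\beta_j}(q) = \ell_\beta(q)$, $v_\beta(q) = \min_j v_{\beta_j}(q)$) that you carry out. Your closing worry about overlapping supports at shared zeroes is unnecessary: every claimed inequality holds summand by summand and is preserved under addition, so no interaction between the collars needs to be checked.
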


\subsection*{Bump functions of flat geodesics.} Let $q$ be a quadratic differential on a closed Riemann surface $X$ of genus $g \geq 2$ and $\beta$ be a flat geodesic of $q$ with no horizontal segments. Denote by $\{\beta_j\}_{j=1}^m$ the sequence of saddle connections of $\beta$ or the singleton $\{\beta\}$ if $\beta$ is a cylinder curve. Let $v_\beta(q) := \min_{i=j,\dots,m} v_{\beta_j}(q)$. We end this section by summarizing Propositions \ref{eq:bump_1} and \ref{eq:bump_3} into one statement.

\begin{proposition}
	\label{prop:bump_4}
	Let $q$ be a unit area quadratic differential on a closed Riemann surface $X$ of genus $g \geq 2$ and $\beta$ be a flat geodesic of $q$ with no horizontal segments. Denote by $\{\beta_j\}_{j=1}^m$ the sequence of saddle connections of $\beta$ or the singleton $\{\beta\}$  if $\beta$ is a cylinder curve. Then, for every $0 < \delta < \smash{\ell_{\min}^\dagger(q)}/8$, there exist non-negative, continuous, piecewise smooth functions $\phi_{0,\delta}, \thinspace \phi_{1,\delta} \in H_q^1(X)$ satisfying
	\begin{enumerate}
		\item $0 \leq \phi_{0,\delta} \leq \phi_{1,\delta}$,
		\item $\displaystyle \int_X \phi_{0,\delta} \thinspace dA_q \leq i(\beta,\Im(q)) \leq \int_X \phi_{1,\delta} \thinspace dA_q$,
		\item $ \displaystyle \int_X (\phi_{1,\delta} -\phi_{0,\delta}) \thinspace dA_q \preceq \delta \cdot \ell_{\beta}(q)$,
		\item $\|\phi_{i,\delta}\|_{1,q} \preceq \ell_{\beta}(q) \cdot \smash{\ell_{\min}^\dagger}(q)^{-3} \cdot v_\beta(q)^{-1} \cdot \delta^{-1}$ for $i \in \{0,1\}$,
	\end{enumerate}
	and such that for every horizontal segment $\gamma$ of $q$,
	\begin{gather*}
	\int_{\gamma} \phi_{0,\delta} \thinspace d\Re(q) - \sum_{j=1}^m \#(\gamma \cap \beta_j)\preceq \frac{\ell_{\beta}(q)}{\ell_{\min}(q)}, \\
	\sum_{j=1}^m \#(\gamma \cap \beta_j) - \int_{\gamma} \phi_{1,\delta} \thinspace d\Re(q) \preceq \frac{\ell_{\beta}(q)}{\ell_{\min}(q)}.
	\end{gather*}
\end{proposition}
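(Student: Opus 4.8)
The plan is to deduce Proposition \ref{prop:bump_4} from the two earlier constructions by splitting into the two cases afforded by Proposition \ref{prop:flat_geod_1}: either $\beta$ is a cylinder curve of $q$, or $\beta$ is a singular flat geodesic of $q$ (a concatenation of saddle connections). In the first case I will take for $\phi_{0,\delta}$ and $\phi_{1,\delta}$ the single bump function produced by Proposition \ref{eq:bump_1}; in the second case I will take the pair of bump functions produced by Proposition \ref{eq:bump_3}. In both cases the sequence $\{\beta_j\}_{j=1}^m$ and the quantity $v_\beta(q) = \min_j v_{\beta_j}(q)$ appearing in the statement specialize to exactly the objects occurring in the cited result, so the proof reduces to verifying that the four numbered conclusions and the two horizontal-segment estimates follow.

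First suppose $\beta$ is a cylinder curve, so that $m = 1$, $\beta_1 = \beta$, and $v_\beta(q) = i(\beta,\Im(q))/\ell_\beta(q)$. Let $\phi \in H^1_q(X)$ be the function provided by Proposition \ref{eq:bump_1} and set $\phi_{0,\delta} := \phi_{1,\delta} := \phi$ for every $0 < \delta < \ell_{\min}^\dagger(q)/8$. Conclusion (1) is immediate, conclusion (2) holds with equality by Proposition \ref{eq:bump_1}(1), and conclusion (3) holds trivially since the left-hand side vanishes. For conclusion (4), Proposition \ref{eq:bump_1}(2) gives $\|\phi\|_{1,q} \preceq \ell_\beta(q) \cdot \ell_{\min}^\dagger(q)^{-3} \cdot v_\beta(q)^{-1}$; since $\delta < \ell_{\min}^\dagger(q)/8 \leq 1/8$ forces $\delta^{-1} > 8 > 1$, multiplying the right-hand side by $\delta^{-1}$ only weakens the bound, so (4) follows. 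Finally, the estimate of Proposition \ref{eq:bump_1}, namely $|\#(\gamma \cap \beta) - \int_\gamma \phi \, d\Re(q)| \preceq \ell_\beta(q)/\ell_{\min}(q)$ for every horizontal segment $\gamma$, gives both displayed inequalities at once because $\sum_{j=1}^m \#(\gamma \cap \beta_j) = \#(\gamma \cap \beta)$ and $\phi_{0,\delta} = \phi_{1,\delta} = \phi$.

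Now suppose $\beta$ is not a cylinder curve, so by Proposition \ref{prop:flat_geod_1} it is a concatenation of its saddle connections $\{\beta_j\}_{j=1}^m$; since $\beta$ has no horizontal segments, none of these saddle connections is horizontal. Then Proposition \ref{eq:bump_3} applies verbatim — in the degenerate case $m = 1$ this is just Proposition \ref{eq:bump_2} — with the same sequence $\{\beta_j\}_{j=1}^m$ and the same $v_\beta(q) = \min_j v_{\beta_j}(q)$, and its conclusions are literally conclusions (1)--(4) together with the two horizontal-segment estimates of Proposition \ref{prop:bump_4}. This completes the argument. I do not expect any genuine obstacle: the entire content of Proposition \ref{prop:bump_4} is carried by Propositions \ref{eq:bump_1} and \ref{eq:bump_3}, and the only points requiring a word of care are the harmless insertion of the factor $\delta^{-1}$ into the bound in the cylinder case and the observation that the normalizations of $\{\beta_j\}$ and of $v_\beta(q)$ agree across the two cases.
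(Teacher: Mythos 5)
Your proposal is correct and takes essentially the same route as the paper, which presents Proposition \ref{prop:bump_4} precisely as a summary of Propositions \ref{eq:bump_1} and \ref{eq:bump_3}. Your cylinder-case bookkeeping (taking $\phi_{0,\delta}=\phi_{1,\delta}=\phi$, noting conclusion (3) is trivial and that $\delta^{-1}>1$ makes the norm bound weaker) is exactly the implicit verification the paper leaves to the reader.
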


\section{Effective equidistribution of straight line flows}

\subsection*{Outline of this section.} In this section we show that horizontal segments of a quadratic differential equidistribute over the underlying Riemann surface at an effective rate if the corresponding Teichmüller geodesic flow orbit satisfies appropriate recurrence conditions. See Theorem \ref{theo:equid_3}. We first prove an analogous result for Abelian differentials, see Theorem \ref{theo:equid_2}, and then deduce the corresponding result for quadratic differentials by passing to canonical double covers. We follow the general approach of Athreya and Forni \cite{AF08} but quantify our statements in a way suitable to applications in later sections. We begin with a brief overview of the tools introduced in the work of Athreya and Forni. 

\subsection*{Strata of Abelian differentials.} Recall that if $\omega$ is an Abelian differential on a closed Riemann surface $X$ of genus $g \geq 2$, the number of zeroes of $\omega$ counted with multiplicity is $2g-2$. Denote by $\Omega_g^1$ the moduli space of unit area, genus $g \geq 2$ Abelian differentials. This space can be stratified according to the order of the zeroes, with one stratum per integer partition of $2g-2$. For the rest of this section we fix $g \geq 2$ and denote by $\Omega^1 \subseteq \Omega^1_g$ an arbitrary stratum of $\Omega^1_g$.

\subsection*{The Teichmüller geodesic flow.} Recall that Abelian differentials are in one-to-one correspondence with translation structures on Riemann surfaces. The group $\mathrm{SL}(2,\mathbf{R})$ acts naturally on Abelian differentials by postcomposing the charts of the corresponding translation structures with the linear action of $\mathrm{SL}(2,\mathbf{R})$ on $\mathbf{C} =\mathbf{R}^2$. This action preserves the order of the zeroes and the area. In particular, it preserves $\Omega^1_g$ and its stratification. The \textit{Teichmüller geodesic flow} on $\Omega_g^1$ is the flow corresponding to the the diagonal subgroup $\{a_t\}_{t \in \mathbf{R}} \subseteq \mathrm{SL}(2,\mathbf{R})$ introduced in (\ref{eq:diag}).

\subsection*{Weighted Sobolev spaces of currents.} Let $(X,\omega)$ be an Abelian differential. Denote by $\Sigma \subseteq X$ the set of zeroes of $\omega$. Recall that the oriented singular measured foliations $\Im(\omega)$ and $\Re(\omega)$ on $X$ induce a pair of canonical continuous orthonormal vector fields $\partial x_\omega$ and $\partial y_\omega$ on $X \setminus \Sigma$. Recall the definition of the weighted Sobolev space of functions $H_\omega^1(X)$ introduced in (\ref{eq:sob}) and of its norm $\|\cdot \|_{1,\omega}$ introduced in (\ref{eq:sob_norm}). Denote by $\Psi^1(X\setminus \Sigma)$ the space of measurable $1$-forms on $X \setminus \Sigma$. The \textit{weighted Sobolev space of $1$-forms} $W_\omega^1(X) \subseteq \Psi^1(X\setminus \Sigma)$ is defined as 
\[
W_\omega^1(X) := \{\psi \in \Psi^1(S \setminus \Sigma) \ | \ (\iota_{\partial x_\omega} \psi, \iota_{\partial y_\omega} \psi) \in H_\omega^1(X) \times H_\omega^1(X) \},
\]
where $\iota_{\partial x_\omega} \psi$ and $\iota_{\partial y_\omega} \psi$ denote contractions. The space $W_\omega^1(X)$ inherits a Hilbert space structure from its natural identification with $H_\omega^1(X) \times H_\omega^1(X)$. Denote its norm by $\|\cdot\|_{1,\omega}$. The \textit{weighted Sobolev space of currents} $W_\omega^{-1}(X)$ is defined as the Hilbert space dual to $W_\omega^1(X)$. Denote its norm by $\|\cdot \|_{-1,\omega}$.

Recall that $\ell_{\min}(\omega)$ denotes the length of the shortest saddle connections of $\omega$. Denote by $\ell_\gamma(\omega)$ the length of a straight line segment $\gamma$ of $\omega$. By the Sobolev trace theorem, see for instance \cite[Chapter 4]{A03}, any piecewise smooth path on $X$ induces a current that belongs to $W_\omega^{-1}(X)$. The following result of Forni bounds the Sobolev norm of currents induced by straight line segments of $\omega$.

\begin{lemma}\cite[Lemma 9.2]{F02}
	\label{lem:path_sov_bound}
	Let $(X,\omega) \in \Omega^1_g$ and $\gamma$ be a straight line segment of $\omega$. Then,
	\[
	\|\gamma\|_{-1,\omega} \preceq_{g}  1+ \ell_\gamma(\omega)/\ell_{\min}(\omega).
	\]
\end{lemma}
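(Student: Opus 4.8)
The statement is quoted verbatim from Forni \cite[Lemma 9.2]{F02}, so the task is really to explain how one deduces the Sobolev bound on the current $[\gamma]\in W^{-1}_\omega(X)$ from the Sobolev trace theorem together with the combinatorics of how a straight line segment of length $\ell_\gamma(\omega)$ sits inside $(X,\omega)$. The plan is to test the current $[\gamma]$ against an arbitrary $1$-form $\psi\in W^1_\omega(X)$, write $\langle[\gamma],\psi\rangle=\int_\gamma\psi$, and bound this integral in terms of $\|\psi\|_{1,\omega}$ and the geometry of $\gamma$; taking the supremum over $\psi$ with $\|\psi\|_{1,\omega}\le 1$ then gives $\|[\gamma]\|_{-1,\omega}$.

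First I would reduce to short segments. Since $\gamma$ is a straight line segment, away from the singularities one can slide $\gamma$ transversally by a small amount: more precisely, cover $\gamma$ by $\preceq_g 1+\ell_\gamma(\omega)/\ell_{\min}(\omega)$ sub-segments $\gamma_1,\dots,\gamma_N$, each of length $\le\ell_{\min}(\omega)/2$, and note that each $\gamma_k$ is contained in an embedded flat rectangle $Q_k$ of dimensions comparable to $\ell_{\min}(\omega)/2$ in the transverse direction and $\ell_{\min}(\omega)/2$ along $\gamma$ (this embeddedness is exactly the kind of Gauss--Bonnet argument already run in Proposition \ref{prop:embed}). On such a rectangle one has a uniform (scale-invariant after normalizing) one-dimensional Sobolev trace inequality: for a smooth function $f$ on the unit square, $\left(\int_0^1 |f(x,0)|^2\,dx\right)^{1/2}\preceq \|f\|_{H^1}$ of the square. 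Applying this to the two components $\iota_{\partial x_\omega}\psi$ and $\iota_{\partial y_\omega}\psi$ of $\psi$ in the flat coordinates of $Q_k$, and accounting for the metric rescaling by $\ell_{\min}(\omega)$ in both the length element along $\gamma_k$ and the area element of $Q_k$, yields
\[
\left|\int_{\gamma_k}\psi\right| \preceq_g \ell_{\min}(\omega)^{1/2}\cdot \ell_{\min}(\omega)^{-1}\cdot\|\psi\|_{1,\omega}\big|_{Q_k} = \ell_{\min}(\omega)^{-1/2}\,\|\psi\|_{1,\omega}\big|_{Q_k}.
\]
Wait — I should be careful that the length of $\gamma_k$ is itself $\preceq \ell_{\min}(\omega)$, which compensates, so in fact $\left|\int_{\gamma_k}\psi\right|\preceq_g \|\psi\|_{1,\omega}\big|_{Q_k}$ with a bound that does not blow up as $\ell_{\min}(\omega)\to 0$; the scale-invariance of the $1$D trace inequality is precisely what makes the constant depend only on $g$ (through the shape of the rectangle, which is bounded in the stratum).

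Then I would sum over $k$: using Cauchy--Schwarz over the $N\preceq_g 1+\ell_\gamma(\omega)/\ell_{\min}(\omega)$ pieces and the fact that the rectangles $Q_k$ have bounded overlap multiplicity (which again follows from a Gauss--Bonnet/systole argument bounding how many flat rectangles of the given size through consecutive pieces can coincide), one gets
\[
\left|\int_\gamma\psi\right| \le \sum_{k=1}^N\left|\int_{\gamma_k}\psi\right| \preceq_g N^{1/2}\left(\sum_k \|\psi\|_{1,\omega}^2\big|_{Q_k}\right)^{1/2} \preceq_g N^{1/2}\cdot N^{1/2}\cdot\|\psi\|_{1,\omega},
\]
hmm, that loses a factor; the clean route is instead the bound $\left|\int_\gamma\psi\right|\le N\cdot\max_k\left|\int_{\gamma_k}\psi\right|\preceq_g N\,\|\psi\|_{1,\omega}$, which already gives $\|[\gamma]\|_{-1,\omega}\preceq_g N\preceq_g 1+\ell_\gamma(\omega)/\ell_{\min}(\omega)$, exactly the claim. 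Taking the supremum over $\|\psi\|_{1,\omega}\le 1$ completes the argument.

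**Main obstacle.** The genuinely delicate point is the scale-invariance of the local trace estimate and the verification that the covering rectangles $Q_k$ can be chosen embedded with aspect ratio bounded in terms of $g$ only, independently of how small $\ell_{\min}(\omega)$ is; this is what prevents the constant from degenerating near the boundary of the stratum and is the reason the bound is stated with $\ell_\gamma(\omega)/\ell_{\min}(\omega)$ rather than $\ell_\gamma(\omega)$ alone. Since this is Forni's lemma, in the paper it is simply invoked, and the proof here would be a one-line citation; the sketch above is how one would reconstruct it if needed.

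\begin{proof}
This is \cite[Lemma 9.2]{F02}. For completeness we indicate the idea. Testing the current $[\gamma] \in W_\omega^{-1}(X)$ against $\psi \in W_\omega^1(X)$, one has $\langle [\gamma], \psi \rangle = \int_\gamma \psi$. Decompose $\gamma$ into $N \preceq_g 1 + \ell_\gamma(\omega)/\ell_{\min}(\omega)$ consecutive straight sub-segments of length $\leq \ell_{\min}(\omega)/2$; by a Gauss--Bonnet argument as in the proof of Proposition \ref{prop:embed}, each sub-segment lies in an embedded flat rectangle whose shape is controlled in terms of $g$ alone. On each such rectangle the one-dimensional Sobolev trace theorem \cite[Chapter 4]{A03}, applied in the flat coordinates to the components $\iota_{\partial x_\omega} \psi$ and $\iota_{\partial y_\omega} \psi$, bounds the integral of $\psi$ over the sub-segment by a constant depending only on $g$ times $\|\psi\|_{1,\omega}$, the scale-invariance of the estimate ensuring no dependence on $\ell_{\min}(\omega)$. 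Summing over the $N$ sub-segments gives $|\int_\gamma \psi| \preceq_g (1 + \ell_\gamma(\omega)/\ell_{\min}(\omega)) \|\psi\|_{1,\omega}$, and taking the supremum over $\|\psi\|_{1,\omega} \leq 1$ yields the claim.
\end{proof}
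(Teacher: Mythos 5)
The paper gives no argument for this lemma at all---it is quoted verbatim from Forni \cite[Lemma 9.2]{F02}---and your proof likewise rests on that citation, which is exactly the paper's approach. Your supplementary sketch (splitting $\gamma$ into $\preceq_g 1+\ell_\gamma(\omega)/\ell_{\min}(\omega)$ sub-segments of length $\leq \ell_{\min}(\omega)/2$, each sitting in an embedded flat rectangle as in Proposition \ref{prop:embed}, and applying a scale-invariant trace estimate on each before summing trivially) is a sound reconstruction and does not conflict with anything in the paper.
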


Recall that $\omega$ induces a pair of canonical singular $1$-forms $\mathrm{Re}(\omega)$ and $\mathrm{Im}(\omega)$ on $X$. These $1$-forms can be considered as currents on $X$ by duality with respect to integration of wedge products. Notice  
\begin{equation}
	\label{eq:basic_bound}
	\|\mathrm{Re}(\omega)\|_{-1,\omega} = 1, \ \|\mathrm{Im}(\omega)\|_{-1,\omega} = 1.
\end{equation}
In terms of this identification, the \textit{tautological subspace} $J_\omega(X) \subseteq W_\omega^{-1}(X)$ is defined as 
\[
J_\omega(X) := \mathbf{R} \cdot \mathrm{Re}(\omega) \oplus \mathbf{R} \cdot \mathrm{Im}(\omega).
\]
Denote by $J_\omega^{\perp}(X)$ the orthogonal of $J_\omega(X)$ in $W_\omega^{-1}(X)$ with respect to integration of wedge products. The subspace of \textit{closed currents} $Z_\omega^{-1}(X) \subseteq W_\omega^{-1}(X)$ is defined as
\[
Z_\omega^{-1}(X) := \{\gamma \in W_\omega^{-1}(X) \ | \ d\gamma = 0\},
\]
where the exterior derivative operator $d$ is defined in the weak sense with respect to an appropriate space of test functions \cite[\S6]{F02}. The subspace $Z_\omega^{-1}(X) \subseteq W_\omega^{-1}(X)$ is closed and contains $J_\omega(X)$. Closed piecewise smooth paths on $X$ induce closed currents that belong to $Z_\omega^{-1}(X)$.

\subsection*{The cocycle of currents.} The moduli space $\smash{\Omega^1_g}$ supports an orbifold bundle $\smash{W_g^{-1}}$ whose fiber above $(X,\omega) \in \smash{\Omega_g}$ is the weighted Sobolev space of currents $\smash{W^{-1}_\omega}(X)$. This bundle supports a cocycle $A := \{A_t \}_{t \in \mathbf{R}}$ over the Teichmüller geodesic flow which acts on fibers by parallel transport \cite[\S3.3]{AF08}. This cocycle satisfies the following inequality for every $\gamma \in W_\omega^{-1}(X)$ and every $t > 0$,
\begin{equation*}
	\label{eq:cocycle_bd}
	\|A_t \gamma\|_{-1,a_t\omega} \leq e^{2|t|} \cdot \|\gamma\|_{-1,\omega}.
\end{equation*}
The tautological subspaces, their orthogonals with respect to integration of wedge products, and the subspaces of closed currents fit into $A$-invariant subbundles $J_g, J_g^\perp,Z_g^{-1} \subseteq W_g^{-1}$.

\subsection*{Spectral gap.} Let $\delta_g \colon \smash{W_g^{-1}} \to \mathbf{R}$ be the distance function to the subbundle $\smash{Z_g^{-1}} \subseteq \smash{W_g^{-1}}$ of closed currents. More precisely, for every $(X,\omega) \in \Omega^1_g$, the restriction $\smash{\delta_g|_{W_\omega^{-1}(X)}} \colon W_\omega^{-1}(X) \to \mathbf{R}$ is the distance function on $W_\omega^{-1}(X)$ to the closed Hilbert subspace $Z_\omega^{-1}(X) \subseteq W_\omega^{-1}(X)$ with respect to the metric induced by the norm $\|\cdot\|_{-1,\omega}$. Fix a stratum $\Omega^1 \subseteq \Omega_g^1$. Given $K \subseteq \Omega^1$ compact and $\delta > 0$, let $\Gamma_K(\delta) \subseteq W_g^{-1}$ be the subset over $\Omega^1$ defined by the condition that for every $(X,\omega) \in \Omega^1$,
\[
\Gamma_K(\delta) \cap W_\omega^{-1}(X) := \{\gamma \in W_\omega^{-1}(X) \ | \ \forall t \in \mathbf{R} \colon a_t\omega \in K \Rightarrow \delta_g(A_t \gamma) \leq \delta \}.
\]
More concretely, $\Gamma_K(\delta)$ is the subset of all currents over $\Omega^1$ which stay at distance $\leq \delta$ from the subbundle of closed currents for all visiting times of the corresponding Teichmüller geodesic flow orbit to the compact subset $K \subseteq \Omega^1$. Lemma \ref{lem:path_sov_bound} provides a tool for checking when currents induced by straight line segments belong to subsets of this kind.

\begin{proposition}
	\label{prop:delta_close}
	Let $\Omega^1 \subseteq \Omega_g^1$ be a stratum and $K \subseteq \Omega^1$ be a compact subset. Then, there exists a constant $\delta = \delta(K) > 0$ such that for every $\omega \in \Omega^1$ and every straight line segment $\gamma$ of $\omega$, $\gamma \in \Gamma_K(\delta)$.
\end{proposition}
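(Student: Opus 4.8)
The plan is to produce, for every $t \in \mathbf{R}$ with $a_t\omega \in K$, a \emph{closed} current lying within a bounded $\|\cdot\|_{-1,a_t\omega}$-distance of $A_t\gamma$, where the bound depends only on $K$; one may then take $\delta = \delta(K)$ to be this bound, since by definition $\delta_g(A_t\gamma)$ is the $\|\cdot\|_{-1,a_t\omega}$-distance from $A_t\gamma$ to the closed subspace $Z_{a_t\omega}^{-1}(X)$. The crucial observation is that, although the transported straight line segment $A_t\gamma$ of $a_t\omega$ may be arbitrarily long, it differs as a current from the closed loop obtained by closing it up inside the surface underlying $a_t\omega$ only by a short path --- short precisely because $a_t\omega$ is confined to the compact set $K$ and hence has uniformly bounded diameter.

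To carry this out, I would first recall that the cocycle $A$ acts on the current induced by a straight line segment of $\omega$ by transporting that segment along the Teichmüller geodesic flow (cf.\ \cite[\S3.3]{AF08}), so that $A_t\gamma$ is the current induced by the straight line segment $A_t\gamma$ of $a_t\omega$. Fixing $t$ with $a_t\omega \in K$, let $p$ and $q$ be the endpoints of the segment $A_t\gamma$, and let $\eta$ be a shortest path (a flat geodesic) of $a_t\omega$ from $q$ to $p$. The concatenation $A_t\gamma \ast \eta$ is a closed piecewise smooth loop, hence induces a closed current $c := A_t\gamma + \eta \in Z_{a_t\omega}^{-1}(X)$, and therefore
\[
\delta_g(A_t\gamma) \le \|A_t\gamma - c\|_{-1,a_t\omega} = \|\eta\|_{-1,a_t\omega}.
\]
It then remains to bound $\|\eta\|_{-1,a_t\omega}$ uniformly in $t$, $\gamma$, and $\omega$.

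Here I would use compactness of $K$: there exist constants $c_0 = c_0(K) > 0$ and $D = D(K) > 0$ with $\ell_{\min}(a_t\omega) \ge c_0$ and $\mathrm{diam}(a_t\omega) \le D$ for all such $t$. By the analogue of Proposition \ref{prop:flat_geod_1} for geodesic segments, $\eta$ decomposes as a concatenation $s_0 \ast s_1 \ast \cdots \ast s_k$ of straight line segments of $a_t\omega$ meeting at zeroes of $a_t\omega$; all but $s_0$ and $s_k$ join two zeroes with no zero in their interior and are thus saddle connections of length $\ge c_0$, while $\sum_i \ell_{s_i}(a_t\omega)$ equals the length of $\eta$, which is $\le D$. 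Hence $\eta$ consists of at most $D/c_0 + 2$ straight line segments, each of length $\le D$. Since the current of $\eta$ is the sum of the currents of the $s_i$, the triangle inequality together with Lemma \ref{lem:path_sov_bound} applied to each $s_i$ yields
\[
\|\eta\|_{-1,a_t\omega} \le \sum_{i=0}^{k} \|s_i\|_{-1,a_t\omega} \preceq_g (D/c_0 + 2)(1 + D/c_0) =: \delta(K),
\]
which depends only on $K$ (and on the fixed genus $g$). This proves the proposition; if no time $t$ satisfies $a_t\omega \in K$, the conclusion is vacuous.

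The step I expect to be the crux is the one just described: one cannot simply bound $\delta_g(A_t\gamma)$ by $\|A_t\gamma\|_{-1,a_t\omega}$, since the latter is controlled by Lemma \ref{lem:path_sov_bound} only in terms of the length of the transported segment, which is in general unbounded along the Teichmüller orbit. The argument genuinely exploits \emph{closedness} of currents --- a long straight segment is $W^{-1}$-close to the honest cycle obtained by closing it up, and the closing path can be kept short once the ambient Abelian differential is confined to a compact subset of the stratum.
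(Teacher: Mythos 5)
Your proposal is correct and follows essentially the same route as the paper: close up the (transported) straight line segment along a flat geodesic of length at most $\mathrm{diam}$, note the closing path is a concatenation of $\preceq \mathrm{diam}/\ell_{\min}$ straight line segments, bound its $\|\cdot\|_{-1}$-norm by Lemma \ref{lem:path_sov_bound} and the triangle inequality, and use compactness of $K$ to control $\mathrm{diam}$ and $\ell_{\min}$ uniformly. The only cosmetic difference is that you carry out the estimate directly at $a_t\omega$ for each visiting time, while the paper states it for an arbitrary $\omega\in\Omega^1$ and then invokes the uniform bounds on $K$; these are the same argument.
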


\begin{proof}
	Let $(X,\omega) \in \Omega^1$ and $\gamma$ be a straight line segment of $\omega$. Recall that $\mathrm{diam}(\omega)$ denotes the diameter of $\omega$. The endpoints of $\gamma$ can be joined along a flat geodesic path of length $\leq \mathrm{diam}(\omega)$ to obtain a closed path $\overline{\gamma} \in Z_\omega^{-1}(X)$. The difference $\overline{\gamma} - \gamma \in W_\omega^{-1}(X)$ is a concatenation of $\preceq \mathrm{diam}(\omega)/\ell_{\min}(\omega)$ straight line segments. By the triangle inequality and Lemma \ref{lem:path_sov_bound}, 
	\[
	\| \overline{\gamma}- \gamma \|_{-1,\omega} \preceq_g \mathrm{diam}(\omega)/\ell_{\min}(\omega).
	\]
	The quantities $\mathrm{diam}(\omega)$ and $\ell_{\min}(\omega)$ can be bounded uniformly away from $+\infty$ and $0$ on compact subsets of $\Omega^1$. The desired conclusion follows.
\end{proof}

For every $r > 0$ let $r^+ := \max\{1,r\}$. In \cite[Lemma 4.2]{AF08} a continuous function $\Lambda \colon \Omega^1 \to [0,1)$ quantifying the spectral gap of the Kontsevich-Zorich cocycle on the stratum $\Omega^1$ is contructed. The same function can be used to quantify the spectral gap of the cocyle $A$ as follows.

\begin{lemma} \cite[Lemma 4.5]{AF08}
	\label{lem:spec_gap}
	Let $\Omega^1 \subseteq \smash{\Omega_g^1}$ be a stratum, $K \subseteq \Omega^1$ be a compact subset, and $\delta > 0$. Consider $(X,\omega) \in \Omega^1$ and $\gamma \in \smash{J_\omega^\perp} \cap \Gamma_{K}(\delta)$. Let $\{s_j\}_{j=1}^k$ with $k \in \mathbf{N}^+$ be an increasing sequence of visiting times of the forward orbit $\{a_t \omega\}_{t \geq 0}$ to $K$. Denote $\omega_k := a_{s_k} \omega$. Then,
	\[
	\|\gamma\|_{-1,\omega} \preceq_{K} \delta^+ \cdot \|A_{s_k}\gamma\|_{-1,\omega_k}^+ \cdot \exp\left( \int_{s_1}^{s_k} \Lambda(a_t \omega) \thinspace dt\right) \cdot e^{2s_1} \cdot \left( \sum_{j=1}^{k-1} e^{2(s_{j+1} -s_j)} \right)^2.
	\]
\end{lemma}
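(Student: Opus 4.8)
Since this statement is quoted as \cite[Lemma 4.5]{AF08}, the plan is to follow the argument of Athreya and Forni, adapted to our notation. Set $\gamma_j := A_{s_j}\gamma$ and $\omega_j := a_{s_j}\omega$ for $j \in \{1,\dots,k\}$. Because $\gamma \in \Gamma_K(\delta)$ and $\omega_j \in K$, one has $\delta_g(A_{s_j}\gamma) \leq \delta$, so we may write $\gamma_j = h_j + e_j$, where $h_j$ is the orthogonal projection of $\gamma_j$ onto the closed currents $Z^{-1}_{\omega_j}(X)$ and $\|e_j\|_{-1,\omega_j} \leq \delta$. Since $J_g^\perp$ is $A$-invariant and $\gamma \in J_\omega^\perp(X)$, we get $\gamma_j \in J_{\omega_j}^\perp(X)$; as $J_{\omega_j}(X) \subseteq Z^{-1}_{\omega_j}(X)$ and orthogonal projection is self-adjoint, this forces $h_j \in Z^{-1}_{\omega_j}(X) \cap J_{\omega_j}^\perp(X)$, i.e. the closed part lands in the subspace controlled by the spectral gap.

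Next I would control the passage between consecutive visiting times and feed in the spectral gap. Since the cocycle preserves $Z_g^{-1}$, we have $A_{s_{j+1}-s_j} h_j \in Z^{-1}_{\omega_{j+1}}(X)$, so $A_{s_{j+1}-s_j}h_j = h_{j+1} - g_j$ with $g_j$ a closed current satisfying $\|g_j\|_{-1,\omega_{j+1}} \leq e^{2(s_{j+1}-s_j)}\delta$ by the crude cocycle bound $\|A_t \eta\|_{-1,a_t\omega} \leq e^{2|t|}\|\eta\|_{-1,\omega}$. On the cohomology side, the function $\Lambda \colon \Omega^1 \to [0,1)$ of \cite[Lemma 4.2]{AF08} bounds the relevant norm ratio of the cohomology class of $h_j$ by $\exp(\int_{s_j}^{s_{j+1}} \Lambda(a_t\omega)\,dt)$, and on $K$ the weighted Sobolev norm of a closed current is comparable, up to a constant depending only on $K$, to the Hodge norm of its class together with its (exponentially contracting) exact part. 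Telescoping these two inequalities over $j = 1, \dots, k-1$ — each prefix product of the $\exp(\int \Lambda)$ factors being dominated by the full exponential since $\Lambda \geq 0$ — yields $\|h_1\|_{-1,\omega_1} \preceq_K \exp(\int_{s_1}^{s_k}\Lambda(a_t\omega)\,dt) \cdot (\|A_{s_k}\gamma\|_{-1,\omega_k} + \delta) \cdot (\sum_{j=1}^{k-1} e^{2(s_{j+1}-s_j)})^2$, the square coming from the combined accumulation of the errors $g_j$ in the cohomological and in the exact parts. Applying the crude cocycle bound once more over $[0,s_1]$ gives $\|\gamma\|_{-1,\omega} \leq e^{2s_1}(\|h_1\|_{-1,\omega_1} + \delta)$, and replacing $\delta$ and $\|A_{s_k}\gamma\|_{-1,\omega_k}$ by $\delta^+$ and $\|A_{s_k}\gamma\|_{-1,\omega_k}^+$ to absorb the additive slack produces the asserted estimate.

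The conceptual heart is the spectral gap input \cite[Lemma 4.2]{AF08}, ultimately Forni's analysis of the derivative of the Hodge norm along the Teichmüller geodesic flow, which I would use as a black box. The genuine technical work — and the main place where care is needed — is the bookkeeping: keeping track of the closed/exact/cohomological decomposition of the currents $h_j$, making the comparison between weighted Sobolev norms of currents and Hodge norms of cohomology classes uniform over $K$, and organizing the errors $g_j$ so that they accumulate into exactly the squared sum $(\sum_j e^{2(s_{j+1}-s_j)})^2$ rather than something larger; the truncations $(\,\cdot\,)^+$ are present precisely to absorb the additive slack generated at the first and last visiting times.
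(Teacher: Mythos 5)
This lemma is not proved in the paper at all: it is imported verbatim from \cite[Lemma 4.5]{AF08}, so there is no internal argument to compare your reconstruction against, and for the purposes of this paper citing it as a black box is exactly what is intended. Your sketch does follow the expected Athreya--Forni strategy (decompose $A_{s_j}\gamma$ at each visiting time into a closed current plus an error of norm $\leq \delta$, run the spectral-gap estimate via $\Lambda$ on the closed parts, use the crude bound $e^{2|t|}$ on the errors and on the initial segment $[0,s_1]$, and telescope), so as an outline it is consistent with the source. Two points are stated more loosely than they can be justified as written: the claim that the projection of $\gamma_j$ onto $Z^{-1}_{\omega_j}(X)$ lies in $J_{\omega_j}^{\perp}(X)$ ``since orthogonal projection is self-adjoint'' conflates the Hilbert-space projection with orthogonality taken with respect to the wedge pairing, which is how $J_\omega^{\perp}$ is defined here; and the uniform comparability over $K$ of the weighted Sobolev norm of a closed current with the Hodge data of its class (the source of the implicit constant $\preceq_K$ and of the truncations $\delta^+$, $\|\cdot\|^+$) is asserted rather than argued, and is precisely the compactness input one must take from \cite{AF08}/\cite{F02} rather than rederive. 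Neither point affects the paper, since the lemma is used only as quoted.
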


\subsection*{Greedy partitions.} The following lemma corresponds to the output of a greedy algorithm for partitioning vertical segments of Abelian differentials according to increasing sequences of lengths.

\begin{lemma} \cite[Lemma 5.1]{AF08}
	\label{lem:part}
	Let $\omega \in \Omega_g^1$, $\gamma$ be a vertical segment of $\omega$, and $\{T_k\}_{k=1}^N$ with $N \in \mathbf{N}^+$ be an increasing sequence of positive real numbers. Denote $T_{N+1} := \ell_{\gamma}(\omega)$. Then, there exist a decomposition of $\gamma$ into consecutive subsegments
	\[
	\gamma = \gamma_0 + \sum_{k=1}^N \sum_{m=1}^{m_k} \gamma_{k,m}
	\]
	such that $\ell_{\gamma_0}(\omega) < T_1$, $\ell_{\gamma_{k,m}}(\omega) = T_k$ for every $k \in \{1,\dots, N\}$ and every $m \in \{1,\dots,m_k\}$, and $m_k \leq T_{k+1} \thinspace \smash{T_k^{-1}}$ for every $k \in \{1,\dots,N\}$.
\end{lemma}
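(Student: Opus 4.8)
The plan is to produce the decomposition by a top-down greedy algorithm; the statement has no geometric content beyond partitioning the parameter interval, so parametrize $\gamma$ by arclength as $\gamma \colon [0,\ell_\gamma(\omega)] \to X$ and work with the interval $[0,\ell_\gamma(\omega)]$. Set $R_N := \ell_\gamma(\omega) = T_{N+1}$ and define recursively, for $k = N, N-1, \dots, 1$,
\[
m_k := \lfloor R_k/T_k \rfloor, \qquad R_{k-1} := R_k - m_k T_k .
\]
By the definition of the floor function, $R_{k-1} = R_k - m_k T_k \in [0,T_k)$ for every $k$, so in particular $0 \le R_0 \le R_1 \le \dots \le R_N = \ell_\gamma(\omega)$, and the subintervals $[0,R_0], [R_0,R_1], \dots, [R_{N-1},R_N]$ tile $[0,\ell_\gamma(\omega)]$. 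Let $\gamma_0 := \gamma|_{[0,R_0]}$ and, for each $k \in \{1,\dots,N\}$, subdivide $\gamma|_{[R_{k-1},R_k]}$---which has length $R_k - R_{k-1} = m_k T_k$---into $m_k$ consecutive subsegments $\gamma_{k,1},\dots,\gamma_{k,m_k}$, each of length $T_k$. Listing these in order yields $\gamma = \gamma_0 + \sum_{k=1}^N \sum_{m=1}^{m_k} \gamma_{k,m}$ as a concatenation of consecutive subsegments.

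It then remains only to verify the three asserted bounds. First, $\ell_{\gamma_0}(\omega) = R_0 \in [0,T_1)$, so $\ell_{\gamma_0}(\omega) < T_1$. Second, $\ell_{\gamma_{k,m}}(\omega) = T_k$ by construction. Third, for $k = N$ one has $m_N = \lfloor T_{N+1}/T_N\rfloor \le T_{N+1}T_N^{-1}$, while for $k \le N-1$ one has $m_k \le R_k/T_k$ and, since $R_k = R_{k+1} - m_{k+1}T_{k+1} \in [0,T_{k+1})$, this gives $m_k < T_{k+1}T_k^{-1}$; in either case $m_k \le T_{k+1}T_k^{-1}$, as required.

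There is essentially no obstacle here, since the lemma is a bookkeeping statement; the only point deserving attention is to run the greedy recursion from the \emph{largest} length downward, so that the remainder controlling $m_k$ is the previous remainder $R_k$, which is bounded by $T_{k+1}$, and then to lay the pieces out along $\gamma$ in the correct order---the short remainder $\gamma_0$ first, followed by the blocks of lengths $T_1, T_2, \dots, T_N$ in increasing order of length. If one prefers that $\gamma_0$ and each block consist of genuine (nondegenerate) subsegments, one simply omits the empty ones (those with $R_0 = 0$ or $m_k = 0$) from the concatenation, which affects none of the stated conclusions.
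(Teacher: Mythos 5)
Your proof is correct and is essentially the intended argument: the paper does not reprove this cited result but describes it precisely as ``the output of a greedy algorithm,'' which is exactly your top-down greedy recursion, matching the proof of Lemma 5.1 in \cite{AF08}. Your bookkeeping (the remainder $R_k < T_{k+1}$ controlling $m_k$, and allowing degenerate pieces when $m_k=0$ or $R_0=0$) is the right way to verify the stated bounds, and it correctly avoids assuming $T_{N+1}\geq T_N$.
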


\subsection*{Summability estimate.} The following summability estimate will play an important role in the proofs of the main results of this section.

\begin{lemma}
	\label{lem:sum}
	Let $\{s_j\}_{j=1}^k$ with $k \in \mathbf{N}^+$ be a sequence of positive numbers. Suppose there exists $s > 0$ such that $s_{j+1} \geq s_j +s$ for every $j \in \{1,\dots,k-1\}$. Let $I \subseteq \mathbf{R}^+$ compact. Then, for every $\lambda \in I$,
	\[
	\sum_{j=1}^k e^{\lambda s_j} \preceq_{I,s} e^{\lambda s_k}. 
	\]
\end{lemma}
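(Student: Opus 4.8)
The plan is to bound the sum by the sum of a geometric series with largest term $e^{\lambda s_k}$. First I would note that the separation hypothesis $s_{j+1} \geq s_j + s$ implies, by induction, that $s_k - s_j \geq (k-j) s$ for every $j \in \{1,\dots,k\}$. Hence $s_j \leq s_k - (k-j)s$, and since $\lambda > 0$,
\[
e^{\lambda s_j} \leq e^{\lambda s_k} \cdot e^{-\lambda (k-j) s}.
\]

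Summing over $j$ from $1$ to $k$ and reindexing by $\ell := k - j$, which ranges over $\{0,1,\dots,k-1\}$, gives
\[
\sum_{j=1}^k e^{\lambda s_j} \leq e^{\lambda s_k} \sum_{\ell=0}^{k-1} e^{-\lambda \ell s} \leq e^{\lambda s_k} \sum_{\ell=0}^{\infty} e^{-\lambda \ell s} = \frac{e^{\lambda s_k}}{1 - e^{-\lambda s}}.
\]
The geometric series converges precisely because $\lambda s > 0$, so $e^{-\lambda s} < 1$.

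It remains to absorb the factor $(1 - e^{-\lambda s})^{-1}$ into the implied constant. Since $\lambda$ ranges over the compact set $I \subseteq \mathbf{R}^+$ and $s > 0$ is fixed, the quantity $\lambda s$ ranges over a compact subset of $\mathbf{R}^+$ bounded away from $0$; thus $1 - e^{-\lambda s}$ attains a positive minimum over $\lambda \in I$ depending only on $I$ and $s$, and $(1 - e^{-\lambda s})^{-1} \preceq_{I,s} 1$. This yields $\sum_{j=1}^k e^{\lambda s_j} \preceq_{I,s} e^{\lambda s_k}$, as desired. There is no real obstacle here; the only point requiring a modicum of care is making explicit that the constant depends only on $I$ and $s$ and not on $k$ or on the particular sequence $\{s_j\}$, which is exactly what the compactness of $I$ and positivity of $s$ provide.
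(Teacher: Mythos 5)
Your proof is correct, and it rests on the same essential idea as the paper's: the spacing hypothesis forces the sum to be dominated by a geometric series whose ratio is controlled, with compactness of $I \subseteq \mathbf{R}^+$ (so $\lambda$ is bounded away from $0$) making the resulting constant depend only on $I$ and $s$. The only cosmetic difference is that the paper bins the $s_j$ by their integer ceilings (at most $\lceil 1/s\rceil$ per integer) and sums a geometric series over integers up to $\lceil s_k\rceil$, whereas you compare each term directly to $e^{\lambda s_k}$ via $s_k - s_j \geq (k-j)s$; both yield the same bound.
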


\begin{proof}
	The condition $s_{j+1} \geq s_j +s$ for $j \in \{1,\dots,k-1\}$ ensures that, for every $n \in \mathbf{N}^+$, 
	\[
	|\{j \in \{1,\dots,k\} \colon \lceil s_j \rceil = n \}| \leq \lceil 1/s \rceil.
	\]
	It follows that, for every $\lambda \in I$,
	\[
	\sum_{j=1}^k e^{\lambda s_j} \leq \sum_{j=1}^k e^{\lambda \lceil s_j \rceil} \leq \left\lceil \frac{1}{s} \right\rceil \cdot \sum_{n=1}^{\lceil s_k \rceil} e^{\lambda n} \leq \left\lceil \frac{1}{s} \right\rceil \cdot \left( \frac{e^{2\lambda}}{e^\lambda - 1}\right) \cdot e^{\lambda s_k}\preceq_{I,s} e^{\lambda s_k}. \qedhere
	\]
\end{proof}

\subsection*{Effective equidistribution of straight line flows.} We now state and prove effective equidistribution theorems for vertical and horizontal segments of Abelian differentials.

Let $(X,\omega)$ be an Abelian differential. Recall that $\Re(\omega)$ and $\Im(\omega)$ denote the vertical and horizontal foliations of $\omega$ endowed with their corresponding transverse measures. By the Sobolev trace theorem, for every vertical segment $\gamma$ of $\omega$, every horizontal segment $\sigma$ of $\omega$, and every Sobolev function $\phi \in H_\omega^1(X)$, the following integrals are well defined,
\[
\int_{\gamma} \phi \thinspace d\Im(\omega), \quad \int_{\sigma} \phi \thinspace d\Re(\omega).
\]

Fix a stratum $\Omega^1 \subseteq \Omega_g^1$. Let $K \subseteq \Omega^1$ be a compact subset, $T > 0$, and  $\eta \in (0,1)$. An Abelian differential $\omega \in \Omega^1$ is said to be \textit{$(K,T,\eta)$-recurrent} if 
\begin{equation*}
	|\{t \in [0,T] \colon a_t \omega \notin K \}| \leq \eta T.
\end{equation*}

Let $K' \subseteq \Omega^1$ be a compact subset, $T > 0$, $\rho  \in (0,1)$, $\epsilon > 0$, and $s > 0$. A sequence of positive real numbers $\{s_n\}_{n=0}^{N+1}$ with $N \in \mathbf{N}^+$ is said to be a \textit{$(K',T,\rho ,\epsilon,s)$-itinerary} of $\omega \in \Omega^1$ if 
\begin{enumerate}
	\item $a_{s_n} \omega \in K'$ for every $n \in \{1, \dots, N\}$,
	\item $s_0 = \rho  T$,
	\item $s_N < T$, 
	\item $s_{N+1} = T$,
	\item $s_{n+1} \geq s_n + s$ for every $n \in \{1,\dots, N-1\}$,
	\item $s_{n+1} - s_n \leq \epsilon s_n$ for every $n \in \{0,\dots,N\}$.
\end{enumerate}

Let $(X,\omega)$ be an Abelian differential. Recall that $A_\omega$ denotes the flat area form induced $\omega$ on $X$ and that $\smash{\ell^\dagger_{\min}}(\omega) := \min\{1,\ell_{\min}(\omega)\}$. The following theorem shows that vertical segments of $\omega$ equidistribute towards $A_\omega$ at an effective rate if the Teichmüller geodesic flow orbit of $\omega$ satisfies appropriate recurrence conditions. Our proof follows the general outline of the proof of \cite[Theorem 1.1]{AF08}.

\begin{theorem}
	\label{theo:equid_1}
	Let $\Omega^1 \subseteq \Omega_g^1$ be a stratum. For every compact subset $K \subseteq \Omega^1$ and every $\eta \in (0,1)$ there exist constants $\rho_1 = \rho _1(K,\eta) > 0$, $\epsilon_1 = \epsilon_1(K,\eta) > 0$, and $\kappa_1 = \kappa_1(K,\eta) > 0$ such that for every $T > 0$, every compact subset $K' \subseteq \Omega^1$, every $0 < \rho  < \rho_1$, every $0 < \epsilon < \epsilon_1$, and every $s > 0$, the following holds. Suppose $(X,\omega) \in \Omega^1$ is $(K,T,\eta)$-recurrent and has a $(K',T,\rho ,\epsilon,s)$-itinerary, $\gamma$ is a vertical segment of $\omega$ of length $A e^T$ for some $A > 0$, and $\phi \in H^1_\omega(X)$. Then, 
	\[
	\bigg\vert \int_{\gamma} \phi \thinspace d\Im(\omega) - A e^T \int_X \phi \thinspace d A_\omega \bigg\vert \preceq_{K',s} A^+ \cdot \ell_{\min}^\dagger(\omega)^{-1} \cdot \|\phi\|_{1,\omega} \cdot e^{(1-\kappa_1)T}.
	\]
\end{theorem}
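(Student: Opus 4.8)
**The plan is to follow the Athreya--Forni strategy closely, partitioning $\gamma$ into pieces that become well-recurrent after a controlled amount of Teichm\"uller time, and bounding the non-closed part of each piece using the spectral gap.**

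The main idea is the following. Consider the current $\gamma \in W_\omega^{-1}(X)$ induced by the vertical segment. By Proposition \ref{prop:delta_close}, after choosing $K'' \subseteq \Omega^1$ a slightly larger compact set containing $K$, the current $\gamma$ lies in $\Gamma_{K''}(\delta)$ for a constant $\delta = \delta(K'')$. Decompose $\gamma$ orthogonally as $\gamma = c_1 \Re(\omega) + c_2 \Im(\omega) + \gamma^\perp$ with $\gamma^\perp \in J_\omega^\perp(X)$. Pairing against the area form, the tautological part contributes: $\int_X \phi\, dA_\omega$ is (up to the Hodge-type pairing conventions) recovered from $\Im(\omega)$, and the length of $\gamma$ being $Ae^T$ forces the relevant coefficient to be $Ae^T$ up to terms controlled by $\|\gamma^\perp\|_{-1,\omega}$; more precisely $\int_\gamma \phi\, d\Im(\omega) = Ae^T \int_X \phi\, dA_\omega + \langle \gamma^\perp, \text{(something built from }\phi)\rangle + O(\ldots)$, so everything reduces to bounding $\|\gamma^\perp\|_{-1,\omega} \cdot \|\phi\|_{1,\omega}$, since pairing a current in $W^{-1}$ against a form built from $\phi \in H^1$ costs $\|\phi\|_{1,\omega}$. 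Thus the crux is: \emph{bound $\|\gamma^\perp\|_{-1,\omega}$ by $\preceq_{K',s} A^+ \cdot \ell_{\min}^\dagger(\omega)^{-1} \cdot e^{(1-\kappa_1)T}$.}

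To bound $\|\gamma^\perp\|_{-1,\omega}$, first apply Lemma \ref{lem:part} with the sequence $\{T_k\}$ of scales chosen so that $T_k = e^{s_{k-1}}$ where $\{s_n\}$ is the given $(K',T,\rho,\epsilon,s)$-itinerary (the itinerary conditions (5),(6) are exactly what make the greedy partition's multiplicities $m_k \leq T_{k+1}/T_k$ summable after the spectral-gap contraction). This writes $\gamma = \gamma_0 + \sum_{k,m}\gamma_{k,m}$ with $\ell_{\gamma_{k,m}}(\omega) = T_k = e^{s_{k-1}}$. For each piece $\gamma_{k,m}$, flow forward by time $s_{k-1}$: the segment $A_{s_{k-1}}\gamma_{k,m}$ has unit length and sits over $a_{s_{k-1}}\omega \in K'$, so by Lemma \ref{lem:path_sov_bound} its norm is $\preceq_{K'} 1$, hence $\|A_{s_{k-1}}(\gamma_{k,m}^\perp)\|_{-1} \preceq_{K'} 1$. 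Now run Lemma \ref{lem:spec_gap} \emph{backwards}: applying it to $\gamma_{k,m}^\perp \in J_\omega^\perp \cap \Gamma_{K''}(\delta)$ with the visiting times being those $s_j$ of the itinerary with $j \le k-1$, we get $\|\gamma_{k,m}^\perp\|_{-1,\omega} \preceq_{K''} \delta^+ \cdot \exp(\int_{s_1}^{s_{k-1}}\Lambda(a_t\omega)\,dt) \cdot e^{2s_1}\cdot (\sum e^{2(s_{j+1}-s_j)})^2$. The $(K,T,\eta)$-recurrence, together with the lower bound for $\Lambda$ on $K$ coming from \cite[Lemma 4.2]{AF08}, gives $\int_{s_1}^{s_{k-1}}\Lambda(a_t\omega)\,dt \le (1 - c(K,\eta))s_{k-1}$ for a gap $c > 0$ once $\rho,\epsilon$ are small enough (this is where $\rho_1,\epsilon_1,\kappa_1$ get fixed: $\kappa_1$ a fraction of $c$); and condition (6) bounds $e^{2(s_{j+1}-s_j)} \le e^{2\epsilon s_j}$, so Lemma \ref{lem:sum} makes the sum $\preceq_{s} e^{2\epsilon s_{k-1}}$. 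Hence $\|\gamma_{k,m}^\perp\|_{-1,\omega} \preceq_{K',s} e^{(1-\kappa_1)s_{k-1}}$ after absorbing the $e^{2\epsilon s_{k-1}}$ into a slightly worse $\kappa_1$. Finally sum over $m \le m_k \le T_{k+1}/T_k = e^{s_k - s_{k-1}} \le e^{\epsilon s_{k-1}}$ (again condition (6)) and over $k$: using Lemma \ref{lem:sum} once more, $\sum_k m_k e^{(1-\kappa_1)s_{k-1}} \preceq_{K',s} e^{(1-\kappa_1')s_N} \le e^{(1-\kappa_1')T}$; the leftover piece $\gamma_0$ has length $< T_1 = e^{s_0} = e^{\rho T}$, negligible; and throughout the $\ell_{\min}^\dagger(\omega)^{-1}$ and $A^+$ factors come from Lemma \ref{lem:path_sov_bound} applied to $\gamma$ itself and from $A e^T / e^{s_N}$ rescaling (the total number of unit pieces being $\asymp A e^T$). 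Assembling gives the claimed bound.

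\textbf{The hard part} is the bookkeeping that makes the spectral-gap exponent genuinely smaller than $1$: one must verify that the three competing exponential contributions — the greedy-partition multiplicities $\prod$(factor $e^{\epsilon s}$), the itinerary-spacing sum $(\sum e^{2(s_{j+1}-s_j)})^2$, and the final sum over $k$ — are all controlled by powers of $\epsilon$ or by Lemma \ref{lem:sum}, so that they can be swallowed into the gap $c(K,\eta) > 0$ produced by recurrence, leaving a net saving $\kappa_1 > 0$. Choosing $\rho_1, \epsilon_1$ small in terms of $c(K,\eta)$ is what makes this work, and tracking that the implied constants depend only on $K'$ and $s$ (not on $T$, $\omega$, $\gamma$, $\phi$) requires care since Lemma \ref{lem:spec_gap}'s constant depends on the compact set $K''$ through which the geodesic recurs, which we have chosen once and for all depending only on $K$.
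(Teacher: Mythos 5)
Your overall route is the paper's route: pair the deviation current against $\phi\cdot\mathrm{Im}(\omega)$, apply the greedy partition (Lemma \ref{lem:part}) at the scales dictated by the itinerary, push each piece forward to a visiting time where it has bounded length over $K'$, pull back with the spectral-gap Lemma \ref{lem:spec_gap}, and sum using Lemma \ref{lem:sum}, with the leftover piece $\gamma_0$ and Lemma \ref{lem:path_sov_bound} supplying the $A^+\cdot\ell_{\min}^\dagger(\omega)^{-1}$ factor. (Minor slip: the currents must be placed in $\Gamma_{K'}(\delta)$ with $\delta=\delta(K')$ from Proposition \ref{prop:delta_close}, since the itinerary visits $K'$, not some enlargement $K''$ of $K$; the constants then depend on $K'$, as in the statement.)

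The genuine gap is your treatment of the spectral-gap integral. You claim that $(K,T,\eta)$-recurrence gives $\int_{s_1}^{s_{k-1}}\Lambda(a_t\omega)\,dt\le(1-c(K,\eta))\,s_{k-1}$ for \emph{every} $k$, once $\rho,\epsilon$ are small. That is false for the pieces whose scale is small compared to $T$: recurrence only bounds the total time spent outside $K$ on $[0,T]$ by $\eta T$, so for $s_{k-1}\le \eta T$ the orbit may spend all of $[0,s_{k-1}]$ outside $K$, and no choice of $\rho,\epsilon$ (which are forced to be small, not large) repairs this. The paper's proof resolves exactly this point by introducing a threshold index $M$ with $s_{M-1}<(1+\eta)T/2\le s_M$: for $k<M$ it uses the trivial bound $\exp\bigl(\int_{s_1}^{s_k}\Lambda\bigr)\le e^{s_k}$, and the contribution is still a power saving because $s_{M-1}<(1+\eta)T/2$ and $(1+5\epsilon)(1+\eta)/2<1$ for $\epsilon$ small; only for $k\ge M$, where $s_k\ge(1+\eta)T/2$ so that the excursion outside $K$ occupies at most a $2\eta/(1+\eta)$ fraction of $[0,s_k]$, does continuity of $\Lambda$ on $K$ give $\exp\bigl(\int_{s_1}^{s_k}\Lambda\bigr)\le e^{(1-\kappa)s_k}$. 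Without this two-regime split your estimate for the small-scale pieces is unjustified, and it is precisely the constraint $\eta<1$ (through the factor $(1+\eta)/2<1$) that makes the trivial regime harmless; the rest of your bookkeeping (multiplicities $m_k\le e^{\epsilon s_k}$, the spacing sum via Lemma \ref{lem:sum}, absorbing $e^{2s_1}\le e^{2(1+\epsilon)\rho T}$ by shrinking $\rho$) matches the paper.
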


\begin{proof} 
	Fix a stratum $\Omega^1 \subseteq \Omega_g^1$, a compact subset $K \subseteq \Omega^1$, and $\eta \in (0,1)$. Let $T > 0$ arbitrary, $K' \subseteq \Omega^1$ an arbitrary compact subset, $\rho  \in (0,1)$ and $\epsilon >0$ to be restricted in the course of the proof, and $s > 0$ arbitrary. Suppose $(X,\omega) \in \Omega^1$ is $(K,T,\eta)$-recurrent and has a $(K',T,\rho ,\epsilon,s)$-itinerary $\{s_n\}_{n=0}^{N+1}$. Denote by $\langle \cdot, \cdot \rangle$ the duality pairing between currents in $W_\omega^{-1}(X)$ and $1$-forms in $W_\omega^{1}(X)$. Consider the current $\widehat{\gamma} := \gamma - Ae^T  \mathrm{Re}(\omega) \in W_\omega^{-1}(S)$. Notice that
	\begin{equation}
		\label{eq:intuition}
		\bigg\vert \int_{\gamma} \phi \thinspace d\Im(\omega) - A e^T \int_X \phi \thinspace d A_\omega \bigg\vert = \langle \widehat{\gamma}, \phi \cdot \mathrm{Im}(\omega) \rangle \leq \| \widehat{\gamma}\|_{-1,\omega} \cdot \|\phi\|_{1,\omega}.
	\end{equation}
	It follows that, to prove the desired estimate, it is enough to bound $\|\widehat{\gamma}\|_{-1,\omega}$. We do this by decomposing $\widehat{\gamma}$ using Lemma \ref{lem:part} with respect to an appropriate sequence of lengths and then applying Lemma \ref{lem:spec_gap} to bound the norm of each one of the terms in this decomposition.
	
	Let $T_n := A e^{s_n}$ for every $n \in \{1,\dots,N\}$. Denote $T_{N+1} := A e^{s_{N+1}} = A e^T = \ell_{\gamma}(\omega)$. Applying Lemma \ref{lem:part} to the vertical segment $\gamma$ of $\omega$ and the increasing sequence of positive real numbers $\{T_n\}_{n=1}^N$ yields a decomposition of $\gamma$ into consecutive subsegments
	\[
	\gamma = \gamma_0 + \sum_{k=1}^N \sum_{m=1}^{m_k} \gamma_{k,m} 
	\]
	such that $\tau:= \ell_{\gamma_0}(\omega) < T_1 = A e^{s_1}$, $\ell_{\gamma_{k,m}}(\omega) = T_k = Ae^{s_k}$ for every $k \in \{1,\dots,N\}$ and every $m \in \{1,\dots,m_k\}$, and $m_k \leq T_{k+1} \thinspace T_k^{-1} = e^{s_{k+1}- s_k}$ for every $k \in \{1,\dots,N\}$. Consider the currents
	\begin{gather*}
		\widehat{\gamma}_0 := \gamma_0 - \tau \cdot \mathrm{Re}(\omega) \in W_\omega^{-1}(X), \\
		\widehat{\gamma}_{k,m} := \gamma_{k,m} - T_k \cdot \mathrm{Re}(\omega) \in W_\omega^{-1}(X).
	\end{gather*}
	Decompose the current $\widehat{\gamma} \in W_\omega^{-1}(X)$ as follows,
	\begin{equation}
		\label{eq:decomp}
		\widehat{\gamma} = \widehat{\gamma}_0 + \sum_{k=1}^N \sum_{m=1}^{m_k} \widehat{\gamma}_{k,m}.
	\end{equation}
	We bound the $\|\cdot\|_{-1,\omega}$ norm of each one of the terms in this decomposition.
	
	We first bound $\|\widehat{\gamma}_0\|_{-1,\omega}$. By Lemma \ref{lem:path_sov_bound} and (\ref{eq:basic_bound}),
	\begin{equation*}
		\|\widehat{\gamma}_0\|_{-1,\omega} \preceq_g A^+ \cdot \ell_{\min}^{\dagger}(\omega)^{-1} \cdot e^{s_1} .
	\end{equation*}
	As $s_0 = \rho  T$ and $s_1 \leq (1+\epsilon) s_0$, we have $s_1 \leq (1+\epsilon)\rho  T$. In particular,
	\begin{equation*}
		\|\widehat{\gamma}_0\|_{-1,\omega} \preceq_g A^+ \cdot \ell_{\min}^{\dagger}(\omega)^{-1} \cdot e^{(1+\epsilon)\rho  T}.
	\end{equation*}
	Let $\rho_2 \in (0,1)$ and $\epsilon_2 > 0$ be small enough so that $(1+\epsilon_2) \rho_2 < 1$. Denote $\kappa_2 := 1- (1+\epsilon_2) \rho _2 > 0$. Then, if $0 < \rho  < \rho _2$ and $0 < \epsilon < \epsilon_2$,
	\begin{equation}
		\label{eq:gamma_0_estimate}
		\|\widehat{\gamma}_0\|_{-1,\omega} \preceq_g A^+ \cdot \ell_{\min}^{\dagger}(\omega)^{-1} \cdot e^{(1-\kappa_2) T} .
	\end{equation}
	
	Let $k \in \{1,\dots,N\}$ and $m \in \{1,\dots,m_k\}$. We now bound $\|\widehat{\gamma}_{k,m}\|_{-1,\omega}$. A direct computation shows that $\widehat{\gamma}_{k,m} \in J_\omega^\perp(X)$. Let $\delta =\delta(K') > 0$ be as in Proposition \ref{prop:delta_close}. As $\mathrm{Re}(\omega) \in J_\omega(X) \subseteq Z_\omega^{-1}(X)$, it follows from Proposition \ref{prop:delta_close} that $\widehat{\gamma}_{k,m} \in \Gamma_{K'}(\delta)$. We can thus apply Lemma \ref{lem:spec_gap} to estimate $\|\widehat{\gamma}_{k,m}\|_{-1,\omega}$. Denote $\omega_k:= a_{s_k} \omega \in K'$ as in Lemma \ref{lem:spec_gap}.
	
	Let $M \in \{1,\dots,N\}$ be such that $s_{M-1} <(1+\eta) T/2$ and $s_M \geq (1+\eta) T/2$. This integer exists under the conditions $\rho  \leq (1+\eta)/2$ and $(1+\epsilon)(1+\eta)/2 \leq 1$. Lemma \ref{lem:spec_gap} provides different types of estimates in the regimes $k < M$ and $k \geq M$, so we take care to consider these cases separately. 
	
	Suppose $k \in \{1,\dots,M-1\}$. Let us first bound $\|A_{s_k} \widehat{\gamma}_{k,m}\|_{-1,\omega_k}$. As $\gamma_{k,m}$ is a vertical segment of length $T_k = A e^{s_k}$ on $\omega$, the current $A_{s_k} \gamma_{k,m}$ is induced by a vertical segment of length $A$ on $\omega_k \in K'$. Notice also that $A_{s_k} \mathrm{Re}(\omega) = e^{-s_k} \mathrm{Re}(\omega_k)$. These facts together with Lemma \ref{lem:path_sov_bound} and (\ref{eq:basic_bound}) imply
	\begin{equation*}
		\|A_{s_k} \widehat{\gamma}_{k,m}\|_{-1,\omega_k} \preceq_{K'}  A^+.
	\end{equation*}
	Applying Lemma \ref{lem:spec_gap} then yields
	\begin{equation}
		\label{eq:bound_1}
		\|\widehat{\gamma}_{k,m}\|_{-1,\omega} \preceq_{K'} A^+ \cdot \exp\left(\int_{s_1}^{s_k} \Lambda(a_t \omega) \thinspace dt\right) \cdot e^{2s_1} \cdot \left( \sum_{j=1}^{k-1} e^{2(s_{j+1} - s_j)}\right)^2.
	\end{equation}
	As we are in the regime $k < M$, we consider the trivial bound
	\begin{equation}
		\label{eq:bound_2}
		\exp\left(\int_{s_1}^{s_k} \Lambda(a_t \omega) \thinspace dt\right) \leq e^{s_k}.
	\end{equation}
	The conditions $s_{j+1} - s_j \leq \epsilon s_j$ for $j \in \{1,\dots,k-1\}$ and $s_{j+1} \geq s_j + s$ for $j \in \{1,\dots,k-2\}$ together with Lemma \ref{lem:sum} imply
	\begin{equation}
		\label{eq:bound_3}
		\left( \sum_{j=1}^{k-1} e^{2(s_{j+1} - s_j)}\right)^2 \leq \left( \sum_{j=1}^{k-1} e^{2\epsilon s_j}\right)^2 \preceq_{\epsilon,s} e^{4\epsilon s_{k-1}} \leq e^{4\epsilon s_k}.
	\end{equation}
	Putting together (\ref{eq:bound_1}), (\ref{eq:bound_2}), and (\ref{eq:bound_3}), we deduce
	\begin{equation}
		\label{eq:bound_4}
		\|\widehat{\gamma}_{k,m}\|_{-1,\omega} \preceq_{K',\epsilon,s} A^+ \cdot e^{2s_1} \cdot e^{(1+4\epsilon)s_k}.
	\end{equation}
	
	Let us add all contributions coming from the regime $k < M$. The conditions $m_k \leq e^{s_{k+1} -s_k} \leq e^{\epsilon s_k}$ for $k \in \{1,\dots,M-1\}$ and $s_{k+1} \geq s_k + s$ for $k \in \{1,\dots,M-2\}$ together with Lemma \ref{lem:sum} imply
	\begin{equation}
		\label{eq:rep1}
		\sum_{k=1}^{M-1} m_k \thinspace e^{(1+4\epsilon)s_k} \leq \sum_{k=1}^{M-1} e^{(1+5\epsilon)s_k} \preceq_{\epsilon,s} e^{(1+5\epsilon)s_{M-1}}.
	\end{equation}
	From (\ref{eq:bound_4})  and (\ref{eq:rep1}) we deduce
	\[
	\sum_{k=1}^{M-1} m_k \thinspace \|\widehat{\gamma}_{k,m}\|_{-1,\omega} \preceq_{K',\epsilon,s} A^+ \cdot e^{2s_1} \cdot e^{(1+5\epsilon)s_{M-1}}.
	\]
	Recall $s_1 \leq (1+\epsilon)\rho  T$ and $s_{M-1} <(1+\eta) T/2$. It follows that
	\begin{equation*}
		\sum_{k=1}^{M-1} m_k \thinspace \|\widehat{\gamma}_{k,m}\|_{-1,\omega} \preceq_{K',\epsilon,s} A^+ \cdot e^{2(1+\epsilon) \rho  T} \cdot e^{(1+5\epsilon)(1+\eta) T/2}.
	\end{equation*}
	Let $\rho_3 = \rho_3(\eta) \in (0,1)$ and $\epsilon_3 = \epsilon_3(\eta) > 0$ be small enough so that $(1+5\epsilon_3)(1+\eta)/2 < 1$ and $2(1+\epsilon_3)\rho_3 < 1 - (1+5\epsilon_3)(1+\eta)/2$. Denote $\kappa_3 = \kappa_3(\eta) := 1 - (1+5\epsilon_3)(1+\eta)/2 - 2(1+\epsilon_3)\rho_3 > 0$. Then, if $0 < \rho  < \rho _3$ and $0 < \epsilon < \epsilon_3$,
	\begin{equation}
		\label{eq:mid_bound}
		\sum_{k=1}^{M-1} m_k \thinspace \|\widehat{\gamma}_{k,m}\|_{-1,\omega} \preceq_{K',\epsilon,s} A^+ \cdot e^{(1-\kappa_3) T}.
	\end{equation}
	
	Suppose now $k \in \{M,\dots,N\}$. The same arguments used to prove (\ref{eq:bound_1}) show that
	\begin{equation}
		\label{eq:bound_b1}
		\|\widehat{\gamma}_{k,m}\|_{-1,\omega} \preceq_{K'} A^+ \cdot \exp\left(\int_{s_1}^{s_k} \Lambda(a_t \omega) \thinspace dt\right) \cdot e^{2s_1} \cdot \left( \sum_{j=1}^{k-1} e^{2(s_{j+1} - s_j)}\right)^2.
	\end{equation}
	As we are in the regime $k \geq M$, we have $s_k \geq (1+\eta) T/2$. In particular, as $\omega$ is $(K,T,\eta)$-recurrent,
	\[
	|\{t \in [0,s_k] \colon a_t \omega \notin K \}| \leq 2\eta s_k/(1+\eta).
	\]
	As the function $\Lambda \colon \Omega^1 \to [0,1)$ is continuous, it follows that, for some $\kappa_4 = \kappa_4(K,\eta) \in (0,1)$, 
	\begin{equation}
		\label{eq:bound_b2}
		\exp\left(\int_{s_1}^{s_k} \Lambda(a_t \omega) \thinspace dt\right) \leq e^{(1-\kappa_4)s_k}.
	\end{equation}
	The same arguments used to prove (\ref{eq:bound_3}) show that
	\begin{equation}
		\label{eq:bound_b3}
		\left( \sum_{j=1}^{k-1} e^{2(s_{j+1} - s_j)}\right)^2 \preceq_{\epsilon,s} e^{4\epsilon s_k}.
	\end{equation}
	Putting together (\ref{eq:bound_b1}), (\ref{eq:bound_b2}), and (\ref{eq:bound_b3}), we deduce
	\begin{equation}
		\label{eq:bound_b4}
		\|\widehat{\gamma}_{k,m}\|_{-1,\omega} \preceq_{K',\epsilon,s} A^+ \cdot e^{2s_1} \cdot e^{(1-\kappa_4 + 4\epsilon)s_k}.
	\end{equation}
	
	Let us add all contributions coming from the regime $k \geq M$. The same arguments used to prove (\ref{eq:rep1}) show that
	\begin{equation*}
		\sum_{k=M}^{N} m_k \thinspace e^{(1-\kappa_4 + 4\epsilon)s_k}  \preceq_{\epsilon,s} e^{(1- \kappa_4 + 5\epsilon)s_{N}}.
	\end{equation*}
	From this and (\ref{eq:bound_b4}) we deduce
	\[
	\sum_{k=M}^{N} m_k \thinspace \|\widehat{\gamma}_{k,m}\|_{-1,\omega} \preceq_{K',\epsilon,s} A^+ \cdot e^{2s_1} \cdot e^{(1-\kappa_4 + 5\epsilon)s_{N}}.
	\]
	Recall $s_1 \leq (1+\epsilon)\rho  T$ and $s_N < T$. It follows that
	\begin{equation*}
		\sum_{k=M}^{N} m_k \thinspace \|\widehat{\gamma}_{k,m}\|_{-1,\omega} \preceq_{K',\epsilon,s} A^+ \cdot e^{2(1+\epsilon) \rho  T} \cdot e^{(1-\kappa_4+5\epsilon)T}.
	\end{equation*}
	Let $\rho _4 = \rho _4(K,\eta) \in (0,1)$ and $\epsilon_4 = \epsilon_4(K,\eta) > 0$ be small enough so that $1-\kappa_4 + 5\epsilon_4< 1$ and $2(1+\epsilon_4)\rho _4 < 1 - (1-\kappa_4 + 5\epsilon_4)$. Denote $\kappa_5 = \kappa_5(K,\eta) := 1 - (1-\kappa_4+5\epsilon_4) - 2(1+\epsilon_4)\rho _4 > 0$. Then, if $0 < \rho  < \rho _4$ and $0 < \epsilon < \epsilon_4$,
	\begin{equation}
		\label{eq:top_bound}
		\sum_{k=M}^{N} m_k \thinspace \|\widehat{\gamma}_{k,m}\|_{-1,\omega} \preceq_{K',\epsilon,s} A^+ \cdot e^{(1-\kappa_5) T}.
	\end{equation}
	
	Let $\rho_1 = \rho_1(K,\eta):= \min\{\rho_2,\rho _3,\rho_4\} > 0$, $\epsilon_1 = \epsilon_1(K,\eta):= \min\{\epsilon_2,\epsilon_3,\epsilon_4\} > 0$, and $\kappa_1 = \kappa_1(K,\eta):= \min\{\kappa_2,\kappa_3,\kappa_5\} > 0$. From (\ref{eq:decomp}), (\ref{eq:gamma_0_estimate}), (\ref{eq:mid_bound}), (\ref{eq:top_bound}), and the triangle inequality, we deduce that, if $0 < \rho  < \rho_1$ and $0 < \epsilon < \epsilon_1$,
	\[
	\|\widehat{\gamma}\|_{-1,\omega} \preceq_{K',\epsilon,s} A^+ \cdot \ell_{\min}^\dagger(\omega)^{-1} \cdot e^{(1-\kappa_1)T}.
	\]
	This together with (\ref{eq:intuition}) allows us to conclude that, if $0 < \rho  < \rho_1$ and $0 < \epsilon < \epsilon_1$,
	\[
	\bigg\vert \int_{\gamma} \phi \thinspace d\Im(\omega) - A e^T \int_X \phi \thinspace d A_\omega \bigg\vert \preceq_{K',\epsilon,s} A^+ \cdot \ell_{\min}^\dagger(\omega)^{-1} \cdot \|\phi\|_{1,\omega} \cdot e^{(1-\kappa_1)T}. \qedhere
	\]
\end{proof}

For every $\theta \in [0,2\pi]$ consider the rotation matrix $r_\theta \in \mathrm{SL}(2,\mathbf{R})$ given by
\[
r_{\theta} := \left( \begin{array}{c c}
\cos \theta & - \sin \theta \\
\sin \theta & \cos \theta
\end{array}\right).
\]
The action of $r_{\pi/2}$ on Abelian differentials exchanges the vertical and horizontal foliations. This fact together with Theorem \ref{theo:equid_1} allows us to deduce the following effective equidistribution theorem for horizontal segments of Abelian differentials.

\begin{theorem}
	\label{theo:equid_2}
	Let $\Omega^1 \subseteq \Omega_g^1$ be a stratum. For every compact subset $K \subseteq \Omega^1$ and every $\eta \in (0,1)$ there exist constants $\rho_1 = \rho_1(K,\eta) > 0$, $\epsilon_1 = \epsilon_1(K,\eta) > 0$, and $\kappa_1 = \kappa_1(K,\eta) > 0$ such that for every $T > 0$, every $K' \subseteq \Omega^1$ compact, every $0 < \rho  < \rho_1$, every $0 < \epsilon < \epsilon_1$, and every $s > 0$, the following holds. Suppose $(X,\omega) \in \Omega^1$ is such that $r_{\pi/2} \thinspace\omega$ is $(K,T,\eta)$-recurrent and has a $(K',T,\rho ,\epsilon,s)$-itinerary, $\gamma$ is a horizontal segment of $\omega$ of length $A e^T$ for some $A > 0$, and $\phi \in H^1_\omega(X)$. Then,
	\[
	\bigg\vert \int_{\gamma} \phi \thinspace d\Re(\omega) - A e^T \int_X \phi \thinspace d A_\omega \bigg\vert \preceq_{K',\epsilon,s} A^+ \cdot \ell_{\min}^\dagger(\omega)^{-1} \cdot \|\phi\|_{1,\omega} \cdot e^{(1-\kappa_1)T}.
	\]
\end{theorem}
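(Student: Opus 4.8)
The plan is to deduce the statement directly from Theorem \ref{theo:equid_1} by exploiting the rotation $r_{\pi/2}$, which interchanges the horizontal and vertical directions. Write $\omega' := r_{\pi/2}\,\omega$. Since $r_{\pi/2}$ acts on $\mathbf{C}$ by $z \mapsto iz$, postcomposing the translation charts of $\omega$ with this map produces a translation structure on the same underlying topological surface whose associated flat metric coincides with that of $\omega$; thus $\omega'$ and $\omega$ share the same underlying flat surface up to a right-angle rotation, so that $A_{\omega'} = A_\omega$, $\ell_{\min}^\dagger(\omega') = \ell_{\min}^\dagger(\omega)$, and a horizontal segment of $\omega$ is, as a point set, a vertical segment of $\omega'$ of the same flat length. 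At the level of the canonical $1$-forms one has $\mathrm{Re}(\omega') = -\mathrm{Im}(\omega)$ and $\mathrm{Im}(\omega') = \mathrm{Re}(\omega)$, so the canonical orthonormal vector fields of $\omega'$ are, up to sign, a permutation of those of $\omega$; hence $H^1_{\omega'}(X) = H^1_\omega(X)$ with $\|\phi\|_{1,\omega'} = \|\phi\|_{1,\omega}$ for every $\phi$, and, for $\gamma$ a horizontal segment of $\omega$ regarded as a vertical segment of $\omega'$, one has $\int_\gamma \phi\,d\Re(\omega) = \int_\gamma \phi\,d\Im(\omega')$ and $\int_X \phi\,dA_\omega = \int_X \phi\,dA_{\omega'}$.

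With these identifications recorded, I would let $\rho_1 = \rho_1(K,\eta)$, $\epsilon_1 = \epsilon_1(K,\eta)$, and $\kappa_1 = \kappa_1(K,\eta)$ be the constants furnished by Theorem \ref{theo:equid_1} for the compact set $K$ and the parameter $\eta$, and then, for arbitrary $T > 0$, arbitrary compact $K' \subseteq \Omega^1$, $0 < \rho < \rho_1$, $0 < \epsilon < \epsilon_1$, and $s > 0$, apply Theorem \ref{theo:equid_1} to $\omega'$. By hypothesis $\omega' = r_{\pi/2}\,\omega$ is $(K,T,\eta)$-recurrent and admits a $(K',T,\rho,\epsilon,s)$-itinerary, which are precisely the hypotheses of Theorem \ref{theo:equid_1}; taking there the vertical segment $\gamma$ (of length $Ae^T$ on $\omega'$) and the Sobolev function $\phi \in H^1_{\omega'}(X) = H^1_\omega(X)$ yields
\[
\bigg\vert \int_\gamma \phi\,d\Im(\omega') - Ae^T \int_X \phi\,dA_{\omega'} \bigg\vert \preceq_{K',\epsilon,s} A^+ \cdot \ell_{\min}^\dagger(\omega')^{-1} \cdot \|\phi\|_{1,\omega'} \cdot e^{(1-\kappa_1)T}.
\]
Substituting the identities $\int_\gamma \phi\,d\Im(\omega') = \int_\gamma \phi\,d\Re(\omega)$, $\int_X \phi\,dA_{\omega'} = \int_X \phi\,dA_\omega$, $\ell_{\min}^\dagger(\omega') = \ell_{\min}^\dagger(\omega)$, and $\|\phi\|_{1,\omega'} = \|\phi\|_{1,\omega}$ recorded above then gives exactly the claimed estimate, with the same constants $\rho_1,\epsilon_1,\kappa_1$.

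The argument involves no new analytic content: all the substance — the greedy partition of Lemma \ref{lem:part} together with the spectral-gap estimate of Lemma \ref{lem:spec_gap} — is already packaged inside Theorem \ref{theo:equid_1}. The only point requiring care is the bookkeeping of the first paragraph: one must verify precisely, and with the correct signs, that the $\mathrm{SL}(2,\mathbf{R})$-action sends horizontal segments to vertical segments and transforms the trace integrals $\int\phi\,d\Re(\cdot)$ and $\int\phi\,d\Im(\cdot)$, the flat area form, and the weighted Sobolev norm as claimed. These facts are immediate from the definitions of the action on translation structures and of $H^1_\omega(X)$, but since signs are involved they should be spelled out explicitly; this is the only (and mild) obstacle.
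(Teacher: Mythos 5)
Your proposal is correct and is exactly the argument the paper intends: the paper deduces Theorem \ref{theo:equid_2} from Theorem \ref{theo:equid_1} in one sentence by noting that $r_{\pi/2}$ exchanges the vertical and horizontal foliations, and your careful bookkeeping (same flat metric, same area form, same $\ell_{\min}^\dagger$, same weighted Sobolev norm, horizontal segments of $\omega$ becoming vertical segments of $r_{\pi/2}\,\omega$) simply makes that reduction explicit. No further comment is needed.
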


A result analogous to Theorem \ref{theo:equid_2} for quadratic differentials can be deduced by passing to canonical double covers. To state this result precisely, we first introduce some notation.

\subsection*{Strata of quadratic differentials.} Recall that if $q$ is a quadratic differential on a closed Riemann surface $X$ of genus $g \geq 2$, the number of zeroes of $q$ counted with multiplicity is $4g-4$. Denote by $\qum$ the moduli space of unit area, genus $g \geq 2$ quadratic differentials. This space can be stratified according to the order of the zeroes and the condition of being the square of an Abelian differential. For the rest of this section we fix $g \geq 2$ and denote by $\mathcal{Q}^1 \subseteq \qum$ an arbitrary stratum of $\qum$. The natural $\mathrm{SL}(2,\mathbf{R})$ action on quadratic differentials introduced in the paragraph preceding (\ref{eq:diag}) preserves $\qum$ and its stratification.

\subsection*{Masur's compactness criterion.} Let $\mathcal{Q}^1 \subseteq \qum$ be a stratum of quadratic differentials. For every $\delta > 0$ consider the subset $K_\delta \subseteq \mathcal{Q}^1$ given by
\begin{equation}
	\label{eq:compact}
	K_\delta := \{q \in \mathcal{Q}^1 \ | \ \ell_{\min}(q) \geq \delta\}.
\end{equation}
Masur's compactness criterion \cite[Proposition 3.6]{MT02} ensures this subset is compact. An analogous result holds for strata of Abelian differentials $\Omega^1 \subseteq \Omega_g^1$.

Recall that every quadratic differential $(X,q)$ induces a canonical double cover of $X$ onto which $q$ pulls back to the square of an Abelian differential $\omega$. As this cover pulls back the zeroes of $q$ onto the zeroes of $\omega$, we must have $\ell_{\min}(q) = \ell_{\min}(\omega)$. In particular, by Masur's compactness criterion, given a stratum of quadratic differentials $\mathcal{Q}^1 \subseteq \qum$ and a compact subset $K \subseteq \mathcal{Q}^1$, there exists $g' > 0$, a stratum of Abelian differentials $\Omega^1 \subseteq \Omega^1_{g'}$, and a compact subset $K' \subseteq \Omega^1$ such that the Abelian differential $\omega$ coming from the canonical double cover of any $q \in K$ belongs to $K'$. 

\subsection*{Effective equidistribution of horizontal segments.} We now state the main result of this section. Let $(X,q)$ be a quadratic differential. Recall the definition of the weighted Sobolev space of functions $H_q^1(X)$ introduced in (\ref{eq:sob_q}). Recall that $\Re(q)$ denotes the  horizontal foliation of $\omega$ endowed with its corresponding transverse measure. By the Sobolev trace theorem, for every horizontal segment $\gamma$ of $q$ and every Sobolev function $\phi \in H_q^1(X)$, the following integral is well defined,
\[
\int_{\gamma} \phi \thinspace d\Re(q).
\]

Fix a stratum $\mathcal{Q}^1 \subseteq \qum$. Let $K \subseteq \mathcal{Q}^1$ be a compact subset, $T > 0$, and  $\eta \in (0,1)$. A quadratic differential $q \in \qum$ is said to be \textit{$(K,T,\eta)$-recurrent} if 
\begin{equation*}
	|\{t \in [0,T] \colon a_t q\notin K \}| \leq \eta T.
\end{equation*}

Let $K' \subseteq \mathcal{Q}^1$ be a compact subset, $T > 0$, $\rho  \in (0,1)$, $\epsilon > 0$, and $s > 0$. A sequence of positive real numbers $\{s_n\}_{n=0}^{N+1}$ with $N \in \mathbf{N}^+$ is said to be a \textit{$(K',T,\rho ,\epsilon,s)$-itinerary} of $q \in \qum$ if 
\begin{enumerate}
	\item $a_{s_n} q \in K'$ for every $n \in \{1, \dots, N\}$,
	\item $s_0 = \rho  T$,
	\item $s_N < T$, 
	\item $s_{N+1} = T$,
	\item $s_{n+1} \geq s_n + s$ for every $n \in \{1,\dots, N-1\}$,
	\item $s_{n+1} - s_n \leq \epsilon s_n$ for every $n \in \{0,\dots,N\}$.
\end{enumerate}
 
Let $(X,q)$ be a quadratic differential. Recall that $A_q$ denotes the singular flat area form induced $q$ on $X$ and that $\smash{\ell^\dagger_{\min}}(q) := \min\{1,\ell_{\min}(q)\}$. Recall the definition of the norm $\|\cdot \|_{1,q}$ on the weighted Sobolev space $H_q(X)$ introduced in (\ref{eq:sob_norm_q}). Notice that $-q = r_{\pi/2} \thinspace q$. The following theorem shows that horizontal segments of $q$ equidistribute towards $A_q$ at an effective rate if the Teichmüller geodesic flow orbit of $q$ satisfies appropriate recurrence conditions. This result follows directly from Theorem $\ref{theo:equid_2}$ by passing to canonical double covers.

\begin{theorem}
	\label{theo:equid_3}
	Let $\mathcal{Q}^1 \subseteq \qum$ be a stratum. For every compact subset $K \subseteq \mathcal{Q}^1$ and every $\eta \in (0,1)$ there exist constants $\rho_1 = \rho_1(K,\eta) > 0$, $\epsilon_1 = \epsilon_1(K,\eta) > 0$, and $\kappa_1 = \kappa_1(K,\eta) > 0$ such that for every $T > 0$, every $K' \subseteq \mathcal{Q}^1$ compact, every $0 < \rho  < \rho_1$, every $0 < \epsilon < \epsilon_1$, and every $s > 0$, the following holds. Suppose $(X,q) \in \qum$ is such that $-q$ is $(K,T,\eta)$-recurrent and has a $(K',T,\rho ,\epsilon,s)$-itinerary, $\gamma$ is a horizontal segment of $q$ of length $A e^T$ for some $A > 0$, and $\phi \in H^1_q(X)$. Then, 
	\[
	\bigg\vert \int_{\gamma} \phi \thinspace d\Re(q) - A e^T \int_X \phi \thinspace d A_q \bigg\vert \preceq_{K',\epsilon,s} A^+ \cdot \ell_{\min}^\dagger(q)^{-1} \cdot \|\phi\|_{1,q} \cdot e^{(1-\kappa_1)T}.
	\]
\end{theorem}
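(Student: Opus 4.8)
The plan is to deduce the statement directly from Theorem~\ref{theo:equid_2} by lifting everything to the canonical double cover. Let $h \colon Y \to X$ be the canonical double cover of $(X,q)$, onto which $q$ pulls back to the square of an Abelian differential; after rescaling to unit area — a change of lengths, areas, transverse-measure integrals, and Sobolev norms by universal factors, hence harmless for $\preceq$ estimates — we obtain $\omega \in \Omega^1$ for a stratum $\Omega^1 \subseteq \Omega^1_{g'}$ depending only on $\mathcal{Q}^1$. The transfer dictionary I will use is: $h$ intertwines the $\mathrm{SL}(2,\mathbf{R})$-actions, so the Abelian differential of the canonical double cover of $a_t q$ (resp. $r_{\pi/2}q = -q$) is $a_t\omega$ (resp. $r_{\pi/2}\omega$); since $h$ pulls zeroes onto zeroes, $\ell_{\min}^\dagger(\omega) \asymp_g \ell_{\min}^\dagger(q)$; for $\phi \in H^1_q(X)$ the lift $\phi\circ h$ lies in $H^1_\omega(Y)$ with $\|\phi\circ h\|_{1,\omega} \asymp_g \|\phi\|_{1,q}$, essentially by the definition of the latter norm; $\int_Y(\phi\circ h)\thinspace dA_\omega$ equals $\int_X\phi\thinspace d A_q$ up to a universal factor; and the preimage of a horizontal segment $\gamma$ of $q$ is a union of at most two horizontal segments of $\omega$ of total length a bounded multiple of $\ell_\gamma(q)$, over which the integral of $(\phi\circ h)\thinspace d\Re(\omega)$ equals $\int_\gamma\phi\thinspace d\Re(q)$ up to a universal factor.

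The first step is to transfer the dynamical hypotheses. By Masur's compactness criterion together with the identity $\ell_{\min}(\omega) = \ell_{\min}(q)$ — precisely the discussion in the excerpt preceding the theorem — fix compact subsets $\widetilde K, \widetilde{K'} \subseteq \Omega^1$ containing the Abelian differentials of the canonical double covers of all points of $K$, resp. $K'$. Since $a_t(-q) \in K$ forces $a_t(r_{\pi/2}\omega) \in \widetilde K$, the set of $t$ with $a_t(r_{\pi/2}\omega) \notin \widetilde K$ is contained in the set of $t$ with $a_t(-q) \notin K$, so $r_{\pi/2}\omega$ is $(\widetilde K, T, \eta)$-recurrent. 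Likewise a $(K',T,\rho,\epsilon,s)$-itinerary $\{s_n\}$ of $-q$ is a $(\widetilde{K'},T,\rho,\epsilon,s)$-itinerary of $r_{\pi/2}\omega$, since conditions (2)--(6) constrain only the numbers $s_n$ and condition (1) passes through the implication $a_{s_n}(-q) \in K' \Rightarrow a_{s_n}(r_{\pi/2}\omega) \in \widetilde{K'}$.

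Now invoke Theorem~\ref{theo:equid_2} for the stratum $\Omega^1$, the compact set $\widetilde K$, and the given $\eta$; it produces constants $\rho_1,\epsilon_1,\kappa_1$ depending only on $(\widetilde K,\eta)$, hence on $(K,\eta)$, and these are the constants we output. For $\gamma$ of length $A e^T$, apply the conclusion of Theorem~\ref{theo:equid_2} to $\omega$, to each of the (at most two) horizontal segments making up the lift of $\gamma$, and to $\phi\circ h$ — whose hypotheses are exactly the recurrence and itinerary verified in the previous paragraph. Summing over the lifted segments (whose lengths sum to a bounded multiple of $\ell_\gamma(q) = Ae^T$), translating each term back to $X$ via the transfer dictionary, and absorbing the universal rescaling factors into the implied constant, yields
\[
\bigg\vert \int_{\gamma} \phi \thinspace d\Re(q) - A e^T \int_X \phi \thinspace d A_q \bigg\vert \preceq_{K',\epsilon,s} A^+ \cdot \ell_{\min}^\dagger(q)^{-1} \cdot \|\phi\|_{1,q} \cdot e^{(1-\kappa_1)T},
\]
which is the claim; here $\preceq_{\widetilde{K'},\epsilon,s}$ becomes $\preceq_{K',\epsilon,s}$ because $\widetilde{K'}$ is determined by $K'$.

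I do not expect a serious obstacle, since the analytic content lives entirely in Theorem~\ref{theo:equid_2}. The steps to carry out with care are: (i) that the lift of $K$, resp. $K'$, is contained in a compact subset of a \emph{single} Abelian stratum, which is where Masur's compactness criterion enters; (ii) that the one-directional implications $a_t q \in K \Rightarrow a_t\omega \in \widetilde K$ suffice to transfer both the recurrence and the itinerary conditions — which they do, since each of those conditions only restricts when the orbit fails to lie in, resp. does lie in, the relevant compact set; and (iii) that the rescaling of $\omega$ to unit area and the $2$-to-$1$ nature of $h$ contribute only $g$-dependent factors to lengths, areas, transverse-measure integrals, and $\|\cdot\|_{1,q}$, so that none of them changes the shape of the final estimate.
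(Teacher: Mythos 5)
Your proposal is correct and follows essentially the same route as the paper, which deduces Theorem \ref{theo:equid_3} from Theorem \ref{theo:equid_2} by passing to canonical double covers, using the identity $\ell_{\min}(q)=\ell_{\min}(\omega)$ and Masur's compactness criterion to lift $K$ and $K'$ into compact subsets of a single Abelian stratum, and the definition $\|\phi\|_{1,q}=\|\phi\circ h\|_{1,\omega}$ to transfer the Sobolev norm. The transfer of the recurrence and itinerary hypotheses and the bookkeeping of universal (area-normalization and degree-two) factors are exactly the details the paper leaves implicit, and you handle them correctly.
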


\section{A preliminary quantitative estimate for geometric intersection numbers}

\subsection*{Outline of this section.} In this section we combine results from \S 2 -- 5 to prove a preliminary quantitative estimate for the geometric intersection numbers of closed curves on a surface with respect to arbitrary geodesic currents in terms of the Teichmüller geodesic flow. See Theorem \ref{theo:prelim_2}. We first prove a quantitative estimate for the geometric intersection numbers between closed curves, see Theorem \ref{theo:prelim_1}, and then deduce  Theorem \ref{theo:prelim_2} via a density argument. Before stating Theorem \ref{theo:prelim_2}, we discuss some aspects of the theory of geodesic currents.

\subsection*{A preliminary quantitative estimate for geometric intersection numbers.} For the rest of this section we fix an integer $g \geq 2$ and a connected, oriented, closed surface $S_g$ of genus $g$. Recall that $\qut$ denotes the Teichmüller space of marked, unit area quadratic differentials on $S_g$, that $\mathcal{MF}_g$ denotes the space of singular measured foliations on $S_g$, and that $\Re(q),\Im(q) \in \mf$ denote the vertical and horizontal foliations of $q \in \qut$. The $\mathrm{SL}(2,\mathbf{R})$ action on half-translation structures introduced in \S 3 gives rise to an $\mathrm{SL}(2,\mathbf{R})$ action on $\qut$. The \textit{Teichmüller geodesic flow} on $\qut$ is the flow corresponding to the the diagonal subgroup $\{a_t\}_{t \in \mathbf{R}} \subseteq \mathrm{SL}(2,\mathbf{R})$ introduced in (\ref{eq:diag}). 

Recall that, given a pair of closed curves $\alpha$ and $\beta$ on $S_g$, their geometric intersection number $i(\alpha,\beta)$ is defined as the minimal number of intersections among pairs of transverse closed curves homotopic to $\alpha$ and $\beta$. Following Thurston \cite{T80} and Bonahon \cite{Bon88}, given a closed curve $\alpha$ on $S_g$ and a singular measured foliation $\lambda \in \mathcal{MF}_g$, we denote by $i(\alpha,\lambda)$ the minimal tranverse measure assigned by $\lambda$ to a closed curve homotopic to $\alpha$. If $\alpha$ is a closed curve on $S_g$ and $q \in \qut$, 
\[
i(\alpha,\Re(q)) := \min_{\alpha' \sim \alpha} \thinspace \int_{\alpha'} d\Re(q), \quad i(\alpha,\Im(q)) := \min_{\alpha' \sim \alpha} \thinspace \int_{\alpha'}  d\Im(q),
\]
where $\alpha'$ runs over all curves homotopic to $\alpha$. The minimum in this definition is attained by the geodesic representatives of $\alpha$ with respect to the singular flat metric induced by $q$ on $S_g$ \cite[\S 5.4]{AL17}.

Recall that $\qum$ denotes the moduli space of unit area quadratic differentials on $S_g$. Fix a stratum $\mathcal{Q}^1 \subseteq \qum$. Let $K \subseteq \mathcal{Q}^1$ be a compact subset, $T > 0$, and $\eta \in (0,1)$. A quadratic differential $q \in \qut$ is said to be \textit{$(K,T,\eta)$-recurrent} if its projection to $\qum$ is $(K,T,\eta)$-recurrent as defined in \S5. Let $K' \subseteq \mathcal{Q}^1$ be a compact subset, $T > 0$, $\rho \in (0,1)$, $\epsilon > 0$, and $s > 0$.  A sequence of positive real numbers $\{s_n\}_{n=0}^{N+1}$ with $N \in \mathbf{N}^+$ is said to be a \textit{$(K',T,\rho,\epsilon,s)$-itinerary} of $q \in \qut$ if it is a  $(K',T,\rho,\epsilon,s)$-itinerary of the projection of $q$ to $\qum$ as defined in \S5.

Let $\beta$ be a closed curve on $S_g$. Consider the function $v_\beta \colon \qut \to \mathbf{R}$ defined as follows. Given a marked quadratic differential $q \in \qut$, choose a flat geodesic representative of $\beta$ on $q$. Let $\{\beta_j\}_{j=1}^m$ be the sequence of saddle connections of this representative if it is singular, or a singleton consisting of the representative itself if it is a cylinder curve. For every $j \in \{1,\dots,m\}$ denote by $i(\beta_j,\Im(q))$ the total transverse measure of $\beta_j$ with respect to $\Im(q)$. Then,
\begin{equation}
\label{eq:vertical}
v_\beta(q) := \min_{j=1,\dots,m} \frac{i(\beta_j,\Im(q))}{\ell_{\beta_j}(q)}.
\end{equation}
By Proposition \ref{prop:flat_rep}, this value is independent of the choice of flat geodesic representative of $\beta$ on $q$. Furthermore, the function $v_\beta \colon \qut \to \mathbf{R}$ is continuous on strata of $\qut$.

Recall that, for any $q \in \qut$, $\smash{\ell_{\min}^\dagger}(q) := \min\{1,\ell_{\min}(q)\}$. The following quantitative estimate for geometric intersection numbers of closed curves will play a crucial role in the proof of Theorem \ref{theo:main}.

\begin{theorem}
	\label{theo:prelim_1} 
	Fix a stratum $\mathcal{Q}^1 \subseteq \qum$. For every compact subset $K \subseteq \mathcal{Q}^1$ and every $\eta > 0$ there exist constants $\rho_1 = \rho_1(K,\eta) > 0$, $\epsilon_1 = \epsilon_1(K,\eta) > 0$, and $\kappa_1 = \kappa_1(K,\eta) > 0$ such that for every compact subset $K' \subseteq \mathcal{Q}^1$, every $0 < \rho < \rho_1$, every $0 < \epsilon < \epsilon_1$, and every $s > 0$, the following holds. Let $q_s \in \qut$, $r > 0$, and $q_e := a_r q_s$. Suppose $-q_e$ is $(K,r,\eta)$-recurrent and has a $(K',r,\rho,\epsilon,s)$-itinerary. Then, for any pair of closed curves $\alpha$ and $\beta$ on $S_g$,
	\begin{align*}
	i(\alpha,\beta) &= i(\alpha,\Re(q_s)) \cdot i(\beta,\Im(q_e)) \cdot e^r \\
	&\phantom{=} + O_{K',\epsilon,s}\left(\ell_{\alpha}(q_s) \cdot \ell_{\min}^\dagger(q_s)^{-1} \cdot \ell_{\beta}(q_e) \cdot \ell_{\min}^\dagger(q_e)^{-5} \cdot v_\beta(q_e)^{-1} \cdot e^{(1-\kappa_1)r}\right).
	\end{align*}
\end{theorem}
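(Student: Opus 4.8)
The plan is to assemble the pieces developed in \S2--\S5 in the following order. Starting from a pair of closed curves $\alpha$ and $\beta$ on $S_g$, replace them by flat geodesic representatives on $q_s$ and $q_e$ respectively, using Proposition \ref{prop:flat_rep}. Since $q_e = a_r q_s$ and flat geodesics are preserved by the Teichmüller geodesic flow, the chosen flat geodesic representative of $\alpha$ on $q_s$ transports to a flat geodesic representative $A_r\alpha$ of $\alpha$ on $q_e$; similarly we may transport the representative of $\beta$ on $q_e$ back to $q_s$, but it is cleanest to fix the representative of $\alpha$ on $q_s$ and the representative of $\beta$ on $q_e$. By Proposition \ref{prop:flat_int}, $i(\alpha,\beta) = I(A_r\alpha,\beta) + O(n \cdot m)$ where $n \preceq \ell_\alpha(q_s)/\ell_{\min}(q_s)$ and $m \preceq \ell_\beta(q_e)/\ell_{\min}(q_e)$ are the numbers of saddle connections; so this introduces an error of size $\preceq \ell_\alpha(q_s)\ell_\beta(q_e)\ell_{\min}^\dagger(q_s)^{-1}\ell_{\min}^\dagger(q_e)^{-1}$, which is absorbed into the claimed error term (it is $O(e^{(1-\kappa_1)r})$ times the other length factors, in fact much smaller since it has no $e^r$). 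One subtlety: if $\beta$ happens to have a horizontal saddle connection on $q_e$ this must be handled; but a horizontal saddle connection of $q_e$ would be an exponentially long saddle connection of $q_s$ relative to $\ell_{\min}(q_s)$, and in any case we may perturb or note that the recurrence hypotheses on $-q_e$ forbid $\ell_{\min}(q_e)$ from being too small, so this case contributes negligibly; alternatively restrict to $\beta$ with no horizontal segment on $q_e$ and note $A_r\alpha$ has no vertical segments issue is not present. (The cleanest route: assume $\beta$ has no horizontal segment on $q_e$, the general case following by a limiting/perturbation argument or by noting horizontal saddle connections on $q_e$ force a degenerate configuration excluded by recurrence.)

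Next, apply Proposition \ref{prop:rect_int} with the quadratic differential $q_s$, the flat geodesic $\alpha$, time $t = r$, and the flat geodesic $\beta$ of $a_r q_s = q_e$ with no horizontal segments. This yields horizontal segments $\{\gamma_i\}_{i=1}^n$ of $q_s$ with $n \preceq \ell_\alpha(q_s)/\ell_{\min}(q_s)$, $\ell_{\gamma_i}(q_s) \leq \ell_{\min}(q_s)/2$, $\sum_i \ell_{\gamma_i}(q_s) = i(\alpha,\Re(q_s))$, and
\[
\Big| I(A_r\alpha,\beta) - \sum_{i=1}^n \sum_{j=1}^m \#(A_r\gamma_i \cap \beta_j) \Big| \preceq \frac{\ell_\alpha(q_s) \cdot \ell_\beta(q_e)}{\ell_{\min}(q_s) \cdot \ell_{\min}(q_e)},
\]
again absorbed into the error term. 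Now invoke Proposition \ref{prop:bump_4} applied to the unit area quadratic differential $q_e$ and the flat geodesic $\beta$: for each $\delta \in (0,\ell_{\min}^\dagger(q_e)/8)$ it produces $\phi_{0,\delta} \leq \phi_{1,\delta}$ in $H^1_{q_e}(X)$ with $\int \phi_{0,\delta}\, dA_{q_e} \leq i(\beta,\Im(q_e)) \leq \int \phi_{1,\delta}\, dA_{q_e}$, $\int(\phi_{1,\delta}-\phi_{0,\delta})\, dA_{q_e} \preceq \delta\,\ell_\beta(q_e)$, $\|\phi_{i,\delta}\|_{1,q_e} \preceq \ell_\beta(q_e)\,\ell_{\min}^\dagger(q_e)^{-3}\, v_\beta(q_e)^{-1}\,\delta^{-1}$, and $|\sum_j \#(A_r\gamma_i \cap \beta_j) - \int_{A_r\gamma_i} \phi_{k,\delta}\, d\Re(q_e)| \preceq \ell_\beta(q_e)/\ell_{\min}(q_e)$ for $k \in \{0,1\}$. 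The horizontal segment $A_r\gamma_i$ of $q_e$ has length $e^r \ell_{\gamma_i}(q_s)$; writing $A_i := \ell_{\gamma_i}(q_s) \leq \ell_{\min}(q_s)/2 \leq 1/2$, so that $A_i^+ = 1$, Theorem \ref{theo:equid_3} applied to $-q_e$ (which is $(K,r,\eta)$-recurrent with a $(K',r,\rho,\epsilon,s)$-itinerary by hypothesis) gives constants $\rho_1,\epsilon_1,\kappa_1$ depending only on $(K,\eta)$ such that for $0<\rho<\rho_1$, $0<\epsilon<\epsilon_1$,
\[
\Big| \int_{A_r\gamma_i} \phi_{k,\delta}\, d\Re(q_e) - A_i e^r \int_X \phi_{k,\delta}\, dA_{q_e} \Big| \preceq_{K',\epsilon,s} \ell_{\min}^\dagger(q_e)^{-1} \cdot \|\phi_{k,\delta}\|_{1,q_e} \cdot e^{(1-\kappa_1)r}.
\]

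Then sum over $i$ from $1$ to $n$. The main term becomes $\big(\sum_i A_i\big) e^r \int_X \phi_{k,\delta}\, dA_{q_e} = i(\alpha,\Re(q_s))\, e^r \int_X \phi_{k,\delta}\, dA_{q_e}$. Using the sandwich $\int \phi_{0,\delta}\, dA_{q_e} \leq i(\beta,\Im(q_e)) \leq \int \phi_{1,\delta}\, dA_{q_e}$ together with the gap bound $\int(\phi_{1,\delta}-\phi_{0,\delta})\, dA_{q_e} \preceq \delta\,\ell_\beta(q_e)$, we get $\big| i(\alpha,\Re(q_s))\, e^r \int_X \phi_{k,\delta}\, dA_{q_e} - i(\alpha,\Re(q_s))\, i(\beta,\Im(q_e))\, e^r \big| \preceq \ell_\alpha(q_s)\,\delta\,\ell_\beta(q_e)\, e^r$, using $i(\alpha,\Re(q_s)) \leq \ell_\alpha(q_s)$. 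The accumulated error from the $n \preceq \ell_\alpha(q_s)/\ell_{\min}(q_s)$ applications of Theorem \ref{theo:equid_3}, each of size $\preceq_{K',\epsilon,s} \ell_{\min}^\dagger(q_e)^{-1}\,\ell_\beta(q_e)\,\ell_{\min}^\dagger(q_e)^{-3}\,v_\beta(q_e)^{-1}\,\delta^{-1}\, e^{(1-\kappa_1)r}$, totals $\preceq_{K',\epsilon,s} \ell_\alpha(q_s)\,\ell_{\min}^\dagger(q_s)^{-1}\,\ell_\beta(q_e)\,\ell_{\min}^\dagger(q_e)^{-4}\,v_\beta(q_e)^{-1}\,\delta^{-1}\, e^{(1-\kappa_1)r}$, and the error from Proposition \ref{prop:bump_4}'s intersection-count approximation totals $n \cdot \ell_\beta(q_e)/\ell_{\min}(q_e) \preceq \ell_\alpha(q_s)\,\ell_{\min}^\dagger(q_s)^{-1}\,\ell_\beta(q_e)\,\ell_{\min}^\dagger(q_e)^{-1}$. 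The remaining task is to choose $\delta$ to balance the term $\ell_\alpha(q_s)\,\delta\,\ell_\beta(q_e)\, e^r$ against $\ell_\alpha(q_s)\,\ell_{\min}^\dagger(q_s)^{-1}\,\ell_\beta(q_e)\,\ell_{\min}^\dagger(q_e)^{-4}\,v_\beta(q_e)^{-1}\,\delta^{-1}\, e^{(1-\kappa_1)r}$: taking $\delta \asymp \ell_{\min}^\dagger(q_e)^{-2}\, v_\beta(q_e)^{-1/2}\, e^{-\kappa_1 r/2}$ (and checking $\delta < \ell_{\min}^\dagger(q_e)/8$, which we may arrange by shrinking $\kappa_1$ or observing that otherwise the whole estimate is trivial since the error term would dominate $e^r$), both contributions become $\preceq_{K',\epsilon,s} \ell_\alpha(q_s)\,\ell_\beta(q_e)\,\ell_{\min}^\dagger(q_e)^{-2}\,v_\beta(q_e)^{-1/2}\, e^{(1-\kappa_1/2)r}$, which sits comfortably inside the claimed error $O_{K',\epsilon,s}(\ell_\alpha(q_s)\,\ell_{\min}^\dagger(q_s)^{-1}\,\ell_\beta(q_e)\,\ell_{\min}^\dagger(q_e)^{-5}\,v_\beta(q_e)^{-1}\, e^{(1-\kappa_1)r})$ after relabeling $\kappa_1 \leftarrow \kappa_1/2$ and using $v_\beta(q_e)^{-1/2} \leq v_\beta(q_e)^{-1}$ (valid once $v_\beta(q_e) \leq 1$, and if $v_\beta(q_e) > 1$ one uses the crude bound directly). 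Finally, using $\sum_j \#(A_r\gamma_i \cap \beta_j)$ sandwiched between the two integrals $\int_{A_r\gamma_i}\phi_{0,\delta}$ and $\int_{A_r\gamma_i}\phi_{1,\delta}$ up to $O(\ell_\beta(q_e)/\ell_{\min}(q_e))$ each, combine all inequalities via the triangle inequality to conclude.

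I expect the main obstacle to be purely bookkeeping: tracking the precise powers of $\ell_{\min}^\dagger(q_s)$, $\ell_{\min}^\dagger(q_e)$, and $v_\beta(q_e)$ through the chain of approximations and verifying that the choice of $\delta$ indeed lands everything inside the stated error term, together with the care needed to handle horizontal saddle connections of $\beta$ on $q_e$ and the degenerate cases (vertical/horizontal $\alpha_*$, $\beta_j$) that Propositions \ref{prop:rect_int} and \ref{prop:bump_4} dispatch with "similar arguments." There is no deep new idea required; the content is entirely in Theorems \ref{theo:equid_3} and the geometric Propositions of \S2--\S4, and the present proof is the act of gluing them.
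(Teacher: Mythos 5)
Your outline follows the paper's own proof essentially step for step: identify $\alpha$ and $\beta$ with flat geodesics, compare $i(\alpha,\beta)$ with transverse intersection counts via Proposition \ref{prop:flat_int}, pass to the rectangular decomposition of Proposition \ref{prop:rect_int}, integrate the bump functions of Proposition \ref{prop:bump_4} along the transported horizontal segments using Theorem \ref{theo:equid_3}, and optimize over $\delta$. The architecture and the bookkeeping of powers of $\ell_{\min}^\dagger(q_s)$, $\ell_{\min}^\dagger(q_e)$, $v_\beta(q_e)$ are the paper's; two of the justifications you offer for side points, however, do not work as stated and should be replaced.

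First, your choice $\delta \asymp \ell_{\min}^{\dagger}(q_e)^{-2}\, v_\beta(q_e)^{-1/2}\, e^{-\kappa_1 r/2}$ need not satisfy the admissibility constraint $\delta < \ell_{\min}^{\dagger}(q_e)/8$ required by Proposition \ref{prop:bump_4}, and neither fallback you propose repairs this: shrinking $\kappa_1$ makes $e^{-\kappa_1 r/2}$ larger, i.e.\ moves the constraint the wrong way, and the claim that the estimate is otherwise trivial fails, since in that regime the stated error term is only $\succeq \ell_\alpha(q_s)\,\ell_{\min}^{\dagger}(q_s)^{-1}\,\ell_\beta(q_e)\,\ell_{\min}^{\dagger}(q_e)\, e^{r}$, which need not dominate the main term when $\ell_{\min}(q_e)$ is much smaller than $\ell_{\min}(q_s)$ (and $\ell_{\min}(q_e)$ can be as small as $e^{-r}\ell_{\min}(q_s)$). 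The correct move, which is what the paper does and is precisely why the target error carries $\ell_{\min}^{\dagger}(q_e)^{-5}$ rather than $\ell_{\min}^{\dagger}(q_e)^{-4}$, is to take $\delta = (\ell_{\min}^{\dagger}(q_e)/8)\, e^{-\kappa_1 r/2}$: this is automatically admissible, and both the $\delta^{-1} e^{(1-\kappa_1)r}$ and $\delta\, e^{r}$ contributions become $\preceq \ell_{\min}^{\dagger}(q_e)^{-5}\, v_\beta(q_e)^{-1}\, e^{(1-\kappa_1/2)r}$ times the remaining factors, after which one relabels $\kappa_1$. Second, the case where $\beta$ has a horizontal saddle connection on $q_e$ needs no perturbation or limiting argument and is not excluded by the recurrence hypotheses (it genuinely can occur); it is simply vacuous, because then $i(\beta_j,\Im(q_e))=0$ for that saddle connection, so $v_\beta(q_e)=0$ and the error term in the statement is infinite. (A smaller point: your claim $A_i^{+}=1$ presumes $\ell_{\min}(q_s)\leq 1$; this is harmless, since in any case $A_i^{+}\leq \ell_{\min}(q_s)^{+}$ and summing over the $n \preceq \ell_\alpha(q_s)/\ell_{\min}(q_s)$ segments still yields the factor $\ell_\alpha(q_s)\,\ell_{\min}^{\dagger}(q_s)^{-1}$.) With these repairs your argument coincides with the paper's proof.
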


\begin{proof}
	Fix a stratum $\mathcal{Q}^1 \subseteq \qum$, a compact subset $K \subseteq \mathcal{Q}^1$, and $\eta > 0$. Let $\rho_1 = \rho_1(K,\eta) > 0$, $\epsilon_1 = \epsilon_1(K,\eta) > 0$, and $\kappa_1 = \kappa_1(K,\eta) > 0$ be as in Theorem \ref{theo:equid_3}. Fix $0 < \rho < \rho_1$, $0 < \epsilon < \epsilon_1$, and $s > 0$. Let $q_s \in \mathcal{Q}^1$, $r > 0$, and $q_e := a_r q_s$. Suppose $-q_e$ is $(K,r,\eta)$-recurrent and has a $(K',r,\rho,\epsilon,s)$-itinerary. Let $\alpha$ and $\beta$ be a pair of closed curves on $S_g$. For the rest of this proof we identify $\alpha$ and $\beta$ with fixed choices of flat geodesic representatives on $q_e$. 
	
	Notice that $\beta$ is either a cylinder curve or a concatenation of $m \preceq \ell_{\beta}(q_e)/\ell_{\min}(q_e)$ saddle connections of $q_e$. As $q_e = a_r q_s$ and as the Teichmüller geodesic flow preserves flat geodesics, $\alpha$ is either a cylinder curve or a concatenation of $\preceq \ell_{\alpha}(q_s)/\ell_{\min}(q_s)$ saddle connections of $q_e$. Then, by Proposition \ref{prop:flat_int},
	\begin{equation}
	\label{eq:X1}
	|i(\alpha,\beta) - I(\alpha,\beta)| \preceq \frac{\ell_{\alpha}(q_s) \cdot \ell_{\beta}(q_e)}{\ell_{\min}(q_s) \cdot \ell_{\min}(q_e)}.
	\end{equation}
	
	Let $\{\beta_j\}_{j=1}^m$ be the sequence of saddle connections of $\beta$ or $\{\beta\}$ if $\beta$ is a cylinder curve. As $q_e = a_r q_s$ for $r > 0$, Proposition \ref{prop:rect_int} provides a rectangular decomposition of $\alpha$ with $n \preceq \ell_{\alpha}(q_s)/\ell_{\min}(q_s)$ horizontal segments $\{\gamma_i\}_{i=1}^n$ of lengths $\ell_{\gamma_i}(q_e) \leq e^r \cdot \ell_{\min}(q_s)/2$. The lengths of these segments add up to $\sum_{i=1}^n \ell_{\gamma_i}(q_e) = e^r \cdot i(\Re(q_s),\alpha)$. Furthermore, the following estimate holds
	\begin{equation}
	\label{eq:X2}
	\bigg\vert I(\alpha,\beta) - \sum_{i=1}^n \sum_{j=1}^m \#(\gamma_i \cap \beta_j)\bigg\vert \preceq \frac{\ell_\alpha(q_s) \cdot \ell_{\beta}(q_e)}{\ell_{\min}(q_s) \cdot \ell_{\min}(q_e)}.
	\end{equation}
	
	Putting together (\ref{eq:X1}) and (\ref{eq:X2}) we deduce
	\begin{equation}
	\label{eq:X25}
	\bigg\vert i(\alpha,\beta) - \sum_{i=1}^n \sum_{j=1}^m \#(\gamma_i \cap \beta_j)\bigg\vert \preceq \frac{\ell_\alpha(q_s) \cdot \ell_{\beta}(q_e)}{\ell_{\min}(q_s) \cdot \ell_{\min}(q_e)}.
	\end{equation}
	
	Let $0 < \delta < \smash{\ell^\dagger_{\min}}(q_e)/8$, to be fixed later. By Proposition \ref{prop:bump_4}, there exists a pair of functions $\phi_{0,\delta}, \thinspace \phi_{1,\delta} \in H_{q_e}^1(X)$ satisfying
	\begin{enumerate}
		\item $0 \leq \phi_{0,\delta} \leq \phi_{1,\delta}$,
		\item $ \displaystyle \int_X \phi_{0,\delta} \thinspace dA_{q_e} \leq i(\beta,\Im(q_e)) \leq \int_X \phi_{1,\delta} \thinspace dA_{q_e}$,
		\item $ \displaystyle \int_X (\phi_{1,\delta} -\phi_{0,\delta}) \thinspace dA_{q_e} \preceq \delta \cdot \ell_{\beta}(q_e)$,
		\item $\|\phi_{k,\delta}\|_{1,q_e} \preceq \ell_{\beta}(q_e) \cdot\ell^\dagger_{\min}(q_e)^{-3} \cdot v_\beta(q_e)^{-1} \cdot \delta^{-1}$ for $k \in \{0,1\}$,
	\end{enumerate}
	and such that for every horizontal segment $\gamma$ of $q_e$,
	\begin{gather}
	\int_{\gamma} \phi_{0,\delta} \thinspace d\Re(q) - \sum_{j=1}^m \#(\gamma \cap \beta_j)\preceq \frac{\ell_{\beta}(q_e)}{\ell_{\min}(q_e)}, \label{eq:X3}\\
	\sum_{j=1}^m \#(\gamma \cap \beta_j) - \int_{\gamma} \phi_{1,\delta} \thinspace d\Re(q) \preceq \frac{\ell_{\beta}(q_e)}{\ell_{\min}(q_e)}. \label{eq:X4}
	\end{gather}
	Recall $n \preceq \ell_{\alpha}(q_s)/\ell_{\min}(q_s)$. Adding up (\ref{eq:X3}) and (\ref{eq:X4}) over $\{\gamma_i\}_{i=1}^n$ we deduce
	\begin{gather}
	\sum_{i=1}^n \int_{\gamma_i} \phi_{0,\delta} \thinspace d\Re(q) - \sum_{i=1}^n \sum_{j=1}^m \#(\gamma_i \cap \beta_j)\preceq \frac{\ell_\alpha(q_s) \cdot \ell_{\beta}(q_e)}{\ell_{\min}(q_s) \cdot \ell_{\min}(q_e)}, \label{eq:X5}\\
	\sum_{i=1}^n \sum_{j=1}^m \#(\gamma_i \cap \beta_j) - \sum_{i=1}^n \int_{\gamma_i} \phi_{1,\delta} \thinspace d\Re(q) \preceq \frac{\ell_\alpha(q_s) \cdot \ell_{\beta}(q_e)}{\ell_{\min}(q_s) \cdot \ell_{\min}(q_e)}. \label{eq:X6}
	\end{gather}
	
	Let $X$ be the underlying Riemann surface of $q_e$. As $-q_e$ is $(K,r,\eta)$-recurrent and has a $(K',r,\rho,\epsilon,s)$-itinerary, and as the horizontal segments $\{\gamma_i\}_{i=1}^n$ have lengths $\ell_{\gamma_i}(q_e) \leq e^r \cdot \ell_{\min}(q_s)/2$, Theorem \ref{theo:equid_3} ensures that, for every $k \in \{0,1\}$ and every $i \in \{1,\dots,n\}$,
	\begin{gather}
	\label{eq:X7}
	\bigg\vert \int_{\gamma_i} \phi_{k,\delta} \thinspace d\Re(q) - \ell_{\gamma_i}(q_e) \cdot \int_X \phi_{k,\delta} \thinspace d A_{q_e} \bigg\vert \\
	\preceq_{K',\epsilon,s}  \ell_{\min}(q_s)^+ \cdot \ell_{\min}^\dagger(q_e)^{-1} \cdot \|\phi_{k,\delta}\|_{1,q_e} \cdot e^{(1-\kappa_1)r}. \nonumber
	\end{gather}
	Recall $n \preceq \ell_{\alpha}(q_s)/\ell_{\min}(q_s)$ and $\sum_{i=1}^n \ell_{\gamma_i}(q_e) = e^r \cdot i(\Re(q_s),\alpha)$. From (\ref{eq:X7}) and the triangle inequality we deduce that, for $k \in \{0,1\}$,
	\begin{gather}
	\label{eq:X8}
	\bigg\vert \sum_{i=1}^n \int_{\gamma_i} \phi_{k,\delta} \thinspace d\Re(q) - e^r \cdot  i(\alpha,\Re(q_s)) \cdot \int_X \phi_{k,\delta} \thinspace d A_q \bigg\vert\\
	 \preceq_{K',\epsilon,s} \ell_{\alpha}(q_s) \cdot \ell_{\min}^\dagger(q_s)^{-1}\cdot \ell_{\min}^\dagger(q_e)^{-1} \cdot \|\phi_{k,\delta}\|_{1,q_e} \cdot e^{(1-\kappa_1)r}. \nonumber
	\end{gather}
	From (\ref{eq:X8}) and condition (4) above we deduce that, for $k \in \{0,1\}$,
	\begin{gather}
	\label{eq:X9}
	\bigg\vert \sum_{i=1}^n \int_{\gamma_i} \phi_{k,\delta} \thinspace d\Re(q) - e^r \cdot i(\alpha,\Re(q_s)) \int_X \phi_{k,\delta} \thinspace d A_q \bigg\vert \\
	\preceq_{K',\epsilon,s} \ell_{\alpha}(q_s) \cdot \ell_{\min}^\dagger(q_s)^{-1} \cdot \ell_{\beta}(q_e) \cdot \ell_{\min}^\dagger(q_e)^{-4} \cdot  v_\beta(q_e)^{-1} \cdot \delta^{-1} \cdot e^{(1-\kappa_1)r}. \nonumber
	\end{gather}
	
	Notice $v_\beta(q_e) \leq 1$. Putting together (\ref{eq:X3}) and (\ref{eq:X4}) with (\ref{eq:X9}) yields
	\begin{gather}
	e^r \cdot  i(\alpha,\Re(q_s)) \cdot \int_X \phi_{0,\delta} \thinspace d A_q - \sum_{i=1}^n \sum_{j=1}^m \#(\gamma_i \cap \beta_j)\label{eq:X10} \\ 
	\preceq_{K',\epsilon,s} \ell_{\alpha}(q_s) \cdot \ell_{\min}^\dagger(q_s)^{-1} \cdot \ell_{\beta}(q_e) \cdot \ell_{\min}^\dagger(q_e)^{-4} \cdot  v_\beta(q_e)^{-1} \cdot \delta^{-1} \cdot e^{(1-\kappa_1)r}, \nonumber\\
	\sum_{i=1}^n \sum_{j=1}^m \#(\gamma_i \cap \beta_j) - e^r \cdot i(\alpha,\Re(q_s)) \cdot \int_X \phi_{1,\delta} \thinspace d A_q \label{eq:X11}\\
	\preceq_{K',\epsilon,s} \ell_{\alpha}(q_s) \cdot \ell_{\min}^\dagger(q_s)^{-1} \cdot \ell_{\beta}(q_e) \cdot \ell_{\min}^\dagger(q_e)^{-4} \cdot v_\beta(q_e)^{-1} \cdot \delta^{-1} \cdot e^{(1-\kappa_1)r}. \nonumber
	\end{gather}
	From (\ref{eq:X10}), (\ref{eq:X11}), and condition (2) above we deduce
	\begin{gather}
	\bigg\vert \sum_{i=1}^n \sum_{j=1}^m \#(\gamma_i \cap \beta_j) - e^r \cdot i(\alpha,\Re(q_s) \cdot \thinspace i(\beta, \Im(q_e)) \bigg| \label{eq:X12}\\
	\preceq_{K',\epsilon,s} \ell_{\alpha}(q_s) \cdot \ell_{\min}^\dagger(q_s)^{-1} \cdot \ell_{\beta}(q_e) \cdot \ell_{\min}^\dagger(q_e)^{-4} \cdot  v_\beta(q_e)^{-1} \cdot \delta^{-1} \cdot e^{(1-\kappa_1)r} \nonumber\\
	 + e^r \cdot i(\alpha,\Re(q_s)) \cdot \left(\int_X (\phi_{1,\delta} - \phi_{0,\delta}) \thinspace dA_{q_e} \right). \nonumber
	\end{gather}
	From (\ref{eq:X12}) and condition (3) above we deduce
	\begin{gather}
	\bigg\vert \sum_{i=1}^n \sum_{j=1}^m \#(\gamma_i \cap \beta_j) - e^r \cdot i(\alpha,\Re(q_s)) \cdot i(\beta, \Im(q_e)) \bigg| \label{eq:X13}\\
	\preceq_{K',\epsilon,s} \ell_{\alpha}(q_s) \cdot \ell_{\min}^\dagger(q_s)^{-1} \cdot \ell_{\beta}(q_e) \cdot \ell_{\min}^\dagger(q_e)^{-4} \cdot  v_\beta(q_e)^{-1} \cdot \delta^{-1} \cdot e^{(1-\kappa_1)r} \nonumber\\
	+ e^r \cdot \thinspace i(\alpha,\Re(q_s)) \cdot \delta \cdot \ell_{\beta}(q_e). \nonumber
	\end{gather}
	Notice $i(\alpha,\Re(q_s)) \leq \ell_\alpha(q_s)$. Rearranging terms in (\ref{eq:X13}) yields
	\begin{gather}
	\bigg\vert \sum_{i=1}^n \sum_{j=1}^m \#(\gamma_i \cap \beta_j) - e^r \cdot i(\alpha,\Re(q_s)) \cdot i(\beta, \Im(q_e)) \bigg| \label{eq:X14}\\
	\preceq_{K',\epsilon,s} \ell_{\alpha}(q_s) \cdot \ell_{\min}^\dagger(q_s)^{-1} \cdot \ell_{\beta}(q_e) \cdot \ell_{\min}^\dagger(q_e)^{-4} \cdot  v_\beta(q_e)^{-1} \cdot (\delta^{-1} \cdot e^{(1-\kappa_1)r} + \delta \cdot e^r).\nonumber
	\end{gather}
	
	Putting together (\ref{eq:X25}) and (\ref{eq:X14}) we deduce
	\begin{gather}
	|i(\alpha,\beta) - e^r \cdot i(\alpha,\Re(q_s)) \cdot i(\beta, \Im(q_e)) | \label{eq:X15}\\
	\preceq_{K',\epsilon,s} \ell_{\alpha}(q_s) \cdot \ell_{\min}^\dagger(q_s)^{-1} \cdot \ell_{\beta}(q_e) \cdot \ell_{\min}^\dagger(q_e)^{-4} \cdot v_\beta(q_e)^{-1} \cdot (\delta^{-1} \cdot e^{(1-\kappa_1)r} + \delta \cdot e^r).\nonumber
	\end{gather}
	Letting $\delta = e^{-\kappa_1 r/2} \cdot \thinspace \ell_{\min}^\dagger(q_e)/8 < \ell_{\min}^\dagger(q_e)/8$ we conclude
	\begin{gather*}
	|i(\alpha,\beta) - e^r \cdot i(\alpha,\Re(q_s) \cdot i(\beta, \Im(q_e)) | \\
	\preceq_{K',\epsilon,s} \ell_{\alpha}(q_s) \cdot \ell_{\min}^\dagger(q_s)^{-1} \cdot \ell_{\beta}(q_e) \cdot \ell_{\min}^\dagger(q_e)^{-5} \cdot  v_\beta(q_e)^{-1} \cdot e^{(1-\kappa_1/2)r}.\qedhere
	\end{gather*}
\end{proof}

Theorem \ref{theo:prelim_1} can be upgraded via a density argument to an estimate for geometric intersection numbers of closed curves with respect to arbitrary geodesic currents. To state this upgraded version, we first discuss some aspects of the theory of geodesic currents.

\subsection*{Geodesic currents.} Endow the connected, oriented, closed, genus $g \geq 2$ surface $S_g$ with an arbitrary hyperbolic metric. The projective tangent bundle $PTS_g$ admits a $1$-dimensional foliation by lifts of geodesics on $S_g$. A \textit{geodesic current} on $S_g$ is a Radon transverse measure of the geodesic foliation of $PTS_g$. Equivalently, a geodesic current on $S_g$ is a $\pi_1(S_g)$-invariant Radon measure on the space of unoriented geodesics of the universal cover of $S_g$. Endow the space of geodesic currents on $S_g$ with the weak-$\star$ topology. Different choices of hyperbolic metrics on $S_g$ yield canonically identified spaces of geodesic currents \cite[Fact 1]{Bon88}. Denote the space of geodesic currents on $S_g$ by $\mathcal{C}_g$. This space supports a natural $\mathbf{R}^+$ scaling action and a natural $\mcg$ action \cite[\S 2]{RS19}. The space of projective geodesic currents $\mathcal{PC}_g := \mathcal{C}_g/\mathbf{R}^+$ is compact \cite[Corollary 5]{Bon88}.

Weighted closed curves on $S_g$ embed into $\mathcal{C}_g$ by considering their geodesic representatives with respect to the previously fixed hyperbolic metric. By work of Bonahon \cite[Proposition 2]{Bon88}, this embedding is dense. Moreover, the geometric intersection number of closed curves extends in a unique way to a continuous, symmetric, bilinear pairing $i(\cdot,\cdot)$ on $\mathcal{C}_g$ \cite[Proposition 3]{Bon88}. This pairing is invariant with respect to the diagonal action of $\mcg$ on $\mathcal{C}_g \times \mathcal{C}_g$.

Many different metrics on $S_g$ embed into $\mathcal{C}_g$ in such a way that the geometric intersection number of the metric with any closed curve is equal to the length of the geodesic representatives of the closed curve with respect to the metric. We refer to the geodesic current corresponding to any such metric as its \textit{Liouville current}. Examples of metrics admitting Liouville currents include:
 \renewcommand{\labelitemi}{\textperiodcentered}
\begin{itemize}
	\item Hyperbolic metrics \cite{Bon88},
	\item Negatively curved Riemannian metrics \cite{O90},
	\item Negatively curved Riemannian metrics with cone singularities of angle $\geq 2\pi$ \cite{HP97},
	\item Singular flat metrics induced by quadratic differentials \cite{DLR10},
	\item Singular flat metrics with cone singularities of angle $\geq 2\pi$ \cite{BL18}.
\end{itemize}

\subsection*{An upgraded preliminary quantitative estimate for geometric intersection numbers.} Denote by $q \in \mathcal{C}_g$ the Liouville current of the singular flat metric on $S_g$ induced by $q \in \qut$. The following quantitative estimate for geometric intersection numbers of closed curves with respect to arbitrary geodesic currents is a direct consequence of Theorem \ref{theo:prelim_1} and the fact that weighted closed curves are dense in the space of geodesic currents.

\begin{theorem}
	\label{theo:prelim_2} 
	Fix a stratum $\mathcal{Q}^1 \subseteq \qum$. For every compact subset $K \subseteq \mathcal{Q}^1$ and every $\eta > 0$ there exist constants $\rho_1 = \rho_1(K,\eta) > 0$, $\epsilon_1 = \epsilon_1(K,\eta) > 0$, and $\kappa_1 = \kappa_1(K,\eta) > 0$ such that for every compact subset $K' \subseteq \mathcal{Q}^1$, every $0 < \rho < \rho_1$, every $0 < \epsilon < \epsilon_1$, and every $s > 0$, the following holds. Let $q_s \in \qut$, $r > 0$, and $q_e := a_r q_s$. Suppose $-q_e$ is $(K,r,\eta)$-recurrent and has a $(K',r,\rho,\epsilon,s)$-itinerary. Then, for any geodesic current $\alpha$ and any closed curve $\beta$ on $S_g$,
	\begin{align*}
	i(\alpha,\beta) &= i(\alpha,\Re(q_s)) \cdot i(\beta,\Im(q_e)) \cdot e^r \\
	&\phantom{=}+ O_{K',\epsilon,s}\left(i(\alpha,q_s) \cdot \ell_{\min}^\dagger(q_s)^{-1} \cdot \ell_{\beta}(q_e) \cdot \ell_{\min}^\dagger(q_e)^{-5} \cdot v_\beta(q_e)^{-1} \cdot e^{(1-\kappa_1)r}\right).
	\end{align*}
\end{theorem}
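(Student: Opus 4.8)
The plan is to deduce Theorem \ref{theo:prelim_2} from Theorem \ref{theo:prelim_1} by a straightforward density and continuity argument, exploiting that both sides of the estimate depend continuously on $\alpha$ in the weak-$\star$ topology on $\mathcal{C}_g$ (for $\beta$ and $q_s$ fixed) and that weighted closed curves are dense in $\mathcal{C}_g$ by Bonahon's theorem \cite[Proposition 2]{Bon88}. First I would fix all the data: the stratum $\mathcal{Q}^1$, the compact set $K$, the parameter $\eta$, the constants $\rho_1,\epsilon_1,\kappa_1$ coming from Theorem \ref{theo:prelim_1}, and then $K'$, $\rho,\epsilon,s$, and finally $q_s$, $r$, $q_e := a_r q_s$ with $-q_e$ being $(K,r,\eta)$-recurrent with a $(K',r,\rho,\epsilon,s)$-itinerary, together with the closed curve $\beta$. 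At this point Theorem \ref{theo:prelim_1} gives, for \emph{every} closed curve $\alpha_0$ on $S_g$,
\[
\left| i(\alpha_0,\beta) - i(\alpha_0,\Re(q_s)) \cdot i(\beta,\Im(q_e)) \cdot e^r \right| \leq C \cdot \ell_{\alpha_0}(q_s) \cdot \ell_{\min}^\dagger(q_s)^{-1} \cdot \ell_{\beta}(q_e) \cdot \ell_{\min}^\dagger(q_e)^{-5} \cdot v_\beta(q_e)^{-1} \cdot e^{(1-\kappa_1)r},
\]
where $C = C(K',\epsilon,s) > 0$. The same inequality then holds for any weighted closed curve $c\cdot \alpha_0$ with $c > 0$ by scaling all three intersection-number terms linearly; note that $\ell_{\alpha_0}(q_s) = i(\alpha_0, q_s)$ for the Liouville current $q_s$ of the singular flat metric \cite{DLR10}, so the right-hand side reads $C \cdot i(c\cdot\alpha_0, q_s) \cdot \ell_{\min}^\dagger(q_s)^{-1} \cdot \ell_\beta(q_e) \cdot \ell_{\min}^\dagger(q_e)^{-5} \cdot v_\beta(q_e)^{-1} \cdot e^{(1-\kappa_1)r}$. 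Thus the desired estimate holds on the dense subset of weighted closed curves with the precise error term claimed.

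Next I would pass to the limit. Given an arbitrary geodesic current $\alpha \in \mathcal{C}_g$, choose a sequence of weighted closed curves $\alpha_n \to \alpha$ in $\mathcal{C}_g$, which exists by \cite[Proposition 2]{Bon88}. The intersection pairing $i(\cdot,\cdot)$ on $\mathcal{C}_g$ is continuous \cite[Proposition 3]{Bon88}, so each of the quantities $i(\alpha_n,\beta) \to i(\alpha,\beta)$, $i(\alpha_n,\Re(q_s)) \to i(\alpha,\Re(q_s))$, and $i(\alpha_n,q_s) \to i(\alpha,q_s)$ converges (here $\beta$, $\Re(q_s) \in \mathcal{MF}_g$, and the Liouville current $q_s$ are all regarded as fixed elements of $\mathcal{C}_g$). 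The remaining factors $i(\beta,\Im(q_e)) \cdot e^r$, $\ell_{\min}^\dagger(q_s)^{-1}$, $\ell_\beta(q_e)$, $\ell_{\min}^\dagger(q_e)^{-5}$, $v_\beta(q_e)^{-1}$ do not depend on $\alpha$ at all. Hence the inequality displayed above, valid for each $\alpha_n$, passes to the limit term by term, yielding
\[
\left| i(\alpha,\beta) - i(\alpha,\Re(q_s)) \cdot i(\beta,\Im(q_e)) \cdot e^r \right| \leq C \cdot i(\alpha,q_s) \cdot \ell_{\min}^\dagger(q_s)^{-1} \cdot \ell_\beta(q_e) \cdot \ell_{\min}^\dagger(q_e)^{-5} \cdot v_\beta(q_e)^{-1} \cdot e^{(1-\kappa_1)r},
\]
which is exactly the assertion of Theorem \ref{theo:prelim_2}, with the same constants $\rho_1,\epsilon_1,\kappa_1$ inherited from Theorem \ref{theo:prelim_1} and the implied constant in the big-$O$ depending only on $K',\epsilon,s$.

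This argument is essentially routine, so I do not anticipate a genuine obstacle; the only point requiring a little care is the bookkeeping of \emph{which} quantities depend on $\alpha$ and which do not, so that the limit can be taken factor by factor without circularity. One should also double-check that the identification of the Liouville current $q_s$ with the quadratic-differential metric is consistent with the convention $i(\alpha_0,q_s) = \ell_{\alpha_0}(q_s)$ used implicitly when rewriting the error term of Theorem \ref{theo:prelim_1}; this is exactly the content of \cite{DLR10} as recalled in the discussion of geodesic currents above. With that in hand, the proof is complete in a few lines. I would write: ``\emph{Proof.} Fix all data as in the statement and let $\rho_1,\epsilon_1,\kappa_1$ be the constants provided by Theorem \ref{theo:prelim_1}. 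For any weighted closed curve $\alpha$, Theorem \ref{theo:prelim_1} together with $\ell_\alpha(q_s) = i(\alpha,q_s)$ and the bilinearity of $i(\cdot,\cdot)$ gives the claimed estimate. Since weighted closed curves are dense in $\mathcal{C}_g$ \cite[Proposition 2]{Bon88} and $i(\cdot,\cdot)$ is continuous \cite[Proposition 3]{Bon88}, while every factor in the estimate other than those involving $\alpha$ is independent of $\alpha$, the estimate extends by continuity to all geodesic currents $\alpha$. $\qed$''
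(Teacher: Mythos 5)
Your proposal is correct and is exactly the argument the paper intends: the paper derives Theorem \ref{theo:prelim_2} from Theorem \ref{theo:prelim_1} precisely by identifying $\ell_\alpha(q_s)=i(\alpha,q_s)$ via the Liouville current and then using density of weighted closed curves together with continuity of the intersection pairing, with all $\alpha$-independent factors fixed. Your bookkeeping of which quantities depend on $\alpha$ is the only delicate point, and you handle it correctly.
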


\section{Recurrence estimates}

\subsection*{Outline of this section.} In this section we show that most Teichmüller geodesic segments joining a point in Teichmüller space to points in a mapping class group orbit of Teichmüller space satisfy the recurrence conditions needed to apply Theorem \ref{theo:prelim_2}. See Theorems  \ref{theo:thin_traj} and \ref{theo:rec_1}. The estimates of Eskin, Mirzakhani, and Rafi on the number of thin Teichmüller geodesic segments joining a point in Teichmüller space to points in a mapping class group orbit of Teichmüller space \cite{EMR19} play a crucial role in the proofs of this section.

\subsection*{Thin Teichmüller geodesic segments.} For the rest of this section we fix an integer $g \geq 2$ and a connected, oriented, closed surface $S_g$ of genus $g$. Let us recall some of the notation introduced in \S 1. Recall that $\tt$ denotes the Teichmüller space of marked complex structures on $S_g$ and that $d_\mathcal{T}$ denotes the Teichmüller metric on $\mathcal{T}_g$. Recall that $\mathrm{Mod}_g$ denotes the mapping class group of $S_g$ and that this group acts properly discontinuously on $\tt$ by changing the markings.

Recall that $\qut$ denotes the Teichmüller space of marked, unit area quadratic differentials on $S_g$ and that $\pi \colon \qut \to \tt$ denotes the natural projection to $\mathcal{T}_g$. The Teichmüller metric on $\mathcal{T}_g$ is complete and its unit speed geodesics are precisely the projection to $\mathcal{T}_g$ of the Teichmüller geodesic flow orbits on $\qut$. Recall that $S(X) := \pi^{-1}(X)$ for every $X \in \mathcal{T}_g$ and that $q_s, q_e \colon \mathcal{T}_g \times \mathcal{T}_g - \Delta \to \mathcal{Q}^1\mathcal{T}_g$ denote the maps which to every pair of distinct points $X,Y \in \mathcal{T}_g$ assign the quadratic differentials $q_s(X,Y) \in S(X)$ and $q_e(X,Y) \in S(Y)$ corresponding to the tangent directions at $X$ and $Y$ of the unique Teichmüller geodesic segment from $X$ to $Y$.

Recall that $\qum$ denotes the moduli space of unit area quadratic differentials on $S_g$ and that $\{a_t\}_{t \in \mathbf{R}}$ denotes the Teichmüller geodesic flow on $\qum$. Denote by $p \colon \qut \to \qum$ the natural forgetful map. Fix a stratum $\mathcal{Q}^1 \subseteq \qum$. Let $K \subseteq \mathcal{Q}^1$ compact, $T > 0$, and $\eta \in (0,1)$. Recall that $q \in \qut$ is said to be $(K,T,\eta)$-recurrent if
\[
|\{t \in [0,T] \colon a_t \thinspace p(q) \notin K\}| \leq \eta T.
\]

Fix a stratum $\mathcal{Q}^1 \subseteq \qum$. Let $X,Y \in \mathcal{T}_g$, $R > 0$, $K \subseteq \mathcal{Q}^1$ compact, and $\eta \in (0,1)$. Denote by $N(X,Y,R,K,\eta) \subseteq \mcg$ the set of all mapping classes $\mc \in \mcg$ such that $\mc.Y =X$ or $\mc.Y \neq X$, $r:=d_{\mathcal{T}}(X,\mc.Y) \leq R$, and $-q_e(X,\mc.Y)$ is not $(K,r,\eta)$-recurrent. 

The \textit{principal stratum} of $\qum$ is the stratum of unit area quadratic differentials on $S_g$ all of whose zeroes are simple. This stratum is the unique top dimensional stratum of $\qum$. It is an open subset of $\qum$ and its complex dimension is $6g-6$.

The following theorem of Eskin, Mirzakhani, and Rafi shows that most Teichmüller geodesic segments from a point in Teichmüller space to points in a mapping class group orbit of Teichmüller space spend an arbitrarily large fraction of their time in a compact subset of the principal stratum; compare to the estimate in (\ref{eq:max_growth}). This result will play a crucial role in our proof of Theorem \ref{theo:main}. 

\begin{theorem} \cite[Theorem 1.7 and Lemma 3.9]{EMR19}
	\label{theo:thin_traj}
	Let $\mathcal{Q}^1 \subseteq \qum$ be the principal stratum. Then, for every $\eta > 0$ there exists a compact subset $K = K(\eta) \subseteq \mathcal{Q}^1$ such that for every compact subset $\mathcal{K} \subseteq \mathcal{T}_g$, every $X,Y \in \mathcal{K}$, and every $R > 0$,
	\[
	|N(X,Y,R,K,\eta)| \preceq_{\mathcal{K},\eta} e^{(6g-6-\eta/2)R}.
	\]
\end{theorem}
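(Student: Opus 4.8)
The plan is to deduce Theorem~\ref{theo:thin_traj} from the quoted results of Eskin--Mirzakhani--Rafi by carefully converting their statements, which are phrased in terms of counting lattice points in $\mcg$-orbits that spend a definite fraction of time outside a compact set of the principal stratum, into our language of $(K,r,\eta)$-recurrence of the backward differential $-q_e(X,\mc.Y)$.

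\textbf{Setup and reduction.} First I would fix $\eta>0$ and recall that a Teichm\"uller geodesic segment of length $r$ from $X$ to $\mc.Y$ is, up to reparametrization, the forward orbit $\{a_t\, q_s(X,\mc.Y)\}_{t\in[0,r]}$ of the initial differential; equivalently, reading it backwards from $\mc.Y$, it is the forward orbit $\{a_t(-q_e(X,\mc.Y))\}_{t\in[0,r]}$, since applying $a_t$ to $-q=r_{\pi/2}q$ runs the flow in the reverse direction along the same unparametrized geodesic. Thus $-q_e(X,\mc.Y)$ fails to be $(K,r,\eta)$-recurrent precisely when the geodesic segment from $X$ to $\mc.Y$ spends at least $\eta r$ of its time with underlying quadratic differential outside $K\subseteq\mathcal{Q}^1$. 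This is exactly the notion of a ``$(\eta, K)$-thin'' geodesic segment appearing in \cite{EMR19}. The set $N(X,Y,R,K,\eta)$ also absorbs the finitely many (boundedly many in terms of $g$) mapping classes with $\mc.Y=X$, which contributes a harmless additive constant.

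\textbf{Applying the EMR estimates.} Next I would invoke \cite[Theorem 1.7]{EMR19}: for the principal stratum $\mathcal{Q}^1$ and any $\eta>0$ there is a compact $K=K(\eta)\subseteq\mathcal{Q}^1$ together with a function controlling the number of lattice points $\mc\in\mcg$ with $d_{\mathcal{T}}(X,\mc.Y)\le R$ whose connecting geodesic is $(\eta,K)$-thin; Lemma~3.9 of \cite{EMR19} packages the dependence on the basepoints $X,Y$ so that the bound is uniform when $X,Y$ range over a fixed compact $\mathcal{K}\subseteq\mathcal{T}_g$, with the exponential rate improved from the ambient $e^{(6g-6)R}$ of \eqref{eq:max_growth} to $e^{(6g-6-\eta/2)R}$ (the loss $\eta/2$ being the quantitative gain from the time spent in the thin part, after possibly shrinking $K$ or adjusting constants). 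Translating through the dictionary of the previous paragraph, this bound on $(\eta,K)$-thin connecting geodesics is exactly a bound on $|N(X,Y,R,K,\eta)|$, giving $|N(X,Y,R,K,\eta)|\preceq_{\mathcal{K},\eta} e^{(6g-6-\eta/2)R}$.

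\textbf{Main obstacle.} The only genuine subtlety — and the step I expect to require the most care — is matching conventions between \cite{EMR19} and our framework: their counting theorem is stated for geodesic segments that are thin in the thick-thin sense of the \emph{moduli space of Riemann surfaces} (short curves), whereas we need thinness relative to a compact subset of a \emph{stratum}, which additionally controls the lengths of saddle connections and keeps the differential away from the boundary strata. One must check that the compact set $K$ produced by \cite[Theorem 1.7]{EMR19} can be taken inside the principal stratum, or enlarge the thin part slightly so that ``not in a compact subset of the principal stratum'' still occurs with frequency at most $\eta$ outside an exponentially small (rate $e^{(6g-6-\eta/2)R}$) set of $\mc$; this uses that the complement of the principal stratum has measure zero and that generic geodesics equidistribute, arguments already implicit in the EMR machinery. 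Once this identification is in place, the theorem follows immediately, and the uniformity in $\mathcal{K}$ is exactly the content of \cite[Lemma 3.9]{EMR19}.
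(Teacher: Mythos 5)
Your proposal takes the same route as the paper itself: Theorem \ref{theo:thin_traj} is not proved in the paper but imported wholesale from \cite[Theorem 1.7 and Lemma 3.9]{EMR19}, and your dictionary is the right one — reading the segment backwards from $\mc.Y$, so that failure of $(K,r,\eta)$-recurrence of $-q_e(X,\mc.Y)$ is exactly the ``spends at least an $\eta$-fraction of its length outside $K$'' condition counted by Eskin--Mirzakhani--Rafi, with the boundedly many $\mc$ satisfying $\mc.Y=X$ absorbed into the count. The one point to flag is your ``main obstacle'' paragraph: the fallback you sketch there (the complement of the principal stratum has measure zero, generic geodesics equidistribute) is not a viable substitute, since measure-zero and generic-equidistribution statements do not by themselves yield a lattice-point count with an exponential power saving for the exceptional set of mapping classes. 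The resolution is instead exactly what the double citation signals: the Eskin--Mirzakhani--Rafi thin/thick dichotomy is taken at the level of the stratum rather than merely the thick part of moduli space — thinness is detected by short saddle connections, which simultaneously registers proximity to the boundary of moduli space and to lower strata (colliding zeros force short saddle connections), and their Lemma 3.9 is the bridge that lets the compact set $K(\eta)$ be taken inside the principal stratum. So no auxiliary equidistribution argument is needed, and with that reading your deduction matches the paper's (purely imported) treatment.
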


\subsection*{Sampling Teichmüller geodesic flow orbits.} Fix a stratum $\mathcal{Q}^1 \subseteq \qum$. Let $K' \subseteq \mathcal{Q}^1$ be a compact subset, $T > 0$, $\rho \in (0,1)$, $\epsilon > 0$, and $s > 0$. Recall that a sequence of positive real numbers $\{s_n\}_{n=0}^{N+1}$ with $N \in \mathbf{N}^+$ is said to be a $(K',T,\rho,\epsilon,s)$-itinerary of $q \in \qum$ if 
\begin{enumerate}
	\item $a_{s_n} q \in K'$ for every $n \in \{1, \dots, N\}$,
	\item $s_0 = \rho T$,
	\item $s_N < T$, 
	\item $s_{N+1} = T$,
	\item $s_{n+1} \geq s_n + s$ for every $n \in \{1,\dots, N-1\}$,
	\item $s_{n+1} - s_n \leq \epsilon s_n$ for every $n \in \{0,\dots,N\}$.
\end{enumerate}
Finding such itineraries will be crucial for our proof of Theorem \ref{theo:main}. We now describe a sampling procedure that fulfils this objective. This procedure is inspired by methods introduced in \cite[\S5.2]{AF08}.

Fix a stratum $\mathcal{Q}^1 \subseteq \qum$. Let $K \subseteq \mathcal{Q}^1$ be a compact subset, $T > 0$, $\rho \in (0,1)$, and $s > 0$. Denote by $K' \subseteq \mathcal{Q}^1$ the compact enlargement of $K$ given by
\begin{equation}
\label{eq:enlarge}
K' := \bigcup_{t \in [-s,s]} a_t K.
\end{equation}
Let $q \in \qum$. Fix $\sh_0 :=  \rho T$. Starting from $n = 0$, define $\sh_{n+1}$ inductively as follows: if $a_t q \in K'$ for every $t \in [\sh_n,\sh_n+s]$ then $\sh_{n+1} := \sh_n + s$, otherwise
\[
\sh_{n+1} := \inf\{ t > \sh_n + s \ | \ a_t q \in K\}.
\]
Notice that, by definition, $\sh_{n+1} \geq \sh_n + s$. In particular, there exists $N \in \mathbf{N}$ such that $\sh_N < T$ and $\sh_{N+1} \geq T$, where we allow $\sh_{N+1} = +\infty$. Once this step is reached, stop the inductive definition of $\sh_{N+2}$. Consider the sequence $\{s_n\}_{n=0}^{N+1}$ of positive real numbers given by $s_n := \sh_n$ if $n \in \{0,\dots,N\}$ and $s_{N+1} := T$. By construction, $a_{s_n} q \in K'$ for every $n \in \{1,\dots,N\}$. Furthermore, conditions (2), (3), (4), and (5) above are directly satisfied. To ensure $\{s_n\}_{n=0}^{N+1}$ is a $(K',T,\rho,\epsilon,s)$-itinerary of $q$, it remains to check that $N > 0$ and that condition (6) above holds for suitable choices of $\epsilon$. 

More specifically, we show that if condition (6) above fails to hold for some $\epsilon > 0$, then the Teichmüller geodesic flow orbit $\{a_t q\}_{t \geq 0}$ has a large excursion outside of $K$. Theorem \ref{theo:thin_traj} ensures that, in the context of the proof of Theorem \ref{theo:main}, this is a very unlikely event.

\begin{proposition}
	\label{theo:bad_sampling}
	Fix a stratum $\mathcal{Q}^1 \subseteq \qum$. Let $K \subseteq \mathcal{Q}^1$ be a compact subset, $T > 0$, $\rho\in (0,1)$, and $s > 0$ be such that $\rho T \geq s$.  Fix $q \in \qum$. Denote by $\{s_n\}_{n=0}^{N+1}$ with $N \in \mathbf{N}$ the sequence of positive real numbers produced by the sampling procedure above. Let $\epsilon> 0$ be such that $\rho (1+\epsilon) < 1$ and $T \geq s/\rho\epsilon$. Suppose $\{s_n\}_{n=0}^{N+1}$ is not a $(K',T,\rho,\epsilon,s)$-itinerary of $q$. Then,
	\[
	|\{t \in [0,T] \colon a_t q \notin K\}| > \rho\epsilon T.
	\]
\end{proposition}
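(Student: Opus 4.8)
The plan is to run through the sampling procedure and, from the failure of the itinerary conditions, extract an interval of length exceeding $\rho\epsilon T$ on which the orbit $\{a_t q\}_{t\geq 0}$ stays outside $K$. The first step is to isolate which of the six defining conditions of a $(K',T,\rho,\epsilon,s)$-itinerary can fail. As recorded in the construction, conditions (1)--(5) hold automatically for the sequence $\{s_n\}_{n=0}^{N+1}$: each $a_{s_n}q$ with $1\leq n\leq N$ lies in $K'$ because $\sh_{n+1}$ is either $\sh_n+s$ (landing in $K'$ by the first alternative) or a first return time to the closed set $K\subseteq K'$; conditions (2)--(4) are built in; and $s_{n+1}\geq s_n+s$ for $1\leq n\leq N-1$ since $\sh_{n+1}\geq\sh_n+s$. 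Hence the hypothesis forces $N=0$ or a violation of (6). When $N=0$ one has $s_1-s_0=(1-\rho)T$ and $\epsilon s_0=\epsilon\rho T$, and $\rho(1+\epsilon)<1$ gives $(1-\rho)T>\epsilon\rho T$, so (6) fails at $n=0$ as well. In every case there is an index $n\in\{0,\dots,N\}$ with $s_{n+1}-s_n>\epsilon s_n$; since $\sh$ is increasing, $s_n\geq s_0=\rho T$, so $s_{n+1}-s_n>\epsilon\rho T$, and the hypothesis $T\geq s/\rho\epsilon$ gives $s\leq\rho\epsilon T<s_{n+1}-s_n$.

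Next I would argue that such a large increment can only occur through the ``second alternative'' at step $n$, and that this alternative produces a gap. If $n<N$ then $s_n=\sh_n$, $s_{n+1}=\sh_{n+1}$, and $\sh_{n+1}-\sh_n>s$ rules out $\sh_{n+1}=\sh_n+s$; if $n=N$ then $T-\sh_N=s_{n+1}-s_n>s$ forces $\sh_N+s<T\leq\sh_{N+1}$, again ruling out the first alternative. So at step $n$ we are in the second alternative: the first alternative having failed, some $t^\ast\in[\sh_n,\sh_n+s]$ has $a_{t^\ast}q\notin K'$, and unwinding $K'=\bigcup_{u\in[-s,s]}a_uK$ yields $a_tq\notin K$ for all $t\in[t^\ast-s,t^\ast+s]\supseteq[\sh_n,\sh_n+s]$; meanwhile $\sh_{n+1}=\inf\{t>\sh_n+s : a_tq\in K\}$ gives $a_tq\notin K$ for all $t\in(\sh_n+s,\sh_{n+1})$. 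Concatenating, $a_tq\notin K$ on the whole interval $[\sh_n,\sh_{n+1})$ (interpreted as $[\sh_n,+\infty)$ if $\sh_{n+1}=+\infty$).

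Finally I would check that this gap lies in $[0,T]$ and is long enough. Since $\rho T=\sh_0\leq\sh_n$, and since either $\sh_{n+1}\leq\sh_N<T$ (when $n<N$) or $\sh_{n+1}\geq T$ (when $n=N$), the interval $[\sh_n,\min\{\sh_{n+1},T\})$ is contained in $[0,T]$, consists of times with $a_tq\notin K$, and has length $\min\{\sh_{n+1},T\}-\sh_n=s_{n+1}-s_n>\rho\epsilon T$. This gives $|\{t\in[0,T] : a_tq\notin K\}|>\rho\epsilon T$, as claimed. There is no deep obstacle here; the one point that needs care — and which I would state as the crux of the bookkeeping — is the interaction with the enlargement $K'$: one must use precisely that $a_{t^\ast}q\notin K'$ propagates to an $s$-neighbourhood of $t^\ast$ that \emph{covers} $[\sh_n,\sh_n+s]$, so the gap begins exactly at $\sh_n$ and splices without loss onto the first-return gap $(\sh_n+s,\sh_{n+1})$; this is what makes the first alternative failing at step $n$ quantitatively costly.
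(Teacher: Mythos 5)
Your proof is correct and follows essentially the same approach as the paper: reduce the failure of the itinerary to a violation of condition (6) (noting $\rho(1+\epsilon)<1$ handles $N=0$), observe that the resulting increment exceeds $s$ so the second alternative of the sampling was invoked at that step, and use the definition of the enlargement $K'$ together with the first-return time to produce an excursion outside $K$ of length greater than $\rho\epsilon T$ inside $[0,T]$. The only difference is bookkeeping: you argue directly at the failing step and exhibit the gap $[\widehat{s}_n,\min\{\widehat{s}_{n+1},T\})$, whereas the paper routes the same mechanism through an auxiliary sequence of excursion times and its claim relating them to the sampling times.
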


\begin{proof}
	By the discussion above, the only way $\{s_n\}_{n=0}^{N+1}$ can fail to be a $(K',T,\rho,\epsilon,s)$-itinerary of $q$ is if $N = 0$ or condition (6) above fails to hold. As $\rho(1+\epsilon) < 1$, if condition (6) above holds, then $N > 0$. Thus, for the rest of this proof we will assume
	\begin{equation}
	\label{eq:exc0}
	\exists n \in \{0,\dots,N\} \colon s_{n+1} - s_n > \epsilon s_n.
	\end{equation}
	
	Let us introduce some notation that will help us keep track of the excursions of the forward orbit $\{a_t q\}_{t \geq 0}$ outside of $K$. Fix $t_0 := \rho T$. Starting from $k = 1$, inductively define
	\begin{align*}
	t_{2k-1} &:= \inf\{t > t_{2k-2} \ | \ a_t q \notin K'\},\\
	t_{2k} &:= \inf \{t > t_{2k-1} \ | \ a_t q \in K\}.
	\end{align*}
	If $t_k = + \infty$ for some $k \in \mathbf{N}^+$, define $t_j := +\infty$ for every $j >k$. Notice that, by the definition of the times $\{t_k\}_{k \in \mathbf{N}}$ and of the compact enlargement $K' \supseteq K$, for every $k \in \mathbf{N}^+$,
	\begin{equation}
	\label{eq:exc}
	a_t q \notin K, \ \forall t \in (t_{2k-1}-s,t_{2k}).
	\end{equation}
	 The condition $\rho T \geq s$ ensures that $t_{2k-1} - s \geq 0$ for every $k \in \mathbf{N}^+$. 
	 
	 To take advantage of (\ref{eq:exc}), we relate the times $\{t_k\}_{k \in \mathbf{N}}$ to the times $\{s_n\}_{n =0}^{N+1}$ as follows. We claim that, for every $n \in \{0,\dots,N\}$ such that $\sh_{n+1} \neq \sh_n + s$, 
	\begin{equation}
	\label{eq:exc2}
	\exists k \in \mathbf{N}^+ \colon t_{2k-1} < \sh_n + s \text{ and } \sh_{n+1} = t_{2k}.
	\end{equation}
	Indeed, suppose  $n \in \{0,\dots,N\}$ satisfies $\sh_{n+1} \neq \sh_n + s$. Let $k \in \smash{\mathbf{N}^+}$ be the largest positive integer such that $t_{2k-2} \leq \sh_n$. We claim that $t_{2k-1} < \sh_n + s$. Indeed, suppose $t_{2k-1} \geq \sh_n + s$. Then, by the definiton of $t_{2k-1}$, $a_t q \in K'$ for every $t \in [t_{2k-2}, t_{2k-1}] \supseteq [\sh_n,\sh_n + s]$. This forces $\sh_{n+1} = \sh_n + s$. We also claim that $t_{2k} > \sh_n + s$. Indeed, suppose $t_{2k} \leq \sh_n + s$. The maximality of $k \in \mathbf{N}^+$ implies $t_{2k} \geq \sh_n$. By the defintion of $t_{2k}$, $a_{t_{2k}}q \in K$. It follows that, by the definition of $K'$, $a_t q \in K'$ for every $t \in [t_{2k}-s,t_{2k}+s] \supseteq [\sh_n,\sh_n+s]$.  This forces $\sh_{n+1} = \sh_n + s$.  We have thus proved that $t_{2k-1} < \sh_n + s$ and $t_{2k} > \sh_n + s$. These conditions together with the assumption $\sh_{n+1} \neq \sh_n + s$ force $\sh_{n+1} = t_{2k}$.
	
	We now use (\ref{eq:exc0}), (\ref{eq:exc}), and (\ref{eq:exc2}) to show the forward orbit $\{a_t q\}_{t \geq 0}$ has a large excursion outside of $K$. By (\ref{eq:exc0}), there exists $n \in \{0,\dots,N\}$ such that $s_{n+1} - s_n > \epsilon s_n$. By construction, $s_n \geq \rho T$. It follows that $s_{n+1} - s_n > \rho \epsilon T$. By definition, $s_n = \sh_n$ and $s_{n+1} \leq \sh_{n+1}$. In particular, $\sh_{n+1} - \sh_n > \rho \epsilon T$. As $T \geq s/\rho\epsilon$, this implies $\sh_{n+1} - \sh_n > s$. From (\ref{eq:exc2}) we deduce there exists $k \in \mathbf{N}^+$ such that $t_{2k-1} < \sh_n + s$ and $\sh_{n+1} = t_{2k}$. If $n < N$ then $t_{2k} - (t_{2k-1}-s) > \sh_{n+1} - \sh_n > \rho\epsilon T$ and $[t_{2k-1} - s, t_{2k}] \subseteq [0,s_{n+1}] \subseteq [0,T]$.  If $n = N$ then $t_{2k} = \sh_{N+1} \geq T$, $T - (t_{2k-1} -s) > s_{N+1} - s_N > \rho\epsilon T$, and $[t_{2k-1}-s,T] \subseteq [0,T]$. Applying (\ref{eq:exc}) in either case shows that
	\[
	|\{t \in [0,T] \colon a_t q \notin K\}| > \rho\epsilon T. \qedhere
	\]
\end{proof}

Using Theorem \ref{theo:thin_traj} and Proposition \ref{theo:bad_sampling} we show that the sampling procedure above yields suitable itineraries in most cases of interest. Fix a stratum $\mathcal{Q}^1 \subseteq \qum$. Let $X,Y \in \mathcal{T}_g$, $R > 0$, $K' \subseteq \mathcal{Q}^1$ compact, $\rho \in (0,1)$, $\epsilon > 0$, and $s > 0$. Denote by $N'(X,Y,R,K',\rho,\epsilon,s) \subseteq \mcg$ the set of all mapping classes $\mc \in \mcg$ such that $\mc.Y = X$ or $\mc.Y \neq X$, $r:=d_\mathcal{T}(X,\mc.Y) \leq R$, and $-p(q_e(X,\mc.Y))$ has no $(K',r,\rho,\epsilon,s)$-itineraries. The following result ensures that few mapping classes belong to this set if $K' \subseteq \mathcal{Q}^1$ is a large enough compact subset of the principal stratum; compare to the estimate in (\ref{eq:max_growth}). This result will play a crucial role in our proof of Theorem \ref{theo:main}. 

\begin{theorem}
	\label{theo:rec_1}
	Let $\mathcal{Q}^1 \subseteq \qum$ be the principal stratum. Then, for every $\rho \in (0,1)$, every $\epsilon > 0$ such that $\rho(1+\epsilon)<1$, and every $s > 0$, there exists a compact subset $K' = K'(\rho,\epsilon,s) \subseteq \mathcal{Q}^1$ and a constant $\kappa_2 = \kappa_2(\rho,\epsilon) > 0$ such that for every $\mathcal{K} \subseteq \mathcal{T}_g$ compact, every $X,Y \in \mathcal{K}$, and every $R > 0$,
	\[
	|N'(X,Y,R,K',\rho,\epsilon,s)| \preceq_{\mathcal{K},\rho,\epsilon,s} e^{(6g-6-\kappa_2)R}.
	\]
\end{theorem}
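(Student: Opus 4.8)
The plan is to combine Theorem \ref{theo:thin_traj} with Proposition \ref{theo:bad_sampling} in the following way. Fix $\rho \in (0,1)$, $\epsilon > 0$ with $\rho(1+\epsilon) < 1$, and $s > 0$. Since $\mathcal{Q}^1$ is the principal stratum, for the purposes of Theorem \ref{theo:thin_traj} set $\eta := \rho\epsilon$ and let $K = K(\rho\epsilon) \subseteq \mathcal{Q}^1$ be the compact subset it produces. Then define the compact enlargement $K' := \bigcup_{t \in [-s,s]} a_t K$ as in (\ref{eq:enlarge}); this $K'$ is the candidate compact subset in the statement, and I would set $\kappa_2 := \rho\epsilon/2$. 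Note $K'$ is indeed compact since it is the image of the compact set $[-s,s] \times K$ under the continuous $\mathrm{SL}(2,\mathbf{R})$-action, and it is contained in the principal stratum because the principal stratum is $\mathrm{SL}(2,\mathbf{R})$-invariant.

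Next I would carry out the comparison of $N'$ with $N$ (the set of non-recurrent mapping classes from Theorem \ref{theo:thin_traj}). Fix a compact $\mathcal{K} \subseteq \mathcal{T}_g$ and $X, Y \in \mathcal{K}$, $R > 0$. Let $\mc \in N'(X,Y,R,K',\rho,\epsilon,s)$, so that $r := d_\mathcal{T}(X,\mc.Y) \le R$ and $-p(q_e(X,\mc.Y))$ admits no $(K',r,\rho,\epsilon,s)$-itinerary. There is a minor bookkeeping point: Proposition \ref{theo:bad_sampling} requires the hypotheses $\rho r \ge s$ and $r \ge s/(\rho\epsilon)$. These may fail for $\mc$ with $r$ bounded; but the set of mapping classes $\mc$ with $d_\mathcal{T}(X,\mc.Y) \le R_0$ for a fixed threshold $R_0 = R_0(\rho,\epsilon,s)$ has cardinality $\preceq_{\mathcal{K},\rho,\epsilon,s} e^{(6g-6-\kappa_2)R}$ trivially (in fact it is bounded independent of $R$, by proper discontinuity of the $\mcg$-action and the compactness of $\mathcal{K}$, using a volume/packing bound), so those can be absorbed into the error term. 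For $\mc$ with $r$ exceeding this threshold, apply the sampling procedure preceding Proposition \ref{theo:bad_sampling} to the quadratic differential $q := -p(q_e(X,\mc.Y))$ with parameters $K$, $T := r$, $\rho$, $s$; it produces a sequence $\{s_n\}_{n=0}^{N+1}$. Since by assumption $q$ has no $(K',r,\rho,\epsilon,s)$-itinerary, in particular this produced sequence is not such an itinerary, so Proposition \ref{theo:bad_sampling} applies and yields
\[
|\{t \in [0,r] \colon a_t q \notin K\}| > \rho\epsilon r.
\]
This says precisely that $-q_e(X,\mc.Y)$ is not $(K, r, \rho\epsilon)$-recurrent, i.e. $\mc \in N(X,Y,R,K,\rho\epsilon)$.

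Finally I would invoke Theorem \ref{theo:thin_traj} with $\eta = \rho\epsilon$: since $K = K(\rho\epsilon)$ was chosen as the compact set it furnishes,
\[
|N(X,Y,R,K,\rho\epsilon)| \preceq_{\mathcal{K},\rho\epsilon} e^{(6g-6 - \rho\epsilon/2)R} = e^{(6g-6-\kappa_2)R}.
\]
Combining this with the bounded contribution from the small-$r$ mapping classes gives $|N'(X,Y,R,K',\rho,\epsilon,s)| \preceq_{\mathcal{K},\rho,\epsilon,s} e^{(6g-6-\kappa_2)R}$, as desired. The only genuine subtlety — and the step I expect to need the most care — is matching up conventions: verifying that the "$-q$" in the recurrence condition of $N'$ (which tracks $-p(q_e)$) lines up correctly with the input to the sampling procedure and to Theorem \ref{theo:thin_traj}, and checking that the hypotheses $\rho r \ge s$, $r \ge s/(\rho\epsilon)$, and $\rho(1+\epsilon)<1$ of Proposition \ref{theo:bad_sampling} are all met (or harmlessly excluded) for the relevant mapping classes. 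Everything else is a direct chaining of the two cited results.
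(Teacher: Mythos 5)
Your proposal is correct and follows essentially the same route as the paper: choose $K$ from Theorem \ref{theo:thin_traj} with $\eta$ comparable to $\rho\epsilon$, take $K'$ to be the enlargement (\ref{eq:enlarge}), use Proposition \ref{theo:bad_sampling} to show that any $\mc \in N'$ with $r \geq \max\{s/\rho, s/\rho\epsilon\}$ lies in $N(X,Y,R,K,\eta)$, and absorb the finitely many remaining mapping classes (including those with $\mc.Y = X$) via proper discontinuity. The only cosmetic difference is that the paper takes $\eta$ strictly less than $\rho\epsilon$, while your choice $\eta=\rho\epsilon$ also works since the proposition's conclusion is a strict inequality.
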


\begin{proof}
	Let $\rho \in (0,1)$, $\epsilon > 0$ such that $\rho(1+\epsilon)<1$, and $s > 0$. Fix $\eta = \eta(\rho,\epsilon) > 0$ satisfying $\eta < \rho\epsilon$. Let $K = K(\eta) \subseteq \mathcal{Q}^1$ be as in Theorem \ref{theo:thin_traj} and $K' = K'(\rho,\epsilon,s) \subseteq \mathcal{Q}^1$ be the compact enlargement of $K$ defined in (\ref{eq:enlarge}). Fix $\mathcal{K} \subseteq \mathcal{T}_g$ compact, $X,Y \in \mathcal{K}$, and $R > 0$. Let $\mc \in N'(X,Y,R,K',\rho,\epsilon,s)$. Suppose $\mc.Y \neq X$. Denote $r := d_\mathcal{T}(X,\mc.Y) \leq R$. By definition, $-p(q_e(X,\mc.Y))$ has no $(K',r,\rho,\epsilon,s)$-itineraties. Thus, under the assumption $r \geq \max\{s/\rho,s/\rho\epsilon\}$, Theorem \ref{theo:bad_sampling} ensures 
	\[
	|\{t \in [0,r] \colon a_t p(-q_e(X,\mc.Y)) \notin K\}| \geq \rho \epsilon r > \eta r.
	\]
	By definition, this implies $\mc \in N(X,Y,R,K,\eta)$. Denote by $E = E(\mathcal{K},\rho,\epsilon,s) \subseteq \mcg$ the finite subset of all mapping classes $\mc \in \mcg$ such that the Teichmüller distance between $\mc.\mathcal{K}$ and $\mathcal{K}$ is $\leq \max\{s/\rho,s/\rho\epsilon\}$. It follows that
	\[
	N'(X,Y,R,K',\rho,\epsilon,s) \subseteq N(X,Y,R,K,\eta) \cup E.
	\]
	Using Theorem \ref{theo:thin_traj} we conclude
	\[
	|N'(X,Y,R,K',\rho,\epsilon,s)| \leq |N(X,Y,R,K,\eta)| + |E| \preceq_{\mathcal{K},\rho,\epsilon,s} e^{(6g-6-\eta/2)R}. \qedhere
	\]
\end{proof}

\section{Controlling the error terms}

\subsection*{Outline of this section.} A crucial step in our proof of Theorem \ref{theo:main} will be to control the error terms obtained when applying Theorem \ref{theo:prelim_2}. In this section we explain how to control these error terms. See Theorems \ref{theo:sparse_1} and \ref{theo:sparse_4}. The proofs of these results are based on estimates established in \cite{Ara20b} for countings of mapping class group orbits in thin sectors of Teichmüller space. These estimates in turn rely on the metric hyperbolicity properties of the Teichmüller geodesic flow proved by Athreya, Bufetov, Eskin, and Mirzakhani in \cite{ABEM12}.

\subsection*{Thin sectors of Teichmüller space.} For the rest of this section we fix an integer $g \geq 2$ and a connected, oriented, closed surface $S_g$ of genus $g$. Recall that $\tt$ denotes the Teichmüller space of marked complex structures on $S_g$, that $d_\mathcal{T}$ denotes the Teichmüller metric on $\mathcal{T}_g$, and that $B_R(X) \subseteq \mathcal{T}_g$ denotes the closed Teichmüller metric ball of radius $R > 0$ center at $X \in \mathcal{T}_g$. Recall that $\mcg$ denotes the mapping class group of $S_g$, that this group acts properly discontinuously on $\tt$ by changing the markings, and that this action preserves the Teichmüller metric.

Recall that $\qut$ denotes the Teichmüller space of marked, unit area quadratic differentials on $S_g$. The forgetful map $\pi \colon \qut \to \tt$ makes $\qut$ into a smooth sphere bundle over $\tt$. This bundle supports a natural smooth measure $\mu$ called the \textit{Masur-Veech measure}. See \cite[\S2]{Ara20b} for several equivalent definitions. This measure is invariant under the natural marking changing action of $\mcg$ on $\qut$. The pushforward $\mathbf{m} := \pi_* \mu$ is a smooth $\mcg$-invariant measure on $\mathcal{T}_g$.

Recall that $S(X) := \pi^{-1}(X)$ for every $X \in \mathcal{T}_g$ and that $q_s, q_e \colon \mathcal{T}_g \times \mathcal{T}_g - \Delta \to \mathcal{Q}^1\mathcal{T}_g$ denote the maps which to every pair of distinct points $X,Y \in \mathcal{T}_g$ assign the quadratic differentials $q_s(X,Y) \in S(X)$ and $q_e(X,Y) \in S(Y)$ corresponding to the tangent directions at $X$ and $Y$ of the unique Teichmüller geodesic segment from $X$ to $Y$. For every $X \in \mathcal{T}_g$ and every $V \subseteq S(X)$ consider the sector 
\[
\text{Sect}_V(X) := \{X\} \cup \{Y \in \mathcal{T}_g \setminus \{X\} \ | \ q_s(X,Y) \in V \}.
\] 

Recall that $\qum$ denotes the moduli space of unit area quadratic differentials on $S_g$. For the rest of this section, denote by $\mathcal{Q}^1 \subseteq \qum$ the principal stratum of $\qum$. As in (\ref{eq:compact}), for every $\delta > 0$, denote by $K_\delta \subseteq \mathcal{Q}^1$ the compact subset of $\mathcal{Q}^1$ given by
\[
K_\delta := \{q \in \mathcal{Q}^1 \ | \ \ell_{\min}(q) \geq \delta\}.
\]

Recall that $p \colon \qut \to \qum$ denotes the natural forgetful map. Given $A \subseteq \mathcal{T}_g$ and $r > 0$, denote by $\mathrm{Nbhd}_r(A) \subseteq \mathcal{T}_g$ the set of points in $\mathcal{T}_g$ at Teichmüller distance at most $r$ from $A$. The following estimate was proved in \cite{Ara20b}. The proof of this estimate uses the metric hyperbolicity properties of the Teichmüller geodesic flow established in \cite{ABEM12}.

\begin{theorem}
	\cite[Theorem 7.16]{Ara20b}
	\label{theo:thin_sector_meas}
	There exist constants $r_0 = r_0(g) > 0$ and $\kappa_3 = \kappa_3(g) > 0$ such that the following holds. Let $\mathcal{K} \subseteq \mathcal{T}_g$ be a compact subset, $X \in \mathcal{K}$,  and $\delta > 0$. Denote $V_\delta := S(X) \setminus p^{-1}(K_\delta)$. Then, for every $0 < r < r_0$ and every $R > 0$,
	\[
	\mathbf{m}(\mathrm{Nbhd}_{r}(B_R(X) \cap \mathrm{Sect}_{V_\delta}(X) \cap \mathrm{Mod}_g \cdot \mathcal{K}))
	\preceq_{\mathcal{K}} \delta \cdot e^{(6g-6)R} + e^{(6g-6 - \kappa_3) R}.
	\]
\end{theorem}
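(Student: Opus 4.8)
The plan is to reduce this measure estimate to a lattice point count in a thin sector of $\mathcal{T}_g$, and then to feed that count into the effective counting machinery of \cite{Ara20b}, which rests in turn on the metric hyperbolicity of the Teichmüller geodesic flow established in \cite{ABEM12}. Write $A := B_R(X) \cap \mathrm{Sect}_{V_\delta}(X)$. Since $\mathbf{m}$ is $\mcg$-invariant and $\mcg$ acts by $d_\mathcal{T}$-isometries, $\mathbf{m}(\mathrm{Nbhd}_r(\mc.\mathcal{K})) = \mathbf{m}(\mathrm{Nbhd}_r(\mathcal{K})) \preceq_{\mathcal{K}} 1$ for every $\mc \in \mcg$ and every $0 < r < r_0$, where we may take $r_0 \leq 1$. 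Covering $\mcg\cdot\mathcal{K}$ by its translates $\mc.\mathcal{K}$ and fixing a basepoint $Y_0 \in \mathcal{K}$,
\[
\mathbf{m}(\mathrm{Nbhd}_r(A \cap \mcg\cdot\mathcal{K})) \preceq_{\mathcal{K}} |\{\mc \in \mcg \colon \mc.Y_0 \in \mathrm{Nbhd}_{r'}(A)\}|, \qquad r' := r + \mathrm{diam}_\mathcal{T}(\mathcal{K}).
\]
One then absorbs the $r'$-fuzz: mapping classes with $d_\mathcal{T}(X,\mc.Y_0)$ bounded in terms of $\mathcal{K}$ contribute $O_{\mathcal{K}}(1)$ by proper discontinuity, while for the rest the continuity of the initial-direction map $(X,Y) \mapsto q_s(X,Y)$ off the diagonal, together with the continuity of $\ell_{\min}$ on $\qum$, shows that a point of $\mathrm{Nbhd}_{r'}(\mathrm{Sect}_{V_\delta}(X))$ lying outside a bounded ball about $X$ belongs to $\mathrm{Sect}_{V_{C\delta}}(X)$ for a suitable $C = C(\mathcal{K})$. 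Therefore
\[
\mathbf{m}(\mathrm{Nbhd}_r(A \cap \mcg\cdot\mathcal{K})) \preceq_{\mathcal{K}} 1 + |\{\mc \in \mcg \colon d_\mathcal{T}(X,\mc.Y_0) \leq R + r',\ q_s(X,\mc.Y_0) \in V_{C\delta}\}|.
\]

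The second step is to bound this last count by $\delta\,e^{(6g-6)R} + e^{(6g-6-\kappa_3)R}$, all $\mathcal{K}$-dependent quantities such as $r'$ and $C$ being absorbed into the implicit constant. Here one invokes the effective equidistribution of $\mcg$-orbits towards the Masur--Veech measure along the Teichmüller geodesic flow, in the power-saving form of \cite{Ara20b}: the number of $\mc$ with $d_\mathcal{T}(X,\mc.Y_0) \leq R$ whose initial direction $q_s(X,\mc.Y_0)$ lies in a measurable subset $W \subseteq S(X)$ is $\preceq \mathbf{m}_X(W)\cdot e^{(6g-6)R} + e^{(6g-6-\kappa_3)R}$, where $\mathbf{m}_X$ denotes the conditional Masur--Veech measure on the fibre $S(X)$. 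Applying this with $W = V_{C\delta}$ and using that $\mathbf{m}_X(V_{C\delta}) \preceq_{\mathcal{K}} \delta$ --- which for $\delta \geq 1$ is immediate from $\mathbf{m}_X(V_{C\delta}) \leq \mathbf{m}_X(S(X)) \preceq_g 1 \leq \delta$, and for $\delta < 1$ follows since the unit area quadratic differentials on $X$ with a saddle connection shorter than $C\delta$ form a set of conditional measure $\preceq (C\delta)^2 \preceq_{\mathcal{K}} \delta$, by a direct Siegel--Veech type covering of the relevant directions --- yields the claimed estimate.

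The main obstacle is this last count: extracting the true exponential gain $e^{(6g-6-\kappa_3)R}$ beyond the leading term $\delta\,e^{(6g-6)R}$, rather than a mere $o(e^{(6g-6)R})$ error, requires the quantitative mixing of the Teichmüller geodesic flow together with the contraction, flow-box, and shadowing estimates of \cite{ABEM12}, assembled as in \cite{Ara20b}; this is the analytic core of the statement, and it is also the origin of the smallness condition $r < r_0$. A secondary technical point is the off-diagonal continuity input used to fatten the thin sector when absorbing the $r'$-neighbourhood.
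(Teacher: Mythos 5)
Your proposal has two genuine gaps, and they sit exactly where the real content of this theorem lies. First, the counting input you invoke — that the number of $\mc$ with $d_\mathcal{T}(X,\mc.Y_0)\leq R$ and $q_s(X,\mc.Y_0)\in W$ is $\preceq \mathbf{m}_X(W)\,e^{(6g-6)R}+e^{(6g-6-\kappa_3)R}$ for an \emph{arbitrary} measurable $W\subseteq S(X)$, with error uniform in $W$ — cannot hold in that generality: taking $W$ to be the countable (hence conditional-measure-zero) set of directions $\{q_s(X,\mc.Y_0)\colon \mc\in\mcg\}$ itself makes the count of order $e^{(6g-6)R}$ while $\mathbf{m}_X(W)=0$. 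No such statement is available in \cite{Ara20b}; what that paper proves (restated here as Theorem \ref{theo:sector_general}) is a \emph{measure} bound whose leading coefficient is $\overline{\nu}(\Re(V(Ce^{-\kappa R})))$, the Thurston measure of the vertical foliations of an $e^{-\kappa R}$-thickening of $V$ along strongly unstable leaves in the modified Hodge metric. Showing that this thickened coefficient is still $\preceq\delta$ — via the Euclidean/Hodge comparison of \cite{ABEM12}, the Lipschitz control of $\ell_{\min}$ along short Euclidean paths (Proposition \ref{prop:lmin_euclidean}), and the Thurston-measure estimate for the thin locus through Delaunay triangulations and dual train tracks — is precisely the work your proposal leaves out; note also that the logical order in this paper and in \cite{Ara20b} is the reverse of yours: the counting statement in thin sectors (Theorem \ref{theo:thin_count_1}) is \emph{deduced from} the measure estimate, not the other way around.

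Second, your fattening step fails: $r'\asymp r+\mathrm{diam}_\mathcal{T}(\mathcal{K})$ is of unit size, and continuity (even uniform continuity on compacta) of $(X,Y)\mapsto q_s(X,Y)$ and of $\ell_{\min}$ only yields an \emph{additive} enlargement $\ell_{\min}(q_s(X,Y'))<\delta+\epsilon(r')$ with $\epsilon(r')$ a constant independent of $\delta$; this is useless for small $\delta$, and the multiplicative containment $\mathrm{Nbhd}_{r'}(\mathrm{Sect}_{V_\delta}(X))\smallsetminus B_{C}(X)\subseteq \mathrm{Sect}_{V_{C\delta}}(X)$ you assert is not true in general. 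Converting a bounded fuzz at the far endpoint into a controlled perturbation of the initial direction is exactly where the contraction and hyperbolicity estimates of \cite{ABEM12} enter (this is also the origin of the $e^{-\kappa R}$ scale and of the smallness condition $r<r_0$), and the correct outcome is an additive but exponentially small enlargement of the form $\delta\mapsto \delta+Ce^{-\kappa R}$ after discarding a set contributing only the power-saving term, which is what produces the two-term bound $\delta\cdot e^{(6g-6)R}+e^{(6g-6-\kappa_3)R}$. Your Siegel--Veech-type bound $\mathbf{m}_X(V_{C\delta})\preceq\delta$ is fine in spirit, but in the argument that actually closes the proof the relevant quantity is the Thurston measure $\overline{\nu}(\Re(\cdot))$ of the (thickened) thin locus, not the conditional Masur--Veech measure, and it is estimated by the train-track argument of Proposition \ref{prop:meas_bd} rather than by a covering of short directions.
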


\subsection*{Counting in thin sectors of Teichmüller space.} The following lemma will allow us to deduce an estimate for countings of mapping class group orbits in thin sectors of Teichmüller space from Theorem \ref{theo:thin_sector_meas}. It is a direct consequence of the proper discontinuity of the mapping class group action on Teichmüller space and of the invariance of the Teichmüller metric with respect to this action.

\begin{lemma}
	\label{lem:embedded_balls}
	Let $\mathcal{K} \subseteq \mathcal{T}_g$ be a compact subset. Then, for every $Y \in \mcg \cdot \mathcal{K}$ and every $0 < r < 1$,
	\[
	|\{\mc \in \mcg \ | \ B_{r}(Y) \cap B_{r}(\mc.Y) \neq \emptyset\}| \preceq_{\mathcal{K}} 1.
	\]
\end{lemma}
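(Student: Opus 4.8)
The plan is to exploit two standard features of the action of $\mcg$ on $\tt$: it is properly discontinuous, and it preserves the Teichmüller metric. Fix a compact set $\mathcal{K} \subseteq \mathcal{T}_g$ and enlarge it slightly; since $Y \in \mcg \cdot \mathcal{K}$, we may write $Y = \mathbf{h}.Z$ for some $Z \in \mathcal{K}$ and $\mathbf{h} \in \mcg$. Because $d_\mathcal{T}$ is $\mcg$-invariant, translating by $\mathbf{h}^{-1}$ sends the collection $\{\mc \in \mcg \mid B_r(Y) \cap B_r(\mc.Y) \neq \emptyset\}$ bijectively onto $\{\mc' \in \mcg \mid B_r(Z) \cap B_r(\mc'.Z) \neq \emptyset\}$ (with $\mc' = \mathbf{h}^{-1}\mc\mathbf{h}$), so it suffices to bound the latter with $Z \in \mathcal{K}$. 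Thus the problem reduces to a uniform statement over base points in the fixed compact set $\mathcal{K}$.

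Next I would observe that if $B_r(Z) \cap B_r(\mc'.Z) \neq \emptyset$ with $0 < r < 1$, then $d_\mathcal{T}(Z, \mc'.Z) \leq 2r < 2$, so $\mc'.Z \in B_2(Z) \subseteq \mathrm{Nbhd}_2(\mathcal{K})$, which is again a compact subset of $\mathcal{T}_g$ — call it $\mathcal{K}'$, depending only on $\mathcal{K}$. Now invoke proper discontinuity: the set $\{\mc' \in \mcg \mid \mc'.\mathcal{K}' \cap \mathcal{K}' \neq \emptyset\}$ is finite, and its cardinality is a function of $\mathcal{K}'$ alone, hence of $\mathcal{K}$ alone. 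Since any $\mc'$ with $\mc'.Z \in \mathcal{K}'$ and $Z \in \mathcal{K} \subseteq \mathcal{K}'$ lies in this finite set, we get $|\{\mc' \in \mcg \mid B_r(Z) \cap B_r(\mc'.Z) \neq \emptyset\}| \preceq_{\mathcal{K}} 1$. Combining with the reduction above gives the claimed bound. I would phrase the final constant as depending on $\mathcal{K}$ via $\mathcal{K}' = \mathrm{Nbhd}_2(\mathcal{K})$.

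There is no real obstacle here; the only point requiring a line of care is the reduction step, where one must note that conjugation $\mc \mapsto \mathbf{h}^{-1}\mc\mathbf{h}$ is a bijection of $\mcg$ and that $B_r(\mathbf{h}.Z) = \mathbf{h}.B_r(Z)$ by isometry, so $B_r(Y) \cap B_r(\mc.Y) \neq \emptyset$ is equivalent to $B_r(Z) \cap B_r(\mathbf{h}^{-1}\mc\mathbf{h}.Z) \neq \emptyset$. Everything else is an immediate application of the definition of proper discontinuity (a group acting properly discontinuously on a space has, for any compact $\mathcal{K}'$, only finitely many elements $\mathbf{g}$ with $\mathbf{g}.\mathcal{K}' \cap \mathcal{K}' \neq \emptyset$). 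I would keep the write-up to a short paragraph, since the lemma is elementary and used only as a packaging device for the counting arguments that follow.
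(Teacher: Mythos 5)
Your argument is correct and matches the paper, which simply notes the lemma is a direct consequence of the proper discontinuity of the $\mcg$-action and the $\mcg$-invariance of the Teichmüller metric; your conjugation reduction to a basepoint $Z \in \mathcal{K}$ and the passage to the compact set $\mathrm{Nbhd}_2(\mathcal{K})$ is exactly the intended elaboration of that remark.
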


Let $X,Y \in \mathcal{T}_g$ and $V \subseteq S(X)$. For every $R > 0$ consider the set of mapping classes
\[
F(X,Y,R,V) := \{\mc \in \mcg \ | \ \mc.Y \in B_R(X) \cap \mathrm{Sect}_V(X)\}.
\]
Directly from Theorem \ref{theo:thin_sector_meas}, Lemma \ref{lem:embedded_balls}, and the mapping class group invariance of the Teichmüller metric and the measure $\mathbf{m}$, we deduce the following result. 

\begin{theorem}
	\label{theo:thin_count_1}
	There exists a constant $\kappa_3 = \kappa_3(g) > 0$ such that the following holds. Let $\mathcal{K} \subseteq \mathcal{T}_g$ be a compact subset, $X,Y \in \mathcal{K}$, and $\delta > 0$. Denote $V_\delta := S(X) \setminus p^{-1}(K_\delta)$. Then, for every $R > 0$,
	\[
	|F(X,Y,R,V_\delta)| \preceq_\mathcal{K} \delta \cdot e^{(6g-6)R} + e^{(6g-6 - \kappa_3) R}.
	\]
\end{theorem}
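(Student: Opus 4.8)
The plan is to deduce Theorem \ref{theo:thin_count_1} from Theorem \ref{theo:thin_sector_meas} by the standard lattice-point-counting trick: thicken each orbit point to a ball of fixed small radius, observe these balls have bounded overlap, and convert the measure estimate into a counting estimate. First I would fix the constants $r_0 = r_0(g) > 0$ and $\kappa_3 = \kappa_3(g) > 0$ provided by Theorem \ref{theo:thin_sector_meas}, and choose once and for all a radius $r$ with $0 < r < \min\{r_0, 1\}$; this $r$ depends only on $g$, so it can be absorbed into the implied constants. With $\mathcal{K} \subseteq \mathcal{T}_g$ compact and $X, Y \in \mathcal{K}$ fixed, for each $\mc \in F(X,Y,R,V_\delta)$ the point $\mc.Y$ lies in $B_R(X) \cap \mathrm{Sect}_{V_\delta}(X) \cap \mcg \cdot \mathcal{K}$, hence the ball $B_r(\mc.Y)$ is contained in $\mathrm{Nbhd}_r(B_R(X) \cap \mathrm{Sect}_{V_\delta}(X) \cap \mcg \cdot \mathcal{K})$.

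Next I would bound the multiplicity of the cover $\{B_r(\mc.Y)\}_{\mc \in F(X,Y,R,V_\delta)}$. If a point $Z$ lies in $B_r(\mc_1.Y) \cap B_r(\mc_2.Y)$, then $B_r(\mc_1.Y) \cap B_r(\mc_2.Y) \neq \emptyset$, so $\mc_2 \mc_1^{-1}$ (equivalently, after applying $\mc_1^{-1}$, which preserves the Teichmüller metric) lies in the set counted by Lemma \ref{lem:embedded_balls} applied with basepoint $\mc_1.Y \in \mcg \cdot \mathcal{K}$; that lemma gives a bound $\preceq_{\mathcal{K}} 1$ on the number of such mapping classes. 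Hence each point of $\mathcal{T}_g$ lies in at most $C(\mathcal{K})$ of the balls $B_r(\mc.Y)$, so
\[
C(\mathcal{K})^{-1} \sum_{\mc \in F(X,Y,R,V_\delta)} \m(B_r(\mc.Y)) \leq \m\big(\mathrm{Nbhd}_r(B_R(X) \cap \mathrm{Sect}_{V_\delta}(X) \cap \mcg \cdot \mathcal{K})\big).
\]
Since $\m$ is $\mcg$-invariant and $\mc.Y \in \mcg \cdot \mathcal{K}$ with $Y$ ranging over the compact set $\mathcal{K}$, the masses $\m(B_r(\mc.Y)) = \m(B_r(Y))$ are bounded below by a positive constant depending only on $\mathcal{K}$ (and $r$, hence on $g$). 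Therefore $|F(X,Y,R,V_\delta)| \preceq_{\mathcal{K}} \m(\mathrm{Nbhd}_r(\cdots))$, and plugging in the bound from Theorem \ref{theo:thin_sector_meas} yields
\[
|F(X,Y,R,V_\delta)| \preceq_{\mathcal{K}} \delta \cdot e^{(6g-6)R} + e^{(6g-6-\kappa_3)R},
\]
as claimed.

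The only mild subtlety — and the step I would be most careful about — is the uniform lower bound on $\m(B_r(Y))$ as $Y$ varies over $\mathcal{K}$: this uses that $\m$ is a smooth (hence locally positive) measure and that $\mathcal{K}$ is compact, together with the $\mcg$-invariance of $\m$ to handle the fact that the actual centers $\mc.Y$ leave $\mathcal{K}$. One should also make sure the radius $r$ is small enough that $r < r_0$ (for Theorem \ref{theo:thin_sector_meas}) and $r < 1$ (for Lemma \ref{lem:embedded_balls}) simultaneously, which is why the single choice $r < \min\{r_0,1\}$ is made at the outset; since $r_0$ depends only on $g$, this introduces no new parameter dependence. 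Everything else is a routine packing argument.
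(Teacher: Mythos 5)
Your argument is correct and is exactly the deduction the paper intends: the paper states Theorem \ref{theo:thin_count_1} follows directly from Theorem \ref{theo:thin_sector_meas}, Lemma \ref{lem:embedded_balls}, and the $\mcg$-invariance of $d_\mathcal{T}$ and $\m$, and your packing argument (balls of radius $r<\min\{r_0,1\}$, bounded overlap via Lemma \ref{lem:embedded_balls}, uniform lower mass bound from invariance and compactness) is the standard way to carry that out.
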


\subsection*{Sparse subsets of thin mapping classes.} Let $X,Y \in \mathcal{T}_g$, $C > 0$, and $\kappa > 0$. Recall that a subset of mapping classes $M \subseteq \mcg$ is said to be $(X,Y,C,\kappa)$-sparse if
\[
|\{\mc \in M \ | \ d_\mathcal{T}(X,\mc.Y) \leq R \}| \leq C \cdot e^{(6g-6-\kappa)R}.
\]
The following consequence of  Theorem \ref{theo:thin_count_1} will play a crucial role in our proof of Theorem \ref{theo:main}. 

\begin{theorem}
	\label{theo:sparse_1}
	There exists a constant $\kappa_3 = \kappa_3(g) > 0$ such that for every compact subset $\mathcal{K} \subseteq \mathcal{T}_g$ there exists a constant $C  = C(\mathcal{K}) > 0$ with the following property. For every $X,Y \in \mathcal{K}$ and every $0 < \kappa < \kappa_3$, there exists an $(X,Y,C,\kappa)$-sparse subset of mapping classes $M = M(X,Y,\kappa) \subseteq \mcg$ such that for every $\mc \in \mcg \setminus M$, if $r := d_\mathcal{T}(X,\mc.Y)$ and $q_s := q_s(X,\mc.Y)$, then
	\[
	\ell_{\min}(q_s)  \geq e^{-\kappa r}.
	\]
\end{theorem}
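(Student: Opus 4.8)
The plan is to deduce this from the counting estimate in thin sectors, Theorem \ref{theo:thin_count_1}, applied dyadically in the radius and along a geometric sequence of threshold parameters $\delta$. Fix a compact $\mathcal{K} \subseteq \mathcal{T}_g$ and $X,Y \in \mathcal{K}$. The key observation is that the condition $\ell_{\min}(q_s(X,\mc.Y)) < e^{-\kappa r}$, where $r := d_\mathcal{T}(X,\mc.Y)$, says precisely that $\mc.Y$ lies in the sector $\mathrm{Sect}_{V_\delta}(X)$ with $\delta = e^{-\kappa r}$, that is, $q_s(X,\mc.Y) \in S(X) \setminus p^{-1}(K_{e^{-\kappa r}})$. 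So the bad set $M$ we must control is
\[
M := \{\mc \in \mcg \colon \mc.Y \neq X, \ \ell_{\min}(q_s(X,\mc.Y)) < e^{-\kappa d_\mathcal{T}(X,\mc.Y)}\},
\]
together, if one wishes, with the finitely many $\mc$ fixing $X$ up to the point $Y$.

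First I would slice $M$ by dyadic annuli: for $k \geq 0$ let $M_k := \{\mc \in M \colon 2^{k-1} < d_\mathcal{T}(X,\mc.Y) \leq 2^k\}$ (with $M_0$ absorbing the bounded part). For $\mc \in M_k$ we have $r \leq 2^k$, hence $\ell_{\min}(q_s) < e^{-\kappa r}$, and since $\ell_{\min}$ is monotone one gets (after a harmless enlargement) $\mc.Y \in B_{2^k}(X) \cap \mathrm{Sect}_{V_{\delta_k}}(X)$ with $\delta_k := e^{-\kappa 2^{k-1}}$. Thus $M_k \subseteq F(X,Y,2^k,V_{\delta_k})$, and Theorem \ref{theo:thin_count_1} gives
\[
|M_k| \preceq_\mathcal{K} \delta_k \cdot e^{(6g-6)2^k} + e^{(6g-6-\kappa_3)2^k} = e^{(6g-6)2^k - \kappa 2^{k-1}} + e^{(6g-6-\kappa_3)2^k}.
\]
(Here I am slightly cheating on the exponent; the clean way is to run the argument with the single radius $R$ and the single parameter $\delta = e^{-\kappa R'}$ over a net of values $R' = R, R/2^j, \dots$, which is the standard trick for converting a ``$\delta \cdot e^{(6g-6)R} + e^{(6g-6-\kappa_3)R}$'' estimate into a pure power saving once $\delta$ itself decays exponentially in $R$.) The point is that for $0 < \kappa < \kappa_3$ each term is bounded by $e^{(6g-6 - \kappa')2^k}$ for some $\kappa' = \kappa'(\kappa) > 0$ (take $\kappa' := \min\{\kappa/2, \kappa_3\}$, say), so summing the geometric-type series over the annuli meeting $[0,R]$ yields
\[
|\{\mc \in M \colon d_\mathcal{T}(X,\mc.Y) \leq R\}| \preceq_\mathcal{K} e^{(6g-6-\kappa')R},
\]
which is exactly the $(X,Y,C,\kappa')$-sparseness assertion with $C = C(\mathcal{K})$ from Theorem \ref{theo:thin_count_1} and a uniform constant. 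Relabeling $\kappa'$ as $\kappa$ (and shrinking the allowed range of $\kappa$ accordingly, which is permitted since we only claim existence of $\kappa_3$ and an exponent below it) completes the proof.

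The main obstacle — and the only genuinely non-formal point — is making the interplay between the annulus radius and the parameter $\delta$ give a clean power saving rather than an expression of the form $\delta \cdot e^{(6g-6)R}$ that is not itself a power saving when $\delta$ is merely small. The resolution is precisely that we are allowed to let $\delta$ \emph{depend} on $R$: choosing $\delta = e^{-\kappa R}$ makes the first term $e^{(6g-6-\kappa)R}$, which beats $e^{(6g-6)R}$ by a power. One must then be careful that the monotone behaviour of $\ell_{\min}$ and the nesting of sectors/balls let us compare $\mc$ at its true distance $r$ with the sector $V_{e^{-\kappa r}}$ at the slightly larger radius $R$; a short argument using $\ell_{\min}(q_s) < e^{-\kappa r} \leq e^{-\kappa \cdot (\text{lower end of annulus})}$ handles this. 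Everything else — the dyadic decomposition, the geometric summation, the absorption of the finite set of mapping classes moving $Y$ near $X$ — is routine.
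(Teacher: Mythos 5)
Your overall strategy is the same as the paper's: slice the bad set $\{\mc \colon \ell_{\min}(q_s(X,\mc.Y)) < e^{-\kappa r}\}$ by distance, apply Theorem \ref{theo:thin_count_1} on each slice with a radius-dependent parameter $\delta = e^{-\kappa \cdot(\text{radius})}$, and sum. However, the decomposition you actually write down has a genuine gap. With bottom-up dyadic annuli $2^{k-1} < r \leq 2^k$ and $\delta_k = e^{-\kappa 2^{k-1}}$, the bound you get for the annulus is of size $e^{(6g-6-\kappa/2)2^k}$, and the annulus containing a given radius $R$ has top $2^k$ that can be nearly $2R$. Since the counting function grows \emph{exponentially} in the radius (unlike the polynomial settings where dyadic slicing is harmless), this overshoot squares the main term: the bound for that single annulus is roughly $e^{(6g-6-\kappa/2)\cdot 2R} = e^{(12g-12-\kappa)R}$, which is worse than the trivial count $e^{(6g-6)R}$ and certainly not $\preceq e^{(6g-6-\kappa')R}$. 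So the displayed summation does not yield sparseness; your parenthetical admission of ``cheating on the exponent'' is exactly where the argument breaks. A secondary point: even ignoring the overshoot, using annuli of width comparable to their distance from the origin only saves $\kappa/2$ rather than $\kappa$, whereas the theorem couples the same $\kappa$ in the threshold $e^{-\kappa r}$ and in the sparseness exponent, so you would at best prove a weakened version (adequate for the later applications, but not the statement as written).

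The fix, which is what the paper does, is to use annuli of \emph{unit} width: set $\delta_n := e^{-\kappa n}$, $V_n := S(X)\setminus p^{-1}(K_{\delta_n})$, and $F_n := \{\mc \colon d_\mathcal{T}(X,\mc.Y) \in [n,n+1), \ \mc.Y \in \mathrm{Sect}_{V_n}(X)\}$; then Theorem \ref{theo:thin_count_1} at radius $n+1$ gives $|F_n| \preceq_{\mathcal{K}} \delta_n e^{(6g-6)(n+1)} + e^{(6g-6-\kappa_3)(n+1)} \preceq_g e^{(6g-6-\kappa)n}$, losing only a bounded multiplicative factor, and for $R>0$ the sum $\sum_{n \leq R} e^{(6g-6-\kappa)n}$ is dominated by its last term, giving $(X,Y,C,\kappa)$-sparseness of $M := \bigcup_n F_n$ with the full exponent $\kappa$; for $\mc \notin M$ with $r \in [n,n+1)$ one reads off $\ell_{\min}(q_s) \geq \delta_n \geq e^{-\kappa r}$. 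Your sketched ``clean way'' (a net of radii anchored at $R$ from the top) can also be made to work, but as written it is only a gesture; to have a complete proof you need to carry out one of these decompositions in which each slice is estimated at a radius comparable to, not double, the distances it contains.
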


\begin{proof}
	Let $\kappa_3 := \kappa_3(g) > 0$ be as in Theorem \ref{theo:thin_count_1} and $\mathcal{K} \subseteq \mathcal{T}_g$ be a compact subset. Fix $X,Y \in \mathcal{K}$ and $0 < \kappa <\kappa_3$. For every $n \in \mathbf{N}$ denote
	\begin{gather*}
	\delta_n := e^{-\kappa n},\\
	V_n := S(X) \setminus p^{-1}(K_{\delta_n}) ,\\
	F_n:= \{\mc \in \mcg \ | \ d_\mathcal{T}(X,\mc.Y) \in [n,n+1), \ \mc.Y \in \mathrm{Sect}_{V_n}(X)\}.
	\end{gather*}
	Consider the subset of mapping classes $M = M(X,Y,\kappa) \subseteq \mcg$ given by
	\[
	M := \bigcup_{n \in \mathbf{N}} F_n.
	\]
	
	We claim $M \subseteq \mcg $ is $(X,Y,C,\kappa)$-sparse for some constant $C  = C(\mathcal{K}) > 0$. Given $R > 0$ denote
	\[
	M_R := \{\mc \in M \ | \ d_\mathcal{T}(X,\mc.Y) \leq R\}.
	\]
	To prove the claim we need to show that, for every $R > 0$,
	\[
	|M_R| \preceq_{\mathcal{K}}  e^{(6g-6-\kappa)R}.
	\]
	Notice that, for every $R > 0$,
	\[
	M_R \subseteq \bigcup_{n=0}^{\lfloor R \rfloor} F_n.
	\]
	Recall $0 < \kappa <\kappa_3$. By Theorem \ref{theo:thin_count_1}, for every $n \in \mathbf{N}$,
	\[
	|F_n| \leq |F(X,Y,n+1,V_n)|  \preceq_{\mathcal{K}} \delta_n \cdot e^{(6g-6)(n+1)} + e^{(6g-6 - \kappa_3)(n+1)} \preceq_g e^{(6g-6-\kappa)n}.
	\]
	It follows that, for every $R > 0$,
	\[
	|M_R| \leq \sum_{n =0}^{\lfloor R \rfloor} |F_n| \preceq_\mathcal{K} \sum_{n =0}^{\lfloor R \rfloor}  e^{(6g-6-\kappa)n} \preceq_g e^{(6g-6-\kappa)R}.
	\]
	
	Now let $\mc \in \mcg \setminus M$. Denote $r := d_\mathcal{T}(X,\mc.Y)$ and $q_s := q_s(X,\mc.Y)$. Let $n \in \mathbf{N}$ be the unique non-negative integer such that $r \in [n,n+1)$. By the definition of $M$, we have $\mc \notin F_n$. Thus, by the definition of $F_n$, we must have $\ell_{\min}(q_s) \geq \delta_n = e^{-\kappa n} \geq e^{-\kappa r}$.
\end{proof}

\subsection*{Horizontally thin sectors of Teichmüller space.} To prove Theorem \ref{theo:main} we will need a more refined version of Theorem \ref{theo:thin_sector_meas}. To state this version, we first introduce some terminology.

Let $\beta$ be a closed curve on $S_g$. Consider the function $h_\beta \colon \qut \to \mathbf{R}$ defined as follows. Given $q \in \qut$, choose a flat geodesic representative of $\beta$ on $q$. Let $\{\beta_j\}_{j=1}^m$ be the sequence of saddle connections of this representative if it is singular, or a singleton consisting of the representative itself if it is a cylinder curve. For every $j \in \{1,\dots,m\}$ denote by $i(\beta_j,\Re(q))$ the total transverse measure of $\beta_j$ with respect to the singular measured foliation $\Re(q)$. Then,
\begin{equation}
\label{eq:hbeta}
h_\beta(q) := \min_{j=1,\dots,m} \frac{i(\beta_j,\Re(q))}{\ell_{\beta_j}(q)}.
\end{equation}
By Proposition \ref{prop:flat_rep}, this value is independent of the choice of flat geodesic representative of $\beta$ on $q$. Furthermore, the function $h_\beta \colon \qut \to \mathbf{R}$ is continuous on strata of $\qut$.

For every $\delta > 0$ consider the subset $H_{\beta,\delta} \subseteq \qut$ given by
\begin{equation}
\label{eq:Hbeta}
H_{\beta,\delta} := \{ q \in \qut \ | \ p(q) \in K_\delta, \ h_\beta(q) \geq \delta \}.
\end{equation}
The following refinement of Theorem \ref{theo:thin_sector_meas} can be proved using the methods introduced in the proof of \cite[Theorem 7.16]{Ara20b}. We refer the reader to Appendix A for a detailed proof.

\begin{theorem}
	\label{theo:slope_sector_meas}
	There exist constants $r_0 = r_0(g) > 0$ and $\kappa_4 = \kappa_4(g) > 0$ such that the following holds. Let $\mathcal{K} \subseteq \mathcal{T}_g$ be a compact subset, $X \in \mathcal{K}$, $\beta$ be a closed curve on $S_g$, and $\delta \in (0,1)$. Denote $V_{\beta,\delta} := S(X) \setminus H_{\beta,\delta}$. Then, for every $0 < r < r_0$ and every $R > 0$,
	\[
	\mathbf{m}(\mathrm{Nbhd}_{r}(B_R(X) \cap \mathrm{Sect}_{V_{\beta,\delta}}(X) \cap \mathrm{Mod}_g \cdot \mathcal{K}))
	\preceq_{\mathcal{K},\beta} \delta \cdot e^{(6g-6)R} + \delta^{-1} \cdot e^{(6g-6 - \kappa_4) R}.
	\]
\end{theorem}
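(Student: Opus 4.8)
\textbf{Proof plan for Theorem \ref{theo:slope_sector_meas}.} The plan is to mimic the proof of \cite[Theorem 7.16]{Ara20b} (which is Theorem \ref{theo:thin_sector_meas} above), introducing the extra slope condition $h_\beta(q) \geq \delta$ into the bookkeeping. The starting point is the observation that the problematic sector decomposes: $V_{\beta,\delta} = S(X) \setminus H_{\beta,\delta} = \big(S(X) \setminus p^{-1}(K_\delta)\big) \cup \{q \in S(X) \colon p(q) \in K_\delta, \ h_\beta(q) < \delta\}$. The first piece is exactly the set $V_\delta$ handled by Theorem \ref{theo:thin_sector_meas}, so its contribution to $\mathbf{m}(\mathrm{Nbhd}_r(\cdots))$ is already $\preceq_{\mathcal K} \delta \cdot e^{(6g-6)R} + e^{(6g-6-\kappa_3)R}$. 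So the only new work is to bound the $\mathbf{m}$-measure of the $r$-neighborhood of $B_R(X) \cap \mathrm{Sect}_W(X) \cap \mathrm{Mod}_g\cdot\mathcal K$ where $W := \{q \in S(X) \colon p(q) \in K_\delta, \ h_\beta(q) < \delta\}$, i.e.\ directions that start in the thick part but along which $\beta$ has small horizontal slope.

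The key mechanism, as in the prequel, is to transport the measure of such a neighborhood into the sphere bundle and integrate along Teichmüller geodesic rays: one writes $\mathbf{m}(\mathrm{Nbhd}_r(B_R(X)\cap \mathrm{Sect}_W(X)\cap \mathrm{Mod}_g\cdot\mathcal K))$ as an integral over $W$ (with respect to a measure comparable to the cone measure on $S(X)$, times the radial Jacobian $e^{(6g-6)t}\,dt$ coming from the Hubbard--Masur/Margulis-type product structure used in \cite{ABEM12, Ara20b}), restricted to those directions and times for which the geodesic lands within distance $r$ of $\mathrm{Mod}_g\cdot\mathcal K$. Along such a ray, the relevant contribution is controlled by how much Masur--Veech measure sits near the thin part visited. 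The new ingredient is a measure estimate for the set $\{q \in \mathcal Q^1\mathcal T_g \colon h_\beta(q) < \delta\} \cap (\text{thick part})$: I would show that the cone measure on $S(X)$ of $W$ is $\preceq_{\mathcal K,\beta} \delta$, and more importantly that after flowing, the Masur--Veech measure of the region where a geodesic representative of $\beta$ still has horizontal slope $<\delta$ while staying in $K_\delta$ decays like $\delta^{\mathrm{const}}$ — since $\beta$ has bounded flat length in the thick part, the condition $h_\beta(q) < \delta$ confines the horizontal foliation direction to a thin cone around $\Re(q)$-vertical, a codimension-one condition whose neighborhood has measure $O(\delta)$. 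The slightly worse error $\delta^{-1} e^{(6g-6-\kappa_4)R}$ (as opposed to $e^{(6g-6-\kappa_3)R}$) arises because, to keep $\beta$ from becoming long, one must pay a factor $\ell_\beta(q)^{-1} \preceq \delta^{-1}$ somewhere in the geometric estimates relating flat length of $\beta$ to the distance-to-thin-part along the geodesic — this is the standard trade-off when inserting an auxiliary curve-dependent condition into the ABEM-type counting.

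Concretely, the steps I would carry out are: (1) split $V_{\beta,\delta}$ as above and dispose of the $V_\delta$ part via Theorem \ref{theo:thin_sector_meas}; (2) recall from \cite[\S7]{Ara20b} the integral-geometric formula expressing $\mathbf{m}(\mathrm{Nbhd}_r(B_R(X)\cap\mathrm{Sect}_\bullet(X)\cap\mathrm{Mod}_g\cdot\mathcal K))$ as $\preceq \int_0^R \big(\text{cone-measure of bad directions at radius }t\big)\,e^{(6g-6)t}\,dt$ plus an exponentially-small error coming from the mixing rate of the Teichmüller flow; (3) prove the measure bound $\mathrm{cone}_{S(X)}\big(\{q \colon h_\beta(a_{-t}q\text{-pullback stays in }K_\delta,\ h_\beta < \delta\}\big) \preceq_{\mathcal K,\beta}\delta$, using continuity of $h_\beta$ on strata and the compactness of $K_\delta$, together with the fact that $\{h_\beta < \delta\}$ is a thin neighborhood of the codimension-one locus $\{h_\beta = 0\}$ (where $\beta$ is freely homotopic to a union of horizontal saddle connections); (4) feed this into the integral, obtaining a main term $\preceq_{\mathcal K,\beta}\delta\cdot e^{(6g-6)R}$ and an error term $\preceq_{\mathcal K,\beta}\delta^{-1}e^{(6g-6-\kappa_4)R}$, tracking carefully where the $\delta^{-1}$ enters when bounding $\ell_\beta$ along excursions; (5) add the two contributions. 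The main obstacle I anticipate is step (3) together with the part of step (4) that controls excursions: one must verify that the only way a geodesic ray starting in $\mathrm{Sect}_W(X)$ can persistently violate the slope bound is correlated with a genuine geometric degeneration (a short curve or a long $\beta$), and quantify this so that the resulting measure estimate survives the $e^{(6g-6)t}$ weight — this is exactly the point where the argument of \cite[Theorem 7.16]{Ara20b} must be reexamined rather than quoted verbatim, and it is deferred to Appendix A.
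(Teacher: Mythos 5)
Your overall strategy (feed the bad sector into the sector--measure machinery of the prequel and prove an $O(\delta)$ measure bound on the bad directions) is the same as the paper's, but your plan omits the step that carries the real content, and it is exactly the step you defer. The tool actually available from the prequel is Theorem \ref{theo:sector_general} (\cite[Theorem 7.10]{Ara20b}), whose input is not the cone measure of the bad set $V$ itself but the Thurston measure $\overline{\nu}(\Re(V(Ce^{-\kappa R})))$ of a \emph{thickening} of $V$ of size $e^{-\kappa R}$ in the modified Hodge metric along strongly unstable leaves; there is no version with only "cone-measure of bad directions at radius $t$" as in your step (2). Consequently the new work is a stability statement for the defining condition of the sector: if $\ell_{\min}\geq\delta$ along a short path, then $h_\beta$ changes by at most $C\delta^{-1}$ times the Euclidean length of the path (Proposition \ref{prop:hbeta_euclidean}, proved by differentiating $i(\beta_j,\Re(q))/\ell_{\beta_j}(q)$ in period coordinates adapted to triangulations containing the saddle connections of $\beta$, with care when the geodesic representative changes combinatorially), together with Proposition \ref{prop:lmin_euclidean} to keep $\ell_{\min}$ bounded below along the perturbation; this yields $V_{\beta,\delta}(s)\subseteq \pi^{-1}(B_1(X))\setminus H_{\beta,\delta+C\delta^{-1}s}$ (Proposition \ref{prop:thick}). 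That Lipschitz constant $\preceq\delta^{-1}$ is the true source of the factor $\delta^{-1}$ in the error term, via $\overline{\nu}\bigl(\Re(V_{\beta,\delta}(Ce^{-\kappa R}))\bigr)\preceq_{\mathcal{K},\beta}\delta+\delta^{-1}e^{-\kappa R}$. Your proposed mechanism ("paying $\ell_\beta(q)^{-1}\preceq\delta^{-1}$ to keep $\beta$ from becoming long along excursions") is not how the estimate is obtained, and the point you flag as the main obstacle and postpone is precisely this missing perturbation lemma; without it the plan does not close.

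Second, your step (3) derives the $O(\delta)$ measure bound from a "thin neighborhood of a codimension-one locus" heuristic. A bound linear in $\delta$ does not follow from codimension alone; the paper gets it by showing (Lemma \ref{lem:beta_Delaunay}, Propositions \ref{prop:curve_triang} and \ref{lem:finite_tt}) that over the compact set one can choose, from a finite list, a maximal train track dual to a bounded triangulation containing the relevant saddle connection, so that $i(\beta_j,\Re(q))$ is literally one of the train-track coordinates; then $h_\beta(q)<\delta$ together with $\ell_{\beta_j}(q)\preceq_{\mathcal{K},\beta}1$ over $\pi^{-1}(\mathcal{K})$ confines one coordinate to an interval of length $\preceq_{\mathcal{K},\beta}\delta$, giving Proposition \ref{prop:meas_bd}. (With this argument your preliminary splitting into $V_\delta$ and $W$ is unnecessary, since the shortest saddle connection handles the thin case by the same device, but the splitting is harmless.) So the two substantive ingredients you would need to supply are the stability estimate for $h_\beta$ under Hodge/Euclidean perturbations in the $\ell_{\min}\geq\delta$ region and the train-track realization of $i(\beta_j,\Re(q))$ as a coordinate; as written, both are asserted or deferred rather than proved.
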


\subsection*{Counting in horizontally thin sectors of Teichmüller space.} Directly from Theorem \ref{theo:slope_sector_meas}, Lemma \ref{lem:embedded_balls}, and the mapping class group invariance of the Teichmüller metric and the measure $\mathbf{m}$, we deduce the following result. 

\begin{theorem}
	\label{theo:slope_count}
	There exists $\kappa_4 = \kappa_4(g) > 0$ such that the following holds. Let $\mathcal{K} \subseteq \mathcal{T}_g$ compact, $X,Y \in \mathcal{K}$, $\beta$ a closed curve on $S_g$, and $\delta \in (0,1)$. Denote $V_{\beta,\delta} := S(X) \setminus H_{\beta,\delta}$. Then, for $R > 0$,
	\[
	|F(X,Y,R,V_{\beta,\delta})| \preceq_\mathcal{K,\beta} \delta \cdot e^{(6g-6)R} + \delta^{-1} \cdot e^{(6g-6 - \kappa_4) R}.
	\]
\end{theorem}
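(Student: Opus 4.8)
The plan is to deduce Theorem \ref{theo:slope_count} from Theorem \ref{theo:slope_sector_meas} by the same packing argument used to pass from Theorem \ref{theo:thin_sector_meas} to Theorem \ref{theo:thin_count_1}. The point is that counting mapping classes $\mc$ with $\mc.Y \in B_R(X) \cap \mathrm{Sect}_{V_{\beta,\delta}}(X)$ is, up to bounded multiplicity, the same as measuring a small neighborhood of this set.

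\begin{proof}
	Let $\kappa_4 = \kappa_4(g) > 0$ be as in Theorem \ref{theo:slope_sector_meas} and let $r_0 = r_0(g) > 0$ also be as in that theorem. Fix a compact subset $\mathcal{K} \subseteq \mathcal{T}_g$, points $X,Y \in \mathcal{K}$, a closed curve $\beta$ on $S_g$, and $\delta \in (0,1)$. Denote $V_{\beta,\delta} := S(X) \setminus H_{\beta,\delta}$ and fix $0 < r < \min\{1,r_0\}$; we may choose $r = r(g)$ depending only on $g$. Let $R > 0$.

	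For every $\mc \in F(X,Y,R,V_{\beta,\delta})$ we have $\mc.Y \in B_R(X) \cap \mathrm{Sect}_{V_{\beta,\delta}}(X)$, and since $Y \in \mathcal{K}$ we also have $\mc.Y \in \mcg \cdot \mathcal{K}$. In particular the ball $B_r(\mc.Y)$ is contained in $\mathrm{Nbhd}_r(B_R(X) \cap \mathrm{Sect}_{V_{\beta,\delta}}(X) \cap \mcg \cdot \mathcal{K})$. By Lemma \ref{lem:embedded_balls}, the collection of balls $\{B_r(\mc.Y)\}_{\mc \in F(X,Y,R,V_{\beta,\delta})}$ has multiplicity $\preceq_{\mathcal{K}} 1$, since any point lies in at most $\preceq_{\mathcal{K}} 1$ of them. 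By the mapping class group invariance of $\mathbf{m}$, each ball $B_r(\mc.Y)$ has $\mathbf{m}$-measure equal to $\mathbf{m}(B_r(Y)) =: c > 0$, a positive constant depending only on $Y$ and $r$. Summing over $\mc \in F(X,Y,R,V_{\beta,\delta})$ and using the bounded multiplicity, we obtain
	\[
	|F(X,Y,R,V_{\beta,\delta})| \cdot \mathbf{m}(B_r(Y)) \preceq_{\mathcal{K}} \mathbf{m}\big(\mathrm{Nbhd}_{r}(B_R(X) \cap \mathrm{Sect}_{V_{\beta,\delta}}(X) \cap \mcg \cdot \mathcal{K})\big).
	\]
	Since $\mathbf{m}(B_r(Y)) \asymp_{\mathcal{K}} 1$ for $Y \in \mathcal{K}$ and $r = r(g)$ fixed, dividing through and applying Theorem \ref{theo:slope_sector_meas} yields
	\[
	|F(X,Y,R,V_{\beta,\delta})| \preceq_{\mathcal{K},\beta} \delta \cdot e^{(6g-6)R} + \delta^{-1} \cdot e^{(6g-6 - \kappa_4) R},
	\]
	as desired.
\end{proof}

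The only mild subtlety is making sure the implied constants depend only on the allowed parameters: the multiplicity bound from Lemma \ref{lem:embedded_balls} depends only on $\mathcal{K}$, the measure-theoretic input from Theorem \ref{theo:slope_sector_meas} depends only on $\mathcal{K}$ and $\beta$, and the normalizing factor $\mathbf{m}(B_r(Y))$ is bounded above and below in terms of $\mathcal{K}$ once $r$ is fixed in terms of $g$. There is no real obstacle here — this is a routine packing argument, entirely parallel to the derivation of Theorem \ref{theo:thin_count_1} from Theorem \ref{theo:thin_sector_meas}; the genuine content lies upstream in Theorem \ref{theo:slope_sector_meas}.
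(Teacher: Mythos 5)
Your proposal is correct and follows exactly the route the paper intends: the paper deduces Theorem \ref{theo:slope_count} ``directly'' from Theorem \ref{theo:slope_sector_meas}, Lemma \ref{lem:embedded_balls}, and the $\mcg$-invariance of $d_\mathcal{T}$ and $\mathbf{m}$, which is precisely your packing argument. The bookkeeping of constants (multiplicity depending only on $\mathcal{K}$, $\mathbf{m}(B_r(Y)) \asymp_{\mathcal{K}} 1$ for fixed $r = r(g)$) is handled correctly, so there is nothing to add.
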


\subsection*{Sparse subsets of horizontally thin mapping classes.} A slightly more refined application of the methods used in the proof of Theorem \ref{theo:sparse_1} yields the following consequence of  Theorem \ref{theo:slope_count}.

\begin{theorem}
	\label{theo:sparse_3}
	There exists a constant $\kappa_4 = \kappa_4(g) > 0$ such that for every compact subset $\mathcal{K} \subseteq \mathcal{T}_g$ and every closed curve $\beta$ on $S_g$, there exists a constant $C  = C(\mathcal{K},\beta) > 0$ with the following property. For every $X,Y \in \mathcal{K}$ and every $0 < \kappa < \kappa_4$, there exists an $(X,Y,C,\kappa)$-sparse subset $M = M(X,Y,\beta,\kappa) \subseteq \mcg$ such that for every $\mc \in \mcg \setminus M$, if $r := d_\mathcal{T}(X,\mc.Y)$ and $q_s := q_s(X,\mc.Y)$, then
	\[
	\ell_{\min}(q_s) \geq e^{-\kappa r}, \quad h_\beta(q_s) \geq e^{-\kappa r}.
	\]
\end{theorem}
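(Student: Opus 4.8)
The plan is to mimic the proof of Theorem \ref{theo:sparse_1} almost verbatim, replacing the single compact set $K_{\delta_n}$ by the horizontally thin sector complement and appealing to Theorem \ref{theo:slope_count} in place of Theorem \ref{theo:thin_count_1}. First I would fix $\kappa_4 = \kappa_4(g) > 0$ as in Theorem \ref{theo:slope_count}, fix a compact $\mathcal{K} \subseteq \mathcal{T}_g$ and a closed curve $\beta$ on $S_g$, and fix $X,Y \in \mathcal{K}$ and $0 < \kappa < \kappa_4$. For every $n \in \mathbf{N}$ I would set $\delta_n := e^{-\kappa n}$ (discarding the finitely many $n$ with $\delta_n \geq 1$, or equivalently only worrying about large $n$), let $V_{\beta,\delta_n} := S(X) \setminus H_{\beta,\delta_n}$ as in Theorem \ref{theo:slope_sector_meas}, and define
\[
F_n := \{\mc \in \mcg \ | \ d_\mathcal{T}(X,\mc.Y) \in [n,n+1), \ \mc.Y \in \mathrm{Sect}_{V_{\beta,\delta_n}}(X)\}, \qquad M := \bigcup_{n \in \mathbf{N}} F_n.
\]

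The sparsity estimate for $M$ would then follow the same summation as before. For $R > 0$ one has $M_R := \{\mc \in M \colon d_\mathcal{T}(X,\mc.Y) \leq R\} \subseteq \bigcup_{n=0}^{\lfloor R \rfloor} F_n$, and Theorem \ref{theo:slope_count} applied with $\delta = \delta_n = e^{-\kappa n}$ gives
\[
|F_n| \leq |F(X,Y,n+1,V_{\beta,\delta_n})| \preceq_{\mathcal{K},\beta} e^{-\kappa n} \cdot e^{(6g-6)(n+1)} + e^{\kappa n} \cdot e^{(6g-6-\kappa_4)(n+1)} \preceq_{g} e^{(6g-6-\kappa)n},
\]
where the last inequality uses $\kappa < \kappa_4$ so that $\kappa n - (\kappa_4 n) \leq -\kappa n$ absorbs the $e^{\kappa n}$ factor in the second term (after pulling out the constant $e^{6g-6}$ and $e^{-(6g-6-\kappa_4)}$). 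Summing the geometric series yields $|M_R| \leq \sum_{n=0}^{\lfloor R \rfloor} |F_n| \preceq_{\mathcal{K},\beta} e^{(6g-6-\kappa)R}$, so $M$ is $(X,Y,C,\kappa)$-sparse with $C = C(\mathcal{K},\beta)$ independent of $X,Y,\kappa$ (uniformity in $\kappa$ coming from the geometric series constant $e^{(6g-6-\kappa)}/(e^{(6g-6-\kappa)}-1)$ being bounded for $\kappa \in (0,\kappa_4)$).

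Finally, for $\mc \in \mcg \setminus M$, writing $r := d_\mathcal{T}(X,\mc.Y)$, $q_s := q_s(X,\mc.Y)$, and letting $n$ be the unique integer with $r \in [n,n+1)$, the fact that $\mc \notin F_n$ forces $q_s \notin V_{\beta,\delta_n}$, i.e. $q_s \in H_{\beta,\delta_n}$; by the definition \eqref{eq:Hbeta} of $H_{\beta,\delta_n}$ this means $p(q_s) \in K_{\delta_n}$ and $h_\beta(q_s) \geq \delta_n$, hence $\ell_{\min}(q_s) \geq \delta_n = e^{-\kappa n} \geq e^{-\kappa r}$ and $h_\beta(q_s) \geq e^{-\kappa r}$, as desired. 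The only mild subtlety, and the one point I would be careful about, is the bookkeeping that makes $C$ depend only on $\mathcal{K}$ and $\beta$ and not on $\kappa$; this is handled exactly as in Theorem \ref{theo:sparse_1} by noting the implied constants in Theorem \ref{theo:slope_count} depend only on $\mathcal{K}$ and $\beta$, and the remaining geometric-series constant is uniform over $\kappa \in (0,\kappa_4)$. There is no genuine obstacle here beyond faithfully reproducing the argument of Theorem \ref{theo:sparse_1} with the refined counting input.
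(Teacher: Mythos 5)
Your construction is exactly the paper's: the same $\delta_n = e^{-\kappa n}$, the same horizontally thin sectors $V_{\beta,\delta_n} = S(X)\setminus H_{\beta,\delta_n}$, the same dyadic-shell sets $F_n$, the same appeal to Theorem \ref{theo:slope_count}, and the same final deduction from $\mc \notin F_n$ that $q_s \in H_{\beta,\delta_n}$, giving both lower bounds. The one place you slip is the absorption of the second term in the counting estimate: you claim that $\kappa < \kappa_4$ gives $e^{\kappa n}\cdot e^{(6g-6-\kappa_4)n} \preceq e^{(6g-6-\kappa)n}$, i.e.\ $\kappa - \kappa_4 \leq -\kappa$, but that inequality requires $\kappa \leq \kappa_4/2$, not merely $\kappa < \kappa_4$. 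With your choice of $\kappa_4$ equal to the constant of Theorem \ref{theo:slope_count}, the bound $|F_n| \preceq e^{(6g-6-\kappa)n}$ fails for $\kappa \in (\kappa_4/2,\kappa_4)$; you would only get sparseness with the weaker exponent $6g-6-(\kappa_4-\kappa)$ in that range.

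This is exactly how the paper handles it: in its proof it fixes $0 < \kappa < \kappa_4/2$, which amounts to declaring the $\kappa_4$ of the theorem to be half the constant coming from Theorem \ref{theo:slope_count}. So your argument is correct after this harmless renormalization of the constant (restrict to $\kappa < \kappa_4/2$, or equivalently halve $\kappa_4$ in the statement), and the rest of your bookkeeping, including the uniformity of $C$ in $\kappa$ via the geometric series, is fine and matches the paper.
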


\begin{proof}
	Let $\kappa_4 := \kappa_4(g) > 0$ be as in Theorem \ref{theo:slope_count} and $\mathcal{K} \subseteq \mathcal{T}_g$ be a compact subset. Fix $X,Y \in \mathcal{K}$ and $0 < \kappa <\kappa_4/2$. For every $n \in \mathbf{N}$ denote
	\begin{gather*}
	\delta_n := e^{-\kappa n},\\
	V_n := S(X) \setminus H_{\beta,\delta_n} ,\\
	F_n:= \{\mc \in \mcg \ | \ d_\mathcal{T}(X,\mc.Y) \in [n,n+1), \ \mc.Y \in \mathrm{Sect}_{V_n}(X)\}.
	\end{gather*}
	Consider the subset of mapping classes $M = M(X,Y,\beta,\kappa) \subseteq \mcg$ given by
	\[
	M := \bigcup_{n \in \mathbf{N}} F_n.
	\]
	
	We claim $M$ is $(X,Y,C,\kappa)$-sparse for some constant $C  = C(\mathcal{K},\beta) > 0$. Given $R > 0$ denote
	\[
	M_R := \{\mc \in M \ | \ d_\mathcal{T}(X,\mc.Y) \leq R\}.
	\]
	To prove the claim we need to show that, for every $R > 0$,
	\[
	|M_R| \preceq_{\mathcal{K},\beta}  e^{(6g-6-\kappa)R}.
	\]
	Notice that, for every $R > 0$,
	\[
	M_R \subseteq \bigcup_{n=0}^{\lfloor R \rfloor} F_n.
	\]
	Recall $0 < \kappa < \kappa_4/2$. By Theorem \ref{theo:thin_count_1}, for every $n \in \mathbf{N}$,
	\[
	|F_n| \leq |F(X,Y,n+1,V_n)|  \preceq_{\mathcal{K},\beta} \delta_n \cdot e^{(6g-6)(n+1)} + \delta_n^{-1} \cdot e^{(6g-6 - \kappa_4)(n+1)} \preceq_g e^{(6g-6-\kappa)n}.
	\]
	It follows that, for every $R > 0$,
	\[
	|M_R| \leq \sum_{n =0}^{\lfloor R \rfloor} |F_n| \preceq_\mathcal{K,\beta} \sum_{n =0}^{\lfloor R \rfloor}  e^{(6g-6-\kappa)n} \preceq_g e^{(6g-6-\kappa)R}.
	\]
	
	Now let $\mc \in \mcg \setminus M$. Denote $r := d_\mathcal{T}(X,\mc.Y)$ and $q_s := q_s(X,\mc.Y)$. Let $n \in \mathbf{N}$ be the unique non-negative integer such that $r \in [n,n+1)$. By the definition of $M$, we have $\mc \notin F_n$. Thus, by the definition of $F_n$, we must have $\ell_{\min}(q_s) \geq \delta_n = e^{-\kappa n} \geq e^{-\kappa r}$ and $h_\beta(q) \geq \delta_n = e^{-\kappa n} \geq e^{-\kappa r}$.
\end{proof}

\subsection*{Sparse subsets of vertically thin mapping classes.} Let $\beta$ be a closed curve on $S_g$. Let us recall the definition of the function $v_\beta \colon \qut \to \mathbf{R}$ introduced in (\ref{eq:vertical}). Given a marked quadratic differential $q \in \qut$, choose a flat geodesic representative of $\beta$ on $q$. Let $\{\beta_j\}_{j=1}^m$ be the sequence of saddle connections of this representative if it is singular, or a singleton consisting of the representative itself if it is a cylinder curve. For every $j \in \{1,\dots,m\}$ denote by $i(\beta_j,\Im(q))$ the total transverse measure of $\beta_j$ with respect to the singular measured foliation $\Im(q)$. Then,
\[
v_\beta(q) := \min_{j=1,\dots,m} \frac{i(\beta_j,\Im(q))}{\ell_{\beta_j}(q)}.
\]
By Proposition \ref{prop:flat_rep}, this value is independent of the choice of flat geodesic representative of $\beta$ on $q$. Furthermore, the function $v_\beta \colon \qut \to \mathbf{R}$ is continuous on strata of $\qut$.

Directly from Theorem \ref{theo:sparse_3} we deduce the following result. This result will play a crucial role in our proof of Theorem \ref{theo:main}. 

\begin{theorem}
	\label{theo:sparse_4}
	There exists a constant $\kappa_4 = \kappa_4(g) > 0$ such that for every compact subset $\mathcal{K} \subseteq \mathcal{T}_g$ and every closed curve $\beta$ on $S_g$, there exists a constant $C  = C(\mathcal{K},\beta) > 0$ with the following property. For every $X,Y \in \mathcal{K}$ and every $0 < \kappa < \kappa_4$, there exists an $(X,Y,C,\kappa)$-sparse subset $M = M(X,Y,\beta,\kappa) \subseteq \mcg$ such that for every $\mc \in \mcg \setminus M$, if $r := d_\mathcal{T}(X,\mc.Y)$ and $q_e := q_e(X,\mc.Y)$, then
	\[
	\ell_{\min}(q_e) \geq e^{-\kappa r}, \quad v_{\mc.\beta}(q_e) \geq e^{-\kappa r}.
	\]
\end{theorem}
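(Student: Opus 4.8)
The plan is to deduce Theorem \ref{theo:sparse_4} from Theorem \ref{theo:sparse_3} by reversing the Teichmüller geodesic segment from $X$ to $\mc.Y$ and conjugating by $\mc^{-1}$: this turns the assertion about $q_e(X,\mc.Y)$ and $v_{\mc.\beta}$ into the assertion about $q_s(Y,\mc^{-1}.X)$ and $h_\beta$ that Theorem \ref{theo:sparse_3} already controls. First I would record three elementary compatibilities. Since $-q = r_{\pi/2}\thinspace q$ and $r_{\pi/2}\thinspace a_t\thinspace r_{-\pi/2} = a_{-t}$, and $\pi$ is $r_{\pi/2}$-invariant, reversing a Teichmüller geodesic negates the generating quadratic differential; hence for distinct $X,Y \in \mathcal{T}_g$ with $r = d_\mathcal{T}(X,Y)$ one has $q_e(X,Y) = a_r q_s(X,Y)$ and $-q_e(X,Y) = q_s(Y,X)$. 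Since $r_{\pi/2}$ is an isometry of the singular flat metric, $\ell_{\min}(-q) = \ell_{\min}(q)$; since it exchanges the vertical and horizontal foliations, $\Re(-q) = \Im(q)$, and since $q$ and $-q$ have the same flat geodesic representatives, comparing (\ref{eq:hbeta}) and (\ref{eq:vertical}) gives $v_\gamma(q) = h_\gamma(-q)$ for every closed curve $\gamma$. Finally, $\ell_{\min}$, $q_s$, $q_e$, $h_\beta$, $v_\beta$ and $d_\mathcal{T}$ are all natural under $\mcg$, so $\ell_{\min}(\mc\cdot q) = \ell_{\min}(q)$, $q_s(\mc.X,\mc.Y) = \mc\cdot q_s(X,Y)$, $h_{\mc.\gamma}(q) = h_\gamma(\mc^{-1}\cdot q)$, and $d_\mathcal{T}(X,\mc.Y) = d_\mathcal{T}(Y,\mc^{-1}.X)$.

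Combining these, for $\mc \in \mcg$ with $\mc.Y \neq X$ and $r := d_\mathcal{T}(X,\mc.Y)$ I obtain
\[
\ell_{\min}(q_e(X,\mc.Y)) = \ell_{\min}(-q_e(X,\mc.Y)) = \ell_{\min}(q_s(\mc.Y,X)) = \ell_{\min}(q_s(Y,\mc^{-1}.X)),
\]
\[
v_{\mc.\beta}(q_e(X,\mc.Y)) = h_{\mc.\beta}(-q_e(X,\mc.Y)) = h_{\mc.\beta}(q_s(\mc.Y,X)) = h_\beta(q_s(Y,\mc^{-1}.X)),
\]
together with $r = d_\mathcal{T}(Y,\mc^{-1}.X)$. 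Now I apply Theorem \ref{theo:sparse_3} with the roles of $X$ and $Y$ interchanged: as $X,Y \in \mathcal{K}$, it produces a constant $\kappa_4 = \kappa_4(g)>0$, a constant $C = C(\mathcal{K},\beta)>0$, and, for every $0<\kappa<\kappa_4$, an $(Y,X,C,\kappa)$-sparse subset $M_0 = M(Y,X,\beta,\kappa)\subseteq\mcg$ such that for every $\mu \in \mcg\setminus M_0$ with $\mu.X\neq Y$, writing $r' := d_\mathcal{T}(Y,\mu.X)$, one has $\ell_{\min}(q_s(Y,\mu.X))\geq e^{-\kappa r'}$ and $h_\beta(q_s(Y,\mu.X))\geq e^{-\kappa r'}$. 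I then set $M := \{\mc\in\mcg : \mc^{-1}\in M_0\}$, enlarged by the finitely many mapping classes with $\mc.Y = X$ (which one is free to discard). Substituting $\mu = \mc^{-1}$ in the two displays above yields, for every $\mc\in\mcg\setminus M$, the desired bounds $\ell_{\min}(q_e)\geq e^{-\kappa r}$ and $v_{\mc.\beta}(q_e)\geq e^{-\kappa r}$ with $q_e := q_e(X,\mc.Y)$.

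It remains to verify that $M$ is $(X,Y,C,\kappa)$-sparse. For $R>0$, the map $\mc\mapsto\mc^{-1}$ restricts to a bijection from $\{\mc\in M : d_\mathcal{T}(X,\mc.Y)\leq R\}$ onto $\{\mu\in M_0 : d_\mathcal{T}(X,\mu^{-1}.Y)\leq R\} = \{\mu\in M_0 : d_\mathcal{T}(Y,\mu.X)\leq R\}$, using the $\mcg$-invariance and symmetry of $d_\mathcal{T}$; the latter set has cardinality at most $C\cdot e^{(6g-6-\kappa)R}$ because $M_0$ is $(Y,X,C,\kappa)$-sparse, so the same bound holds for $M$. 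I do not expect a substantive obstacle here, since this is essentially the formal content of ``directly from Theorem \ref{theo:sparse_3}''. The only points that require care are the bookkeeping of the orientation reversal and $\mcg$-equivariance --- in particular checking the identity $v_\gamma(q) = h_\gamma(-q)$ from the definitions --- and confirming that the constant $C$ supplied by Theorem \ref{theo:sparse_3} depends only on $\mathcal{K}$ and $\beta$, hence is insensitive to which of $X,Y$ is named first, so that it transfers unchanged to the present statement.
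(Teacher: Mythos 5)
Your proposal is correct and is essentially the paper's own argument: apply Theorem \ref{theo:sparse_3} with the roles of $X$ and $Y$ exchanged, take $M$ to be the set of inverses of the resulting sparse set, and transfer the conclusion via the identities $\ell_{\min}(q_e(X,\mc.Y)) = \ell_{\min}(q_s(Y,\mc^{-1}.X))$ and $v_{\mc.\beta}(q_e(X,\mc.Y)) = h_\beta(q_s(Y,\mc^{-1}.X))$ together with the $\mcg$-invariance and symmetry of $d_\mathcal{T}$. The extra details you supply (the reversal identity $-q_e(X,Y)=q_s(Y,X)$, the equality $v_\gamma(q)=h_\gamma(-q)$, and the handling of the finitely many $\mc$ with $\mc.Y=X$) are exactly the "directly from the definitions" verifications the paper leaves implicit.
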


\begin{proof}
	Let $\kappa_4 = \kappa_4(g) > 0$ be as in Theorem \ref{theo:sparse_3}. Fix $\mathcal{K} \subseteq \mathcal{T}_g$ compact and $\beta$ a closed curve on $S_g$. Let $C = C(\mathcal{K},\beta) > 0$ be as in Theorem \ref{theo:sparse_3}. Fix $X,Y \in \mathcal{K}$ and $0 < \kappa < \kappa_4$. Let $M = M(Y,X,\beta, \kappa) \subseteq \mcg$ be the $(Y,X,C,\kappa)$-sparse subset provided by Theorem \ref{theo:sparse_3}. As the Teichmüller metric is mapping class group invariant, the set $M^{-1} \subseteq \mcg$ of all inverses of $M$ is $(X,Y,C,\kappa)$-sparse.
	
	Directly from the definitions, one can check that, for every $\mc \in \mcg$ such that $\mc.X \neq Y$,
	\begin{gather*}
	\ell_{\min}(q_e(X,\mc.Y)) = \ell_{\min}(q_s(Y,\mc^{-1}.X)), \\
	v_{\mc.\beta}(q_e(X,\mc.Y)) = h_\beta(q_s(Y,\mc^{-1}.X)).
	\end{gather*}
	Fix $\mc \in \mcg \setminus M^{-1}$. Denote $r := d_\mathcal{T}(X,\mc.Y)$ and $q_e := q_e(X,\mc.Y)$. Directly from Theorem \ref{theo:sparse_3} and the identities above we deduce
	\[
	\ell_{\min}(q_e) \geq e^{-\kappa r}, \quad v_{\mc.\beta}(q_e) \geq e^{-\kappa r}. \qedhere
	\]	
\end{proof}

\section{Proofs of the main results}

\subsection*{Outline of this section.}  In this section we combine results from \S 6 -- 8 to prove Theorems \ref{theo:main}, \ref{theo:main_2}, and \ref{theo:main_3}. We actually prove stronger versions of these results, see Theorems \ref{theo:main_strong}, \ref{theo:main_2_unif}, and \ref{theo:main_3_strong}. We also prove a version of Theorem \ref{theo:main} that yields stronger conclusions for simple closed curves, see Theorem \ref{theo:main_2_unif}, and a version of Theorem \ref{theo:main_2} that applies to non-simple closed curves, see Theorem \ref{theo:main_2_strong}.

\subsection*{A quantitative estimate for geometric intersection numbers} For the rest of this section we fix an integer $g \geq 2$ and a connected, oriented, closed surface $S_g$ of genus $g$. Let us recall some of the notation introduced in previous sections. Recall that $\tt$ denotes the Teichmüller space of marked complex structures on $S_g$, or, equivalently, the Teichmüller space of marked hyperbolic structures on $S_g$. Recall that $d_\mathcal{T}$ denotes the Teichmüller metric on $\mathcal{T}_g$. Recall that $\mcg$ denotes the mapping class group of $S_g$, that this group acts properly discontinuously on $\tt$ by changing the markings, and that this action preserves the Teichmüller metric.

Recall that $\qut$ denotes the Teichmüller space of marked, unit area quadratic differentials on $S_g$ and that $\pi \colon \qut \to \tt$ denotes the natural projection to $\mathcal{T}_g$. Recall that the Teichmüller metric on $\mathcal{T}_g$ is complete and that its unit speed geodesics are precisely the projection to $\mathcal{T}_g$ of the Teichmüller geodesic flow orbits on $\qut$. Recall that $\mathcal{MF}_g$ denotes the space of singular measured foliations on $S_g$ and that $\Re(q),\Im(q) \in \mf$ denote the vertical an horizontal foliations of $q \in \qut$.

Recall that $\Delta \subseteq \mathcal{T}_g \times \mathcal{T}_g$ denotes the diagonal of $\mathcal{T}_g \times \mathcal{T}_g$, that $S(X) := \pi^{-1}(X)$ for every $X \in \mathcal{T}_g$, and that $q_s, q_e \colon \mathcal{T}_g \times \mathcal{T}_g - \Delta \to \mathcal{Q}^1\mathcal{T}_g$ denote the maps which to every pair of distinct points $X,Y \in \mathcal{T}_g$ assign the quadratic differentials $q_s(X,Y) \in S(X)$ and $q_e(X,Y) \in S(Y)$ corresponding to the tangent directions at $X$ and $Y$ of the unique Teichmüller geodesic segment from $X$ to $Y$.

Let $X,Y \in \mathcal{T}_g$, $C > 0$, and $\kappa > 0$. Recall that a subset of mapping classes $M \subseteq \mcg$ is said to be $(X,Y,C,\kappa)$-sparse if the following estimate holds for every $R > 0$,
\[
|\{\mc \in M \ | \ d_\mathcal{T}(X,\mc.Y) \leq R \}| \leq C \cdot e^{(6g-6-\kappa)R}.
\]

Recall that $\mathcal{C}_g$ denotes the space of geodesic currents on $S_g$. Recall that the space of projective geodesic currents $\mathcal{PC}_g := \mathcal{C}_g/\mathbf{R}^+$ is compact. Recall that $i(\cdot,\cdot)$ denotes the geometric intersection number pairing on $\mathcal{C}_g$ and that this pairing is symmetric, bilinear, and invariant with respect to the diagonal action of $\mcg$ on $\mathcal{C}_g \times \mathcal{C}_g$. Recall that, for every $q \in \qut$, we denote by $q \in \mathcal{C}_g$ the Liouville current of the singular flat metric induced by $q$ on $S_g$. Analogously, for every $X \in \mathcal{T}_g$, we denote by $X \in \mathcal{C}_g$ the Liouville current of the hyperbolic metric induced by $X$ on $S_g$.

Recall that, for every closed curve $\beta$ on $S_g$ and every marked hyperbolic structure $X \in \mathcal{T}_g$, we denote by $\ell_{\beta}(X)$ the length of the unique geodesic representative of $\beta$ with respect to $X$. Recall that the length $\ell_{\lambda}(X)$ of a singular measured foliation $\lambda \in \mathcal{MF}_g$ with respect to a marked hyperbolic structure $X \in \mathcal{T}_g$ can be defined in an analogous way.

The following result is a stonger version of Theorem \ref{theo:main}. The proof combines results from \S 6 -- 8. We refer the reader back to these sections for some of the terminology that will be used in the proof.

\begin{theorem}
	\label{theo:main_strong}
	There exists a constant $\kappa = \kappa(g) > 0$ such that for every compact subset $\mathcal{K} \subseteq \mathcal{T}_g$ and every closed curve $\beta$ on $S_g$, there exists a constant $C  = C(\mathcal{K},\beta) > 0$ with the following property. For every $X,Y \in \mathcal{K}$ there exists an $(X,Y,C,\kappa)$-sparse subset of mapping classes $M = M(X,Y,\beta) \subseteq \mcg$ such that for every geodesic current $\alpha$ on $S_g$ and every $\mc \in \mcg \setminus M$, if $r := d_\mathcal{T}(X,\mc.Y)$, $q_s := q_s(X,\mc.Y)$, and $q_e := q_e(X,\mc.Y)$, then
	\[
	i(\alpha,\mc.\beta) = i(\alpha,\Re(q_s)) \cdot i(\mc.\beta,\Im(q_e)) \cdot e^r + O_{\mathcal{K}}\left(i(\alpha,X) \cdot\ell_{\beta}(Y) \cdot e^{(1-\kappa)r}\right).
	\]
\end{theorem}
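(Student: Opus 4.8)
The plan is to combine the preliminary estimate from \S6 (Theorem \ref{theo:prelim_2}) with the recurrence estimates from \S7 (Theorem \ref{theo:rec_1}) and the error-control estimates from \S8 (Theorems \ref{theo:sparse_1} and \ref{theo:sparse_4}), organizing the discarded mapping classes into one sparse set. First I would fix the principal stratum $\mathcal{Q}^1 \subseteq \qum$ and choose $\eta > 0$ and then $\rho, \epsilon, s$ small enough so that Theorem \ref{theo:prelim_2} applies with these parameters; this produces a constant $\kappa_1 = \kappa_1(g) > 0$ and a compact set $K' = K'(\rho,\epsilon,s) \subseteq \mathcal{Q}^1$. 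Next, given $\mathcal{K} \subseteq \mathcal{T}_g$ compact and a closed curve $\beta$, I would form the union $M = M(X,Y,\beta)$ of three sparse subsets of $\mcg$: the set $N'(X,Y,\infty,K',\rho,\epsilon,s)$ of mapping classes whose Teichmüller geodesic lacks a suitable itinerary (sparse by Theorem \ref{theo:rec_1}, which for the principal stratum also forces $-q_e$ to be $(K,r,\eta)$-recurrent via the compact $K$ of Theorem \ref{theo:thin_traj}); the sparse set from Theorem \ref{theo:sparse_1} controlling $\ell_{\min}(q_s)$ from below by $e^{-\kappa r}$; and the sparse set from Theorem \ref{theo:sparse_4} controlling $\ell_{\min}(q_e) \geq e^{-\kappa r}$ and $v_{\mc.\beta}(q_e) \geq e^{-\kappa r}$. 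Setting $\kappa := \min$ of all the constants $\kappa_1, \kappa_2, \kappa_3, \kappa_4$ (suitably shrunk) and $C := C(\mathcal{K},\beta)$ the maximum of the constants produced, the union $M$ is $(X,Y,C,\kappa)$-sparse.

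For $\mc \in \mcg \setminus M$, write $r := d_\mathcal{T}(X,\mc.Y)$, $q_s := q_s(X,\mc.Y) \in S(X)$, $q_e := q_e(X,\mc.Y) = a_r q_s \in S(\mc.Y)$; note $q_e$ lies in the principal stratum, $-q_e$ is $(K,r,\eta)$-recurrent and admits a $(K',r,\rho,\epsilon,s)$-itinerary. Applying Theorem \ref{theo:prelim_2} with the closed curve $\mc.\beta$ in place of $\beta$ yields
\[
i(\alpha,\mc.\beta) = i(\alpha,\Re(q_s)) \cdot i(\mc.\beta,\Im(q_e)) \cdot e^r + O_{K',\epsilon,s}\!\left(i(\alpha,q_s) \cdot \ell_{\min}^\dagger(q_s)^{-1} \cdot \ell_{\mc.\beta}(q_e) \cdot \ell_{\min}^\dagger(q_e)^{-5} \cdot v_{\mc.\beta}(q_e)^{-1} \cdot e^{(1-\kappa_1)r}\right),
\]
using that $i(\alpha,\Re(q_s)) = i(\alpha,\Re(q_s))$ is independent of which geodesic representative one takes. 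It then remains to bound the error factor. On $\mcg \setminus M$ the three small quantities $\ell_{\min}^\dagger(q_s)$, $\ell_{\min}^\dagger(q_e)$, $v_{\mc.\beta}(q_e)$ are each $\geq e^{-\kappa r}$, so their combined reciprocal power contributes at most $e^{C'\kappa r}$ for a dimensional constant $C'$; choosing $\kappa$ small relative to $\kappa_1$ (so that $\kappa_1 - C'\kappa > 0$) absorbs this into a net power saving. The quantities $i(\alpha,q_s)$ and $\ell_{\mc.\beta}(q_e)$ must be converted into the stated $\mathcal{K}$-dependent factors $i(\alpha,X)$ and $\ell_\beta(Y)$: since $q_s \in S(X)$ with $X$ in the compact set $\mathcal{K}$, the Liouville current $q_s$ is comparable to the hyperbolic Liouville current $X$ with constants depending only on $\mathcal{K}$, giving $i(\alpha,q_s) \preceq_{\mathcal{K}} i(\alpha,X)$; similarly $\ell_{\mc.\beta}(q_e)$, the flat length of $\mc.\beta$ on $q_e \in S(\mc.Y)$, equals $i(\mc.\beta, q_e) = i(\beta, \mc^{-1}.q_e)$, and $\mc^{-1}.q_e \in S(Y)$ with $Y \in \mathcal{K}$, so $\ell_{\mc.\beta}(q_e) \preceq_{\mathcal{K}} \ell_\beta(Y)$ by the analogous comparison with the hyperbolic structure $Y$. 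Combining these comparisons with the power saving yields the displayed estimate with error $O_{\mathcal{K}}(i(\alpha,X) \cdot \ell_\beta(Y) \cdot e^{(1-\kappa)r})$.

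The main obstacle I anticipate is bookkeeping: making sure a single $\kappa = \kappa(g)$ works uniformly — the preliminary estimate's $\kappa_1$ depends on $(K,\eta)$, and $K = K(\eta)$ comes from Theorem \ref{theo:thin_traj}, so one must first fix $\eta$, then $K$, then $\kappa_1$, then finally choose the sparseness exponent $\kappa$ smaller than $\kappa_1$ (divided by the relevant power, here something like $7$ or more coming from $\ell_{\min}^\dagger(q_s)^{-1}\ell_{\min}^\dagger(q_e)^{-5}v_{\mc.\beta}(q_e)^{-1}$), and also $\leq \kappa_2, \kappa_3, \kappa_4$; and ensuring the various sparse sets are taken with the \emph{same} final $\kappa$ (Theorems \ref{theo:sparse_1} and \ref{theo:sparse_4} permit any $\kappa$ below their threshold). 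A secondary subtlety is that Theorem \ref{theo:prelim_2} is stated for $q_s \in \qut$ with $r > 0$ and $q_e = a_r q_s$; one must check that $q_s(X,\mc.Y)$ and $q_e(X,\mc.Y)$ are precisely related this way, which is exactly the definition of the maps $q_s, q_e$ via the Teichmüller geodesic segment, together with the (already discarded) finitely many mapping classes with $\mc.Y = X$. None of these steps is deep; the content is entirely in the cited theorems from \S6--8.
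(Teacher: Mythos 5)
Your overall assembly is the same as the paper's: apply Theorem \ref{theo:prelim_2} to $\alpha$ and $\mc.\beta$, discard sparse sets so that the recurrence/itinerary hypotheses hold and so that $\ell_{\min}(q_s)$, $\ell_{\min}(q_e)$, $v_{\mc.\beta}(q_e)$ are all $\geq e^{-\kappa r}$, take $\kappa$ small relative to $\kappa_1$ (the exponent loss is exactly $7\kappa$, so $7\kappa < \kappa_1/2$ suffices), and then convert $i(\alpha,q_s) \preceq_{\mathcal{K}} i(\alpha,X)$ and $\ell_{\mc.\beta}(q_e)=\ell_{\beta}(\mc^{-1}.q_e) \preceq_{\mathcal{K}} \ell_{\beta}(Y)$ by compactness of $\pi^{-1}(\mathcal{K})$ and projective compactness of $\mathcal{C}_g$. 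All of that is fine.

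There is, however, one genuine gap: you only discard three sparse sets and claim that the complement of $N'(X,Y,\cdot,K',\rho,\epsilon,s)$ "also forces $-q_e$ to be $(K,r,\eta)$-recurrent." That implication is false, and in fact the logic of \S 7 runs in the opposite direction: Proposition \ref{theo:bad_sampling} shows that if the sampling procedure fails to produce a $(K',r,\rho,\epsilon,s)$-itinerary then the orbit has a large excursion outside $K$, i.e.\ sufficient recurrence implies the existence of an itinerary, so $N'$ is (essentially) \emph{contained} in the non-recurrent set, not the other way around. An itinerary only constrains the orbit at the discrete sample times $s_n$, with gaps up to $\epsilon s_n$, during which the orbit may lie entirely outside $K$; so Theorem \ref{theo:prelim_2}'s hypothesis that $-q_e$ is $(K,r,\eta)$-recurrent is not verified on $\mcg\setminus M$ as you have built $M$. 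The repair is exactly what the paper does: add a fourth sparse set, namely the mapping classes $\mc$ for which $-q_e(X,\mc.Y)$ fails to be $(K,r,\eta)$-recurrent, which is $(X,Y,C_1,\eta/2)$-sparse by Theorem \ref{theo:thin_traj} (with $K=K(\eta)$ fixed before $\rho,\epsilon,s$ and before $\kappa_1$, as in your parameter bookkeeping). With that set included in $M$, the rest of your argument goes through and coincides with the paper's proof.
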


\begin{proof}
	Denote by $\mathcal{Q}^1 \subseteq \qum$ the principal stratum of $\qum$. Fix $\eta \in (0,1)$. Let $K  = K(\eta) \subseteq \mathcal{Q}^1$ be as in Theorem \ref{theo:thin_traj}. Let $\rho_1 = \rho_1(K,\eta) > 0$, $\epsilon_1 = \epsilon_1(K,\eta) > 0$, and $\kappa_1 = \kappa_1(K,\eta) > 0$ be as in Theorem \ref{theo:prelim_2}. Fix $0 < \rho < \rho_1$, $0 < \epsilon < \epsilon_1$ such that $\rho (1+\epsilon) < 1$, and $s > 0$. Let $K' = K'(\rho,\epsilon,s) \subseteq \mathcal{Q}^1$ and $\kappa_2 = \kappa_2(\rho,\epsilon) > 0$ be as in Theorem \ref{theo:rec_1}. Let $\kappa_3 = \kappa_3(g) > 0$ be as in Theorem \ref{theo:sparse_1}. Let $\kappa_4 = \kappa_4(g) > 0$ be as in Theorem \ref{theo:sparse_4}. We will consider all the objects introduced above as depending only on $g$.
	
	Fix $\mathcal{K} \subseteq \mathcal{T}_g$ compact and $X,Y \in \mathcal{K}$. Let $M_1 = M_1(X,Y,K,\eta) \subseteq \mcg$ be the set of all mapping classes $\mc \in \mcg$ such that $\mc.Y = X$ or  $\mc.Y \neq X$ and $-q_e(X,\mc.Y)$ is not $(K,r,\eta)$-recurrent for $r := d_\mathcal{T}(X,\mc.Y)$. By Theorem \ref{theo:thin_traj}, $M_1 \subseteq \mcg$ is $(X,Y,C_1,\eta/2)$-sparse for some constant $C_1 = C_1(K,\eta) > 0$. Let $M_2 = M_2(X,Y,K',\rho,\epsilon,s) \subseteq \mcg$ be the set of all mapping classes $\mc \in \mcg$ such that $\mc.Y = X$ or  $\mc.Y \neq X$ and $-q_e(X,\mc.Y)$ has no $(K',r,\rho,\epsilon,s)$-itineraties for $r := d_\mathcal{T}(X,\mc.Y)$. By Theorem \ref{theo:rec_1}, $M_2 \subseteq \mcg$ is $(X,Y,C_2,\kappa_2)$-sparse for some constant $C_2 = C_2(\mathcal{K},\rho,\epsilon,s) > 0$. 
	
	Denote $\kappa_0 = \kappa_0(g) := \min\{\kappa_2,\kappa_3,\kappa_4,\eta/2\} > 0$. Consider $0 < \kappa < \kappa_0$ to be fixed later. By Theorem \ref{theo:sparse_1}, there exists a constant $C_3 = C_3(\mathcal{K}) > 0$ and an $(X,Y,C_3,\kappa)$-sparse subset $M_3 = M_3(X,Y,\kappa) \subseteq \mcg$ such that for every $\mc \in \mcg \setminus M_3$, if $r := d_\mathcal{T}(X,\mc.Y)$ and $q_s := q_s(X,\mc.Y)$, then $\ell_{\min}(q_s)  \geq e^{-\kappa r}$. Fix a closed curve $\beta$ on $S_g$. By Theorem \ref{theo:sparse_4}, there exists a constant $C_4 =  C_4(\mathcal{K},\beta) > 0$ and an  $(X,Y,C_4,\kappa)$-sparse subset $M_4 = M_4(X,Y,\beta,\kappa) \subseteq \mcg$ such that for every $\mc \in \mcg \setminus M_4$, if $r := d_\mathcal{T}(X,\mc.Y)$ and $q_e := q_e(X,\mc.Y)$, then $\ell_{\min}(q_e) \geq e^{-\kappa r}$ and $v_{\mc.\beta}(q_e) \geq e^{-\kappa r}$.
	
	Consider the subset of mapping classes $M = M(X,Y,\beta) \subseteq \mcg$ given by
	\[
	M := M_1 \cup M_2 \cup M_3 \cup M_4.
	\]
	Denote $C = C(\mathcal{K},\beta) := \max\{C_1,C_2,C_3,C_4\} > 0$. As $M \subseteq \mcg$ is a finite union of $(X,Y,C,\kappa)$-sparse subsets, it is also $(X,Y,C,\kappa)$-sparse. Let $\mc \in \mcg \setminus M$. Denote $r := d_\mathcal{T}(X,\mc.Y)$, $q_s := q_s(X,\mc.Y)$, and $q_e := q_e(X,\mc.Y)$. By the definition of $M$, the following conditions hold:
	\begin{enumerate}
		\item $\mc.Y \neq X$,
		\item $-q_e(X,\mc.Y)$ is $(K,r,\eta)$-recurrent,
		\item $-q_e(X,\mc.Y)$ has a $(K',r,\rho,\epsilon,s)$-itinerary,
		\item $\ell_{\min}(q_s) \geq e^{-\kappa r}$,
		\item $\ell_{\min}(q_e) \geq e^{-\kappa r}$,
		\item $v_{\mc.\beta}(q_e) \geq e^{-\kappa r}$.
	\end{enumerate}
	
	Fix a geodesic current $\alpha$ on $S_g$. Recall that $\kappa_1 = \kappa_1(g) > 0$ is as in Theorem \ref{theo:prelim_2} and that $K'$, $\epsilon$, and $s$ depend only on $g$. As conditions (1), (2), and (3) above hold, we can use Theorem \ref{theo:prelim_2} to deduce
	\begin{align*}
	i(\alpha,\mc.\beta) &= i(\alpha,\Re(q_s)) \cdot i(\mc.\beta,\Im(q_e)) \cdot e^r \\
	&\phantom{=}+ O_{g}\left(i(\alpha,q_s) \cdot \ell_{\min}^\dagger(q_s)^{-1} \cdot \ell_{\mc.\beta}(q_e) \cdot \ell_{\min}^\dagger(q_e)^{-5} \cdot v_{\mc.\beta}(q_e)^{-1} \cdot e^{(1-\kappa_1)r}\right).
	\end{align*}
	Using conditions (4), (5), and (6) above, this estimate reduces to
	\begin{equation}
	\label{eq:pre_almost}
	i(\alpha,\mc.\beta) = i(\alpha,\Re(q_s)) \cdot i(\mc.\beta,\Im(q_e)) \cdot e^r + O_g\left(i(\alpha,q_s) \cdot \ell_{\mc.\beta}(q_e)\cdot e^{(1-\kappa_1 + 7\kappa)r}\right).
	\end{equation}
	Let $0 < \kappa < \kappa_0$ be small enough so that $7\kappa < \kappa_1/2$. Under this assumption, (\ref{eq:pre_almost}) reduces to
	\begin{equation}
	\label{eq:almost_est}
	i(\alpha,\mc.\beta) = i(\alpha,\Re(q_s)) \cdot i(\mc.\beta,\Im(q_e)) \cdot e^r + O_{g}\left(i(\alpha,q_s) \cdot \ell_{\mc.\beta}(q_e)\cdot e^{(1-\kappa_1/2)r}\right).
	\end{equation}
	As $\mathcal{K} \subseteq \mathcal{T}_g$ is compact, as $X,Y \in \mathcal{K}$, as $q_s,\mc^{-1}.q_e \in \pi^{-1}(\mathcal{K})$, and as $\mathcal{C}_g$ is projectively compact,
	\begin{equation}
	\label{eq:SS1}
	i(\alpha,q_s) \preceq_{\mathcal{K}} i(\alpha,X), \quad \ell_{\mc.\beta}(q_e) = \ell_{\beta}(\mc^{-1}.q_e) \preceq_{\mathcal{K}} \ell_{\beta}(Y)
	\end{equation}
	Putting together (\ref{eq:almost_est}) and (\ref{eq:SS1}) we conclude
		\begin{equation*}
	i(\alpha,\mc.\beta) = i(\alpha,\Re(q_s)) \cdot i(\mc.\beta,\Im(q_e)) \cdot e^r + O_{\mathcal{K}}\left(i(\alpha,X) \cdot \ell_{\beta}(Y) \cdot e^{(1-\kappa_1/2)r}\right). \qedhere
	\end{equation*}
\end{proof}

\subsection*{Effective convergence to the boundary at infinity of Teichmüller space} Directly from Theorem \ref{theo:main_strong} we deduce the following version of Theorem \ref{theo:main_2} for general closed curves. Notice that, in contrast with Theorem \ref{theo:main_2}, the sparse subset of mapping classes that needs to be discarded to ensure the estimate in this result holds depends on the closed curve $\beta$ of interest.

\begin{theorem}
	\label{theo:main_2_strong}
	There exists a constant $\kappa = \kappa(g) > 0$ such that for every compact subset $\mathcal{K} \subseteq \mathcal{T}_g$ and every closed curve $\beta$ on $S_g$, there exists a constant $C  = C(\mathcal{K},\beta) > 0$ with the following property. For every $X,Y \in \mathcal{K}$ there exists an $(X,Y,C,\kappa)$-sparse subset of mapping classes $M = M(X,Y,\beta) \subseteq \mcg$ such that for every $\mc \in \mcg \setminus M$, if $r := d_\mathcal{T}(X,\mc.Y)$ and $q_s := q_s(X,\mc.Y)$, then
	\[
	\ell_{\beta}(\pi(a_r q_s)) = i(\beta,\Re(q_s)) \cdot \ell_{\Im(q_s)}(\pi(a_r q_s))+ O_{\mathcal{K}}\left( \ell_{\beta}(Y) \cdot e^{(1-\kappa)r}\right). 
	\]
\end{theorem}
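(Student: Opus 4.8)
The plan is to apply Theorem \ref{theo:main_strong} with a carefully chosen geodesic current $\alpha$ and then rescale. Recall that $\pi(a_r q_s) \in \mathcal{T}_g$ is a marked hyperbolic structure, so it determines a Liouville current $\pi(a_r q_s) \in \mathcal{C}_g$ whose geometric intersection number with any closed curve $\gamma$ equals $\ell_\gamma(\pi(a_r q_s))$, and whose intersection number with any singular measured foliation $\lambda$ equals $\ell_\lambda(\pi(a_r q_s))$. The key observation is that the Teichmüller geodesic segment from $X$ to $\mc.Y$ has length $r$ and initial quadratic differential $q_s$, so $a_r q_s$ is the \emph{terminal} quadratic differential of this segment as an element of $\qut$; more precisely $\pi(a_r q_s) = \mc.Y$ and, up to the conventions fixed in \S 9, $a_r q_s$ and $q_e := q_e(X,\mc.Y)$ are related by $q_e = a_r q_s$ (reading $q_e$ as lying in the fiber over $\mc.Y$). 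In particular $\Re(q_e) = e^{-r}\Re(q_s)$ and $\Im(q_e) = e^r \Im(q_s)$ as measured foliations, because the Teichmüller flow contracts the vertical foliation by $e^{-r}$ and dilates the horizontal foliation by $e^r$.

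With this in hand, I would run Theorem \ref{theo:main_strong} twice — or rather once, with $\alpha := \pi(a_r q_s) \in \mathcal{C}_g$ — to get
\[
i\big(\pi(a_r q_s),\mc.\beta\big) = i\big(\pi(a_r q_s),\Re(q_s)\big)\cdot i\big(\mc.\beta,\Im(q_e)\big)\cdot e^r + O_{\mathcal{K}}\big(i(\pi(a_r q_s),X)\cdot \ell_\beta(Y)\cdot e^{(1-\kappa)r}\big).
\]
The left-hand side is $\ell_{\mc.\beta}(\pi(a_r q_s)) = \ell_\beta(\mc^{-1}.\pi(a_r q_s)) $, but I actually want $\ell_\beta(\pi(a_r q_s))$; here one uses $\mathcal{MF}_g$-mapping class invariance carefully, noting $i(\pi(a_r q_s), \mc.\beta) = i(\mc^{-1}.\pi(a_r q_s),\beta)$ — so in fact I should instead apply the theorem with the roles arranged so that $\mc.\beta$ is replaced by $\beta$, i.e. apply it to the pair $(\alpha', \beta)$ with $\alpha' = \mc.(\text{something})$; the cleanest route is to apply Theorem \ref{theo:main_strong} with $\beta$ fixed and $\alpha := \pi(a_r q_s)$, and note that the mapping class in the conclusion acts on $\beta$, so one gets $i(\alpha, \mc.\beta)$ on the left. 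To land on $\ell_\beta(\pi(a_r q_s))$ directly, I would instead exploit that $\mc^{-1}.\pi(a_r q_s)$ plays the role of a point on the \emph{reversed} Teichmüller geodesic, and the cleanest bookkeeping is: $\ell_\beta(\pi(a_r q_s)) = i(\pi(a_r q_s),\beta)$, and since $\pi(a_r q_s) = \mc.Y$ we can write $\beta = \mc.(\mc^{-1}.\beta)$ and apply the theorem to the current $\mc.Y$... This interplay is where I expect the only real friction, and I would resolve it by choosing the current $\alpha$ to be $\mc.Y \in \mathcal{C}_g$ from the start, applying Theorem \ref{theo:main_strong} to $(\alpha,\beta) = (\mc.Y, \beta)$, and using that $i(\mc.Y, \mc.\beta) = i(Y,\beta)$ together with symmetry.

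Let me instead describe the approach I actually expect to work most smoothly. Set $\alpha := \mc.Y$, viewed as a Liouville current. Theorem \ref{theo:main_strong} gives, after discarding the appropriate sparse set,
\[
i(\mc.Y, \mc.\beta) = i(\mc.Y, \Re(q_s))\cdot i(\mc.\beta, \Im(q_e))\cdot e^r + O_{\mathcal{K}}\big(i(\mc.Y,X)\cdot \ell_\beta(Y)\cdot e^{(1-\kappa)r}\big).
\]
By $\mcg$-invariance the left side is $i(Y,\beta) = \ell_\beta(Y)$, which is a bounded constant as $Y$ ranges over $\mathcal{K}$, so it contributes to the error term, giving a relation among $i(\mc.Y,\Re(q_s))$, $i(\mc.\beta,\Im(q_e))$, and the leading data. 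Now I use $i(\mc.Y,\Re(q_s)) = \ell_{\Re(q_s)}(\mc.Y) = \ell_{\Re(q_s)}(\pi(a_r q_s))$, and $\Re(q_s) = e^{r}\Re(q_e)$, and $i(\mc.\beta,\Im(q_e)) = e^{r} i(\mc.\beta,\Im(q_s))$ since $\Im(q_e) = e^r\Im(q_s)$; one also applies Theorem \ref{theo:main_strong} a second time with $\alpha$ a hyperbolic Liouville current to pull out $i(\beta,\Re(q_s))$. The honest statement to prove is that the two applications combine so that $\ell_\beta(\pi(a_r q_s))$ and $i(\beta,\Re(q_s))\cdot \ell_{\Im(q_s)}(\pi(a_r q_s))$ differ by $O_{\mathcal{K}}(\ell_\beta(Y)e^{(1-\kappa)r})$. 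The main obstacle is purely the normalization bookkeeping: tracking the $e^{\pm r}$ factors coming from the flow acting on $\Re$ and $\Im$, and matching the convention for $q_e(X,\mc.Y)$ versus $a_r q_s$ exactly as fixed in \S 9 so that no spurious constants appear; the core estimate is entirely supplied by Theorem \ref{theo:main_strong}.
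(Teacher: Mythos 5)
There is a genuine gap, and it lies exactly at the point you flag as ``friction'' and then wave past. Every application of Theorem \ref{theo:main_strong} you propose uses as the geodesic current $\alpha$ the \emph{moving} Liouville current $\pi(a_r q_s) = \mc.Y$. But the error term of Theorem \ref{theo:main_strong} is $O_{\mathcal{K}}\left(i(\alpha,X)\cdot \ell_\beta(Y)\cdot e^{(1-\kappa)r}\right)$, and $i(\mc.Y,X)$ is not bounded as $\mc$ varies: since $q_e = a_r q_s$ one has $\Re(q_e) = e^{r}\Re(q_s)$, so $i(\mc.Y,X) \asymp_{\mathcal{K}} i(q_e,X) \succeq_{\mathcal{K}} \ell_{\Re(q_e)}(X) \asymp_{\mathcal{K}} e^{r}$. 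Hence both of your displayed applications carry an error of size $e^{(2-\kappa)r}$, which is larger than the quantity $\ell_\beta(\mc.Y) \asymp_{\mathcal{K},\beta} e^{r}$ you are trying to estimate; in particular the version with $\alpha := \mc.Y$ has left-hand side $i(\mc.Y,\mc.\beta) = \ell_\beta(Y) = O_{\mathcal{K},\beta}(1)$, so the ``relation'' it yields is vacuous. The promised second application ``to pull out $i(\beta,\Re(q_s))$'' is never specified and cannot repair this, because any application of Theorem \ref{theo:main_strong} in the direction $(X,Y)$ with a current comparable to $\mc.Y$ suffers the same loss. Separately, your scaling conventions are inverted: the flow dilates horizontal \emph{segments}, hence dilates the transverse measure of the vertical foliation, so $\Re(q_e) = e^{r}\Re(q_s)$ and $\Im(q_s) = e^{r}\Im(q_e)$, not the relations you wrote; and you never explain how the sparse set $M$ is produced so that the final estimate holds for $\mc$ itself.

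The paper's proof avoids all of this by reversing the roles of the two points. One applies Theorem \ref{theo:main_strong} to the pair $(Y,X)$, the same curve $\beta$, the mapping class $\mc^{-1}$, and the \emph{fixed} Liouville current $\alpha := Y$, so the error involves only $i(Y,X) \preceq_{\mathcal{K}} 1$ (and one takes $M := M(Y,X,\beta)^{-1}$, which is $(X,Y,C,\kappa)$-sparse by $\mcg$-invariance of $d_\mathcal{T}$). The geodesic from $Y$ to $\mc^{-1}.X$ is $\mc^{-1}$ applied to the time-reversal of the geodesic from $X$ to $\mc.Y$, which gives the identities $\Re(q_s(Y,\mc^{-1}.X)) = \mc^{-1}.\Im(q_e(X,\mc.Y))$ and $\Im(q_e(Y,\mc^{-1}.X)) = \mc^{-1}.\Re(q_s(X,\mc.Y))$ (time reversal negates the differential and so swaps $\Re$ and $\Im$). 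The resulting estimate reads
\[
i(Y,\mc^{-1}.\beta) = i(Y,\mc^{-1}.\Im(q_e)) \cdot i(\mc^{-1}.\beta,\mc^{-1}.\Re(q_s)) \cdot e^{r} + O_{\mathcal{K}}\left(\ell_\beta(Y)\, e^{(1-\kappa)r}\right),
\]
and $\mcg$-invariance of $i(\cdot,\cdot)$ turns the left side into $\ell_\beta(\mc.Y) = \ell_\beta(\pi(a_r q_s))$ while $\Im(q_s) = e^{r}\Im(q_e)$ absorbs the factor $e^{r}$ into $\ell_{\Im(q_s)}(\pi(a_r q_s))$. This reversal is the missing idea; without it the power-saving error term cannot be obtained from Theorem \ref{theo:main_strong}.
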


\begin{proof}
	Let $\kappa = \kappa(g) > 0$ be as in Theorem \ref{theo:main_strong}. Fix $\mathcal{K} \subseteq \mathcal{T}_g$ compact and $\beta$ a closed curve on $S_g$. Let $C = C(\mathcal{K},\beta) > 0$ be as in Theorem \ref{theo:main_strong}. Fix $X,Y \in \mathcal{K}$. Let $M = M(Y,X,\beta) \subseteq \mcg$ be the $(Y,X,C,\kappa)$-sparse subset of mapping classes provided by Theorem \ref{theo:main_strong}. As the Teichmüller metric is mapping class group invariant, the set $M^{-1} \subseteq \mcg$ of all inverses of $M$ is $(X,Y,C,\kappa)$-sparse.
	
	 Directly from the definitions, one can check that, for every $\mc \in \mcg$ such that $\mc.X \neq Y$,
	\begin{gather*}
	\Re(q_s(X,\mc^{-1}.Y)) = \mc^{-1}.\Im(q_e(Y,\mc.X)),\\
	\Im(q_e(X,\mc^{-1}.Y)) = \mc^{-1}.\Re(q_s(Y,\mc.X)).
	\end{gather*}
	Fix $\mc \in \mcg \setminus M^{-1}$. Denote $r := d_\mathcal{T}(X,\mc.Y)$, $q_s := q_s(X,\mc.Y)$, and $q_e := q_e(X,\mc.Y)$. Applying Theorem \ref{theo:main_strong} to the Liouville current $Y \in \mathcal{C}_g$ yields the following estimate,
	\begin{equation}
	\label{eq:AA1}
	i(Y,\mc^{-1}.\beta) = i(Y,\mc^{-1}.\Im(q_e)) \cdot i(\mc^{-1}.\beta,\mc^{-1}.\Re(q_s)) \cdot e^r + O_{\mathcal{K}}\left(i(Y,X) \cdot \ell_\beta(Y) \cdot e^{(1-\kappa)r}\right).
	\end{equation}
	Notice that $\Im(q_s) = e^r \cdot \Im(q_e)$ and $\mc.Y  = \pi(q_e) = \pi(a_r q_s)$. Rearranging terms in (\ref{eq:AA1}) and using the definition of Liouville currents, we conclude
	\[
	\ell_{\beta}(\pi(a_r q_s)) = i(\beta,\Re(q_s)) \cdot \ell_{\Im(q_s)}(\pi(a_r q_s))+ O_{\mathcal{K}}\left( \ell_\beta(Y) \cdot e^{(1-\kappa)r}\right). \qedhere
	\]
\end{proof}

\subsection*{Dehn-Thurston coordinates.} Our next goal is to prove versions of Theorems \ref{theo:main_strong} and \ref{theo:main_2_strong} that yield stronger conclusions for simple closed curves. The proofs will rely on Theorem \ref{theo:main_strong} and some facts about Dehn-Thurston coordinates of singular measured foliations.

Dehn-Thurston coordinates parametrize the space of singular measured foliations $\mf$ in terms of intersection and twisting numbers with respect to the components of a pair of pants decomposition of $S_g$ \cite[\S2.6]{PH92}. Consider the piecewise linear manifold $\IT:= \mathbf{R}^2 / \langle-1\rangle$ endowed with the quotient Euclidean metric. The product $\IT^{3g-3}$ is a piecewise linear manifold which we endow with the product metric. Every set of Dehn-Thurston coordinates provides a homeomorphism $F \colon \mf \to \IT^{3g-3}$ equivariant with respect to the natural $\mathbf{R}^+$ scaling actions on $\mf$ and $\IT^{3g-3}$. 

Given a finite collection $\Gamma := \smash{\{\gamma_i\}_{i=1}^k}$ of simple closed curves on $S_g$, denote by $I_\Gamma \colon \mf \to \mathbf{R}^{k}$ the map which to every $\lambda \in \mf$ assigns the vector $I_\Gamma(\lambda) := \smash{(i(\gamma_i,\lambda))_{i=1}^k} \in \mathbf{R}^k$. The following result shows that any set of Dehn-Thurston coordinates can be written as a continuous, piecewise linear function of finitely many intersection numbers with respect to simple closed curves.

\begin{proposition}
	\label{prop:DT_int}
	\cite[Appendix C]{FLP12}
	Let $F \colon \mf \to \IT^{3g-3}$ be a set of Dehn-Thurston coordinates of $\mf$. Then, there exists a finite collection $\Gamma := \{\gamma_i\}_{i=1}^{k}$ of simple closed curves on $S_g$ and a continuous, piecewise linear map $H \colon \mathbf{R}^k \to \IT^{3g-3}$ such that $F = H \circ I_{\Gamma}$.
\end{proposition}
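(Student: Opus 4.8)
The plan is to leverage the explicit combinatorial description of Dehn--Thurston coordinates in terms of intersection and twisting numbers with respect to a fixed pair of pants decomposition. First I would fix a pair of pants decomposition $\mathcal{P} = \{\gamma_1,\dots,\gamma_{3g-3}\}$ of $S_g$ adapted to the given set of coordinates $F$; the $i$-th factor of $\IT^{3g-3}$ records the pair $(m_i,t_i)$, where $m_i = i(\gamma_i,\lambda) \geq 0$ is the intersection number of $\lambda$ with $\gamma_i$ and $t_i \in \mathbf{R}$ is the twisting number of $\lambda$ around $\gamma_i$ (with the identification $(m_i,t_i) \sim (m_i,-t_i)$ when $m_i = 0$, which is exactly the relation $\langle -1\rangle$ defining $\IT$). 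The intersection coordinates $m_i$ are, by definition, exactly $i(\gamma_i,\lambda)$, so these present no difficulty: they are linear coordinate functions of $I_{\mathcal{P}}(\lambda)$. The content of the statement is therefore entirely about the twisting coordinates $t_i$.

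The key step is to express each twisting number $t_i$ as a continuous, piecewise linear function of finitely many intersection numbers $i(\delta,\lambda)$ with auxiliary simple closed curves $\delta$. This is the standard fact (worked out in detail in \cite[Appendix C]{FLP12}, and also implicit in Thurston's original work) that for each curve $\gamma_i$ one can choose an auxiliary ``transversal'' simple closed curve $\gamma_i'$ intersecting $\gamma_i$ (either once or twice, depending on whether $\gamma_i$ is non-separating in the relevant subsurface), and possibly the image $\tau_{\gamma_i}(\gamma_i')$ of $\gamma_i'$ under a Dehn twist, such that $t_i$ is recovered from $i(\gamma_i',\lambda)$, $i(\tau_{\gamma_i}(\gamma_i'),\lambda)$, and the intersection numbers $m_j = i(\gamma_j,\lambda)$ of $\lambda$ with the pants curves via an explicit formula of the shape $t_i = \pm\bigl(i(\gamma_i',\lambda) - i(\tau_{\gamma_i}(\gamma_i'),\lambda)\bigr)/c$ corrected by a piecewise linear term in the $m_j$'s (the correction accounting for the unavoidable intersections forced by the pants curves near $\gamma_i$). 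Each such formula is manifestly continuous and piecewise linear in the relevant intersection numbers. Collecting the finitely many auxiliary curves $\gamma_i', \tau_{\gamma_i}(\gamma_i')$ for all $i$ together with the pants curves $\gamma_j$ into one finite family $\Gamma = \{\gamma_i\}_{i=1}^k$, we obtain that every coordinate of $F$ — the $m_i$'s and the $t_i$'s — is a continuous piecewise linear function of $I_\Gamma(\lambda)$. Assembling these coordinatewise piecewise linear functions into a single map $H \colon \mathbf{R}^k \to \IT^{3g-3}$ and checking compatibility with the $\langle -1\rangle$ identifications (automatic, since when $m_i = 0$ the twist formula outputs a value well-defined only up to sign, matching the quotient) gives $F = H \circ I_\Gamma$.

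The main obstacle, and the part that requires genuine care rather than formal manipulation, is the precise bookkeeping of the piecewise linear correction terms in the twist formula: the relation between the geometric twisting number and the difference of intersection numbers with $\gamma_i'$ and its twist is only an equality after subtracting off the ``mandatory'' intersections that any measured foliation must have with these transversals purely because of how it crosses the pants curves, and the structure of this correction changes across the walls where various $m_j$ vanish — hence the piecewise linearity rather than linearity. However, since Proposition \ref{prop:DT_int} is quoted verbatim from \cite[Appendix C]{FLP12}, in the paper itself it suffices to cite that reference and record the statement; a self-contained proof would amount to reproducing that appendix. I would therefore present the argument at the level of the sketch above, emphasizing that the intersection coordinates are trivially of the required form and that the twist coordinates are handled by the explicit transversal-curve formulas of Fathi--Laudenbach--Po\'enaru, and refer the reader to \cite[Appendix C]{FLP12} for the verification of the piecewise linear formulas.
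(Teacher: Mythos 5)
Your sketch is consistent with what the paper does: the paper offers no proof of its own and simply cites \cite[Appendix C]{FLP12}, and your outline (intersection coordinates are literally intersection numbers with the pants curves, while each twist coordinate is recovered piecewise linearly from intersection numbers with a transversal curve, its Dehn-twisted image, and the pants curves) is precisely the standard argument of that appendix. So the proposal is correct and takes essentially the same route as the cited source.
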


Given a geodesic current $\alpha$ on $S_g$, consider the function $i(\alpha,\cdot) \colon \mf \to \mathbf{R}$ it induces on $\mf$. The following result shows that this function is Lipschitz with respect to any set of Dehn-Thurston coordinates. A version of this result for Liouville currents of hyperbolic metrics was first proved by Luo and Stong \cite{LS02}. We refer the reader to Appendix B for a detailed proof of the general case.

\begin{proposition}
	\label{prop:ml_lip}
	Let $F \colon \mf \to \IT^{3g-3}$ be a set of Dehn-Thurston coordinates of $\mf$ and $K \subseteq \mathcal{C}_g$ be a compact subset of geodesic current on $S_g$. Then, there exists a constant $L  = L(F,K) > 0$ such that for every $\alpha \in K$, the composition $i(\alpha,\cdot) \circ F^{-1} \colon \IT^{3g-3} \to \mathbf{R}$ is $L$-Lipschitz.
\end{proposition}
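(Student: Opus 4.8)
The plan is to reduce to the case of simple closed curves via a density argument and then use the geometry of Dehn--Thurston coordinates, specifically the fact encoded in Proposition \ref{prop:DT_int} that any set of coordinates factors as $F = H \circ I_\Gamma$ for a finite collection $\Gamma = \{\gamma_i\}_{i=1}^k$ of simple closed curves and a continuous piecewise linear map $H \colon \mathbf{R}^k \to \IT^{3g-3}$. Since $H$ is continuous and piecewise linear, its restriction to any compact subset of $\mathbf{R}^k$ is Lipschitz with a constant depending only on $F$; thus, to bound the Lipschitz constant of $i(\alpha,\cdot) \circ F^{-1}$ on $\IT^{3g-3}$, it suffices to compare $F(\lambda)$ and $F(\mu)$ through the vectors $I_\Gamma(\lambda), I_\Gamma(\mu) \in \mathbf{R}^k$ and to show that $|i(\alpha,\lambda) - i(\alpha,\mu)|$ is controlled by $\|I_\Gamma(\lambda) - I_\Gamma(\mu)\|$ uniformly for $\alpha$ in a compact set $K \subseteq \mathcal{C}_g$.

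First I would fix a set of Dehn--Thurston coordinates and the associated collection $\Gamma$ and map $H$ from Proposition \ref{prop:DT_int}. The key analytic input is a uniform estimate of the form: there is a constant $c = c(F,K) > 0$ such that for all $\alpha \in K$ and all $\lambda, \mu \in \mf$,
\[
|i(\alpha,\lambda) - i(\alpha,\mu)| \leq c \cdot \|I_\Gamma(\lambda) - I_\Gamma(\mu)\|_1.
\]
To prove this, I would use bilinearity and continuity of the intersection pairing on $\mathcal{C}_g$ together with the structure of Dehn--Thurston coordinates: a singular measured foliation is determined by, and can be built from, its intersection and twisting data along a pants decomposition, and locally (in each linearity region of the PL structure on $\IT^{3g-3}$) a foliation is a nonnegative linear combination of finitely many ``vertex'' foliations with coefficients that are PL functions of the $I_\Gamma$-coordinates. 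Then $i(\alpha,\cdot)$, being the restriction of a bilinear pairing, is itself PL in these coordinates on each region, and on the compact set $K$ the relevant slopes are bounded uniformly because $i(\alpha, \cdot)$ depends continuously on $\alpha \in \mathcal{C}_g$ and the vertex foliations are a fixed finite set. Combining this with the Lipschitz property of $H$ on compacta and the homogeneity of both $F$ and $i(\alpha,\cdot)$ under the $\mathbf{R}^+$-scaling (which lets one reduce to a compact ``slice'' of $\mf$, e.g.\ $F^{-1}$ of a sphere in $\IT^{3g-3}$) yields the desired uniform Lipschitz constant $L = L(F,K)$.

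The main obstacle I anticipate is making the ``$i(\alpha,\cdot)$ is piecewise linear in Dehn--Thurston coordinates with uniformly bounded slopes over $\alpha \in K$'' step fully rigorous, since one must track how $i(\alpha, \cdot)$ behaves near the boundary of the linearity regions (where the PL structure on $\IT^{3g-3}$ and the twisting coordinate $\IT = \mathbf{R}^2/\langle -1\rangle$ have their cone-like singularities) and near $\lambda$ with some coordinate equal to zero, where the combinatorial type of the reduced foliation changes. The Luo--Stong argument \cite{LS02} handles this for Liouville currents of hyperbolic metrics; the general case follows the same scheme once one observes that the only properties of $i(\alpha,\cdot)$ used are its continuity, homogeneity, and the fact that it arises from a bilinear pairing on geodesic currents, all of which hold for arbitrary $\alpha \in \mathcal{C}_g$, with the constants becoming uniform precisely because $K$ is compact and $\mathcal{PC}_g$ is compact. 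I would defer the detailed verification to Appendix B as the statement indicates, carrying out there the explicit piecewise-linear formulas for $i(\alpha,\cdot)$ in terms of intersection and twisting numbers along the pants curves and bounding the coefficients uniformly over $K$.
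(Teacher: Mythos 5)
The step on which your whole argument rests --- that on each linearity region a foliation is a nonnegative linear combination of finitely many ``vertex'' foliations with PL coefficients, so that bilinearity of $i(\cdot,\cdot)$ makes $i(\alpha,\cdot)$ piecewise linear in the coordinates --- is not valid. Addition of train track or Dehn--Thurston coordinates does not correspond to addition of geodesic currents: the parametrization of a coordinate cone by points of $\mf \subseteq \mathcal{C}_g$ is not affine, so bilinearity of the pairing gives no linearity of $i(\alpha,\cdot)\circ F^{-1}$ on cells. In general one only has \emph{convexity} of $i(\alpha,\cdot)$ in each maximal train track chart, and this is exactly Mirzakhani's nontrivial input (Theorem \ref{theo:current_convex}); piecewise linearity is false for general currents, e.g.\ for the Liouville current of a hyperbolic metric the function $\lambda \mapsto \ell_\lambda(X)$ is convex but strictly subadditive on crossing pairs of curves carried by a chart (smoothing an intersection strictly shortens the geodesic representative, while the coordinate vectors add), so its restriction to coordinate cones is not PL --- which is also why the Luo--Stong result you invoke is a genuine theorem rather than a formality. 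The obstacle you flag (boundaries of linearity regions, vanishing coordinates) is therefore not the real one: the claim already fails in the interior of the cells, and your sketch contains no substitute for the convexity input. The paper's proof instead combines Theorem \ref{theo:current_convex} with the elementary fact that a convex, homogeneous function is Lipschitz on projectively compact subcones with constant controlled by its supremum on a slightly larger set (Propositions \ref{prop:convex_lip} and \ref{prop:convex_lip_2}); uniformity over $\alpha \in K$ then comes from continuity of $i(\cdot,\cdot)$ and compactness, and one covers $\mf$ by finitely many maximal train track charts, using that transition maps and the change to Dehn--Thurston coordinates are piecewise linear (Proposition \ref{prop:int_lip}).

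There is also a directional error in your reduction through $F = H \circ I_\Gamma$. Lipschitzness of $H$ bounds $d(F(\lambda),F(\mu))$ from \emph{above} by $\|I_\Gamma(\lambda)-I_\Gamma(\mu)\|$, whereas to deduce that $i(\alpha,\cdot)\circ F^{-1}$ is Lipschitz from your proposed estimate $|i(\alpha,\lambda)-i(\alpha,\mu)| \preceq \|I_\Gamma(\lambda)-I_\Gamma(\mu)\|_1$ you need the reverse inequality, i.e.\ that $I_\Gamma \circ F^{-1}$ is Lipschitz; that is itself an instance of the proposition being proved (applied to the finitely many currents $\gamma_i \in \Gamma$), so as written the reduction is circular. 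It could be repaired by showing separately that intersection with a fixed simple closed curve is PL, hence Lipschitz, in Dehn--Thurston coordinates, but even then the uniform estimate over $\alpha \in K$ carries all of the difficulty and, in your sketch, rests on the faulty PL claim above. (Proposition \ref{prop:DT_int} and the Lipschitz property of $H$ are used in the paper in the opposite direction, in the proof of Theorem \ref{theo:main_unif}, not in the proof of this proposition.) Finally, the opening ``density'' reduction does no work: Lipschitz estimates do not pass from a dense family of currents $\alpha$ unless the constants are already uniform, which is precisely what has to be proved.
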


\subsection*{Uniform estimates over simple closed curves.} We now state and prove the aforementioned versions of Theorems \ref{theo:main_strong} and \ref{theo:main_2_strong} that yield stronger conclusions for simple closed curves. The main feature of these results is the fact that the sparse subsets of mapping classes that need to be discarded for the corresponding estimates to hold can be chosen uniformly over all simple closed curves.

\begin{theorem}
	\label{theo:main_unif}
	There exists a constant $\kappa = \kappa(g) > 0$ such that for every compact subset $\mathcal{K} \subseteq \mathcal{T}_g$, there exists a constant $C  = C(\mathcal{K}) > 0$ with the following property. For every $X,Y \in \mathcal{K}$, there exists an $(X,Y,C,\kappa)$-sparse subset of mapping classes $M = M(X,Y) \subseteq \mcg$ such that for every compact subset $K \subseteq \mathcal{C}_g$, every geodesic current $\alpha \in K$, every simple closed curve $\beta$ on $S_g$, and every $\mc \in \mcg \setminus M$, if $r := d_\mathcal{T}(X,\mc.Y)$, $q_s := q_s(X,\mc.Y)$, and $q_e := q_e(X,\mc.Y)$, then
	\[
	i(\alpha,\mc.\beta) = i(\alpha,\Re(q_s)) \cdot i(\mc.\beta,\Im(q_e)) \cdot e^r + O_{\mathcal{K},K}\left(\ell_\beta(Y) \cdot e^{(1-\kappa)r}\right).
	\]
\end{theorem}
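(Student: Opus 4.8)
The plan is to reduce Theorem \ref{theo:main_unif} to Theorem \ref{theo:main_strong} by using Dehn-Thurston coordinates to replace an arbitrary simple closed curve $\beta$ by a fixed finite collection of simple closed curves, thereby making the sparse subset independent of $\beta$. First I would fix a set of Dehn-Thurston coordinates $F \colon \mf \to \IT^{3g-3}$ and invoke Proposition \ref{prop:DT_int} to obtain a finite collection $\Gamma = \{\gamma_i\}_{i=1}^k$ of simple closed curves and a continuous, piecewise linear map $H \colon \mathbf{R}^k \to \IT^{3g-3}$ with $F = H \circ I_\Gamma$. The key structural fact I want to exploit is that Dehn-Thurston coordinates are equivariant under the $\mcg$ action in the sense that the action of $\mcg$ on $\mf$, read through $F$, is by piecewise linear homeomorphisms that are controlled — more precisely, since $i(\cdot,\cdot)$ is $\mcg$-invariant, for any geodesic current $\alpha$ and any mapping class $\mc$ we have $i(\alpha,\mc.\beta) = i(\mc^{-1}.\alpha,\beta)$, and $\beta$ lives in the (integral, hence discrete) set of simple closed curves whose Dehn-Thurston coordinates lie on a cone. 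I expect to need a finite generation / finite type argument: every simple closed curve $\beta$ on $S_g$ is $\mcg$-equivalent to one of finitely many, but this alone will not suffice since the mapping class realizing the equivalence interacts with $\mc$. The cleaner route is through intersection numbers directly.

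Here is the approach I would actually carry out. Apply Theorem \ref{theo:main_strong} to each of the finitely many curves $\gamma_1,\dots,\gamma_k \in \Gamma$: for fixed $X,Y \in \mathcal{K}$ this produces, for each $i$, an $(X,Y,C_i,\kappa)$-sparse subset $M_i = M_i(X,Y,\gamma_i)$ and a constant $C_i = C_i(\mathcal{K},\gamma_i)$ such that for $\mc \notin M_i$,
\[
i(\alpha,\mc.\gamma_i) = i(\alpha,\Re(q_s)) \cdot i(\mc.\gamma_i,\Im(q_e)) \cdot e^r + O_{\mathcal{K}}\!\left(i(\alpha,X)\cdot \ell_{\gamma_i}(Y) \cdot e^{(1-\kappa)r}\right).
\]
Set $M := M_1 \cup \dots \cup M_k$ and $C := \max_i C_i$; then $M$ is $(X,Y,C,\kappa)$-sparse and depends only on $X,Y$ (and $g$, since $\Gamma$ depends only on $g$), not on the individual simple closed curve $\beta$. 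Now for an arbitrary simple closed curve $\beta$ and $\mc \notin M$, I want to pass from knowledge of $i(\alpha,\mc.\gamma_i)$ for all $i$ to knowledge of $i(\alpha,\mc.\beta)$. The mechanism is: $\mc.\beta$ is itself a simple closed curve, hence determines a point $F(\mc.\beta) \in \IT^{3g-3}$; by Proposition \ref{prop:DT_int}, $F(\mc.\beta) = H(I_\Gamma(\mc.\beta)) = H\big((i(\gamma_i,\mc.\beta))_i\big) = H\big((i(\mc^{-1}.\gamma_i,\beta))_i\big)$. Then, using that $i(\alpha,\cdot)$ is $L$-Lipschitz in the Dehn-Thurston coordinates on the compact set $K$ (Proposition \ref{prop:ml_lip}, applied uniformly for $\alpha \in K$), and that $i(\alpha,\mc.\beta) = i(\alpha,\cdot) \circ F^{-1}(F(\mc.\beta))$, I would compare $F(\mc.\beta)$ with a fixed "model" vector built from $\{i(\alpha,\Re(q_s)), i(\mc.\gamma_i,\Im(q_e))\}$.

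The heart of the argument — and the main obstacle — is to see that the leading-order formula is compatible with this piecewise linear change of coordinates. The clean way: the formula in Theorem \ref{theo:main_strong} says that, up to a power-saving error, $i(\alpha,\mc.\gamma_i) \approx i(\alpha,\Re(q_s)) \cdot e^r \cdot i(\mc.\gamma_i,\Im(q_e))$, and the factor $i(\alpha,\Re(q_s)) \cdot e^r$ is independent of $i$. Meanwhile $i(\mc.\gamma_i,\Im(q_e)) = i(\gamma_i, \mc^{-1}.\Im(q_e))$, so as $i$ ranges over $\Gamma$ the numbers $(i(\mc.\gamma_i,\Im(q_e)))_i$ are exactly the coordinates $I_\Gamma(\mc^{-1}.\Im(q_e))$, whose image under $H$ is $F(\mc^{-1}.\Im(q_e))$. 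Thus $(i(\alpha,\mc.\gamma_i))_i$ is, up to an additive error of size $O_{\mathcal{K},K}(\ell_{\gamma_i}(Y) e^{(1-\kappa)r})$ in each coordinate, equal to $i(\alpha,\Re(q_s)) e^r \cdot I_\Gamma(\mc^{-1}.\Im(q_e))$. Since $H$ is piecewise linear and $I_\Gamma(\mc.\beta) = (i(\gamma_i,\mc.\beta))_i$ with $\mc.\beta$ simple, one compares $I_\Gamma(\mc.\beta)$ with the scaled foliation: the point is that $i(\alpha,\mc.\beta)$ equals $i(\alpha, \cdot)$ evaluated at the measured foliation whose $\Gamma$-coordinates are $I_\Gamma(\mc.\beta)$, and the target leading term $i(\alpha,\Re(q_s))\cdot i(\mc.\beta,\Im(q_e))\cdot e^r$ is $i(\alpha,\cdot)$ evaluated — via bilinearity and the scaling equivariance of $F$ — at the foliation with $\Gamma$-coordinates $e^r \cdot i(\mc.\beta,\cdot)$-type data. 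The Lipschitz bound of Proposition \ref{prop:ml_lip}, together with the homogeneity of $F$, then converts the coordinatewise additive error into the desired additive error $O_{\mathcal{K},K}(\ell_\beta(Y) e^{(1-\kappa)r})$ for $i(\alpha,\mc.\beta)$, after noting $\ell_{\gamma_i}(Y) \preceq_{\mathcal{K}} 1$ and that $i(\alpha,X) \preceq_K 1$ for $\alpha$ in the compact set $K$, while the bound $\ell_\beta(Y)$ on the right is needed because $\mc.\beta$ can be long in $q_e$. I would need to be careful that the piecewise linearity of $H$ does not amplify errors (it doesn't: $H$ is globally Lipschitz since it is continuous and piecewise linear with finitely many pieces), and that the rescaling by $i(\alpha,\Re(q_s)) e^r$ interacts correctly with $F^{-1}$ — this is where homogeneity of Dehn-Thurston coordinates does the work. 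Finally I would record that $\kappa$, $C$, and $M$ depend only on the stated data, completing the proof.
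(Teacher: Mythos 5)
Your overall strategy (Dehn--Thurston coordinates, the finite collection $\Gamma=\{\gamma_i\}_{i=1}^k$ from Proposition \ref{prop:DT_int}, a union of sparse sets depending only on the $\gamma_i$, then the Lipschitz property of $i(\alpha,\cdot)\circ F^{-1}$ from Proposition \ref{prop:ml_lip}) is the same as the paper's, but the central step has a genuine gap. You apply Theorem \ref{theo:main_strong} with the fixed curve $\gamma_i$ in the ``closed curve'' slot and the current $\alpha$ in the ``current'' slot, which yields estimates for $i(\alpha,\mc.\gamma_i)$, i.e.\ for the vector $\big(i(\alpha,\mc.\gamma_i)\big)_i \approx i(\alpha,\Re(q_s))\,e^r\, I_\Gamma(\mc^{-1}.\Im(q_e))$. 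But to run the Dehn--Thurston/Lipschitz argument you must control $F(\mc.\beta)$, hence the vector $I_\Gamma(\mc.\beta)=\big(i(\gamma_i,\mc.\beta)\big)_i$, and no choice of $\alpha$ in your estimates produces the quantities $i(\gamma_i,\mc.\beta)$: your data concern intersections of $\alpha$ with the moved curves $\mc.\gamma_i$ and do not involve $\beta$ at all (a geodesic current is not pinned down, even approximately, by finitely many intersection numbers, and in any case the needed comparison is for $\mc.\beta$, not for $\alpha$). The sentence ``one compares $I_\Gamma(\mc.\beta)$ with the scaled foliation'' is exactly the estimate $i(\gamma_i,\mc.\beta) = i(\gamma_i,\Re(q_s))\, i(\mc.\beta,\Im(q_e))\, e^r + O\big(\ell_\beta(Y)\ell_{\gamma_i}(X)e^{(1-\kappa)r}\big)$ that your setup never establishes, and applying Theorem \ref{theo:main_strong} directly to the curve $\beta$ would reintroduce a sparse set depending on $\beta$, which is what you are trying to avoid.

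The missing idea is the symmetry/inversion trick: write $i(\gamma_i,\mc.\beta)=i(\beta,\mc^{-1}.\gamma_i)$ and apply Theorem \ref{theo:main_strong} at the swapped base points $(Y,X)$, with $\gamma_i$ as the fixed closed curve, the (current of the) arbitrary simple closed curve $\beta$ in the geodesic-current slot, and the mapping class $\mc^{-1}$. The sparse sets are then $M(Y,X,\gamma_i)$, depending only on the finitely many $\gamma_i$, and the set you discard is $M:=\bigcup_i M(Y,X,\gamma_i)^{-1}$ (your $M_i(X,Y,\gamma_i)$ without inverses is not the right set). One then uses the identities $\Re(q_s(X,\mc^{-1}.Y))=\mc^{-1}.\Im(q_e(Y,\mc.X))$ and $\Im(q_e(X,\mc^{-1}.Y))=\mc^{-1}.\Re(q_s(Y,\mc.X))$, together with $\mcg$-invariance of $i(\cdot,\cdot)$, to rewrite the leading term in terms of $q_s(X,\mc.Y)$ and $q_e(X,\mc.Y)$; this is also where the factor $\ell_\beta(Y)$ in the final error comes from, since $\beta$ now sits in the current slot and contributes $i(\beta,Y)\asymp\ell_\beta(Y)$ to the error, with $\ell_{\gamma_i}(X)\preceq_{\mathcal{K}}1$. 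With that coordinatewise estimate in hand, your remaining steps (Lipschitzness of $H$, homogeneity of $F$, and Proposition \ref{prop:ml_lip} uniformly over $\alpha\in K$) go through exactly as you describe and match the paper.
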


\begin{proof}
	Let $\kappa = \kappa(g) > 0$ be as in Theorem \ref{theo:main_strong}. Fix a set of Dehn-Thurston coordinates $F \colon \mf \to \IT^{3g-3}$ of $\mf$. By Proposition \ref{prop:DT_int}, there exists a finite collection $\Gamma := \smash{\{\gamma_j\}_{j=1}^k}$ of simple closed curves on $S_g$ and a continuous, piecewise linear map $H \colon \mathbf{R}^k \to \IT^{3g-3}$ such that $F = H \circ I_{\Gamma}$. Fix a compact subset $\mathcal{K} \subseteq \mathcal{T}_g$. For every $j \in \{1,\dots,k\}$ let $C_j := C(\mathcal{K},\beta_j) > 0$ be as in Theorem \ref{theo:main_strong}. Fix $X,Y \in \mathcal{K}$. For every $j \in \{1,\dots,k\}$ let $M_j := M(Y,X,\beta_j) \subseteq \mcg$ be the $(Y,X,C_j,\kappa)$-sparse subset of mapping classes provided by Theorem \ref{theo:main_strong}. For every $j \in \{1,\dots,k\}$ denote by $M_j^{-1} \subseteq \mcg$ the set of all inverses of $M_j$. Consider the subset of of mapping classes $M = M(X,Y) \subseteq \mcg$ given by
	\[
	M := \bigcup_{j=1}^k M_j^{-1}.
	\]
	
	Denote $C = C(\mathcal{K}) := \smash{\max_{j=1,\dots,k} C_j} > 0$. As the Teichmüller metric is mapping class group invariant, for every $j \in \{1,\dots,k\}$, the set $\smash{M_j^{-1}} \subseteq \mcg$ of all inverses of $M_j$ is $(X,Y,C,\kappa)$-sparse. As $M \subseteq \mcg$ is a finite union of $(X,Y,C,\kappa)$-sparse subsets, it is also $(X,Y,C,\kappa)$-sparse. 
	
	Recall that, for every $\mc \in \mcg$ such that $\mc.X \neq Y$, the following identities hold,
	\begin{gather*}
	\Re(q_s(X,\mc^{-1}.Y))  = \mc^{-1}.\Im(q_e(Y,\mc.X)),\\
	\Im(q_e(X,\mc^{-1}.Y))  = \mc^{-1}.\Re(q_s(Y,\mc.X)).
	\end{gather*}
	Fix a closed curve $\beta$ on $S_g$ and a mapping class $\mc \in \mcg \setminus M$. Denote $r := d_\mathcal{T}(X,\mc.Y)$, $q_s := q_s(X,\mc.Y)$, and $q_e := q_e(X,\mc.Y)$. By Theorem \ref{theo:main_strong}, for every $j \in \{1,\dots,k\}$,
	\[
	i(\beta,\mc^{-1}.\gamma_j) = i(\beta,\mc^{-1}.\Im(q_e)) \cdot i(\mc^{-1}.\gamma_j,\mc^{-1}.\Re(q_s)) \cdot e^r + O_\mathcal{K}\left(\ell_\beta(Y) \cdot \ell_{\gamma_j}(X) \cdot e^{(1-\kappa)r}\right).
	\]
	Rearranging terms in this estimate, we deduce that, for every $j \in \{1,\dots,k\}$,
	\begin{equation}
	\label{eq:A1}
	i(\gamma_j,\mc.\beta) =  i(\gamma_j,\Re(q_s)) \cdot i(\mc.\beta,\Im(q_e)) \cdot e^r + O_\mathcal{K}\left(\ell_{\beta}(Y) \cdot \ell_{\gamma_j}(X)  \cdot e^{(1-\kappa)r}\right).
	\end{equation}
	Denote by $\|\cdot\|$ the Euclidean norm on $\mathbf{R}^k$. It follows from (\ref{eq:A1}) that
	\begin{equation}
	\label{eq:A2}
	\| I_\Gamma(\mc.\beta) -  e^r \cdot i(\mc.\beta,\Im(q_e)) \cdot I_\Gamma(\Re(q_s))\| \preceq_{\mathcal{K}} \ell_{\beta}(Y) \cdot e^{(1-\kappa)r}.
	\end{equation}
	Denote by $d$ the product metric on $\IT^{3g-3}$. As the map $H \colon \mathbf{R}^k \to \IT^{3g-3}$ is continuous and piecewise linear, it is also Lipschitz. Recall that $F = H \circ I_\Gamma$. These facts together with (\ref{eq:A2}) allow us to deduce
	\begin{equation}
	\label{eq:A3}
	d(F(\mc.\beta),e^r \cdot i(\mc.\beta,\Im(q_e)) \cdot F(\Re(q_s))) \preceq_{\mathcal{K}} \ell_{\beta}(Y) \cdot e^{(1-\kappa)r}.
	\end{equation}
	
	Fix a geodesic current $\alpha \in K$. By Proposition \ref{prop:ml_lip}, the map $i(\alpha,\cdot) \circ F^{-1} \colon \IT^{3g-3} \to \mathbf{R}$ is $L$-Lipschitz for some constant $L = L(F,K) > 0$. This fact together with (\ref{eq:A3}) allows us to conclude
	\[
	i(\alpha,\mc.\beta) = i(\alpha,\Re(q_s)) \cdot i(\mc.\beta,\Im(q_e)) \cdot e^r + O_{\mathcal{K},K}\left( \ell_\beta(Y) \cdot e^{(1-\kappa)r}\right). \qedhere
	\]
\end{proof}

The same arguments used to deduce Theorem \ref{theo:main_2_strong} from Theorem \ref{theo:main_strong} yield the following stronger version of Theorem \ref{theo:main_2} as a consequence of Theorem \ref{theo:main_unif}.

\begin{theorem}
	\label{theo:main_2_unif}
	There exists a constant $\kappa = \kappa(g) > 0$ such that for every compact subset $\mathcal{K} \subseteq \mathcal{T}_g$, there exists a constant $C  = C(\mathcal{K}) > 0$ with the following property. For every $X,Y \in \mathcal{K}$, there exists an $(X,Y,C,\kappa)$-sparse subset of mapping classes $M = M(X,Y) \subseteq \mcg$ such that for every simple closed curve $\beta$ on $S_g$ and every $\mc \in \mcg \setminus M$, if $r := d_\mathcal{T}(X,\mc.Y)$ and $q_s := q_s(X,\mc.Y)$, then
	\[
	\ell_{\beta}(\pi(a_r q_s)) = i(\beta,\Re(q_s)) \cdot \ell_{\Im(q_s)}(\pi(a_r q_s))+ O_{\mathcal{K}}\left( \ell_\beta(Y) \cdot e^{(1-\kappa)r}\right). 
	\]
\end{theorem}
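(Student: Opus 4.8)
The plan is to deduce Theorem \ref{theo:main_2_unif} from Theorem \ref{theo:main_unif} by the same symmetrization trick used to derive Theorem \ref{theo:main_2_strong} from Theorem \ref{theo:main_strong}. First I would let $\kappa = \kappa(g) > 0$ be the constant provided by Theorem \ref{theo:main_unif}. Fix a compact subset $\mathcal{K} \subseteq \mathcal{T}_g$ and let $C = C(\mathcal{K}) > 0$ be the constant from Theorem \ref{theo:main_unif}. Given $X,Y \in \mathcal{K}$, apply Theorem \ref{theo:main_unif} with the roles of $X$ and $Y$ swapped to obtain a $(Y,X,C,\kappa)$-sparse subset $M = M(Y,X) \subseteq \mcg$, and then set $M^{-1} \subseteq \mcg$ to be the set of inverses; by mapping class group invariance of the Teichmüller metric this set is $(X,Y,C,\kappa)$-sparse.

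Next I would record the elementary identities relating the start and end quadratic differentials of a geodesic segment and its reverse: for every $\mc \in \mcg$ with $\mc.X \neq Y$,
\[
\Re(q_s(X,\mc^{-1}.Y)) = \mc^{-1}.\Im(q_e(Y,\mc.X)), \quad \Im(q_e(X,\mc^{-1}.Y)) = \mc^{-1}.\Re(q_s(Y,\mc.X)).
\]
Equivalently, writing $r := d_\mathcal{T}(X,\mc.Y)$, $q_s := q_s(X,\mc.Y)$, $q_e := q_e(X,\mc.Y)$ for a fixed $\mc \in \mcg \setminus M^{-1}$, one has $q_s = \mc.q_e(Y,\mc^{-1}.X)$-type relations allowing Theorem \ref{theo:main_unif} (applied to the pair $Y,X$, the geodesic current $\alpha := Y \in \mathcal{C}_g$, and the simple closed curve $\beta$) to be invoked. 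The output of that application is, after using the diagonal $\mcg$-invariance of the intersection pairing,
\[
i(Y,\mc^{-1}.\beta) = i(Y,\mc^{-1}.\Im(q_e)) \cdot i(\mc^{-1}.\beta,\mc^{-1}.\Re(q_s)) \cdot e^r + O_{\mathcal{K}}\big(\ell_\beta(Y)\cdot e^{(1-\kappa)r}\big),
\]
using that $i(Y,X)$ is bounded on the compact set $\mathcal{K}$. Then I would rewrite $i(Y,\mc^{-1}.\beta) = i(\mc.Y,\beta) = \ell_\beta(\mc.Y)$ and $i(Y,\mc^{-1}.\Im(q_e)) = i(\mc.Y,\Im(q_e)) = \ell_{\Im(q_e)}(\mc.Y)$ using the Liouville current description, and $i(\mc^{-1}.\beta,\mc^{-1}.\Re(q_s)) = i(\beta,\Re(q_s))$.

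Finally I would translate into the language of the Teichmüller geodesic flow: since $q_e = a_r q_s$ we have $\mc.Y = \pi(q_e) = \pi(a_r q_s)$ and $\Im(q_s) = e^r \cdot \Im(q_e)$, so $\ell_{\Im(q_e)}(\mc.Y) \cdot e^r = \ell_{\Im(q_s)}(\pi(a_r q_s))$, and hence
\[
\ell_{\beta}(\pi(a_r q_s)) = i(\beta,\Re(q_s)) \cdot \ell_{\Im(q_s)}(\pi(a_r q_s)) + O_{\mathcal{K}}\big(\ell_\beta(Y)\cdot e^{(1-\kappa)r}\big),
\]
which is the claimed estimate. I do not expect any genuine obstacle here: every step is either a bookkeeping identity for reversed Teichmüller geodesics, the homogeneity of $\Im(\cdot)$ under the geodesic flow, or a direct quotation of Theorem \ref{theo:main_unif}; the only point requiring a word of care is that the error term in Theorem \ref{theo:main_unif} (applied with $\alpha = Y$ and the compact set $K = \{Y\}$, or a fixed compact neighborhood of $\mathcal{K}$'s Liouville currents) depends only on $\mathcal{K}$, which is exactly why the uniform-over-$\beta$ version is available and the sparse set $M(X,Y)$ does not depend on $\beta$.
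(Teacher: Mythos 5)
Your argument is correct and is exactly the paper's route: the paper deduces Theorem \ref{theo:main_2_unif} by repeating, verbatim, the symmetrization used to get Theorem \ref{theo:main_2_strong} from Theorem \ref{theo:main_strong}, only quoting Theorem \ref{theo:main_unif} (with the Liouville currents of points of $\mathcal{K}$ forming the compact set $K \subseteq \mathcal{C}_g$) so that the inverted sparse set $M^{-1}$ is independent of $\beta$. The only cosmetic slip is invoking boundedness of $i(Y,X)$: the error term of Theorem \ref{theo:main_unif} applied to the pair $(Y,X)$ is $O_{\mathcal{K},K}(\ell_\beta(X)e^{(1-\kappa)r})$, which you convert to $\ell_\beta(Y)$ via $\ell_\beta(X) \asymp_{\mathcal{K}} \ell_\beta(Y)$, not via $i(Y,X)$.
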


\subsection*{Comparing the Teichmüller and Thurston metrics.} Recall that $d_\mathrm{Thu}$ denotes the Thurston metric on $\mathcal{T}_g$. Recall that $\mathcal{S}_g$ denotes the set of all simple closed curves on $S_g$. By work of Thurston \cite{Thu98}, for every pair of marked hyperbolic structures $X,Y \in \mathcal{T}_g$,
\begin{equation}
\label{eq:Thu}
d_\mathrm{Thu}(X,Y) = \log\left(\sup_{\beta \in \mathcal{S}_g} \frac{\ell_{\beta}(Y)}{\ell_{\beta}(X)}\right).
\end{equation}

Recall that, for every marked hyperbolic structure $X \in \mathcal{T}_g$, we denote by $D_X \colon \mf \to \smash{\mathbf{R}^+}$ the function which to every singular measured foliation $\lambda \in \mf$ assigns the value
\begin{equation*}
D_X(\lambda) := \sup_{\beta \in \mathcal{S}_g}\left( \frac{i(\beta,\lambda)}{\ell_\beta(X)}\right).
\end{equation*}

Combining (\ref{eq:Thu}) with Theorem \ref{theo:main_2_unif}, we deduce the following stronger version of Theorem \ref{theo:main_3}.

\begin{theorem}
	\label{theo:main_3_strong}
	There exists a constant $\kappa = \kappa(g) > 0$ such that for every compact subset $\mathcal{K} \subseteq \mathcal{T}_g$, there exists a constant $C  = C(\mathcal{K}) > 0$ with the following property. For every $X,Y \in \mathcal{K}$, there exists an $(X,Y,C,\kappa)$-sparse subset of mapping classes $M = M(X,Y) \subseteq \mcg$ such that for every $\mc \in \mcg \setminus M$, if $r := d_\mathcal{T}(X,\mc.Y)$, $q_s := q_s(X,\mc.Y)$, and $q_e := q_e(X,\mc.Y)$, then
	\[
	d_\mathrm{Thu}(X,\mc.Y) = d_\mathcal{T}(X,\mc.Y) + \log D_X(\Re(q_s)) + \log \ell_{\Im(q_e)}(\mc.Y) + O_\mathcal{K}\left(e^{-\kappa r}\right).
	\]
\end{theorem}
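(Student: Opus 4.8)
The plan is to take logarithms in Thurston's formula \eqref{eq:Thu} and feed in the uniform estimate of Theorem \ref{theo:main_2_unif}, taking care that the error term controls the supremum over all $\beta \in \mathcal{S}_g$ uniformly. First I would fix the constant $\kappa = \kappa(g) > 0$ and the constant $C = C(\mathcal{K}) > 0$ provided by Theorem \ref{theo:main_2_unif}, together with the associated $(X,Y,C,\kappa)$-sparse subset $M = M(X,Y) \subseteq \mcg$. Possibly after shrinking $\kappa$ and enlarging $C$, I would enlarge $M$ by a further $(X,Y,C,\kappa)$-sparse subset chosen so that for $\mc \in \mcg \setminus M$ one has an a priori comparison $d_\mathrm{Thu}(X,\mc.Y) \asymp_\mathcal{K} r$ and $\ell_{\Im(q_e)}(\mc.Y) \asymp_\mathcal{K} 1$ (the latter follows since $q_e \in \pi^{-1}(\mathcal{K})$ lies in a compact set and $\Im(q_e)$ projectively ranges over a compact subset of $\mf$; the former from the standard bi-Lipschitz comparison of the Teichmüller and Thurston metrics on compact parts together with the triangle inequality and the fact that $X,Y$ vary over the compact $\mathcal{K}$). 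These bounds are what let us pass from additive errors of size $e^{(1-\kappa)r}$ inside a ratio to additive errors of size $e^{-\kappa r}$ after taking logarithms.

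Next, fix $\mc \in \mcg \setminus M$ and write $r := d_\mathcal{T}(X,\mc.Y)$, $q_s := q_s(X,\mc.Y)$, $q_e := q_e(X,\mc.Y)$. For each simple closed curve $\beta \in \mathcal{S}_g$, Theorem \ref{theo:main_2_unif} gives
\[
\ell_{\beta}(\mc.Y) = \ell_\beta(\pi(a_r q_s)) = i(\beta,\Re(q_s)) \cdot \ell_{\Im(q_s)}(\pi(a_r q_s)) + O_\mathcal{K}\left(\ell_\beta(Y) \cdot e^{(1-\kappa)r}\right),
\]
where crucially the sparse set $M$ and the implied constant are independent of $\beta$. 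Since $\Im(q_s) = e^r \Im(q_e)$ and $\pi(a_r q_s) = \mc.Y$, this reads $\ell_\beta(\mc.Y) = e^r \cdot i(\beta,\Re(q_s)) \cdot \ell_{\Im(q_e)}(\mc.Y) + O_\mathcal{K}(\ell_\beta(Y) e^{(1-\kappa)r})$. Dividing by $\ell_\beta(X)$ and using $\ell_\beta(Y)/\ell_\beta(X) \leq e^{d_\mathrm{Thu}(X,Y)} \asymp_\mathcal{K} 1$ for $X,Y \in \mathcal{K}$,
\[
\frac{\ell_\beta(\mc.Y)}{\ell_\beta(X)} = e^r \cdot \ell_{\Im(q_e)}(\mc.Y) \cdot \frac{i(\beta,\Re(q_s))}{\ell_\beta(X)} + O_\mathcal{K}\left(e^{(1-\kappa)r}\right).
\]
Taking the supremum over $\beta \in \mathcal{S}_g$, and noting that $e^r \cdot \ell_{\Im(q_e)}(\mc.Y) \asymp_\mathcal{K} e^r$ is a positive factor not depending on $\beta$, the supremum of the main term is exactly $e^r \cdot \ell_{\Im(q_e)}(\mc.Y) \cdot D_X(\Re(q_s))$; since the error is uniform in $\beta$, one gets
\[
\sup_{\beta \in \mathcal{S}_g} \frac{\ell_\beta(\mc.Y)}{\ell_\beta(X)} = e^r \cdot \ell_{\Im(q_e)}(\mc.Y) \cdot D_X(\Re(q_s)) + O_\mathcal{K}\left(e^{(1-\kappa)r}\right).
\]

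Finally, I would apply $\log$ to both sides. Using $\eqref{eq:Thu}$ on the left and the fact that the main term $e^r \cdot \ell_{\Im(q_e)}(\mc.Y) \cdot D_X(\Re(q_s))$ is $\asymp_\mathcal{K} e^r$ (here $D_X(\Re(q_s))$ is bounded above and below by positive constants depending only on $\mathcal{K}$, since $X \in \mathcal{K}$, $\Re(q_s)$ ranges projectively over a compact subset of $\mf$, and $D_X$ is continuous and positive), the additive error $O_\mathcal{K}(e^{(1-\kappa)r})$ becomes a multiplicative factor $1 + O_\mathcal{K}(e^{-\kappa r})$ inside the logarithm, hence an additive $O_\mathcal{K}(e^{-\kappa r})$ after taking $\log$ and using $\log(1+x) = O(|x|)$ for $|x|$ small (absorbing finitely many small $r$ into the constant $C$). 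This yields
\[
d_\mathrm{Thu}(X,\mc.Y) = r + \log \ell_{\Im(q_e)}(\mc.Y) + \log D_X(\Re(q_s)) + O_\mathcal{K}\left(e^{-\kappa r}\right),
\]
which is the claim since $r = d_\mathcal{T}(X,\mc.Y)$. The main obstacle is the bookkeeping to justify that the supremum over the infinite family $\mathcal{S}_g$ interacts correctly with the $\beta$-uniform error term and with the logarithm; in particular one must verify that $D_X(\Re(q_s))$ and $\ell_{\Im(q_e)}(\mc.Y)$ are bounded away from $0$ and $\infty$ uniformly over $\mc \in \mcg \setminus M$ (this is where the Liouville-current/projective-compactness arguments from the proof of Theorem \ref{theo:main_strong} are reused), since otherwise the passage from an additive to a logarithmic error fails.
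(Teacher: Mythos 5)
Your proposal is correct and essentially identical to the paper's proof: apply Theorem \ref{theo:main_2_unif} uniformly over $\beta \in \mathcal{S}_g$, divide by $\ell_\beta(X)$, use $\ell_\beta(Y) \preceq_{\mathcal{K}} \ell_\beta(X)$, take the supremum via (\ref{eq:Thu}), and pass to logarithms using the lower bound $D_X(\Re(q_s))\cdot\ell_{\Im(q_e)}(\mc.Y) \succeq_{\mathcal{K}} 1$ (the paper does this with the mean value theorem and absorbs small $r$ into the implicit constant, so no enlargement of $M$ beyond that of Theorem \ref{theo:main_2_unif} is needed). One justification should be repaired: the bound on $\ell_{\Im(q_e)}(\mc.Y)$ comes from $\mc^{-1}.q_e \in \pi^{-1}(\mathcal{K})$ via the equivariance identity $\ell_{\Im(q_e)}(\mc.Y) = \ell_{\mc^{-1}.\Im(q_e)}(Y)$, not from the assertion that $q_e \in \pi^{-1}(\mathcal{K})$, which is false since $q_e \in S(\mc.Y)$ with $\mc.Y \in \mcg\cdot\mathcal{K}$.
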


\begin{proof}
	Let $\kappa = \kappa(g) > 0$ be as in Theorem \ref{theo:main_2_unif}. Fix a compact subset $\mathcal{K} \subseteq \mathcal{T}_g$. Let $C = C(\mathcal{K}) > 0$ be as in Theorem \ref{theo:main_2_unif}. Fix $X,Y \in \mathcal{K}$. Let $M = M(X,Y) \subseteq \mcg$ be the $(X,Y,C,\kappa)$-sparse subset of mapping classes provided by Theorem \ref{theo:main_2_unif}.
	
	Fix $\mc \in \mcg \setminus M$. Denote $r := d_\mathcal{T}(X,\mc.Y)$, $q_s := q_s(X,\mc.Y)$, and $q_e := q_e(X,\mc.Y)$. By Theorem \ref{theo:main_2_unif}, for every simple closed curve $\beta$ on $S_g$,
	\[
	\ell_{\beta}(\mc.Y) = i(\beta,\Re(q_s)) \cdot \ell_{\Im(q_e)}(\mc.Y) \cdot e^r + O_\mathcal{K}\left( \ell_{\beta}(Y) \cdot e^{(1-\kappa)r}\right).
	\]
	Dividing this estimate by $\ell_{\beta}(X)$ for every simple closed curve $\beta$ on $S_g$ yields
	\begin{equation}
	\label{eq:B1}
	\frac{\ell_{\beta}(\mc.Y)}{\ell_\beta(X)} = \frac{i(\beta,\Re(q_s))}{\ell_\beta(X)} \cdot \ell_{\Im(q_e)}(\mc.Y) \cdot e^r + O_\mathcal{K}\left(\frac{\ell_\beta(Y)}{\ell_\beta(X)} \cdot e^{(1-\kappa)r}\right).
	\end{equation}
	As $\mathcal{K} \subseteq \mathcal{T}_g$ is compact, as $X,Y \in \mathcal{K}$, and as $\mf$ is projectively compact, $\ell_\beta(Y)  \preceq_{\mathcal{K}} \ell_\beta(X)$ for every simple closed curve $\beta$ on $S_g$. This fact together with (\ref{eq:B1}) implies
	\begin{equation*}
	\frac{\ell_{\beta}(\mc.Y)}{\ell_\beta(X)} = \frac{i(\beta,\Re(q_s))}{\ell_\beta(X)} \cdot \ell_{\Im(q_e)}(\mc.Y) \cdot e^r + O_\mathcal{K}\left(e^{(1-\kappa)r}\right).
	\end{equation*}
	Taking supremum over all simple closed curves $\beta$ on $S_g$ and using (\ref{eq:Thu}) we deduce
	\[
	\exp({d_\mathrm{Thu}(X,\mc.Y)}) = D_X(\Re(q_s)) \cdot \ell_{\Im(q_e)}(\mc.Y) \cdot e^r + O_\mathcal{K} \left( e^{(1-\kappa)r}\right).
	\]
	More explicitly, there exists a constant $A = A(\mathcal{K})  > 0$ such that 
	\begin{equation}
	\label{eq:Y1}
	\left|\exp(d_\mathrm{Thu}(X,\mc.Y)) - D_X(\Re(q_s)) \cdot \ell_{\Im(q_e)}(\mc.Y) \cdot e^r \right| \leq A \cdot e^{(1-\kappa)r}.
	\end{equation}
	
	By the mean value theorem, for every $x,y \in \smash{\mathbf{R}^+}$,
	\begin{equation}
	\label{eq:mvt}
	|\log(x) - \log(y)| \leq  \max\left\lbrace \frac{1}{x}, \frac{1}{y} \right\rbrace \cdot |x -y|.
	\end{equation}
	As $\mathcal{K} \subseteq \mathcal{T}_g$ is compact, as $X,Y \in \mathcal{K}$, and as $q_s, \mc^{-1}.q_e \in \pi^{-1}(\mathcal{K})$,
	\[
	D_X(\Re(q_s)) \cdot \ell_{\Im(q_e)}(\mc.Y) = D_X(\Re(q_s)) \cdot \ell_{\mc^{-1}.\Im(q_e)}(Y) \succeq_{\mathcal{K}} 1.
	\]
	Thus, there exists a constant $r_0 = r_0(\mathcal{K}) > 0$ such that if $r > r_0$, then 
	\begin{equation}
	\label{eq:Y2}
	D_X(\Re(q_s)) \cdot \ell_{\Im(q_e)}(\mc.Y) \cdot e^r - A \cdot e^{(1-\kappa)r} \succeq_{\mathcal{K}} e^r.
	\end{equation}
	It follows from (\ref{eq:mvt}), (\ref{eq:Y2}), and (\ref{eq:Y1}) that, under the assumption $r > r_0$,
	\[
	d_{\mathrm{Thu}}(X,\mc.Y) = r + \log D_X(\Re(q_s)) + \log \ell_{\Im(q_e)}(\mc.Y) + O_\mathcal{K}\left(e^{-\kappa r} \right).
	\]
	The same estimate holds without the assumption $r > r_0$ by increasing the implicit constant.  Recalling that $r := d_\mathcal{T}(X,\mc.Y)$, we conclude
	\[
	d_\mathrm{Thu}(X,\mc.Y) = d_\mathcal{T}(X,\mc.Y) + \log D_X(\Re(q_s)) + \log \ell_{\Im(q_e)}(\mc.Y) + O_\mathcal{K}\left(e^{-\kappa r}\right). \qedhere
	\]
\end{proof}

\appendix

\section{Horizontally thin sectors of Teichmüller space}

\subsection*{Outline of this appendix.} In this appendix we give a complete proof of Theorem \ref{theo:slope_sector_meas}. We follow the general outline of the proof of \cite[Theorem 7.16]{Ara20b}. 
We refer the reader to \cite{Ara20b} for more details on some of the methods used in the proof.

\subsection*{Period coordinates.} For the rest of this appendix we fix an integer $g \geq 2$ and a connected, oriented, closed surface $S_g$ of genus $g$. Denote by $\qt$ the Teichmüller space of marked holomorphic quadratic differentials on $S_g$. This space can be stratified according to the order of the zeroes and the condition of being the square of an Abelian differential. Let $\mathcal{Q} \subseteq \qt$ be one of these strata. Given a quadratic differential $q_0 \in \mathcal{Q}$, recording the holonomy of enough edges of a triangulation by saddle connections of $q_0$ with respect to quadratic differentials $q \in \mathcal{Q}$ sufficiently close to $q_0$ provides local coordinates near $q_0$ for the stratum $\mathcal{Q}$. These coordinates are called \textit{period coordinates}.

More precisely, for every quadratic differential $q_0 \in \mathcal{Q}$ and every triangulation by saddle connections $\Delta$ of $q_0$, there exists a finite collection $\{\gamma_i\}_{i=1}^d$ with $d := \mathrm{dim}_{\mathbf{C}}(\mathcal{Q})$ of oriented edges of $\Delta$ and an open subset $U \subseteq \mathcal{Q}$ containing $q_0$ such that the triangulation $\Delta$ persists on every $q \in U$, such that a branch of $\sqrt{q}$ can be chosen continuously over all $q \in U$, and such that the map $\Phi \colon U \to \mathbf{C}^d$ defined for every $q \in U$ as follows is a diffeomorphism,
\[
\Phi(q) := \left( \int_{\gamma_i} \sqrt{q}\right)_{i=1}^d \in \mathbf{C}^d.
\]

\subsection*{The strongly unstable foliation.} Recall that $\qut$ denotes the Teichmüller space of marked, holomorphic, unit area quadratic differentials on $S_g$. Recall that $\mathcal{MF}_g$ denotes the space of singular measured foliations on $S_g$ and that $\Re(q),\Im(q) \in \mf$ denote the vertical an horizontal foliations of $q \in \qut$. For every $q_0 \in \qut$, its \textit{strongly unstable leaf} $\allowbreak\alpha^{uu}(q_0) \subseteq \qut$ is given by
\[
\alpha^{uu}(q_0) := \{ q \in \qut \ | \ \Im(q) = \Im(q_0) \}. 
\]
In period coordinates, the intersection of $\allowbreak\alpha^{uu}(q_0)$ with the stratum containing $q_0$ is given by all quadratic differentials whose periods have the same imaginary parts as those of $q_0$. The strongly unstable leaves give rise to a topological foliation $\mathcal{F}^{uu}$ called the \textit{strongly unstable foliation} of $\qut$.  

\subsection*{The Euclidean metric.} Period coordinates give rise to a local Lipschitz class of metrics on $\qut$. This class contains metrics induced by fiberwise norms on the tangent bundle of $\qut$. Explicit constructions of these metrics can be found in \cite[\S3.5]{ABEM12} and \cite[Definition 3.2]{F17}. For the rest of this appendix we fix one such metric, denote it by $d_E$, and refer to it as the \textit{Euclidean metric} of $\qut$. Denote by $\| \cdot \|_E$ the corresponding fiberwise norm on the tangent bundle of $\qut$. 

Recall that $\mathcal{T}_g$ denotes the Teichmüller space of marked complex structures on $S_g$. Recall that $\pi \colon \qut \to \tt$ denotes the natural projection. For the rest of this appendix we denote by $\mathcal{Q} \subseteq \qt$ the principal stratum of $\qt$, i.e., the stratum of marked, unit area, holomorphic quadratic differentials on $S_g$ all of whose zeroes are simple. This stratum is an open subset of $\qt$ and has complex dimension $\mathrm{dim}_{\mathbf{C}}(\mathcal{Q}^1) = 6g-6$. Denote by $\mathcal{Q}^1 \subseteq \qut$ the unit area locus of this stratum. On $\mathbf{C}^{6g-6}$ consider the standard Euclidean norm $\| \cdot \|$.  The Euclidean metric satisfies the following fundamental property.

\begin{lemma}
	\label{lem:euc_fund_prop}
	Let $\mathcal{K} \subseteq \tt$ be a compact subset. Consider a period coordinates chart $\Phi \colon U \to \mathbf{C}^{6g-6}$ defined on an open subset $U \subseteq \mathcal{Q}$. Then, for any $q \in U \cap \pi^{-1}(\mathcal{K})$ and any $v \in T_q \qut$,
	\[
	\| v \|_E \asymp_{\Phi,\mathcal{K}} \|d\Phi_q v\|.
	\]
\end{lemma}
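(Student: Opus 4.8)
\emph{Approach.} The plan is to rephrase the claim as the boundedness of a single continuous ``distortion function'' attached to the chart $\Phi$, and then to obtain the required uniformity by reducing to a compact piece of the principal stratum of the moduli space $\qum$.

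\emph{Step 1: the distortion function.} Recall that $d_E$ arises from a continuous fiberwise norm $\|\cdot\|_E$ on the tangent bundle of $\qut$, and that, by definition, $d_E$ lies in the local Lipschitz class of metrics induced by period coordinates. For the given chart $\Phi \colon U \to \mathbf{C}^{6g-6}$ and each $q \in U \cap \qut$, the differential $d\Phi_q$ restricts to an injective linear map $T_q\qut \to \mathbf{C}^{6g-6}$, so $v \mapsto \|d\Phi_q v\|$ is a norm on $T_q\qut$. I would introduce
\[
\Lambda_\Phi(q) := \max\left\{ \ \sup_{0 \neq v \in T_q\qut} \frac{\|v\|_E}{\|d\Phi_q v\|}, \ \ \sup_{0 \neq v \in T_q\qut} \frac{\|d\Phi_q v\|}{\|v\|_E} \ \right\} \in [1,\infty).
\]
This is finite for each $q$ (a ratio of two norms on a fixed finite-dimensional space) and depends continuously on $q$, since $\|\cdot\|_E$, $d\Phi$, and the subspaces $T_q\qut$ (kernels of the area differential) all vary continuously. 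The lemma is exactly the assertion that $\Lambda_\Phi$ is bounded on $U \cap \pi^{-1}(\mathcal{K})$.

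\emph{Step 2: a systole bound and reduction to moduli space.} Since $\pi \colon \qut \to \tt$ is a sphere bundle, $\pi^{-1}(\mathcal{K})$ is compact; as $\ell_{\min}$ is continuous and strictly positive on $\qut$ (every genus $\geq 2$ quadratic differential has a shortest saddle connection of positive length), there is $\delta_0 = \delta_0(\mathcal{K}) > 0$ with $\ell_{\min}(q) \geq \delta_0$ for all $q \in \pi^{-1}(\mathcal{K})$, hence for all $q \in U \cap \pi^{-1}(\mathcal{K})$. The point I would then extract from the constructions of the Euclidean metric in \cite[\S3.5]{ABEM12} and \cite[Definition 3.2]{F17}, together with the affineness of transitions between period coordinates, is that the distortion of $d_E$ relative to period coordinates is governed by the flat geometry alone: covering the compact subset $\{q : \ell_{\min}(q) \geq \delta_0\}$ of the principal stratum of $\qum$ by finitely many period-coordinate charts on which $\|\cdot\|_E$ is bi-Lipschitz to the standard norm with a uniform constant (possible by the local Lipschitz property and compactness), and using the systole lower bound to keep the combinatorics, hence the linear parts, of the finitely many relevant affine transition maps to $\Phi$ under control, one obtains a bound for $\Lambda_\Phi$ that is uniform over $U \cap \{\ell_{\min} \geq \delta_0\} \supseteq U \cap \pi^{-1}(\mathcal{K})$ and depends only on $\delta_0$ (hence $\mathcal{K}$) and on $\Phi$. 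This gives $\Lambda_\Phi(q) \preceq_{\Phi,\mathcal{K}} 1$ on $U \cap \pi^{-1}(\mathcal{K})$, i.e.\ $\|v\|_E \asymp_{\Phi,\mathcal{K}} \|d\Phi_q v\|$.

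\emph{Main obstacle.} The substantive step is the last one: $U \cap \pi^{-1}(\mathcal{K})$ is not compact, so continuity of $\Lambda_\Phi$ by itself is not enough, and one must genuinely use that the local Lipschitz comparison constant between $d_E$ and period coordinates is uniform over moduli-space compacta and is controlled by $\ell_{\min}$ — in effect, that away from short saddle connections all period-coordinate charts are uniformly comparable. Everything else (continuity of $\Lambda_\Phi$, positivity and continuity of $\ell_{\min}$, compactness of $\pi^{-1}(\mathcal{K})$) is routine and I would not dwell on it.
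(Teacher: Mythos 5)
There is a genuine gap, and it sits exactly at the step you yourself flagged as the main obstacle. The paper offers no proof of this lemma at all: it is taken as a defining property of the Euclidean metric, with the uniformity over $\pi^{-1}(\mathcal{K})$ coming from the constructions in \cite{ABEM12} and \cite{F17}. So your argument must stand on its own, and its pivotal reduction is false: $\ell_{\min}$ is \emph{not} continuous on $\qut$ (it is continuous only on each stratum, which is why the paper is careful to say this of $v_\beta$ and $h_\beta$), and it is \emph{not} bounded below on the compact set $\pi^{-1}(\mathcal{K})$. Even inside a single fiber $S(X)$ there are unit-area quadratic differentials in the principal stratum whose two nearby simple zeroes are joined by a saddle connection of arbitrarily small length; as $q$ approaches the multiple-zero locus, $\ell_{\min}(q)\to 0$ while $\pi(q)$ stays in $\mathcal{K}$. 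This is precisely why the paper needs the compact subsets $K_\delta=\{q : \ell_{\min}(q)\geq\delta\}$ of the stratum and the thin-sector estimates of \S 8 (Theorems \ref{theo:thin_sector_meas} and \ref{theo:sparse_1}); if a lower bound $\delta_0(\mathcal{K})$ for $\ell_{\min}$ on $\pi^{-1}(\mathcal{K})$ existed, those results would be vacuous. You appear to have conflated $\ell_{\min}$ (shortest saddle connection) with $\mathrm{sys}$ (shortest nontrivial closed curve), which is indeed bounded below on $\pi^{-1}(\mathcal{K})$.

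Consequently your Step 2 covers only $U\cap\{\ell_{\min}\geq\delta_0\}$, which does not contain $U\cap\pi^{-1}(\mathcal{K})$: the omitted region, points of $U$ over $\mathcal{K}$ close to the boundary of the principal stratum, is exactly where Delaunay triangulations degenerate and where uniform comparability of $\|\cdot\|_E$ with a fixed chart $\Phi$ is delicate, and it is exactly the non-compactness that Step 1 (finiteness and continuity of your distortion function $\Lambda_\Phi$, which is fine as far as it goes) cannot handle by itself. To repair the argument you must either invoke the property directly from the construction of $d_E$ in \cite[\S 3.5]{ABEM12} and \cite{F17} (there the comparison constants are uniform over compact subsets of $\mathcal{T}_g$, not deduced from a lower bound on $\ell_{\min}$), or give an argument that the linear transitions between $\Phi$ and the charts used to define $\|\cdot\|_E$ remain uniformly bounded on $U\cap\pi^{-1}(\mathcal{K})$ even as $\ell_{\min}\to 0$; as written, the proof does not establish the lemma.
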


Recall that $d_\mathcal{T}$ denotes the Teichmüller metric on $\tt$. The following result is a direct consequence of the uniform continuity on compact subsets of the projection $\pi \colon \qut \to \tt$. See \cite[Theorem 1.2]{F17} for a related quantitative estimate.

\begin{lemma}
	\cite[Lemma 3.9]{ABEM12}
	\label{lem:ET_1}
	Let $\mathcal{K} \subseteq \tt$ be a compact subset. There exists a continuous function $f \colon \mathbf{R}^+ \to \mathbf{R}^+$ with $f(r) \to 0$ as $r \to 0$ such that for any pair of distinct points $q_1,q_2 \in \pi^{-1}(\mathcal{K})$,
	\[
	d_\mathcal{T}(\pi(q_1),\pi(q_2)) \leq f(d_E(q_1,q_2)).
	\]
\end{lemma}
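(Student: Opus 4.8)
The statement to prove is Lemma \ref{lem:ET_1}: for a compact subset $\mathcal{K} \subseteq \tt$, there is a continuous function $f \colon \mathbf{R}^+ \to \mathbf{R}^+$ with $f(r) \to 0$ as $r \to 0$ such that $d_\mathcal{T}(\pi(q_1),\pi(q_2)) \leq f(d_E(q_1,q_2))$ for all $q_1, q_2 \in \pi^{-1}(\mathcal{K})$. The plan is to exploit the continuity of $\pi$ together with the local comparability between the Euclidean metric and coordinate charts, and to convert this into a modulus-of-continuity statement by a standard compactness argument.

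First I would observe that it suffices to produce, for each $\varepsilon > 0$, some $\delta = \delta(\varepsilon,\mathcal{K}) > 0$ such that $d_E(q_1,q_2) \leq \delta$ and $q_1,q_2 \in \pi^{-1}(\mathcal{K})$ imply $d_\mathcal{T}(\pi(q_1),\pi(q_2)) \leq \varepsilon$; the desired function $f$ is then obtained by setting $f(r) := \inf\{\varepsilon > 0 \colon r \leq \delta(\varepsilon,\mathcal{K})\}$ (and making a routine adjustment so that $f$ is continuous and $f(r)\to 0$). To establish this uniform continuity statement, I would argue by contradiction: if it fails, there is $\varepsilon_0 > 0$ and sequences $q_1^{(n)}, q_2^{(n)} \in \pi^{-1}(\mathcal{K})$ with $d_E(q_1^{(n)},q_2^{(n)}) \to 0$ but $d_\mathcal{T}(\pi(q_1^{(n)}),\pi(q_2^{(n)})) \geq \varepsilon_0$. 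The set $\pi^{-1}(\mathcal{K})$ is compact (it is a closed subset of the sphere bundle $\qut$ lying over the compact base $\mathcal{K}$, hence compact since the fibers $S(X)$ are spheres). So after passing to a subsequence both $q_1^{(n)}$ and $q_2^{(n)}$ converge in $\qut$; since $d_E(q_1^{(n)},q_2^{(n)}) \to 0$ and $d_E$ is a genuine metric inducing the topology of $\qut$ locally, the two limits coincide, say $q_1^{(n)}, q_2^{(n)} \to q_\infty \in \pi^{-1}(\mathcal{K})$. Then by continuity of $\pi$ (with respect to the manifold topology, equivalently the Teichmüller metric topology on $\tt$) we get $\pi(q_i^{(n)}) \to \pi(q_\infty)$, so $d_\mathcal{T}(\pi(q_1^{(n)}),\pi(q_2^{(n)})) \to 0$, contradicting the lower bound $\varepsilon_0$.

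There is one point that needs slight care: the relation between $d_E$ as a global object and the local Lipschitz class of metrics coming from period coordinates. The Euclidean metric $d_E$ is only defined via period coordinate charts and is a priori a local Lipschitz class; but we have fixed one representative, and on the compact set $\pi^{-1}(\mathcal{K})$ — which can be covered by finitely many period coordinate charts on which, by Lemma \ref{lem:euc_fund_prop}, $\|\cdot\|_E$ is comparable to the standard Euclidean norm pulled back from $\mathbf{C}^{6g-6}$ — the metric $d_E$ induces the standard topology and is complete enough for the compactness argument to run. I expect the main (minor) obstacle to be bookkeeping the passage between the "chart-wise" description of $d_E$ and the global convergence statement; concretely, one must check that $d_E(q_1^{(n)},q_2^{(n)}) \to 0$ together with $q_1^{(n)} \to q_\infty$ forces $q_2^{(n)} \to q_\infty$ as well. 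This follows because near $q_\infty$ we may work in a single period coordinate chart where $d_E$ is bi-Lipschitz to a Euclidean metric, so small $d_E$-distance is small Euclidean distance, and convergence of one sequence forces convergence of the other to the same limit. Once this is in place, the contradiction argument closes and extracting the monotone, continuous modulus $f$ with $f(0^+) = 0$ is routine. (Alternatively, one can cite the uniform continuity of $\pi$ restricted to the compact set $\pi^{-1}(\mathcal{K})$ directly, which is exactly the content of \cite[Lemma 3.9]{ABEM12}, and simply note that this is the statement being recorded.)
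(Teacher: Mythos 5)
Your argument is essentially the paper's own justification: the lemma is quoted from \cite[Lemma 3.9]{ABEM12} with the remark that it is a direct consequence of the uniform continuity of $\pi$ on compact subsets, and your compactness/contradiction argument (compactness of $\pi^{-1}(\mathcal{K})$ as a sphere bundle over $\mathcal{K}$, plus extraction of a common limit and continuity of $\pi$) is exactly the standard way to get that uniform continuity, with the modulus $f$ extracted routinely. One small caution: if the limit $q_\infty$ lies in a non-principal stratum you cannot invoke Lemma \ref{lem:euc_fund_prop} (which compares $\|\cdot\|_E$ to period coordinates chart-wise on the principal stratum, and such charts do not cover a full neighborhood of $q_\infty$ in $\qut$); instead, the needed fact that small $d_E$-distance forces the two sequences to have the same limit should be justified by the fact that $d_E$ is induced by a continuous fiberwise norm on the tangent bundle of $\qut$ and hence induces the topology of $\qut$ near any point of $\pi^{-1}(\mathcal{K})$.
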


Directly from Lemma \ref{lem:ET_1} we deduce the following result.

\begin{lemma}
	\label{lem:ET_2}
	Let $\mathcal{K} \subseteq \mathcal{T}_g$ be a compact subset. There exists a constant $s_1 = s_1(\mathcal{K}) > 0$ such that if $q_1,q_2 \in \mathcal{Q}^1\mathcal{T}_g$ satisfy $q_1 \in \pi^{-1}(\mathcal{K})$ and $d_E(q_1,q_2) \leq s_1$, then $d_\mathcal{T}(\pi(q_1),\pi(q_2)) \leq 1$. 
\end{lemma}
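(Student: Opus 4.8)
The plan is to deduce Lemma \ref{lem:ET_2} as an essentially immediate consequence of Lemma \ref{lem:ET_1}, which provides a modulus of continuity $f$ for the projection $\pi \colon \qut \to \tt$ on $\pi^{-1}(\mathcal{K})$. The only subtlety is that Lemma \ref{lem:ET_1} controls $d_\mathcal{T}(\pi(q_1),\pi(q_2))$ in terms of $d_E(q_1,q_2)$ only when \emph{both} points lie in $\pi^{-1}(\mathcal{K})$, whereas in Lemma \ref{lem:ET_2} we only assume $q_1 \in \pi^{-1}(\mathcal{K})$ and that $q_2$ is close to $q_1$ in the Euclidean metric. So the first thing I would do is enlarge the compact set: fix a compact neighborhood $\mathcal{K}' \subseteq \tt$ of $\mathcal{K}$ (for instance the closed Teichmüller $1$-neighborhood $\mathrm{Nbhd}_1(\mathcal{K})$, which is compact since closed balls in $\tt$ are compact and $\tt$ is a complete, proper metric space) and apply Lemma \ref{lem:ET_1} to $\mathcal{K}'$ instead of $\mathcal{K}$, obtaining a modulus function $f = f(\mathcal{K}')$.

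Next I would choose $s_1 = s_1(\mathcal{K}) > 0$ small enough that two conditions hold: first, $f(s_1) \leq 1$, which is possible since $f(r) \to 0$ as $r \to 0$; and second, $s_1$ is small enough to guarantee that any $q_2 \in \qut$ with $d_E(q_1,q_2) \leq s_1$ and $q_1 \in \pi^{-1}(\mathcal{K})$ automatically satisfies $q_2 \in \pi^{-1}(\mathcal{K}')$. For the second condition I would argue as follows: since $\mathcal{K}$ is compact and the Euclidean metric $d_E$ and the metric $d_\mathcal{T} \circ \pi$ induce comparable local topologies (a quantitative version of this is exactly the content of Lemma \ref{lem:ET_1}), there is a threshold below which $d_E$-closeness of $q_2$ to the compact slab $\pi^{-1}(\mathcal{K})$ forces $\pi(q_2) \in \mathrm{Nbhd}_1(\mathcal{K}) = \mathcal{K}'$; concretely, one can simply take $s_1$ so that $f(s_1) \leq 1$ and note that if $q_1, q_2$ were both in $\pi^{-1}(\mathcal{K}')$ the conclusion would already follow, so the only case to rule out is $\pi(q_2) \notin \mathcal{K}'$, but then the $d_E$-geodesic (or a short $d_E$-path) from $q_1$ to $q_2$ must exit $\pi^{-1}(\mathcal{K}')$, and restricting Lemma \ref{lem:ET_1} to the sub-path lying in $\pi^{-1}(\mathcal{K}')$ forces $d_\mathcal{T}(\pi(q_1), \partial) \leq f(s_1) \leq 1$ at the exit point, a contradiction with $\pi(q_1) \in \mathcal{K}$ being distance $> 1$ from the complement of $\mathcal{K}'$. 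Once $q_2 \in \pi^{-1}(\mathcal{K}')$ is secured, Lemma \ref{lem:ET_1} applied on $\mathcal{K}'$ gives $d_\mathcal{T}(\pi(q_1),\pi(q_2)) \leq f(d_E(q_1,q_2)) \leq f(s_1) \leq 1$ (using that $f$ may be taken monotone, or replacing it by $r \mapsto \sup_{\rho \leq r} f(\rho)$), which is the desired bound.

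The main obstacle — and really the only place any genuine care is needed — is the bookkeeping around the compact-set enlargement and the connectedness/path argument used to promote "$q_2$ close to $\pi^{-1}(\mathcal{K})$" into "$q_2 \in \pi^{-1}(\mathcal{K}')$". Everything else is a soft extraction of a constant from a modulus-of-continuity statement. I expect the write-up to be short: state the enlargement $\mathcal{K}'$, invoke Lemma \ref{lem:ET_1} for $\mathcal{K}'$ to get $f$, pick $s_1$ with $f(s_1) \leq 1$, run the short path argument to land $q_2$ in $\pi^{-1}(\mathcal{K}')$, and conclude. If one prefers to avoid the path argument entirely, an alternative is to observe that the \emph{set} of $q_2$ with $d_E(q_1, q_2) \leq s_1$ and $q_1$ ranging over the compact $\pi^{-1}(\mathcal{K})$ has compact closure (by local compactness of $\qut$ and properness of $d_E$ on period-coordinate charts), hence its image under $\pi$ is compact, and one simply takes $\mathcal{K}'$ to be that image union $\mathcal{K}$ — then Lemma \ref{lem:ET_1} applies directly on $\mathcal{K}'$ with no contradiction argument needed. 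I would use whichever of these two framings is cleanest given the conventions already fixed in the appendix.
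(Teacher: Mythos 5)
Your proposal is correct and is essentially the paper's own (unwritten) deduction: Lemma \ref{lem:ET_2} is stated as following directly from Lemma \ref{lem:ET_1}, and your compact enlargement of $\mathcal{K}$ supplies exactly the bookkeeping needed to place $q_2$ inside a set on which Lemma \ref{lem:ET_1} applies. One small repair in the path version: at the exit point you only get $d_\mathcal{T}(\pi(q_1),\pi(\rho(t^*))) \geq 1$ against the bound $\leq f(s_1) \leq 1$, which is not yet a contradiction, so choose $s_1$ with, say, $f(s_1) \leq 1/2$ (or simply use your second, compact-closure framing, which avoids this issue altogether).
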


\subsection*{The modified Hodge metric.} In \cite[\S 3.3.2]{ABEM12}, a modification of the standard Hodge norm on the fibers of the tangent bundle of $\qut$ is introduced to guarantee the new norm behaves well near the multiple zero locus $\qut \setminus \mathcal{Q}^1$ while retaining the natural hyperbolicity properties of the Hodge norm with respect to the Teichmüller geodesic flow. This modified norm gives rise to a metric along the leaves of the strongly unstable foliation $\mathcal{F}^{uu}$ of $\qut$. We denote this metric by $d_H$ and refer to it as the \textit{modified Hodge metric}. The following theorem is proved in \cite{ABEM12}.

\begin{theorem}
	\cite[Theorem 3.10]{ABEM12}
	\label{theo:EH_1}
	Let $\mathcal{K} \subseteq \mathcal{T}_g$ be a compact subset. Then, for every pair of quadratic differentials $q_1,q_2 \in \pi^{-1}(\mathcal{K})$ on the same leaf of $\mathcal{F}^{uu}$,
	\[
	d_E(q_1,q_2) \preceq_\mathcal{K} d_H(q_1,q_2).
	\]
\end{theorem}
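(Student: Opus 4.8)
The final statement to prove is Theorem~\ref{theo:EH_1}, which compares the Euclidean metric $d_E$ and the modified Hodge metric $d_H$ along leaves of the strongly unstable foliation $\mathcal{F}^{uu}$, restricted to $\pi^{-1}(\mathcal{K})$ for a compact $\mathcal{K}\subseteq\mathcal{T}_g$. However, the excerpt explicitly attributes this to \cite[Theorem 3.10]{ABEM12}, so the ``proof'' in the paper is really a citation; what the author presumably does is recall the statement and explain how it follows from the comparison of fiberwise norms established by Athreya--Bufetov--Eskin--Mirzakhani. Since the instruction is to propose how \emph{I} would prove it, I will describe the argument one would give if one wanted to reconstruct it rather than merely cite it.

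The plan is as follows. First I would reduce the metric comparison to a fiberwise norm comparison: both $d_E$ and $d_H$ are path metrics along the leaves of $\mathcal{F}^{uu}$ arising from continuous fiberwise norms $\|\cdot\|_E$ and $\|\cdot\|_H$ on the sub-bundle $T\mathcal{F}^{uu}\subseteq T\mathcal{Q}^1\mathcal{T}_g$ (the modified Hodge norm of \cite[\S3.3.2]{ABEM12} being defined precisely so as to extend continuously across the multiple-zero locus). If one shows $\|v\|_E\preceq_{\mathcal{K}}\|v\|_H$ for every $q\in\pi^{-1}(\mathcal{K})$ and every $v\in T_q\mathcal{F}^{uu}$, then integrating along any leafwise path joining $q_1$ to $q_2$ and taking the infimum over such paths yields $d_E(q_1,q_2)\preceq_{\mathcal{K}}d_H(q_1,q_2)$. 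So the whole statement collapses to a pointwise bound on norms over a compact set.

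Second, I would prove that pointwise bound. On the principal stratum $\mathcal{Q}^1$, period coordinates identify $T\mathcal{F}^{uu}$ with the real subspace of $\mathbf{C}^{6g-6}$ on which the imaginary parts of the periods are held fixed, i.e. the ``real part'' directions; by Lemma~\ref{lem:euc_fund_prop}, $\|v\|_E\asymp_{\Phi,\mathcal{K}}\|d\Phi_q v\|$ on $U\cap\pi^{-1}(\mathcal{K})$. On the same chart, the modified Hodge norm of a strongly unstable tangent vector is, up to the modification near $\mathcal{Q}^1\setminus\mathcal{Q}^1$, the $L^2$-norm of the corresponding harmonic (in fact holomorphic, after taking real parts) representative, which by Riemann bilinear relations controls the periods; so $\|d\Phi_q v\|\preceq\|v\|_H$ with constants depending continuously on $q$. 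Over the compact set $\pi^{-1}(\mathcal{K})$ one can cover by finitely many period-coordinate charts and take the worst constant, and one must check that the modification introduced in \cite[\S3.3.2]{ABEM12} only \emph{increases} the Hodge norm (or increases it up to a bounded factor) in the relevant directions, so the comparison survives near the multiple-zero locus — but note $\mathcal{Q}^1$ here denotes the principal stratum, which is \emph{open} in $\qut$, so over $\pi^{-1}(\mathcal{K})\cap\mathcal{Q}^1$ one may in fact be safely away from the boundary and the unmodified argument suffices.

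The main obstacle is the behavior near the boundary of the principal stratum: period coordinates degenerate as a saddle connection shrinks, the Euclidean metric is only defined up to a local Lipschitz class, and the honest Hodge norm blows up there, which is exactly why the \emph{modified} Hodge metric is used. Controlling the comparison uniformly in a neighborhood of that locus is the delicate point, and it is the content of \cite[Theorem 3.10]{ABEM12}; in the present paper I would simply invoke that theorem, as the excerpt does, rather than reproduce the ABEM analysis. Thus the honest ``proof'' here is: recall the setup of the modified Hodge metric from \cite[\S3.3.2]{ABEM12}, observe that $d_E$ and $d_H$ are leafwise path metrics from continuous fiberwise norms, and cite \cite[Theorem 3.10]{ABEM12} for the fiberwise comparison $\|\cdot\|_E\preceq_{\mathcal{K}}\|\cdot\|_H$ over $\pi^{-1}(\mathcal{K})$, from which the stated leafwise distance comparison follows by integration.
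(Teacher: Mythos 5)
Your proposal correctly recognizes that the paper offers no proof of this statement: it is quoted verbatim as \cite[Theorem 3.10]{ABEM12}, and the appropriate treatment here is exactly the citation you land on. Your sketch of how one would reconstruct the fiberwise norm comparison and integrate along leafwise paths is a reasonable gloss, but it is not needed, and the paper's own handling coincides with your final conclusion.
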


Directly from Lemma \ref{lem:ET_1} and Theorem \ref{theo:EH_1} we deduce the following result.

\begin{lemma}
	\label{leqm:EH_2}
	Let $\mathcal{K} \subseteq \mathcal{T}_g$ compact. There exists $s_1 := s_1(\mathcal{K})>0$ such that if $q_1,q_2 \in \mathcal{Q}^1\mathcal{T}_g$ are on the same leaf of $\mathcal{F}^{uu}$ and satisfy $q_1 \in \pi^{-1}(\mathcal{K})$ and $d_H(q_1,q_2) \leq s_1$, then $d_\mathcal{T}(\pi(q_1),\pi(q_2)) \leq 1$. 
\end{lemma}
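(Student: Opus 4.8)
The statement to prove is Lemma \ref{leqm:EH_2}, which combines the comparison between the Euclidean and Teichmüller metrics (Lemma \ref{lem:ET_1}) with the comparison between the Euclidean and modified Hodge metrics along strongly unstable leaves (Theorem \ref{theo:EH_1}).

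\begin{proof}
Fix a compact subset $\mathcal{K} \subseteq \mathcal{T}_g$. By Theorem \ref{theo:EH_1}, there exists a constant $\useconc{con:EH}>0$, depending only on $\mathcal{K}$, such that for every pair of quadratic differentials $q_1,q_2 \in \pi^{-1}(\mathcal{K})$ lying on the same leaf of $\mathcal{F}^{uu}$,
\[
d_E(q_1,q_2) \leq \useconc{con:EH} \cdot d_H(q_1,q_2).
\]
\newconc{con:EH}
By Lemma \ref{lem:ET_1} applied to $\mathcal{K}$, there exists a continuous function $f \colon \mathbf{R}^+ \to \mathbf{R}^+$ with $f(r) \to 0$ as $r \to 0$ such that for any pair of distinct points $q_1,q_2 \in \pi^{-1}(\mathcal{K})$,
\[
d_\mathcal{T}(\pi(q_1),\pi(q_2)) \leq f(d_E(q_1,q_2)).
\]
Since $f(r) \to 0$ as $r \to 0$, we may choose $\useconr{con:r}>0$ small enough that $f(r) \leq 1$ whenever $0 \leq r \leq \useconr{con:r}$.
\newconr{con:r}
Set $s_1 := s_1(\mathcal{K}) := \useconr{con:r} / \useconc{con:EH} > 0$.

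Now suppose $q_1,q_2 \in \mathcal{Q}^1\mathcal{T}_g$ lie on the same leaf of $\mathcal{F}^{uu}$ and satisfy $q_1 \in \pi^{-1}(\mathcal{K})$ and $d_H(q_1,q_2) \leq s_1$. If $q_1 = q_2$ the conclusion is immediate, so assume $q_1 \neq q_2$. The hypothesis $d_H(q_1,q_2) \leq s_1$ forces in particular that $q_2$ lies in the domain on which the modified Hodge metric comparison applies; moreover, combining the two displayed inequalities above gives
\[
d_E(q_1,q_2) \leq \useconc{con:EH} \cdot d_H(q_1,q_2) \leq \useconc{con:EH} \cdot s_1 = \useconr{con:r}.
\]
In particular $d_E(q_1,q_2) \leq \useconr{con:r}$, so by the choice of $\useconr{con:r}$ we have $f(d_E(q_1,q_2)) \leq 1$. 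Since $q_1,q_2 \in \pi^{-1}(\mathcal{K})$ — indeed $q_1 \in \pi^{-1}(\mathcal{K})$ by hypothesis, and $q_2 \in \pi^{-1}(\mathcal{K})$ since the estimate $d_\mathcal{T}(\pi(q_1),\pi(q_2)) \leq 1$ we are about to derive places $\pi(q_2)$ in a bounded enlargement of $\mathcal{K}$ on which Lemma \ref{lem:ET_1} continues to apply after enlarging $\mathcal{K}$ at the outset — Lemma \ref{lem:ET_1} yields
\[
d_\mathcal{T}(\pi(q_1),\pi(q_2)) \leq f(d_E(q_1,q_2)) \leq 1.
\]
This is the desired conclusion.
\end{proof}

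\begin{remark}
The only subtlety worth flagging is the bookkeeping in the last step: Lemma \ref{lem:ET_1} and Theorem \ref{theo:EH_1} are stated for pairs of differentials both lying over a fixed compact set, whereas a priori only $q_1$ is assumed to lie over $\mathcal{K}$. This is handled exactly as in \cite[Lemma 3.9]{ABEM12}: one first replaces $\mathcal{K}$ by the compact enlargement $\mathrm{Nbhd}_1(\mathcal{K})$, derives the estimate $d_\mathcal{T}(\pi(q_1),\pi(q_2)) \leq 1$ over this enlargement, and then observes a posteriori that $q_2 \in \pi^{-1}(\mathrm{Nbhd}_1(\mathcal{K}))$, so the hypotheses of the two cited results were indeed satisfied. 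No genuinely new estimate is required beyond the two inputs already recorded.
\end{remark}
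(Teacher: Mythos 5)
Your route is the one the paper intends: Lemma \ref{leqm:EH_2} is asserted there to follow directly from Lemma \ref{lem:ET_1} and Theorem \ref{theo:EH_1}, and the two displayed inequalities you combine are exactly the right ingredients. The gap is in how you discharge the hypothesis that $q_2$ lies over a compact set. Both cited results require \emph{both} endpoints to lie in $\pi^{-1}$ of a fixed compact subset, but your justification --- the parenthetical ``since the estimate $d_\mathcal{T}(\pi(q_1),\pi(q_2))\leq 1$ we are about to derive places $\pi(q_2)$ in a bounded enlargement,'' and the remark's ``observes a posteriori that $q_2\in\pi^{-1}(\mathrm{Nbhd}_1(\mathcal{K}))$, so the hypotheses \dots were indeed satisfied'' --- is circular: the estimate is obtained by applying Theorem \ref{theo:EH_1} and Lemma \ref{lem:ET_1} to the pair $(q_1,q_2)$, which is legitimate only once you already know $q_2$ lies over the enlargement. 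A hypothesis cannot be verified a posteriori by a conclusion that was derived from it, and simply enlarging $\mathcal{K}$ at the outset does not by itself remove the circularity.

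The standard way to break the circle is a bootstrap along a path, available here because $d_H$ is a length metric along the leaf (it is induced by a norm on the leafwise tangent spaces). Set $\mathcal{K}':=\mathrm{Nbhd}_1(\mathcal{K})$, let $C=C(\mathcal{K}')$ be the implicit constant of Theorem \ref{theo:EH_1} and $f$ the function of Lemma \ref{lem:ET_1}, both for $\mathcal{K}'$, and choose $s_1>0$ so small that $f(r)\leq 1/2$ for all $0<r\leq 2Cs_1$. Given $q_1,q_2$ as in the statement with $q_1\neq q_2$, pick a path $\rho\colon[0,1]\to\alpha^{uu}(q_1)$ from $q_1$ to $q_2$ of Hodge length at most $2\,d_H(q_1,q_2)\leq 2s_1$, and let $T:=\{t\in[0,1]\ \vert\ \rho([0,t])\subseteq\pi^{-1}(\mathcal{K}')\}$. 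For $t\in T$ the pair $(q_1,\rho(t))$ lies over $\mathcal{K}'$ and satisfies $d_H(q_1,\rho(t))\leq 2s_1$, so Theorem \ref{theo:EH_1} gives $d_E(q_1,\rho(t))\leq 2Cs_1$ and Lemma \ref{lem:ET_1} gives $d_\mathcal{T}(\pi(q_1),\pi(\rho(t)))\leq 1/2$; hence $\pi(\rho(t))\in\mathrm{Nbhd}_{1/2}(\mathcal{K})$, which lies in the interior of $\mathcal{K}'$, so $T$ is open in $[0,1]$, and it is clearly closed and contains $0$. Therefore $T=[0,1]$, and taking $t=1$ yields $d_\mathcal{T}(\pi(q_1),\pi(q_2))\leq 1/2\leq 1$. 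Replacing your final paragraph (and the remark) by this open--closed argument completes the proof; the rest of your write-up is fine.
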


\subsection*{The Thurston measure.} The space $\mf$ of singular measured foliations on $S_g$ can be endowed with a natural piecewise integral linear structure using train track coordinates; we will introduce these coordinates in more detail later in this appendix. In particular, $\mf$ carries a natural Lebesgue class measure $\nu$ called the \textit{Thurston measure}. This measure gives zero mass to the subset of singular measured foliations on $S_g$ having a singularity with more than three prongs \cite[Lemma 2.4]{Mir08a}. 

\subsection*{Sectors of Teichmüller space.} Recall that $\Delta \subseteq \mathcal{T}_g \times \mathcal{T}_g$ denotes the diagonal of $\mathcal{T}_g$, that $S(X) := \pi^{-1}(X)$ for every $X \in \mathcal{T}_g$, and that $q_s \colon \mathcal{T}_g \times \mathcal{T}_g \to \mathcal{Q}^1\mathcal{T}_g$ denotes the map which to every pair of distinct points $X,Y \in \mathcal{T}_g$ assigns the quadratic differential $q_s(X,Y) \in S(X)$ corresponding to the tangent direction at $X$ of the unique Teichmüller geodesic segment from $X$ to $Y$. Recall that, for every $X \in \mathcal{T}_g$ and every subset $V \subseteq S(X)$, the sector $\text{Sect}_V(X) \subseteq \tt$ is given by
\[
\text{Sect}_V(X) := \{X\} \cup \{Y \in \mathcal{T}_g \setminus \{X\} \ | \ q_s(X,Y) \in V \}.
\] 

Recall that $\mu$ denotes the Masur-Veech measure on $\qut$ and that $\mathbf{m} := \pi_* \mu$ denotes its pushforward to $\mathcal{T}_g$. Recall that for every $A \subseteq \mathcal{T}_g$ and every $r > 0$ we denote by $\mathrm{Nbhd}_r(A) \subseteq \mathcal{T}_g$ the set of points in $\mathcal{T}_g$ at Teichmüller distance at most $r$ from $A$. Consider the natural $\mathbf{R}^+$ scaling action on $\mf$. Denote by $\overline{\nu}$ the function which to every measurable subset $A \subseteq \mf$ assigns the value $\overline{\nu}(A) := \nu((0,1] \cdot A)$. Given $V \subseteq \mathcal{Q}^1\mathcal{T}_g$ and $s > 0$, denote by $V(s) \subseteq \mathcal{Q}^1\mathcal{T}_g$ the set of all $q_1 \in \mathcal{Q}^1\mathcal{T}_g$ such that there exists $q_2 \in V$ on the same leaf of $\mathcal{F}^{uu}$ as $q_1$ satisfying $d_H(q_1,q_2)<s$. In \cite{Ara20b} we prove the following general bound for the measure of sectors of Teichmüller space. The proof relies in a crucial way on the metric hyperbolicity properties of the Teichmüller geodesic flow proved in \cite{ABEM12}.

\begin{theorem}
	\cite[Theorem 7.10]{Ara20b}
	\label{theo:sector_general}
	There exist constants $r_0 = r_0(g) > 0$, $C = C(g) > 0$, and $\kappa = \kappa(g) >  0$ such that for every compact subset $\mathcal{K} \subseteq \tt$, every $X \in \mathcal{K}$, every measurable subset $V \subseteq S(X)$, every $0 < r < r_0$, and every $R > 0$,
	\begin{align*}
	\mathbf{m}(\mathrm{Nbhd}_{r}(B_R(X) \cap \mathrm{Sect}_{V}(X) \cap \mathrm{Mod}_g \cdot \mathcal{K})) 
	\preceq_\mathcal{K} \overline{\nu}(\Re(V( C e^{-\kappa R}))) \cdot e^{(6g-6)R} + e^{(6g-6-\kappa)R}.
	\end{align*}
\end{theorem}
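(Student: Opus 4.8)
The plan is to adapt the method Athreya, Bufetov, Eskin, and Mirzakhani use to estimate $\mathbf{m}(B_R(X))$, refining it so that both the sector $V$ and the thin-part restriction $\mathrm{Mod}_g\cdot\mathcal{K}$ become visible. Write $\Omega := B_R(X) \cap \mathrm{Sect}_V(X) \cap \mathrm{Mod}_g\cdot\mathcal{K}$. First I would dispose of bounded $R$: for $R$ below a fixed threshold the whole neighbourhood $\mathrm{Nbhd}_{r_0}(B_R(X))$ sits in a fixed compact set, so $\mathbf{m}(\mathrm{Nbhd}_r(\Omega)) \preceq_{\mathcal{K}} 1$ and there is nothing to prove. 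Assuming $R$ large, I would split the sector by radius at level $R/2$,
\[
\mathrm{Nbhd}_r(\Omega) \subseteq \mathrm{Nbhd}_r\big(\Omega \cap B_{R/2}(X)\big) \cup \mathrm{Nbhd}_r\big(\Omega \setminus B_{R/2}(X)\big).
\]
The first piece lies in $B_{R/2 + r_0}(X)$, so by the ball-growth estimate of \cite{ABEM12} it has measure $\preceq_{\mathcal{K}} e^{(6g-6)R/2}$, which is $\le e^{(6g-6-\kappa)R}$ as soon as $\kappa \le 3g-3$; this is already covered by the second term of the asserted bound. All the content is in the far piece.

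For the far piece I would pass to geodesic polar coordinates based at $X$: every $Y \ne X$ equals $\pi(a_t q_s(X,Y))$ with $t = d_\mathcal{T}(X,Y)$, so $\Omega \setminus B_{R/2}(X) = \{\pi(a_t q) : q \in V,\ t \in (R/2,R],\ \pi(a_t q) \in \mathrm{Mod}_g\cdot\mathcal{K}\}$. The crucial input is the hyperbolicity of the Teichmüller geodesic flow in the thick part: if two unit-speed geodesic rays issued from $X$ are within Teichmüller distance $2r_0$ of one another at a time $t$ at which they both lie in $\mathrm{Mod}_g\cdot\mathcal{K}$, then their initial data differ by a quantity that has been expanded under the forward flow at a definite exponential rate, because that data is carried by the directions along which the modified Hodge norm expands; equivalently, tracing the small separation at time $t$ back to $X$ contracts it by a factor $\preceq_{\mathcal{K}} e^{-\kappa t}$. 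This is exactly the expansion/contraction of the modified Hodge norm along strongly unstable leaves (Theorem \ref{theo:EH_1}), together with the comparisons between $d_\mathcal{T}$, $d_E$, and $d_H$ of Lemmas \ref{lem:euc_fund_prop}, \ref{lem:ET_1}, \ref{lem:ET_2}, \ref{leqm:EH_2}; and, following \cite{Ara20b}, the right way to encode the resulting closeness of initial directions is precisely through the $d_H$-thickenings $V(\cdot)$ of $V$ along leaves of $\mathcal{F}^{uu}$. Charging each $Y' \in \mathrm{Nbhd}_r(\Omega \setminus B_{R/2}(X))$ to its direction $q_s(X,Y')$ and its time $t' := d_\mathcal{T}(X,Y') \in (R/2 - r_0,\, R + r_0)$, and using $e^{-\kappa t'} \le e^{-\kappa R/2}$ on this range, this gives, after absorbing the bounded $r_0$ into $C$,
\[
\mathrm{Nbhd}_r\big(\Omega \setminus B_{R/2}(X)\big) \subseteq \bigcup_{t \in (R/2 - r_0,\, R + r_0)} \pi\Big(a_t\big(V(C e^{-\kappa R/2}) \cap a_{-t}\pi^{-1}(\mathrm{Nbhd}_{1}(\mathrm{Mod}_g\cdot\mathcal{K}))\big)\Big).
\]

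Finally I would estimate the $\mathbf{m}$-measure of the right-hand side exactly as in the ball-counting argument of \cite{ABEM12}: the Masur–Veech measure has a local product structure with respect to $\mathcal{F}^{uu} \times (\text{flow}) \times (\text{strongly stable})$, the conditional measure along $\mathcal{F}^{uu}$ is the Thurston measure on vertical foliations (homogeneous of degree $6g-6$, hence multiplied by $e^{(6g-6)t}$ under $a_t$), and restricting to a fixed thick part keeps the transverse and flow directions of bounded extent and renders all Jacobian and co-area comparisons uniform. Since the geodesic spheres sweep the set above at bounded radial speed, its measure is $\preceq_{\mathcal{K}} \int_{R/2 - r_0}^{R + r_0} e^{(6g-6)t}\,\overline{\nu}(\Re(V(C e^{-\kappa R/2})))\,dt \preceq_{\mathcal{K}} \overline{\nu}(\Re(V(C e^{-\kappa R/2}))) \cdot e^{(6g-6)R}$. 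Combining the two pieces and replacing $\kappa$ by $\min\{\kappa/2,\ 3g-3\}$ (and enlarging $C$) yields the claim. The main obstacle is the hyperbolicity estimate of the second paragraph: one controls only the endpoint of each geodesic ray, not its entire excursion, so excursions deep into the thin part between time $0$ and time $t$ have to be tolerated — this is precisely what the modified Hodge norm of \cite{ABEM12} is engineered to accomplish — and the bookkeeping of how Teichmüller-metric neighbourhoods downstairs inflate into Euclidean- and Hodge-metric neighbourhoods upstairs, and across the fibres of $\pi$, while preserving the constants, is the delicate technical core of the argument.
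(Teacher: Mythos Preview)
This theorem is not proved in the present paper: it is quoted verbatim from the prequel as \cite[Theorem 7.10]{Ara20b}, so there is no ``paper's own proof'' here to compare against. Your sketch is broadly faithful to the strategy of \cite{Ara20b} and \cite{ABEM12}: split off a near ball, and for the far annulus use the exponential contraction of the modified Hodge metric along strongly unstable leaves to push a Teichm\"uller neighbourhood at time $t$ back to a $d_H$-thickening of $V$ of size $\preceq e^{-\kappa t}$, then integrate against the product structure of the Masur--Veech measure.

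One point deserves more care than your sketch gives it. Your displayed containment asserts that if $Y'$ is within $r$ of some $Y \in \Omega$, then $q_s(X,Y')$ lies in $V(Ce^{-\kappa R/2})$. But $V(s)$ is by definition a $d_H$-thickening \emph{along leaves of $\mathcal{F}^{uu}$}, and there is no reason for $q_s(X,Y')$ and $q_s(X,Y)$---two points of $S(X)$---to share an unstable leaf, since that would force $\Im(q_s(X,Y')) = \Im(q_s(X,Y))$. The actual mechanism in \cite{Ara20b} is not a direct comparison of initial directions in $S(X)$; rather, one flows both geodesics forward to time $\approx t$, uses the closeness of $\pi(a_t q_s(X,Y))$ and $Y'$ together with the local product (stable/unstable/flow) structure and the thick-part comparisons you cite to produce a point on the unstable leaf through some $q \in V$ that is $d_H$-close to $q$, and it is the \emph{vertical foliation} of that point (which agrees with $\Re(q_s(X,Y'))$ up to controlled error after pulling back) that enters $\overline{\nu}(\Re(V(\cdot)))$. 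In other words, the relevant object living in $V(Ce^{-\kappa R})$ need not be $q_s(X,Y')$ itself, and the passage from ``endpoints within $r$'' to ``initial data in $V(\cdot)$'' goes through the stable/unstable decomposition at the far end, not a direct fellow-traveling estimate at $X$. Your last paragraph acknowledges this is the delicate core; the sketch would be strengthened by making explicit that the containment is at the level of vertical foliations $\Re(\cdot)$ rather than at the level of points of $S(X)$.
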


\subsection*{Triangulations of quadratic differentials.} By a \textit{marked triangulation} of $S_g$ we mean an isotopy class of triangulations of $S_g$. By a \textit{triangulation} of a quadratic differential $q$ we mean a triangulation of its underlying Riemann surface whose edges are saddle connections of $q$. A triangulation of a quadratic differential $q$ is said to be \textit{$L$-bounded} for some $L > 0$ if its edges have flat length $\leq L$. Given a marked quadratic differential $q \in \qut$ and a triangulation $\Delta'$ of $q$, one can pull back $\Delta'$ via the marking of $q$ to obtain a marked triangulation $\Delta$ on $S_g$. Recall that $\mcg$ denotes the mapping class group of $S_g$ and that this group acts properly discontinuously on $\tt$ and $\qut$  by changing markings.

\begin{lemma}
	\label{lem:triang_compact}
	Let $\Delta$ be a marked triangulation of $S_g$, $\mathcal{K} \subseteq \tt$ be a compact subset, and $L > 0$. Then, the subset of marked quadratic differentials $q \in  \pi^{-1}(\mcg \cdot\mathcal{K})$ having an $L$-bounded triangulation $\Delta'$ that pulls back to $\Delta$ via the marking of $q$ has compact closure.
\end{lemma}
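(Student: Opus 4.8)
The plan is to prove Lemma~\ref{lem:triang_compact} by showing that the relevant set of marked quadratic differentials is contained in finitely many translates, under the mapping class group, of a compact subset of $\qut$, together with a properness argument. First I would reduce to the moduli space: it suffices to show that the image in $\qum$ of the set
\[
Q := \{q \in \pi^{-1}(\mcg \cdot \mathcal{K}) \ | \ q \text{ has an $L$-bounded triangulation pulling back to } \Delta\}
\]
has compact closure, and then lift back up using proper discontinuity. Indeed, if $\overline{p(Q)}$ is compact in $\qum$, then $p^{-1}(\overline{p(Q)}) \cap \pi^{-1}(\mathcal{K}')$ is compact for any fixed compact $\mathcal{K}' \subseteq \mathcal{T}_g$; the point is that $Q$ actually lies over $\mcg \cdot \mathcal{K}$, but the condition of having a triangulation isotopic to the fixed marked triangulation $\Delta$ pins down the $\mcg$-translate, so only finitely many translates $\mc . \mathcal{K}$ can meet $Q$ once we know the underlying unmarked differential ranges in a compact set. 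This last finiteness is exactly the proper discontinuity of the $\mcg$-action on $\qut$ combined with the fact that the stabilizer of a marked triangulation is trivial.

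The core analytic step is the compactness of $\overline{p(Q)}$ in $\qum$, and for this I would invoke Masur's compactness criterion (quoted in the excerpt as \cite[Proposition 3.6]{MT02}, cf.\ \eqref{eq:compact}): a subset of $\qum$ has compact closure if and only if there is a uniform positive lower bound on the length of the shortest saddle connection, i.e.\ $\ell_{\min}$ is bounded below. So I need to show $\ell_{\min}(q) \geq \delta_0$ for some $\delta_0 = \delta_0(\Delta, L) > 0$ and all $q \in Q$. Suppose $\alpha$ is any saddle connection of such a $q$. The $L$-bounded triangulation $\Delta'$ of $q$ cuts $q$ into Euclidean triangles each of whose sides has length $\leq L$, hence each triangle has area $\leq L^2/2$; since $\mathrm{Area}(q) = 1$, there are at least $2/L^2$ triangles, and in any case the number of triangles is the fixed topological quantity $|\Delta|$ determined by $g$, so $|\Delta| \geq 2/L^2$, giving no contradiction but confirming consistency. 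More to the point: any saddle connection $\alpha$ crosses these triangles, and a lower bound on $\ell_{\min}$ comes from the fact that within each triangle of diameter $\leq L$ the shortest saddle connection appearing as an edge is one of the $|\Delta|$ edges, and the combinatorial type of $\Delta'$ as a marked triangulation is fixed. The cleanest route is: the homology (or relative homology rel singularities) classes of the edges of $\Delta'$ are the fixed classes determined by $\Delta$; the holonomy vectors of these edges have norm $\leq L$; hence $q$ lies in the preimage under a period-coordinate-type map of a bounded set; and a bounded region in period coordinates with area normalized to $1$ has $\ell_{\min}$ bounded below away from the part where the triangulation degenerates — but the triangulation cannot degenerate precisely because all its edges stay $\leq L$ and it remains a genuine triangulation. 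I would make this rigorous by arguing that any saddle connection $\alpha$ of $q$, being a straight segment, has its endpoints among the (finitely many) zeros, and its holonomy is an integer combination of the edge holonomies of $\Delta'$ read off a path in the $1$-skeleton; this is not quite a length bound on $\alpha$, so instead I would use area: a short saddle connection $\alpha$ of length $\epsilon$ together with a transversal of length $\leq L$ (which exists because $\alpha$ sits inside a triangle of diameter $\leq L$) bounds an embedded region of area $\lesssim \epsilon L$, and summing the triangles adjacent to $\alpha$ one sees that a very short $\alpha$ forces at least one of the $|\Delta|$ triangles to have been pinched, contradicting the $L$-boundedness and the fact that $\Delta'$ remains a triangulation. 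So $\ell_{\min}(q) \geq \delta_0(\Delta, L, g)$.

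With $\ell_{\min} \geq \delta_0$ on $p(Q)$, Masur's criterion gives $\overline{p(Q)} \subseteq K_{\delta_0}$ compact. For the descent: let $\widehat{\mathcal{K}} := \overline{\mathcal{K}}$, pick a compact fundamental-type set and use that $\pi \colon \qut \to \tt$ restricted over a compact base with $\ell_{\min}$ bounded below has compact image in $\qut$ modulo $\mcg$; explicitly, $Q \subseteq p^{-1}(K_{\delta_0})$, and $p^{-1}(K_{\delta_0}) \cap \pi^{-1}(\mcg \cdot \mathcal{K}) = \mcg \cdot \big(p^{-1}(K_{\delta_0}) \cap \pi^{-1}(\mathcal{K})\big)$, the latter being a compact subset $C_0$ of $\qut$. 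Now every $q \in Q$ is of the form $\mc . q_0$ with $q_0 \in C_0$, and $q$ carries the marked triangulation $\Delta$ while $q_0$ carries the marked triangulation $\mc^{-1} . \Delta$; since $\Delta$ is fixed and the set of $\mc^{-1}.\Delta$ with $\mc.q_0 \in Q$, $q_0 \in C_0$, is finite (marked triangulations of bounded geometric complexity realized on a compact set of differentials form a finite set, by proper discontinuity of $\mcg \curvearrowright \qut$ and the discreteness of marked triangulations), only finitely many $\mc$ occur, say $\mc_1, \dots, \mc_N$. Hence $Q \subseteq \mc_1 . C_0 \cup \cdots \cup \mc_N . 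C_0$, a compact set, and $\overline{Q}$ is compact. The main obstacle, which I would expect to take the most care, is the quantitative lower bound on $\ell_{\min}$ from $L$-boundedness of the triangulation: one must rule out the degenerate scenario in which the triangulation is $L$-bounded yet some \emph{other} saddle connection (not an edge of $\Delta'$) is arbitrarily short. The resolution is that a short saddle connection forces a thin triangle adjacent to it, and a triangle with one side $< \epsilon$ and the other two sides $\leq L$ has area $< \epsilon L / 2$; since the total area is $1$ and there are boundedly many triangles, $\epsilon$ is bounded below — but one must also handle the case where $\alpha$ is parallel to a triangle edge and runs through several triangles, where the same area estimate applies to the union of the thin triangles traversed. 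Everything else is soft: Masur's criterion, proper discontinuity, and the rigidity of marked triangulations.
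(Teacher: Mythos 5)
The central analytic step of your plan --- that every $q$ in your set $Q$ satisfies $\ell_{\min}(q) \geq \delta_0(\Delta,L)$ --- is false, and the thin-triangle area argument does not establish it. A saddle connection of length $\epsilon$ only forces the triangles of $\Delta'$ adjacent to it to have small area; the remaining boundedly many (in terms of $g$) triangles are free to carry the unit area, so there is no contradiction with $\mathrm{Area}(q)=1$. In fact $Q$ genuinely contains differentials with arbitrarily short saddle connections: fix $X \in \mathcal{K}$ and a path $q_t \in S(X)$ in the principal stratum along which two simple zeroes collide as $t \to 0$; realizing one fixed isotopy class of triangulation geodesically along this path keeps all edge lengths $\leq L$ (for a suitable choice of $\Delta$ and $L$) while $\ell_{\min}(q_t) \to 0$. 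Relatedly, Masur's criterion as quoted is a statement about closed subsets of a fixed \emph{stratum}, whereas the lemma asserts compact closure in $\qut$, which contains all strata; compact subsets of $\qut$ need not have $\ell_{\min}$ bounded below (the closure of $Q$ will in general meet smaller strata), so a lower bound on $\ell_{\min}$ is neither available nor needed. There is also a structural problem with the reduction: the image of $\pi^{-1}(\mcg \cdot \mathcal{K})$ in $\qum$ is $p(\pi^{-1}(\mathcal{K}))$, which is automatically compact since $\pi$ has compact fibers, so the moduli-space step carries no information; the entire content of the lemma is the marked statement, i.e.\ that only finitely many $\mcg$-translates can occur. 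Your ``descent'' step asserts exactly this finiteness (``marked triangulations of bounded geometric complexity realized on a compact set form a finite set, by proper discontinuity''), but proper discontinuity alone does not give it --- one needs a uniform length bound on topological representatives attached to the marking, which is precisely the missing ingredient, so as written the step is close to circular.

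The paper's proof supplies that ingredient directly and avoids $\ell_{\min}$ altogether: for each simple closed curve $\alpha$, one represents $\alpha$ by an edge path in $\Delta'$ whose combinatorics depend only on $\alpha$ and the fixed isotopy class $\Delta$, so $\ell_\alpha(q) \preceq_{\alpha,\Delta} L$; this flat-length bound converts into a bound on the hyperbolic length $\ell_\alpha(\pi(q))$ (uniform over $q \in \pi^{-1}(\mcg\cdot\mathcal{K})$); since the only way to leave every compact subset of $\mathcal{T}_g$ is for some simple closed curve's hyperbolic length to blow up, $\pi(q)$ is confined to a compact subset of $\mathcal{T}_g$, and compactness of the fibers of $\pi \colon \qut \to \tt$ then gives compact closure in $\qut$. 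If you want to salvage your outline, the finiteness of mapping classes in your last step should be proved by this kind of length argument rather than by appealing to Masur's criterion and proper discontinuity.
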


\begin{proof}
	Let $q \in  \pi^{-1}(\mcg \cdot\mathcal{K})$ and $\Delta'$ be an $L$-bounded triangulation of $q$ which pulls back to $\Delta$ via the marking of $q$. Fix a simple closed curve $\alpha$ on $S_g$. As $\Delta'$ is $L$-bounded and pulls back to $\Delta$ via the marking of $q$, one can bound the flat length $\ell_\alpha(q)$ of any geodesic representatives of $\alpha$ on $q$ uniformly in terms $\alpha$, $\Delta$, and $L$. This together with the fact that $q \in  \pi^{-1}(\mcg \cdot\mathcal{K})$ implies the hyperbolic length $\ell_\alpha(\pi(q))$ of the unique geodesic representative of $\alpha$ with respect to the marked hyperbolic structure on $S_g$ induced by $\pi(q) \in \mathcal{T}_g$ via uniformization can be bounded uniformly in terms of $\alpha$, $\Delta$, $L$, and $\mathcal{K}$. As the bundle $\pi \colon \qut \to \tt$ has compact fibers and as the only was of escaping to infinity in $\mathcal{T}_g$ is to develop a simple closed curve of unbounded hyperbolic length, this finishes the proof.
\end{proof}

\subsection*{Delaunay triangulations of quadratic differentials.} Recall that for every quadratic differential $q$ we denote by $\ell_{\min}(q)$ the length of its shortest saddle connections and by $\mathrm{diam}(q)$ its diameter. Every quadratic differential $(X,q)$ admits a triangulation by saddle connections which is Delaunay with respect to the singularities of $q$ and the singular flat metric induced by $q$ on $X$ \cite[\S 4]{MS91}. We refer to any such triangulation as a \textit{Delaunay triangulation} of $q$. For the purposes of this appendix we will not need to appeal to the explicit construction of these triangulations. Rather, it will suffice to know they exist and satisfy the following properties.

\begin{lemma}
	\cite[Lemma 3.11]{ABEM12} 
	\label{lem:delaunay}
	Let $q \in \qut$ and $\Delta$ be a Delaunay triangulation of $q$. 
	\begin{enumerate}
		\item If $\gamma$ is a saddle connection of $q$ that belongs to $\Delta$, then $\ell_{\gamma}(q) \preceq_g \mathrm{diam}(q)$.
		\item If $\gamma$ is a saddle connection of $q$ such that $\ell_{\gamma}(q) \leq \sqrt{2} \cdot \ell_{\min}(q)$, then $\gamma$ belongs to $\Delta$.
	\end{enumerate}
\end{lemma}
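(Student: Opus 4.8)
The statement to prove is Lemma~\ref{lem:delaunay}, recording two size properties of Delaunay triangulations of a unit-area quadratic differential $q \in \qut$. The plan is to deduce both claims from the defining Delaunay (empty circumscribed disk) property of the triangulation together with elementary flat geometry. Recall that a triangulation by saddle connections is Delaunay with respect to the singularities of $q$ if for every triangle $T$ of $\Delta$, the open Euclidean disk circumscribing a developed copy of $T$ in the plane contains no singularities of $q$ in its interior (equivalently, the edge-flip/local Delaunay condition holds at every interior edge). This is the property established in \cite[\S4]{MS91}, and it is all we need.

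\textbf{Proof of (1).} Let $\gamma$ be a saddle connection belonging to $\Delta$, so $\gamma$ is an edge of some triangle $T$ of $\Delta$ with circumscribed disk $D$ of some radius $\varrho$. Since $\gamma$ is a chord of $D$ we have $\ell_\gamma(q) \leq 2\varrho = \mathrm{diam}_{\mathrm{eucl}}(D)$. It therefore suffices to bound $\varrho$ by $\preceq_g \mathrm{diam}(q)$. For this I would argue that the three vertices of $T$ are singularities of $q$ and the center $c$ of $D$ lies inside (a developed copy of) $T$ when $T$ is acute, and in general inside one of the three triangles formed by $c$ and the edges; in all cases a developed segment from some vertex of $T$ to a point of the surface realizing a path of flat length $\leq \mathrm{diam}(q)$ cannot exit $D$ without passing near $c$. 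More carefully: the circumradius $\varrho$ is the distance from $c$ to each vertex. If $\varrho > 2\,\mathrm{diam}(q)$, then the concentric disk $D'$ of radius $\varrho/2$ about $c$ is disjoint from all three vertices, yet the closed $\mathrm{diam}(q)$-neighborhood of any vertex covers all of $X$; developing this neighborhood must then wrap inside $D$ and, since $D$ contains no singularities in its interior, one obtains an embedded disk of Euclidean radius $\geq \varrho/2$ inside $X$ free of singularities, forcing $\mathrm{Area}(q) \geq \pi (\varrho/2)^2$, contradicting $\mathrm{Area}(q)=1$ once $\varrho$ is large in terms of $g$. (One gets the cleaner bound $\varrho \preceq_g \mathrm{diam}(q)$ by combining the embedded-disk area bound with the fact that a disk of radius $\mathrm{diam}(q)$ already surjects onto $X$.) I would phrase this as: an empty disk on a quadratic differential has radius $\preceq_g \min\{\mathrm{diam}(q),\mathrm{Area}(q)^{1/2}\}$, hence $\preceq_g \mathrm{diam}(q)$, and $\ell_\gamma(q)\leq 2\varrho$.

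\textbf{Proof of (2).} Let $\gamma$ be a saddle connection with $\ell_\gamma(q) \leq \sqrt{2}\,\ell_{\min}(q)$; I must show $\gamma \in \Delta$. Consider the open disk $B$ with $\gamma$ as diameter, so $B$ has radius $\ell_\gamma(q)/2 \leq \ell_{\min}(q)/\sqrt{2}$. First, $B$ is embedded: any non-embedding would produce a flat geodesic loop or a saddle connection of length $< \sqrt{2}\,\ell_{\min}(q)$ through the interior, and since $B$ is a half-translation-trivial round disk, a standard Gauss--Bonnet / short-curve argument (as in the proof of Proposition~\ref{prop:embed}) rules this out; more precisely any singularity in the interior of $B$ other than the endpoints of $\gamma$ would lie within distance $< \ell_\gamma(q) \leq \sqrt 2 \ell_{\min}(q)$ of an endpoint along a straight segment, and one checks such a segment is itself a saddle connection shorter than $\sqrt 2 \ell_{\min}(q)$ only if it has length $\geq \ell_{\min}(q)$, which combined with the diameter-$B$ geometry (the two endpoints of $\gamma$ plus the interior singularity form a triangle inscribed in a circle of diameter $\ell_\gamma(q)$, with one angle $\geq \pi/2$) forces one of its sides to have length $< \ell_{\min}(q)$ — a contradiction. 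Hence the only singularities in $\overline B$ are the endpoints of $\gamma$. Now invoke the local Delaunay characterization: $\gamma$ belongs to \emph{every} Delaunay triangulation provided there is an empty disk having $\gamma$ as a chord (indeed as a diameter), because an interior edge fails the Delaunay flip condition exactly when the circumscribed disk of one adjacent triangle contains the opposite vertex, which cannot happen here since the relevant disks are contained in $B\cup(\text{thin collars})$ and $B$ is singularity-free in its interior. I would make this precise by citing the equivalence, standard in the Delaunay literature, between ``$\gamma$ is an edge of some Delaunay triangulation'' and ``there exists an empty disk through the endpoints of $\gamma$,'' together with uniqueness of the Delaunay triangulation away from the measure-zero set of $q$ with co-circular singularities (and by perturbation for the co-circular case, since claim (2) is an open condition).

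\textbf{Main obstacle.} The routine part is bookkeeping; the genuine content is the two geometric inputs: (i) the empty-disk radius bound $\varrho \preceq_g \mathrm{diam}(q)$ in claim (1), where one must be careful that an ``empty disk'' immersed in $X$ might not be embedded and the argument must pass to a developed picture and control overlaps via the area bound; and (ii) the equivalence in claim (2) between the diameter-disk being singularity-free and $\gamma$ lying in the Delaunay triangulation, which requires recalling the precise local Delaunay flip criterion on a half-translation surface (where ``circle'' means circle in a developing chart) and handling the non-uniqueness in the co-circular degenerate case. I expect (ii) to be the part demanding the most care, so I would either cite \cite{MS91} or \cite[Lemma~3.11]{ABEM12} directly for this equivalence, or include a short self-contained lemma: on a half-translation surface, a saddle connection $\gamma$ admitting an empty open disk through its endpoints lies in some Delaunay triangulation, and in every one if the disk can be taken with no four singularities co-circular.
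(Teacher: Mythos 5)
The paper does not actually prove this lemma — it is quoted verbatim from \cite[Lemma 3.11]{ABEM12} — so your proposal is being measured against the standard argument (via the Voronoi decomposition dual to the Delaunay triangulation, as in \cite{MS91}) rather than against anything written here. Measured that way, part (1) of your proof has a genuine gap. The circumscribed disk of a Delaunay triangle is only \emph{immersed} in the surface, and an immersed singularity-free disk gives no lower bound on area: on a unit-area surface containing a flat cylinder of tiny circumference $c$ and height $H$, the disk of radius roughly $H/2$ centered at the midpoint of the cylinder develops with no singularities, yet its image has area at most $cH \ll \pi(H/2)^2$. So the step ``one obtains an embedded disk of Euclidean radius $\geq \varrho/2$ \dots forcing $\mathrm{Area}(q)\geq \pi(\varrho/2)^2$'' is false, and with it your asserted bound $\varrho \preceq_g \mathrm{Area}(q)^{1/2}$. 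Worse, in that example the Delaunay circumradii and edge lengths are of order $H$, which is unbounded over unit-area surfaces, so no argument that bounds $\varrho$ by a constant (and then invokes $\mathrm{diam}(q)\succeq_g 1$) can succeed: the bound has to go through $\mathrm{diam}(q)$ itself. The correct route is the one your area detour replaces: the circumcenter is an honest point $p$ of the surface; since the immersion of the open disk of radius $\varrho$ is a local isometry with no singularities in its interior, every path from $p$ of length $<\varrho$ lifts into the disk, so the distance from $p$ to the (nonempty) singularity set equals $\varrho$; hence $\varrho\le \mathrm{diam}(q)$ and every edge, being a chord, has length $\le 2\,\mathrm{diam}(q)$.

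Part (2) is essentially the right argument: if a singularity $P$ lies in the open disk with diameter $\gamma=AB$, the angle at $P$ exceeds $\pi/2$, so $|PA|^2+|PB|^2<|AB|^2\le 2\,\ell_{\min}(q)^2$, and a sub-segment of the shorter of $PA$, $PB$ is a saddle connection of length $<\ell_{\min}(q)$, a contradiction; the empty-circle criterion then places $\gamma$ in the Delaunay triangulation. Two caveats. First, phrase the emptiness argument so it does not presuppose that the diametral disk immerses (grow the developed disk from $\gamma$ until it first meets a singularity). Second, your treatment of the degenerate case $\ell_\gamma(q)=\sqrt2\,\ell_{\min}(q)$ with a co-circular singularity by ``perturbing $q$'' does not work: perturbation changes both $\ell_{\min}$ and the Delaunay triangulation, and the conclusion for the perturbed differential does not transfer back to $q$. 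That boundary configuration must be handled directly with the precise edge criterion (or by citing \cite{ABEM12}, whose proof is exactly the distance/Voronoi argument sketched above).
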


Directly from Lemmas \ref{lem:triang_compact} and \ref{lem:delaunay} we deduce the following result.

\begin{proposition}
	\label{prop:delaunay_comp}
	For every compact subset $\mathcal{K} \subseteq \mathcal{T}_g$ there exists a constant $L = L(\mathcal{K}) > 0$ and a finite collection of marked triangulations $\{\Delta_i\}_{i=1}^n$ on $S_g$ with the following property. Let $q \in \pi^{-1}(\mathcal{K})$ and $\gamma$ be a saddle connection of $q$ attaining the minimal flat length among saddle connections of $q$. Then, there exists $i \in \{1,\dots,n\}$ and an $L$-bounded triangulations $\Delta'$ of $q$ having $\gamma$ as one of its edges and which pulls back to $\Delta_i$ via the marking of $q$.
\end{proposition}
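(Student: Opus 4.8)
The plan is to build the required triangulations from Delaunay triangulations and to bound their number using Lemma~\ref{lem:triang_compact} together with the proper discontinuity of the mapping class group action on $\tt$. First I would record two preliminary facts. Since $\pi \colon \qut \to \tt$ is a sphere bundle and $\mathcal{K}$ is compact, the preimage $\pi^{-1}(\mathcal{K}) \subseteq \qut$ is compact; as the diameter function $q \mapsto \mathrm{diam}(q)$ is continuous on $\qut$, it is bounded above on $\pi^{-1}(\mathcal{K})$, so by Lemma~\ref{lem:delaunay}(1) there is a constant $L = L(\mathcal{K}) > 0$ such that every Delaunay triangulation of every $q \in \pi^{-1}(\mathcal{K})$ is $L$-bounded. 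Second, if $\gamma$ is a saddle connection of $q$ of minimal flat length, then $\ell_\gamma(q) = \ell_{\min}(q) \le \sqrt{2}\,\ell_{\min}(q)$, so by Lemma~\ref{lem:delaunay}(2) the saddle connection $\gamma$ is an edge of every Delaunay triangulation of $q$. Consequently, fixing a Delaunay triangulation of $q$ already produces an $L$-bounded triangulation of $q$ having $\gamma$ as an edge, and pulling it back via the marking of $q$ gives a marked triangulation of $S_g$. Let $\mathcal{D}$ be the set of all marked triangulations of $S_g$ that arise this way as $q$ ranges over $\pi^{-1}(\mathcal{K})$ and the Delaunay triangulation varies; it then suffices to prove $\mathcal{D}$ is finite and to take $\{\Delta_i\}_{i=1}^n$ to be an enumeration of $\mathcal{D}$.

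To bound $|\mathcal{D}|$ I would proceed as follows. There are only finitely many $\mcg$-orbits of marked triangulations of $S_g$ with at most $4g-4$ vertices, since for each admissible vertex count there are finitely many combinatorial types of triangulations of a fixed surface; let $\Delta^{(1)}, \dots, \Delta^{(m)}$ be orbit representatives. For each $k$, Lemma~\ref{lem:triang_compact} applied to $\Delta^{(k)}$, to $\mathcal{K}$, and to the constant $L$ shows that the set $\mathcal{S}_k \subseteq \pi^{-1}(\mcg \cdot \mathcal{K})$ of marked quadratic differentials admitting an $L$-bounded triangulation pulling back to $\Delta^{(k)}$ has compact closure; set $\mathcal{C}_k := \pi(\overline{\mathcal{S}_k}) \subseteq \tt$, a compact set. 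Now let $\Delta \in \mathcal{D}$, say $\Delta = \phi.\Delta^{(k)}$ for some $\phi \in \mcg$, with a witness $q \in \pi^{-1}(\mathcal{K})$ carrying an $L$-bounded Delaunay triangulation pulling back to $\Delta$. Since the $\mcg$ action changes only markings, $\phi^{-1}.q$ carries the same underlying $L$-bounded triangulation, now pulling back to $\phi^{-1}.\Delta = \Delta^{(k)}$, while $\phi^{-1}.q \in \pi^{-1}(\phi^{-1}.\mathcal{K}) \subseteq \pi^{-1}(\mcg \cdot \mathcal{K})$; hence $\phi^{-1}.q \in \overline{\mathcal{S}_k}$, and by equivariance of $\pi$ we get $\phi^{-1}.\pi(q) \in \mathcal{C}_k$ with $\pi(q) \in \mathcal{K}$, so $\phi.\mathcal{C}_k \cap \mathcal{K} \neq \emptyset$. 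By proper discontinuity of the $\mcg$ action on $\tt$ and compactness of $\mathcal{C}_k$ and $\mathcal{K}$, only finitely many $\phi \in \mcg$ satisfy this for each fixed $k$; ranging over the finitely many $k$ shows that only finitely many marked triangulations $\phi.\Delta^{(k)}$ occur, so $\mathcal{D}$ is finite, completing the argument.

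The main obstacle is organizing the bookkeeping of the mapping class group action correctly: one must distinguish a triangulation of the \emph{underlying Riemann surface} of $q$ (which is insensitive to the marking, hence to the $\mcg$ action) from its \emph{pullback marked triangulation} (on which $\mcg$ acts), so that translating a witness $q$ by $\phi^{-1}$ converts a triangulation pulling back to $\Delta$ into one pulling back to $\Delta^{(k)}$ while staying $L$-bounded and staying over a controlled compact subset of $\tt$. Once this is set up, the only external inputs are Lemmas~\ref{lem:delaunay} and~\ref{lem:triang_compact}, the compactness of $\pi^{-1}(\mathcal{K})$, the continuity of $\mathrm{diam}$ on $\qut$, proper discontinuity of the $\mcg$ action, and the standard finiteness of combinatorial types of triangulations of a fixed surface; I would also note that the single constant $L = L(\mathcal{K})$ produced in the first step serves simultaneously in the construction of the triangulations $\Delta_i$ and in the application of Lemma~\ref{lem:triang_compact}.
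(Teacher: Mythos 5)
Your proof is correct and follows essentially the same route as the paper: Lemma~\ref{lem:delaunay} gives both the $L$-bound (via the diameter bound on $\pi^{-1}(\mathcal{K})$) and the fact that shortest saddle connections appear in Delaunay triangulations, and finiteness comes from the finiteness of combinatorial types together with Lemma~\ref{lem:triang_compact} and proper discontinuity. The only difference is that you spell out the final finiteness step (orbit representatives, the compact sets $\mathcal{C}_k$, and the bookkeeping of the marking-changing action) in more detail than the paper, which states it in one sentence.
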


\begin{proof}
	Fix a compact subset $\mathcal{K} \subseteq \mathcal{T}_g$. By Lemma \ref{lem:delaunay}, Delaunay triangulations of quadratic differentials always contain the shortest saddle connections. By Lemma \ref{lem:delaunay}, there exists a constant $L = L(\mathcal{K}) > 0$ such that every Delaunay triangulation of a quadratic differential in $\pi^{-1}(\mathcal{K})$ is $L$-bounded. Thus, it suffices to show these triangulations pull back to finitely many marked triangulations on $S_g$. Triangulations of quadratic differentials in $\qut$ have at most $4g-4$ vertices. In particular, there are finitely many combinatorial types of triangulations on $S_g$ that arise as pullbacks of triangulations on $\qut$. By Lemma \ref{lem:triang_compact} and the proper discontinuity of the $\mcg$ action on $\qut$, there are finitely many marked triangulations on $S_g$ of a given combinatorial type that arise as pullbacks of $L$-bounded triangulations of quadratic differentials in $\pi^{-1}(\mathcal{K})$. This finishes the proof.
\end{proof}

\subsection*{Triangulations induced by closed curves.} Let $(X,q)$ be a quadratic differential. Two saddle connections of $q$ are said to be \textit{disjoint} if they only intersect at singularities of $q$. A \textit{complex} of $q$ is a subset $K \subseteq X$ consisting of disjoint saddle connections and triangles of $q$ with the condition that a triangle belongs to $K$ if and only if all of its sides belong to $K$. Let $K \subseteq X$ be a complex of $q$. Denote by $\partial K \subseteq X$ the topological boundary $K$. Denote by $\ell_K(q)$ the maximal flat length among saddle connections of $K$. Denote by $|K|$ the number of saddle connections of $K$. A saddle connection and a complex of $q$ are said to be disjoint if they only intersect at singularities of $q$.

\begin{proposition}
	\label{prop:triang_enlarge}
	\cite[Proof of Proposition 3.11]{MT02} 
	Let $q$ be a quadratic differential. Suppose $K$ is a complex of $q$ with $\partial K \neq \emptyset$ and $\gamma$ is a saddle connection of $q$ which is either disjoint from $K$ or crosses $\partial K$. Then, there exists a complex $K' \supseteq K$ such that $|K'| > |K|$ and $\ell_{K'}(q) \leq 2 \ell_{K}(q) + \ell_{\alpha}(q)$.
\end{proposition}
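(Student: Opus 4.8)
The statement asserts that a complex $K$ of a quadratic differential $q$ with nonempty boundary can be enlarged by adjoining a saddle connection $\gamma$ that is either disjoint from $K$ or crosses $\partial K$, at the cost of a controlled growth in the maximal saddle connection length. The plan is to reduce to a purely two-dimensional combinatorial situation: since $\partial K \neq \emptyset$, one of the endpoints of $\gamma$ (or a point where $\gamma$ crosses $\partial K$) can be joined to $K$, and we then triangulate the surface relative to $K \cup \gamma$ using only short saddle connections. First I would treat the case where $\gamma$ is disjoint from $K$. Since $\partial K \neq \emptyset$, pick an endpoint $p$ of $\gamma$ and connect $p$ to the nearest singularity lying on $\partial K$ by a flat geodesic path; this path can be taken to be a single saddle connection $\delta$ (or a concatenation that we handle by induction on the number of pieces), and crucially its flat length is bounded by the diameter of the complementary region of $K$, which in turn is controlled by $\ell_K(q)$. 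Adding $\gamma$ and $\delta$ to $K$ and then triangulating the resulting region by saddle connections of length at most $2\ell_K(q) + \ell_\gamma(q)$ produces the desired $K'$ with $|K'| > |K|$.

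The key geometric input is a length bound for the auxiliary saddle connections one introduces. The standard fact (used repeatedly in flat geometry arguments, e.g. in the analysis of complexes in \cite{MT02}) is that within a region bounded by saddle connections of length $\leq \ell$, any two singularities on the boundary of that region can be joined by a saddle connection of length $\leq 2\ell$, and more generally a point inside can be joined to the boundary with comparable control. So the steps, in order, would be: (1) if $\gamma$ crosses $\partial K$, restrict attention to a subarc of $\gamma$ running from a singularity to $\partial K$, or use the crossing point to split the ambient region; (2) identify the complementary component $C$ of $K$ containing the relevant part of $\gamma$, noting $\partial C$ consists of saddle connections of $K$ hence has length $\leq \ell_K(q)$; (3) adjoin $\gamma$ to $C$ and subdivide $C$ into triangles whose edges are saddle connections, each obtained as a geodesic between singularities already present or newly created, with length bounded by $2\ell_K(q) + \ell_\gamma(q)$; (4) check that the union $K' = K \cup (\text{new triangles and edges})$ is again a complex in the sense of the definition (triangles included iff all sides are), and that $|K'| > |K|$ since at least $\gamma$ itself, previously absent, now appears.

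The main obstacle I anticipate is making precise the triangulation-with-length-control step (step 3), i.e. verifying that one can always subdivide a flat polygon (possibly with interior singularities and possibly non-convex) whose boundary saddle connections have length $\leq \ell$ into triangles with all edges of length $\leq 2\ell + \ell_\gamma(q)$, while ensuring the new edges are genuine saddle connections disjoint except at singularities. This is essentially a Delaunay-type or greedy triangulation argument: repeatedly connect pairs of singularities on the boundary of the current sub-polygon by the shortest available geodesic, using convexity of the flat metric away from cone points and the bound that the shortest such geodesic in a region of diameter $D$ has length $\leq D$. One must be careful that the region might not be simply connected or might have cone angle making "shortest geodesic between two boundary singularities" subtle, but the quantitative bound $2\ell_K(q) + \ell_\gamma(q)$ is generous enough to absorb these complications: any geodesic between two points of $\partial(C \cup \gamma)$ stays within distance $\ell_K(q) + \ell_\gamma(q)/2$ of that boundary by a standard cut-and-paste argument, giving the claimed bound. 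Since the statement is cited as coming from the proof of Proposition 3.11 in \cite{MT02}, I would follow that argument's structure rather than reconstruct it from scratch, invoking their triangulation lemma directly.
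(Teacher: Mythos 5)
There is a genuine gap, and it sits exactly where you flag your "main obstacle." Your argument hinges on the claim that the complementary region of $K$ (or the region cut off by $\gamma$ and $\partial K$) has diameter, or admits a triangulation with edge lengths, controlled by $\ell_K(q)$ and $\ell_\gamma(q)$. That is false: $K$ can be a single very short saddle connection on a surface of large diameter, in which case its complementary component is essentially the whole surface and contains singularities at distance far exceeding $2\ell_K(q)+\ell_\gamma(q)$ from everything in sight. So the step "triangulate the resulting region by saddle connections of length at most $2\ell_K(q)+\ell_\gamma(q)$" cannot be carried out, and the auxiliary claim that "any two singularities on the boundary of a region bounded by saddle connections of length $\leq \ell$ can be joined by a saddle connection of length $\leq 2\ell$" is not a standard fact and is not true. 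The same problem invalidates your treatment of the disjoint case: the connecting path $\delta$ to "the nearest singularity on $\partial K$" has no length bound in terms of $\ell_K(q)$. Moreover, in the disjoint case no such path is needed at all: by the paper's definition, $\gamma$ disjoint from $K$ means they meet only at singularities, so $K' = K \cup \{\gamma\}$ (together with any triangle forced by the "iff" condition in the definition of a complex) is already a complex with $|K'| = |K|+1$ and $\ell_{K'}(q) \leq \max\{\ell_K(q),\ell_\gamma(q)\}$, which trivially satisfies the bound. Note also that the proposition only asks for one enlargement with $|K'| > |K|$; triangulating an entire complementary region is both unnecessary and, with your bounds, impossible.

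For the crossing case the correct mechanism (this is the content of the cited argument in \cite{MT02}; the present paper gives no proof and simply quotes it) is local, not global: follow $\gamma$ from one of its endpoint singularities $x$ to the first point $p$ where it crosses the interior of a boundary saddle connection $e$ of $K$ with endpoints $y,z$. The broken paths $x \to p \to y$ and $x \to p \to z$, consisting of a subsegment of $\gamma$ and a subsegment of $e$, are paths between singularities of length at most $\ell_\gamma(q) + \ell_K(q)$; straightening them produces concatenations of saddle connections of no greater length, and one checks that at least one of these saddle connections (or a triangle they bound with $e$) is not already in $K$, because $\gamma$ exits $K$ at $p$. This is where the bound $2\ell_K(q)+\ell_\gamma(q)$ comes from, with no control on the geometry of the complement ever being invoked. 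Your closing sentence, that you would "invoke their triangulation lemma directly," in effect concedes the step your sketch was supposed to supply, so as written the proposal does not constitute a proof.
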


The following lemma will play an important role in our proof of Theorem \ref{theo:slope_sector_meas}.

\begin{lemma}
	\label{lem:beta_Delaunay}
	There exists a constant $C =  C(g) > 0$ with the following property. Let $q \in \qut$, $\beta$ be a closed curve on $S_g$, and $\beta'$ be a saddle connection of a singular flat geodesic representative of $\beta$ on $q$. Denote $L = L(q,\beta) := C \cdot \max\{\mathrm{diam}(q),\ell_{\beta}(q)\}$. Then, there exists an $L$-bounded triangulation $\Delta'$ of $q$ having $\beta'$ as one of its edges.
\end{lemma}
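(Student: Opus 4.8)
\textbf{Proof strategy for Lemma \ref{lem:beta_Delaunay}.}
The plan is to build the desired triangulation by a greedy enlargement of complexes of $q$, starting from a complex that already contains the saddle connection $\beta'$, and controlling the length growth using Proposition \ref{prop:triang_enlarge}. First I would fix a Delaunay triangulation $\Delta_0$ of $q$. By Lemma \ref{lem:delaunay}(1), every edge of $\Delta_0$ has flat length $\preceq_g \mathrm{diam}(q)$. The difficulty is that $\beta'$ need not be an edge of $\Delta_0$: in general $\beta'$ crosses several edges of $\Delta_0$. So I would instead start the greedy process from a complex $K_0$ built around $\beta'$ together with the (finitely many, by Proposition \ref{eq:int_bound}-type counting) Delaunay edges it crosses, or more simply from the complex $K_0$ consisting just of $\beta'$ itself, viewed as a degenerate complex with $\partial K_0 \neq \emptyset$.

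The key steps, in order, are as follows. (1) Let $\beta''$ denote the singular flat geodesic representative of $\beta$ on $q$ to which $\beta'$ belongs; note its total flat length is $\ell_{\beta}(q)$, so each of its saddle connections has flat length $\leq \ell_{\beta}(q)$. (2) Start with $K_0$ = the union of all saddle connections of $\beta''$ (which is a valid complex since these saddle connections are pairwise disjoint as they form a flat geodesic, by Proposition \ref{prop:flat_geod_2}), so $\ell_{K_0}(q) \leq \ell_{\beta}(q)$ and $|K_0| \preceq_g \ell_{\beta}(q)/\ell_{\min}(q)$; if instead one only wants $\beta'$ as an edge, take $K_0 = \{\beta'\}$. (3) Iteratively apply Proposition \ref{prop:triang_enlarge}: while $K_i$ is not a full triangulation, $\partial K_i \neq \emptyset$, so there exists a saddle connection $\gamma_i$ of a Delaunay triangulation $\Delta_0$ of $q$ that is either disjoint from $K_i$ or crosses $\partial K_i$ (since $\Delta_0$ triangulates the whole surface, if $K_i$ is not already a triangulation, some Delaunay edge witnesses this); apply the proposition to get $K_{i+1} \supseteq K_i$ with $|K_{i+1}| > |K_i|$ and $\ell_{K_{i+1}}(q) \leq 2\ell_{K_i}(q) + \ell_{\gamma_i}(q)$. (4) Since $\ell_{\gamma_i}(q) \preceq_g \mathrm{diam}(q)$ by Lemma \ref{lem:delaunay}(1), and since a triangulation of $q$ has at most $N = N(g)$ edges (as it has at most $4g-4$ vertices and fixed Euler characteristic), after at most $N$ steps we reach a triangulation $\Delta'$, and unwinding the recursion gives $\ell_{\Delta'}(q) \leq 2^N \ell_{K_0}(q) + (2^N - 1)\cdot O_g(\mathrm{diam}(q)) \preceq_g \max\{\mathrm{diam}(q), \ell_{\beta}(q)\}$. (5) This yields the constant $C = C(g)$ in the statement, and $\Delta'$ contains $\beta'$ by construction.

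The main obstacle I anticipate is ensuring at each stage of the greedy procedure that Proposition \ref{prop:triang_enlarge} actually applies, i.e.\ that whenever $K_i$ is not yet a full triangulation there genuinely exists a saddle connection $\gamma_i$ either disjoint from $K_i$ or crossing $\partial K_i$, and that the resulting $\gamma_i$ can be taken to be a Delaunay edge so that its length is controlled by $\mathrm{diam}(q)$. This is essentially the content of the proof of \cite[Proposition 3.11]{MT02}: if $K_i$ is a proper sub-complex of $q$, then some Delaunay edge must either lie outside $K_i$ or cross $\partial K_i$, for otherwise $\Delta_0 \subseteq K_i$ and $K_i$ would already be a triangulation. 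A secondary point requiring care is the base case: one must check that $K_0$ (whether a single saddle connection or the geodesic representative $\beta''$) is a legitimate complex with nonempty boundary so that the iteration can begin; for a single saddle connection this is immediate, and for $\beta''$ one uses that consecutive saddle connections meet only at cone points and that $\beta''$ does not bound a region (it is a single closed geodesic, not a triangulated subsurface). Once these combinatorial points are pinned down, the length bookkeeping in step (4) is routine.
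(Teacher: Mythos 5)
Your proposal follows essentially the same route as the paper: start from the complex $K_0 = \{\beta'\}$, enlarge it inductively via Proposition \ref{prop:triang_enlarge} using edges of a Delaunay triangulation, and control the lengths with Lemma \ref{lem:delaunay}, which is exactly the paper's (much terser) argument; your explicit bookkeeping with the $2^N$ factor and the bound $N = N(g)$ on the number of steps is a correct fleshing-out of it. The only shaky side remark is the alternative start $K_0 = \beta''$: for a non-simple curve the saddle connections of the geodesic representative need not be pairwise disjoint on $S_g$ (Proposition \ref{prop:flat_geod_2} only gives simplicity of lifts to the disk), but since the lemma only asks for $\beta'$ as an edge, the choice $K_0 = \{\beta'\}$ suffices and this does not affect your proof.
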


\begin{proof}
	Let $q \in \qut$, $\beta$ be a closed curve on $S_g$, and $\beta'$ be a saddle connection of a singular flat geodesic representative of $\beta$ on $q$. Starting with the complex $K := \{\beta'\}$, apply Proposition \ref{prop:triang_enlarge} inductively using appropriate saddle connections $\gamma$ of a fixed Delaunay triangulation of $q$ until the complex obtained is a triangulation. By Lemma \ref{lem:delaunay} and Proposition \ref{prop:triang_enlarge}, there exists a constant $C = C(g) > 0$ such that this triangulation is $L$-bounded for $L = L(q,\beta) := C \cdot \max\{\mathrm{diam}(q),\ell_{\beta}(q)\}$.
\end{proof}

The same arguments used to prove Proposition \ref{prop:delaunay_comp} allow us to deduce the following result as a consequence of Lemma \ref{lem:triang_compact} and Proposition \ref{lem:beta_Delaunay}.

\begin{proposition}
	\label{prop:curve_triang}
	For every compact subset $\mathcal{K} \subseteq \mathcal{T}_g$ and every closed curve $\beta$ on $S_g$ there exists a constant $L = L(\mathcal{K},\beta)$ and a finite collection of marked triangulations $\{\Delta_i\}_{i=1}^n$ on $S_g$ with the following property. Let $q \in \pi^{-1}(\mathcal{K})$ and $\beta'$ be a saddle connection of a singular flat geodesic representative of $\beta$ on $q$. Then, there exists $i \in \{1,\dots,n\}$ and an $L$-bounded triangulation $\Delta'$ of $q$ having $\beta'$ as one of its edges and which pulls back to $\Delta_i$ via the marking of $q$.
\end{proposition}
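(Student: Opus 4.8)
The statement to prove is Proposition \ref{prop:curve_triang}, which is the ``curve version'' of Proposition \ref{prop:delaunay_comp}. The plan is to mimic, essentially verbatim, the proof of Proposition \ref{prop:delaunay_comp}, replacing the use of Lemma \ref{lem:delaunay} with Lemma \ref{lem:beta_Delaunay}. Fix a compact subset $\mathcal{K} \subseteq \mathcal{T}_g$ and a closed curve $\beta$ on $S_g$. First I would extract the quantitative length bound: for every $q \in \pi^{-1}(\mathcal{K})$, both $\mathrm{diam}(q)$ and $\ell_\beta(q)$ can be bounded above uniformly in terms of $\mathcal{K}$ and $\beta$ — the diameter because $\pi^{-1}(\mathcal{K})$ has compact closure (the bundle $\pi$ has compact fibers), and $\ell_\beta(q)$ because the flat length of a geodesic representative of $\beta$ varies continuously on $\pi^{-1}(\mathcal{K})$. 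Hence, with $C = C(g) > 0$ as in Lemma \ref{lem:beta_Delaunay}, the quantity $L(q,\beta) = C \cdot \max\{\mathrm{diam}(q),\ell_\beta(q)\}$ is bounded above by a constant $L = L(\mathcal{K},\beta) > 0$. Therefore, for any $q \in \pi^{-1}(\mathcal{K})$ and any saddle connection $\beta'$ of a singular flat geodesic representative of $\beta$ on $q$, Lemma \ref{lem:beta_Delaunay} provides an $L$-bounded triangulation $\Delta'$ of $q$ having $\beta'$ as one of its edges.

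Second, I would argue that the marked triangulations obtained by pulling back these $\Delta'$ via the markings of the relevant $q$'s form a finite collection. This is exactly the combinatorial finiteness argument from the proof of Proposition \ref{prop:delaunay_comp}: triangulations of quadratic differentials in $\qut$ have at most $4g-4$ vertices, so only finitely many combinatorial types of triangulations of $S_g$ arise as pullbacks; and by Lemma \ref{lem:triang_compact} together with the proper discontinuity of the $\mcg$-action on $\qut$, for each fixed combinatorial type only finitely many marked triangulations on $S_g$ arise as pullbacks of $L$-bounded triangulations of quadratic differentials in $\pi^{-1}(\mathcal{K})$. Enumerate these as $\{\Delta_i\}_{i=1}^n$. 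Then for each $q \in \pi^{-1}(\mathcal{K})$ and each $\beta'$ as above, the triangulation $\Delta'$ pulls back to some $\Delta_i$, which is the desired conclusion.

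The only genuinely new ingredient compared to Proposition \ref{prop:delaunay_comp} is Lemma \ref{lem:beta_Delaunay}, and that is already established in the excerpt, so there is no serious obstacle here. The one point that requires a little care — and which I would make explicit — is the uniform bound on $\ell_\beta(q)$ over $q \in \pi^{-1}(\mathcal{K})$: this uses that $\pi^{-1}(\mathcal{K})$ has compact closure in $\qut$ and that $q \mapsto \ell_\beta(q)$ is continuous on $\qut$ (the flat length of the geodesic representative of $\beta$, which by Proposition \ref{prop:flat_rep} is well-defined), so it attains a maximum on the closure; combined with the analogous elementary bound for $\mathrm{diam}(q)$, this is what makes $L$ depend only on $\mathcal{K}$ and $\beta$ rather than on the individual $q$. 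Everything else is a transcription of the previously-given argument, so the proof is short.
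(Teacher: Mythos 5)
Your proposal is correct and follows essentially the same route as the paper, which simply observes that the arguments proving Proposition \ref{prop:delaunay_comp} (combinatorial finiteness of pullback triangulations plus Lemma \ref{lem:triang_compact} and proper discontinuity) apply once Lemma \ref{lem:beta_Delaunay} supplies the $L$-bounded triangulation containing $\beta'$. Your explicit remark that $\mathrm{diam}(q)$ and $\ell_\beta(q)$ are uniformly bounded on the compact set $\pi^{-1}(\mathcal{K})$, so that $L$ depends only on $\mathcal{K}$ and $\beta$, is exactly the point left implicit in the paper.
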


\subsection*{The function $\boldsymbol{\ell_{\min}(q)}$ and the Euclidean metric.} Given a piecewise smooth path $\rho \colon [0,1] \to \qut$, denote by $\ell_E(\rho) = \smash{\int_0^1 \| \rho'(t) \|_E \thinspace dt}$ its length with respect to the Euclidean metric.  In \cite{Ara20b} we combine Lemma \ref{lem:euc_fund_prop} with Proposition \ref{prop:delaunay_comp} to prove the following result.

\begin{proposition}
	\cite[Proposition 7.13]{Ara20b}
	\label{prop:lmin_euclidean}
	Let $\mathcal{K} \subseteq \mathcal{T}_g$ be a compact subset. There exist constants $s_1 = s_1(\mathcal{K}) > 0$ and $C = C(\mathcal{K}) > 0$ such that if $\rho \colon [0,1] \to \qut$ is a piecewise smooth path with $\ell_E(\rho) < s_1$, $\rho(0) \in \pi^{-1}(\mathcal{K}) \cap \mathcal{Q}^1$, and $\rho(1) \in \mathcal{Q}^1$, then
	\[
	\ell_{\min}(\rho(1)) \leq \ell_{\min}(\rho(0)) +  C \cdot \ell_E(\rho).
	\]
\end{proposition}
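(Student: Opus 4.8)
The plan is to pick a single saddle connection $\gamma$ of $\rho(0)$ realizing $\ell_{\min}(\rho(0))$, to track its relative period $\operatorname{per}_\gamma(q):=\int_\gamma\sqrt{q}$ along the path, and to show that $q\mapsto |\operatorname{per}_\gamma(q)|=\ell_\gamma(q)$ varies in a Lipschitz fashion with respect to $d_E$, with a Lipschitz constant depending only on $\mathcal{K}$. Since the homology class of $\gamma$ persists along $\rho$ and $\rho(1)\in\mathcal{Q}^1$, the value $\ell_\gamma(\rho(1))$ dominates $\ell_{\min}(\rho(1))$, so integrating the Lipschitz bound over $\rho$ will give $\ell_{\min}(\rho(1))\le\ell_\gamma(\rho(1))\le\ell_\gamma(\rho(0))+C\,\ell_E(\rho)=\ell_{\min}(\rho(0))+C\,\ell_E(\rho)$.

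First I would use Proposition \ref{prop:delaunay_comp} to reduce to a finite combinatorial model: there exist $L=L(\mathcal{K})>0$ and finitely many marked triangulations $\Delta_1,\dots,\Delta_n$ on $S_g$ such that the chosen $\gamma$ is an edge of an $L$-bounded triangulation $\Delta'$ of $\rho(0)$ that pulls back to one of the $\Delta_i$ under the marking. In a period coordinate chart $\Phi\colon U\to\mathbf{C}^{6g-6}$ adapted to $\Delta'$ (or to a Delaunay triangulation of a nearby point of $\mathcal{Q}^1$), the period $\operatorname{per}_\gamma$ is a fixed $\mathbf{Z}$-linear function $\ell_\gamma^{*}\circ\Phi$ of the coordinates; since $\gamma$ has flat length $\le L$ while the basis saddle connections of the chart are $L$-bounded, the matrix of $\ell_\gamma^{*}$ ranges over a finite set determined by $\mathcal{K}$, and the same holds for the (integral, $g$-boundedly-entried) transition maps between such charts. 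By Lemma \ref{lem:euc_fund_prop}, $\|d\Phi_q v\|\preceq_{\mathcal{K}}\|v\|_E$ on $\pi^{-1}(\mathcal{K})$ with a comparability constant uniform over the charts in play, so $\|d(\operatorname{per}_\gamma)_q\|_{E^{*}}\le C=C(\mathcal{K})$ and hence $t\mapsto\ell_\gamma(\rho(t))$ is $C$-Lipschitz with respect to Euclidean arclength wherever it is defined.

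Next, I would fix $s_1=s_1(\mathcal{K})>0$ small enough that whenever $\ell_E(\rho)<s_1$ the class of $\gamma$ extends continuously along all of $\rho$ (this is needed because $\rho$ may leave and re-enter the principal stratum, and because the short saddle connection $\gamma$ may be flipped; by the precompactness of Lemma \ref{lem:triang_compact} only boundedly many combinatorial types are encountered along a path that short), and also so that $\pi(\rho(1))$ stays in a fixed compact neighbourhood of $\mathcal{K}$ by Lemma \ref{lem:ET_2}. Integrating the Lipschitz bound along $\rho$ yields
\[
\bigl|\ell_\gamma(\rho(1))-\ell_\gamma(\rho(0))\bigr|\le C\,\ell_E(\rho).
\]
Since $\rho(1)\in\mathcal{Q}^1$, the class $[\gamma]$ is represented by an honest saddle connection of $\rho(1)$, or, in the event that it degenerated somewhere along $\rho$, by one of even smaller flat length; either way $\ell_{\min}(\rho(1))\le\ell_\gamma(\rho(1))$, which combined with the displayed inequality gives the proposition.

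The main obstacle is the uniformity: proving that $C$ and $s_1$ depend only on $\mathcal{K}$, and in particular that the estimate survives as $\rho(0)$ approaches the boundary of the principal stratum, where $\ell_{\min}(\rho(0))$ may be tiny and the natural period chart centred at $\rho(0)$ has $d_E$-radius of that same small order. The way around this is to not require $\rho$ to remain in a single chart but to carry the fixed homology class $[\gamma]$ through a bounded chain of flipped triangulations, using Proposition \ref{prop:delaunay_comp} and the precompactness of Lemma \ref{lem:triang_compact} to keep all relevant combinatorial data — the transition matrices and the representing matrices $\ell_\gamma^{*}$ — confined to a finite set depending only on $\mathcal{K}$, and then invoking Lemma \ref{lem:euc_fund_prop} to upgrade the coordinatewise control to the Euclidean estimate. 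The remaining verifications — smoothness of $\operatorname{per}_\gamma$ as a function on $\qut$ near the trajectory, and the fact that a path of $E$-length $<s_1$ passes through only boundedly many of these combinatorial types — are routine once this finiteness is established.
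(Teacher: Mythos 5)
Your overall skeleton (reduce to the finite family of Delaunay charts from Proposition \ref{prop:delaunay_comp}, compare $\|\cdot\|_E$ with the chart coordinates via Lemma \ref{lem:euc_fund_prop}, and integrate a derivative bound along $\rho$, keeping the projection inside a fixed compact set by Lemma \ref{lem:ET_2}) is the same mechanism the paper uses for the analogous Proposition \ref{prop:hbeta_euclidean}, so the within-stratum part of your plan is fine. The genuine gap is in how you carry the estimate across the degenerate situations, which is exactly the delicate content of this proposition: you fix the single shortest saddle connection $\gamma$ of $\rho(0)$ and assert that $s_1=s_1(\mathcal{K})$ can be chosen so that ``the class of $\gamma$ extends continuously along all of $\rho$''. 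No such uniform $s_1$ exists. The hypothesis only puts $\rho(0)$ in $\pi^{-1}(\mathcal{K})\cap\mathcal{Q}^1$, so $\rho(0)$ may lie arbitrarily close to the multiple-zero locus and $\ell_{\min}(\rho(0))$ may be arbitrarily small; an arbitrarily $E$-short path (the statement allows $\rho$ to take values in all of $\qut$, not just $\mathcal{Q}^1$) can collide the endpoints of $\gamma$, push a zero across its interior, or pass through a smaller stratum and re-open the zeros differently. After such an event $\gamma$ is no longer a saddle connection, so the quantity $\ell_\gamma(\rho(1))$ in your final chain $\ell_{\min}(\rho(1))\le\ell_\gamma(\rho(1))\le\ell_\gamma(\rho(0))+C\,\ell_E(\rho)$ is undefined (and tracking the relative period of the class $[\gamma]$ does not help: once the straight representative is broken, a small period does not produce a short saddle connection). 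Your fallback clause ``or by one of even smaller flat length'' is precisely the missing argument, not a proof of it.

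Concretely, two ingredients are needed that your write-up does not supply. First, inside $\mathcal{Q}^1$ one should not follow a fixed class but the instantaneous minimizer: bound the upper one-sided derivative of $t\mapsto\ell_{\min}(\rho(t))$ by $C\|\rho'(t)\|_E$ using that the minimizer is always a Delaunay edge (Lemma \ref{lem:delaunay}), exactly as in the proof of Proposition \ref{prop:hbeta_euclidean}; the inequality telescopes correctly at times where the minimizer switches because the new minimizer is no longer than the old one. Second, and this is the real crux, one needs a ``birth'' estimate at excursions outside $\mathcal{Q}^1$: when the path re-enters the principal stratum, the newly created saddle connections near the former multiple zero have flat length bounded by a constant times the $E$-length travelled since the degeneration (this is where Lemma \ref{lem:euc_fund_prop}, applied to the new relative-period coordinate, does genuine work), so that $\ell_{\min}$ just after the excursion is controlled and the estimate can be restarted. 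Your appeal to Lemma \ref{lem:triang_compact} to say ``only boundedly many combinatorial types are encountered along a short path'' does neither job: that lemma is a compactness statement about differentials admitting an $L$-bounded triangulation of a fixed marked type, it does not bound how many types a short path traverses, and no finiteness statement can make a destroyed saddle connection persist or control the length of a newly born one.
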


\subsection*{The function $\boldsymbol{h_{\beta}(q)}$ and the Euclidean metric.} Let $\beta$ be a closed curve on $S_g$. Let us recall the definition of the function $h_\beta \colon \qut \to \mathbf{R}$ introduced in (\ref{eq:hbeta}). Given $q \in \qut$, choose a flat geodesic representative of $\beta$ on $q$. Let $\{\beta_j\}_{j=1}^m$ be the sequence of saddle connections of this representative if it is singular, or a singleton consisting of the representative itself if it is a cylinder curve. For every $j \in \{1,\dots,m\}$ denote by $i(\beta_j,\Re(q))$ the total transverse measure of $\beta_j$ with respect to the singular measured foliation $\Re(q)$. Then,
\[
h_\beta(q) := \min_{j=1,\dots,m} \frac{i(\beta_j,\Re(q))}{\ell_{\beta_j}(q)}.
\]
By Proposition \ref{prop:flat_rep}, this value is independent of the choice of flat geodesic representative of $\beta$ on $q$. Furthermore, the function $h_\beta \colon \qut \to \mathbf{R}$ is continuous on strata of $\qut$. 

The methods introduced in the proof of \cite[Proposition 7.13]{Ara20b} allow us to deduce the following result as a consequence of Lemma \ref{lem:euc_fund_prop} and Proposition \ref{prop:curve_triang}.

\begin{proposition}
	\label{prop:hbeta_euclidean}
	Let $\mathcal{K} \subseteq \mathcal{T}_g$ be a compact subset and $\beta$ be a closed curve on $S_g$. There exist constants $s_1 = s_1(\mathcal{K}) > 0$ and $C = C(\mathcal{K},\beta) > 0$ such that if $\rho \colon [0,1] \to \mathcal{Q}^1$ is a piecewise smooth path with $\ell_E(\rho) < s_1$, $\rho(0) \in \pi^{-1}(\mathcal{K})$, and $\ell_{\min}(\rho(t)) \geq \delta$ for some $\delta > 0$ and every $t \in [0,1]$, then
	\[
	h_\beta(\rho(1)) \leq h_\beta(\rho(0)) +  C \cdot \delta^{-1} \cdot \ell_E(\rho).
	\]
\end{proposition}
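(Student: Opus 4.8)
The plan is to adapt the argument used for Proposition~\ref{prop:lmin_euclidean} (i.e.\ \cite[Proposition 7.13]{Ara20b}), replacing the role of Delaunay triangulations containing the shortest saddle connection (Proposition~\ref{prop:delaunay_comp}) by the triangulations containing a prescribed saddle connection of a flat geodesic representative of $\beta$ (Proposition~\ref{prop:curve_triang}). The key point is that $h_\beta(q)$ is computed from the quantities $i(\beta_j,\Re(q))/\ell_{\beta_j}(q)$, and both the numerator and denominator vary in a controlled way in period coordinates when we move along a short Euclidean path, provided we have a uniformly bounded triangulation adapted to the relevant saddle connection. The lower bound $\ell_{\min}(\rho(t))\ge\delta$ along the path is exactly what lets us control the reciprocal $\ell_{\beta_j}(\cdot)^{-1}$, which is the source of the $\delta^{-1}$ factor in the conclusion.

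First I would fix $q_0 := \rho(0)\in\pi^{-1}(\mathcal{K})$ and a singular flat geodesic representative of $\beta$ on $q_0$ realizing the minimum in the definition of $h_\beta$, so that $h_\beta(q_0) = i(\beta_{j_0},\Re(q_0))/\ell_{\beta_{j_0}}(q_0)$ for some saddle connection $\beta_{j_0}$. Applying Proposition~\ref{prop:curve_triang} to $\mathcal{K}$ and $\beta$ yields a uniform constant $L=L(\mathcal{K},\beta)$ and a finite list of marked triangulations $\{\Delta_i\}_{i=1}^n$ such that $\beta_{j_0}$ is an edge of an $L$-bounded triangulation $\Delta'$ of $q_0$ pulling back to some $\Delta_i$. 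By Lemma~\ref{lem:triang_compact} these triangulations persist on a uniform period-coordinates chart $\Phi\colon U\to\mathbf{C}^{6g-6}$ around $q_0$ with $U$ having compact closure inside $\pi^{-1}(\mathcal{K}')$ for a slightly larger compact $\mathcal{K}'$, and by Lemma~\ref{lem:euc_fund_prop} the Euclidean metric $d_E$ is uniformly bi-Lipschitz to the pullback of the standard metric on $\mathbf{C}^{6g-6}$ on $U$. After choosing $s_1 = s_1(\mathcal{K})$ small enough (shrinking the one from Proposition~\ref{prop:lmin_euclidean} if needed), the hypothesis $\ell_E(\rho)<s_1$ forces $\rho([0,1])\subseteq U$, so the triangulation $\Delta'$ persists along the whole path and the holonomies $\int_{\beta_j}\sqrt{q}$ of the saddle connections of the geodesic representative of $\beta$ — expressible as integer-linear combinations of the period coordinates of the edges of $\Delta'$ — are uniformly Lipschitz functions of $\Phi(\rho(t))$, hence uniformly Lipschitz in $\ell_E$.

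Next I would estimate the change in $h_\beta$. Along $\rho$, the flat geodesic representative of $\beta$ deforms continuously (no saddle connection can collapse since $\ell_{\min}\ge\delta$, and the combinatorics of the representative is locally constant), so each $\ell_{\beta_j}(\rho(t))$ and each $i(\beta_j,\Re(\rho(t)))$ change by at most $C\cdot\ell_E(\rho)$ for a constant $C=C(\mathcal{K},\beta)$, using the Lipschitz control on holonomies just described and the fact that $i(\beta_j,\Re(\cdot))$ is the absolute value of the real part of the holonomy. For the particular saddle connection $\beta_{j_0}$ realizing $h_\beta(\rho(1))$ (or: take any $j$ achieving the minimum at $t=1$ and track it back; since $\beta$ has $\preceq_g 1/\delta$ saddle connections this is harmless), write
\[
h_\beta(\rho(1)) = \frac{i(\beta_{j},\Re(\rho(1)))}{\ell_{\beta_{j}}(\rho(1))}
\le \frac{i(\beta_{j},\Re(\rho(0))) + C\ell_E(\rho)}{\ell_{\beta_{j}}(\rho(0)) - C\ell_E(\rho)}.
\]
Since $\ell_{\beta_j}(\rho(0))\ge\ell_{\min}(\rho(0))\ge\delta$ and $\ell_E(\rho)<s_1$ with $s_1$ chosen $\preceq_{\mathcal{K},\beta}\delta$ small, the denominator is $\ge\delta/2$, and expanding this estimate — together with $i(\beta_j,\Re(\rho(0)))/\ell_{\beta_j}(\rho(0))\le h_\beta(\rho(0))$ — gives $h_\beta(\rho(1))\le h_\beta(\rho(0)) + C'\delta^{-1}\ell_E(\rho)$, as desired. (One must absorb the extra factor coming from $\ell_E<s_1$ into $s_1$, which is why the statement only asserts the bound for paths shorter than $s_1(\mathcal{K})$; if the naive $s_1$ depending on $\delta$ is too restrictive, shrink it to a $\delta$-independent value and note the estimate is vacuous otherwise since then $C\delta^{-1}\ell_E(\rho)$ already exceeds any reasonable bound — but in fact the cleaner route is to keep $s_1$ independent of $\delta$ and observe the denominator bound $\ell_{\beta_j}(\rho(0))-C\ell_E(\rho)\ge\delta - Cs_1$ fails unless $s_1\preceq\delta$; I would instead split: if $C\ell_E(\rho)\ge\delta/2$ the claimed inequality is trivial because $h_\beta\le$ some universal bound on $\pi^{-1}(\mathcal{K}')$ while $C'\delta^{-1}\ell_E(\rho)\gtrsim 1$, and if $C\ell_E(\rho)<\delta/2$ the computation above applies directly.)

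\textbf{Main obstacle.} The technical heart is ensuring the Lipschitz control on the holonomies of the saddle connections of the $\beta$-geodesic is \emph{uniform in $\mathcal{K}$ and $\beta$ but independent of the path and of $\delta$}, which requires that the triangulation adapted to $\beta$ provided by Proposition~\ref{prop:curve_triang} persists over a period chart whose size is bounded below independently of $\delta$ — here the $L$-boundedness is uniform in $\mathcal{K},\beta$ but the chart size a priori could shrink as $q_0$ approaches the boundary of the stratum; this is precisely why the hypothesis $\rho(0)\in\pi^{-1}(\mathcal{K})$ and the condition $\ell_{\min}\ge\delta$ along the path (keeping us on the principal stratum $\mathcal{Q}^1$) are both needed, and it is the reason the argument parallels \cite[Proposition 7.13]{Ara20b} rather than being strictly more elementary. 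The bookkeeping of which saddle connection realizes the minimum at the endpoint, and the fact that $\beta$ may have up to $\preceq_g\delta^{-1}$ saddle connections (so one cannot afford a factor growing with the number of saddle connections), is the other point requiring a little care — but taking the minimum is monotone and only the single realizing saddle connection enters the final estimate, so no such factor appears.
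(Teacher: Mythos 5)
There is a genuine gap, and it sits exactly at the point your proposal waves through: the claim that along $\rho$ ``the combinatorics of the representative is locally constant'' is false. The hypothesis $\ell_{\min}(\rho(t))\geq\delta$ prevents saddle connections from collapsing, but it does not prevent the flat geodesic representative of $\beta$ from changing which saddle connections it consists of: whenever two consecutive saddle connections of the representative meet at angle exactly $\pi$ on one side of a singularity, an arbitrarily small deformation produces a representative that no longer contains them (this is the situation of Figure \ref{fig:hor}). Since $h_\beta$ is a minimum over the saddle connections of the representative, these combinatorial jumps are precisely what must be controlled. The paper's proof handles this by showing that, near any parameter $t_0$, the saddle connections of a representative at nearby times have horizontal and vertical holonomies equal to signed sums of the holonomies of the (persisting) saddle connections at $t_0$, as in (\ref{eq:C3})--(\ref{eq:C4}), and then bounds one-sided Dini derivatives of $t\mapsto h_\beta(\rho(t))$ via the elementary estimate (\ref{eq:C5}), integrating along the path; your argument has no substitute for this step.

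A second, related problem is the direction of your final estimate. You bound the ratio of the saddle connection $\beta_j$ realizing $h_\beta(\rho(1))$ by its data at time $0$ and then invoke $i(\beta_j,\Re(\rho(0)))/\ell_{\beta_j}(\rho(0))\leq h_\beta(\rho(0))$; since $h_\beta$ is a \emph{minimum}, this inequality goes the wrong way (any saddle connection of the time-$0$ representative has ratio $\geq h_\beta(\rho(0))$), so the chain does not yield the stated bound. The correct elementary version of your idea is to take the minimizer at time $0$ and push it \emph{forward}: then $h_\beta(\rho(1))$ is at most its ratio at time $1$, which is within $C\delta^{-1}\ell_E(\rho)$ of $h_\beta(\rho(0))$ --- but this only works if that saddle connection is still part of the geodesic representative at time $1$, which returns you to the combinatorial-change issue above. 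Everything else in your outline (the use of Proposition \ref{prop:curve_triang}, period charts over $\mathrm{Nbhd}_1(\mathcal{K})$, Lemma \ref{lem:euc_fund_prop}, the source of the $\delta^{-1}$ from the length lower bound, and the trivial-case split when $\ell_E(\rho)\gtrsim\delta$) matches the paper's strategy, but without the grouped-holonomy identities and the derivative estimate through the combinatorial transitions the proof is incomplete.
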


\begin{proof}
	Fix a compact subset $\mathcal{K} \subseteq \tt$ and a closed curve $\beta$ on $S_g$. Recall that $\mathrm{Nbhd}_1(\mathcal{K}) \subseteq \tt$ denotes the compact subset of points in $\tt$ at Teichmüller distance at most $1$ from $\mathcal{K}$. Let $\{\Delta_i\}_{i=1}^n$ be the finite collection of marked triangulations of $S_g$ provided by Proposition \ref{prop:curve_triang} for the compact subset $\mathrm{Nbhd}_1(\mathcal{K}) \subseteq \mathcal{T}_g$ and the closed curve $\beta$ on $S_g$. Among these triangulations, consider those with $4g-4$ vertices. These triangulations gives rise to finitely many period coordinate charts $\Phi \colon U \to \mathbf{C}^{6g-6}$ defined on open subsets $U \subseteq \mathcal{Q}^1$ of the principal stratum. By Lemma \ref{lem:euc_fund_prop}, for any such period coordinate chart, any $q \in U \cap \pi^{-1}(\mathrm{Nbhd}_1(\mathcal{K}))$, and any $v \in T_q \qut$, 
	\begin{equation}
	\label{eq:euc_comparison}
	\| v \|_E \asymp_{\mathcal{K},\beta} \|d\Phi_q v\|.
	\end{equation}
	
	Let $s_1 = s_1(\mathcal{K}) > 0$ be as in Lemma	\ref{lem:ET_2}. Consider a piecewise smooth path $\rho \colon [0,1] \to \mathcal{Q}^1$ such that $\ell_E(\rho) < s_1$, $\rho(0) \in \pi^{-1}(\mathcal{K})$, and $\ell_{\min}(\rho(t)) \geq \delta$ for some $\delta > 0$ and every $t \in [0,1]$. By Lemma \ref{lem:ET_2}, $\rho([0,1]) \subseteq \pi^{-1}(\mathrm{Nbhd}_1(\mathcal{K}))$. Fix $t_0 \in [0,1]$. Let $\smash{\{\beta_j\}_{j=1}^m}$ be the sequence of saddle connections of a singular flat geodesic representative of $\beta$ on $\rho(t_0)$. There exists a relatively open interval $I \subseteq [0,1]$ containing $t_0$ such that all these saddle connections persist on the quadratic differentials in $\rho(I)$. As a consequence of this fact and (\ref{eq:euc_comparison}) we deduce
	\begin{gather}
	\bigg| \frac{d}{dt}\bigg|_{t = t_0^\pm} i(\beta_j,\Re(\rho(t))) \bigg| \preceq_{\mathcal{K},\beta}  \bigg\| \frac{d}{dt}\bigg|_{t = t_0^\pm} \rho(t)  \bigg\|_E, \label{eq:C1}\\
	\bigg| \frac{d}{dt}\bigg|_{t = t_0^\pm} i(\beta_j,\Im(\rho(t))) \bigg| \preceq_{\mathcal{K},\beta}  \bigg\| \frac{d}{dt}\bigg|_{t = t_0^\pm} \rho(t)  \bigg\|_E. \label{eq:C2}
	\end{gather}
	
	Notice that, if two consecutive saddle connections of the sequence $\smash{\{\beta_j\}_{j=1}^m}$ form an angle equal to $\pi$ on one side of the singularity at which they intersect, then there exist arbitrarily small deformations of $\rho(t_0)$ for which any flat geodesic representative of $\beta$ does not contain these saddle connections. See Figure \ref{fig:hor} for an example. Nevertheless, there exists a relatively open interval $I \subseteq [0,1]$ containing $t_0$ such that all the saddle connections $\smash{\{\beta_j\}_{j=1}^m}$ persist on the quadratic differentials in $\rho(I)$ and such that for every $t \in I$ there exists a singular flat geodesic representative of $\beta$ on $\rho(t)$ whose sequence of saddle connections $\smash{\{\beta_k(t)\}_{k=1}^{m(t)}}$ satisfies the following property: for every $k \in \{1,\dots,m(t)\}$ there exists a subset of indices $J_k(t) \subseteq \{1,\dots,m\}$ such that the corresponding saddle connections $\smash{\{\beta_j\}_{j\in J_k(t)}}$ on $\rho(t_0)$ are consecutive and form angles equal to $\pi$ on the same side of the singularities at which they intersect, and sequences $\{\epsilon_j(t)\}_{j \in J_k(t)}, \{\epsilon_j'(t)\}_{j \in J_k(t)}$ of $\{0,1\}$ such that
	\begin{gather}
	i(\beta_k(t),\Re(\rho(t))) = \sum_{j \in J_k(t)}  (-1)^{\epsilon_j(t)} \cdot i(\beta_j,\Re(\rho(t))), \label{eq:C3}\\
	i(\beta_k(t),\Im(\rho(t))) = \sum_{j \in J_k(t)}  (-1)^{\epsilon_j'(t)} \cdot i(\beta_j,\Im(\rho(t))). \label{eq:C4}
	\end{gather}
	
	\begin{figure}[h]
		\centering
		\begin{subfigure}[b]{0.4\textwidth}
			\centering
			\includegraphics[width=0.7\textwidth]{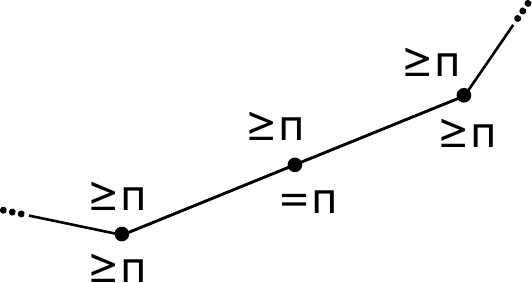}
			\caption{Before deformation.}
		\end{subfigure}
		\quad \quad \quad
		~ 
		\begin{subfigure}[b]{0.4\textwidth}
			\centering
			\includegraphics[width=0.7\textwidth]{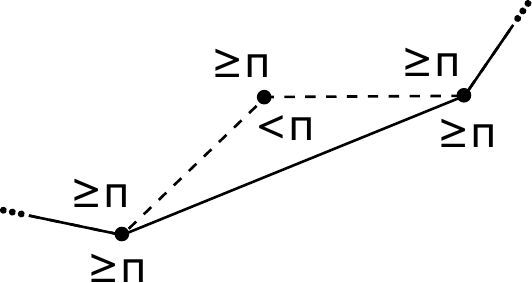}
			\caption{After deformation.}
		\end{subfigure}
		\captionsetup{width=\linewidth}
		\caption{Small deformation changing saddle connections of a geodesic representative.} 
		\label{fig:hor}
	\end{figure}

	Let $v \colon [0,1] \to \mathbf{R}^2$ be a smoothly varying non-zero vector. Denote $v(t) := (x(t),y(t))$ for every $t \in [0,1]$. Consider the Euclidean norm $\| \cdot \|$ on $\mathbf{R}^2$. A direct computation shows that
	\begin{equation}
	\label{eq:C5}
	\bigg| \frac{d}{dt}\bigg|_{t=0^+} \frac{x(t)}{\|v(t)\|}\bigg| \preceq \|v(0)\|^{-1} \cdot \bigg\| \frac{d}{dt} \bigg|_{t=0^+} v(t) \bigg\|. 
	\end{equation}
	Using (\ref{eq:C1}), (\ref{eq:C2}), (\ref{eq:C3}), (\ref{eq:C4}), (\ref{eq:C5}), and the assumption $\ell_{\min}(\rho(t_0)) \geq \delta$, we deduce
	\begin{equation*}
	\bigg| \limsup_{t\to t_0^\pm} \frac{h_\beta(\rho(t)) - h_\beta(\rho(t_0))}{t-t_0} \bigg| \preceq_{\mathcal{K},\beta} \delta^{-1} \cdot \bigg\| \frac{d}{dt}\bigg|_{t=t_0^\pm} \rho(t) \bigg\|_E.
	\end{equation*}
	As this estimate holds for every $t_0 \in [0,1]$, we conclude that, for some constant $C =C(\mathcal{K},\beta) > 0$,
	\[
	h_\beta(\rho(1)) \leq h_\beta(\rho(0)) +  C \cdot \delta^{-1} \cdot \ell_E(\rho). \qedhere
	\]
\end{proof}

Fix a closed curve $\beta$ on $S_g$. As in (\ref{eq:hbeta}), for every $\delta > 0$ consider the subset $H_{\beta,\delta} \subseteq \qut$ given by
\begin{equation}
\label{eq:H}
H_{\beta,\delta} := \{ q \in \mathcal{Q}^1 \ | \ \ell_{\min}(q)\geq \delta, \ h_\beta(q) \geq \delta \}.
\end{equation}
Recall that for every $V \subseteq \mathcal{Q}^1\mathcal{T}_g$ and every $s > 0$ we denote by $V(s) \subseteq \mathcal{Q}^1\mathcal{T}_g$ the set of all $q_1 \in \mathcal{Q}^1\mathcal{T}_g$ such that there exists $q_2 \in V$ on the same leaf of $\mathcal{F}^{uu}$ as $q_1$ satisfying $d_H(q_1,q_2)<s$. Putting together Theorem \ref{theo:EH_1}, Lemma \ref{leqm:EH_2}, and Propositions \ref{prop:lmin_euclidean} and \ref{prop:hbeta_euclidean}, we deduce the following result. This result will play a crucial role in our proof of Theorem \ref{theo:slope_sector_meas}.

\begin{proposition}
	\label{prop:thick}
	For every compact subset $\mathcal{K} \subseteq \mathcal{T}_g$ and every closed curve $\beta$ on $S_g$ there exist constants $s_1 = s_1(\mathcal{K}) > 0$ and $C =C(\mathcal{K},\beta) > 0$ such that for every $X \in \mathcal{K}$ and every $\delta \in (0,1)$, if $V_{\beta,\delta} := S(X) \setminus H_{\beta,\delta}$, then, for every $0 < s < s_1$,
	\[
	V_{\beta,\delta}(s) \subseteq \pi^{-1}(B_1(X)) \setminus H_{\beta, \delta + C \delta^{-1} s}.
	\]
\end{proposition}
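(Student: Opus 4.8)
The goal is to show that if $q_1 \in V_{\beta,\delta}(s)$ for a sufficiently small $s$, then $q_1$ lies in $\pi^{-1}(B_1(X))$ and fails to satisfy $\ell_{\min}(q_1) \geq \delta + C\delta^{-1}s$ or $h_\beta(q_1) \geq \delta + C\delta^{-1}s$; that is, $q_1 \notin H_{\beta,\delta+C\delta^{-1}s}$. First I would unwind the definition of $V_{\beta,\delta}(s)$: there exists $q_2 \in V_{\beta,\delta} = S(X) \setminus H_{\beta,\delta}$ on the same strongly unstable leaf of $\mathcal{F}^{uu}$ as $q_1$ with $d_H(q_1,q_2) < s$. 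Since $q_2 \in S(X) = \pi^{-1}(X)$, we have $q_2 \in \pi^{-1}(\mathcal{K})$. Applying Lemma \ref{leqm:EH_2} with the constant $s_1 = s_1(\mathcal{K})$, for $s < s_1$ the bound $d_H(q_1,q_2) < s$ gives $d_\mathcal{T}(\pi(q_1),\pi(q_2)) \leq 1$, i.e. $\pi(q_1) \in B_1(X)$; this handles the first containment claim.

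Next I would bound the change in $\ell_{\min}$ and $h_\beta$ along a path from $q_2$ to $q_1$ inside the strongly unstable leaf realizing the modified Hodge distance, say $\rho\colon [0,1] \to \mathcal{F}^{uu}$-leaf with $\rho(0) = q_2$, $\rho(1) = q_1$, and $\ell_H(\rho)$ close to $d_H(q_1,q_2) < s$. By Theorem \ref{theo:EH_1}, $\ell_E(\rho) \preceq_\mathcal{K} \ell_H(\rho) < C' s$ for a constant $C' = C'(\mathcal{K})$ (after first noting, via Lemma \ref{leqm:EH_2}, that the whole path stays in $\pi^{-1}(\mathrm{Nbhd}_1(\mathcal{K}))$, so the compact-set hypothesis of Theorem \ref{theo:EH_1} applies along $\rho$). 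Then Proposition \ref{prop:lmin_euclidean}, applied with $\rho$ reversed so that the starting point $q_2 \in \pi^{-1}(\mathcal{K}) \cap \mathcal{Q}^1$ — here I should reparametrize and possibly shrink $s_1$ so that $\ell_E(\rho) < s_1$ in the sense required there — gives $\ell_{\min}(q_2) \leq \ell_{\min}(q_1) + C'' \ell_E(\rho)$ and symmetrically $\ell_{\min}(q_1) \leq \ell_{\min}(q_2) + C'' \ell_E(\rho)$, so $|\ell_{\min}(q_1) - \ell_{\min}(q_2)| \preceq_\mathcal{K} s$.

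For $h_\beta$, the subtlety is that Proposition \ref{prop:hbeta_euclidean} requires a uniform lower bound $\ell_{\min}(\rho(t)) \geq \delta'$ along the path. I would argue as follows: case one, if $\ell_{\min}(q_1) < \delta + C\delta^{-1}s$ we are already done since then $q_1 \notin H_{\beta,\delta+C\delta^{-1}s}$; case two, assume $\ell_{\min}(q_1) \geq \delta + C\delta^{-1}s \geq \delta$, in which case by the just-established comparison $\ell_{\min}(\rho(t)) \geq \delta - O_\mathcal{K}(s)$ along the whole path, which is $\geq \delta/2$ for $s$ small enough (again shrink $s_1$). Now apply Proposition \ref{prop:hbeta_euclidean} with lower bound $\delta/2$ to the path from $q_2$ to $q_1$: $h_\beta(q_1) \leq h_\beta(q_2) + C''' (\delta/2)^{-1} \ell_E(\rho) \leq h_\beta(q_2) + C_0 \delta^{-1} s$. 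Since $q_2 \notin H_{\beta,\delta}$ and we are in the case $\ell_{\min}(q_2) \geq \delta$ (it is within $O_\mathcal{K}(s)$ of $\ell_{\min}(q_1)$; if this forces $\ell_{\min}(q_2)$ slightly below $\delta$ one can absorb it by a harmless adjustment of constants, or note $q_2 \notin H_{\beta,\delta}$ directly), we must have $h_\beta(q_2) < \delta$. Hence $h_\beta(q_1) < \delta + C_0\delta^{-1}s$, so choosing the final constant $C = C(\mathcal{K},\beta)$ to dominate all the $C_0$'s and the $O_\mathcal{K}(s)$ slacks above yields $q_1 \notin H_{\beta,\delta+C\delta^{-1}s}$ in both cases.

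The main obstacle I expect is bookkeeping the interdependence between the lower bound on $\ell_{\min}$ needed for Proposition \ref{prop:hbeta_euclidean} and the quantity $\delta$ we are trying to control: one has to run the $\ell_{\min}$ estimate first, split into the two cases, and only then apply the $h_\beta$ estimate with a safe lower bound like $\delta/2$. A secondary point of care is ensuring all the constants ($s_1$ from Lemma \ref{leqm:EH_2}, from Proposition \ref{prop:lmin_euclidean}, and from Proposition \ref{prop:hbeta_euclidean}) are taken to be the minimum of the relevant thresholds for the fixed compact set $\mathcal{K}$ (and that the path stays in $\pi^{-1}(\mathrm{Nbhd}_1(\mathcal{K}))$ throughout, which Lemma \ref{leqm:EH_2} guarantees), so that every cited proposition is legitimately applicable; none of this is deep, but it is where an error would most likely creep in.
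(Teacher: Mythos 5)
Your overall route is the one the paper intends: Lemma \ref{leqm:EH_2} gives the containment in $\pi^{-1}(B_1(X))$, Theorem \ref{theo:EH_1} converts the small Hodge distance into a short Euclidean path, and Propositions \ref{prop:lmin_euclidean} and \ref{prop:hbeta_euclidean} transport the bounds on $\ell_{\min}$ and $h_\beta$ along that path, with a case split and the factor $\delta^{-1}$ absorbing the loss. However, there is a genuine gap: both cited propositions carry principal-stratum hypotheses that you never verify. Proposition \ref{prop:lmin_euclidean} requires both endpoints of the path to lie in $\mathcal{Q}^1$, and you simply assert ``$q_2 \in \pi^{-1}(\mathcal{K}) \cap \mathcal{Q}^1$'', whereas $q_2$ is only known to lie in $S(X)\setminus H_{\beta,\delta}$; in fact ``$q_2\notin\mathcal{Q}^1$'' is one of the three ways $q_2$ can fail to belong to $H_{\beta,\delta}$, so your concluding step ``$q_2\notin H_{\beta,\delta}$ and $\ell_{\min}(q_2)\geq\delta$, hence $h_\beta(q_2)<\delta$'' is incomplete for the same reason. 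Worse, Proposition \ref{prop:hbeta_euclidean} requires the \emph{entire} path $\rho$ to lie in $\mathcal{Q}^1$ with $\ell_{\min}(\rho(t))$ bounded below for every $t$, and your claim that ``$\ell_{\min}(\rho(t))\geq\delta-O_{\mathcal{K}}(s)$ along the whole path'' does not follow from the endpoint comparison you established: controlling $\ell_{\min}$ at the two ends says nothing about interior points, and nothing you cite prevents the path from leaving the principal stratum.

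The repair is a single additional argument. In the nontrivial case $q_1\in\mathcal{Q}^1$ and $\ell_{\min}(q_1)\geq\delta+C\delta^{-1}s$, parametrize the path from $q_1$ to $q_2$ and, for every $t$ with $\rho(t)\in\mathcal{Q}^1$, apply Proposition \ref{prop:lmin_euclidean} to the subpath from $\rho(t)$ to $q_1$ (both endpoints principal, and the starting point lies in $\pi^{-1}(\mathrm{Nbhd}_1(\mathcal{K}))$, so the proposition applies with constants depending only on $\mathcal{K}$) to get $\ell_{\min}(\rho(t))\geq\ell_{\min}(q_1)-C''s\geq\delta$. An open--closed argument along $[0,1]$ then keeps the whole path inside $\mathcal{Q}^1$: the set of such $t$ is open since $\mathcal{Q}^1$ is open, and it is closed because a limit of principal differentials with $\ell_{\min}\geq\delta$ cannot have a non-simple zero (a higher-order zero splits into nearby zeros for approximating principal differentials, producing saddle connections of length tending to $0$). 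Hence the path, and in particular $q_2$, lies in $\mathcal{Q}^1$ with $\ell_{\min}\geq\delta$ throughout; after that your application of Proposition \ref{prop:hbeta_euclidean}, the trichotomy for $q_2\notin H_{\beta,\delta}$, and the bookkeeping of constants all go through. One further small point: Theorem \ref{theo:EH_1} is stated for distances, not lengths of a chosen path, so either invoke the underlying norm comparison or first bound $d_E(q_1,q_2)\preceq_{\mathcal{K}}d_H(q_1,q_2)<s$ and then pick a piecewise smooth path nearly realizing $d_E$; neither proposition requires the path to stay in the strongly unstable leaf, so this costs nothing.
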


\subsection*{Train track coordinates.} We now discuss some aspects of the theory of train track coordinates. For more details we refer the reader to \cite{PH92}. A \textit{train track} $\tau$ on $S_g$ is an embedded $1$-complex satisfying the following conditions:
\begin{enumerate}
	\item Each edge of $\tau$ is a smooth path with a well defined tangent vector at each endpoint. All edges at a given vertex are tangent.
	\item For each component $R$ of $S_g \setminus \tau$, the double of $R$ along the smooth part of the boundary $\partial R$ has negative Euler characteristic.
\end{enumerate}
The vertices of $\tau$ where three or more edges meet are called \textit{switches}. By considering the inward pointing tangent vectors of the edges incident to a switch, one can divide these edges into \textit{incoming} and \textit{outgoing} edges. A train track $\tau$ on $S_g$ is said to be \textit{maximal} if all the components of $S_g \setminus \tau$ are trigons, i.e., the interior of a disc with three non-smooth points on its boundary.

A singular measured foliation $\lambda \in \mf$ is said to be carried by a train track $\tau$ on $S_g$ if it can be obtained by collapsing the complementary regions in $S_g$ of a measured foliation of a tubular neighborhood of $\tau$ whose leaves run parallel to the edges of $\tau$. In this situation, the invariant transverse measure of $\lambda$ corresponds to a counting measure $v$ on the edges of $\tau$ satisfying the \textit{switch conditions}: at every switch of $\tau$ the sum of the measures of the incoming edges equals the sum of the measures of the outgoing edges. Every $\lambda \in \mf$ is carried by some maximal train track $\tau$ on $S_g$.

Given a maximal train track $\tau$ on $S_g$, denote by $V(\tau) \subseteq \smash{(\mathbf{R}_{\geq0})^{18g-18}}$ the $6g-6$ dimensional closed cone of non-negative counting measures on the edges of $\tau$ satisfying the switch conditions. The set $V(\tau)$ can be identified with the closed cone $U(\tau) \subseteq \mf$ of singular measured foliations carried by $\tau$. These identifications give rise to coordinates on $\mf$ called \textit{train track coordinates}. The transition maps of these coordinates are piecewise integral linear. In particular, $\mf$ can be endowed with a natural $6g-6$ dimensional piecewise integral linear structure. The Thurston measure $\nu$ is the unique, up to scaling, piecewise integral linear measure on $\mf$. 

\subsection*{Train tracks dual to triangulations.} Let $\Delta$ be a marked triangulation on $S_g$. On each of the triangles of $\Delta$ consider a $1$-complex as in Figure \ref{fig:complex_1}; the edges of this complex that do not intersect the sides of the triangle will be referred to as \textit{inner edges}. Join these complexes along the edges of $\Delta$ as in Figure \ref{fig:complex_2} to obtain a complex on $S_g$. We say that a train track $\tau$ on $S_g$ is dual to $\Delta$ if it can be obtained from this complex by deleting one inner each in each triangle of $\Delta$.

	\begin{figure}[h]
	\centering
	\begin{subfigure}[b]{0.4\textwidth}
		\centering
		\includegraphics[width=0.7\textwidth]{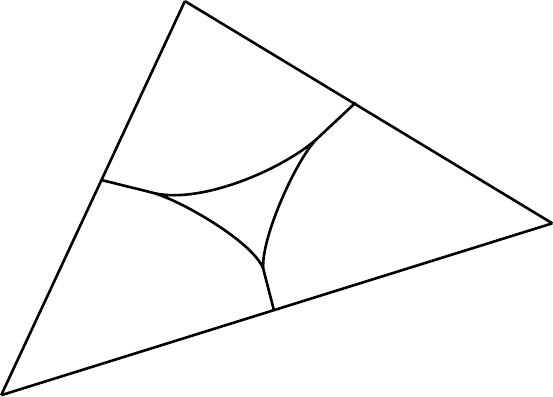}
		\caption{The dual $1$-complex in a triangle.}
		\label{fig:complex_1}
	\end{subfigure}
	\quad \quad \quad
	~ 
	\begin{subfigure}[b]{0.4\textwidth}
		\centering
		\includegraphics[width=0.7\textwidth]{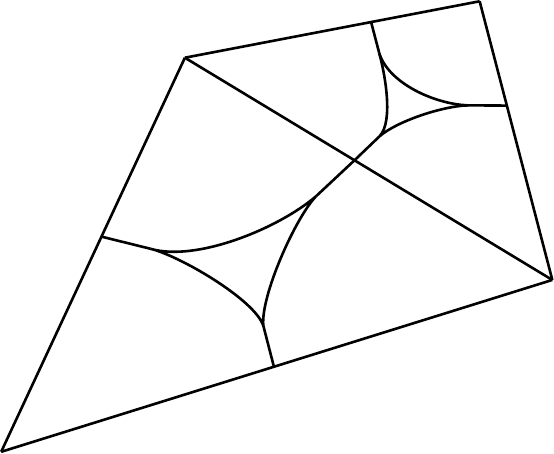}
		\caption{Joining the dual $1$-complexes.}
		\label{fig:complex_2}
	\end{subfigure}
	\caption{The $1$-complex dual to a triangulation.} 
	\label{fig:complex}
	\end{figure}

Let $q \in \qut$ and $\Delta'$ be a triangulation of $q$. Denote by $\Delta$ the marked triangulation of $S_g$ obtained by pulling back $\Delta'$ via the marking of $q$. The vertical foliation $\Re(q) \in \mf$ is carried by a train track dual to $\Delta$. Indeed, let $T'$ be a triangle of $\Delta'$. Label the edges of $T'$ by $a,b,c$ so that
\[
i(a,\Re(q)) = i(b,\Re(q)) + i(c,\Re(q)).
\]
This labelling is unique unless one of the edges of $T'$ is vertical, in which case there exist two such labellings. On $T'$ consider a  $1$-complex as in Figure \ref{fig:complex_1}. Delete the inner edge of this complex opposite to $a$ as in Figure \ref{fig:tt_1}. Consider the corresponding $1$-complexes on all the triangles of $\Delta'$. Joining these complexes along the edges of $\Delta'$ as in Figure \ref{fig:tt_2} and pulling back the resulting complex to $S_g$ yields a train track $\tau$ dual to $\Delta$. This train track carries $\Re(q) \in \mf$. For each edge $e$ of $\Delta'$, the counting measure of the coresponding edge of $\tau$ is equal to $i(e,\Re(q))$. If $q \in \mathcal{Q}^1$, the train track $\tau$ obtained through this construction is maximal.

	\begin{figure}[h]
	\centering
	\begin{subfigure}[b]{0.4\textwidth}
		\centering
		\includegraphics[width=0.7\textwidth]{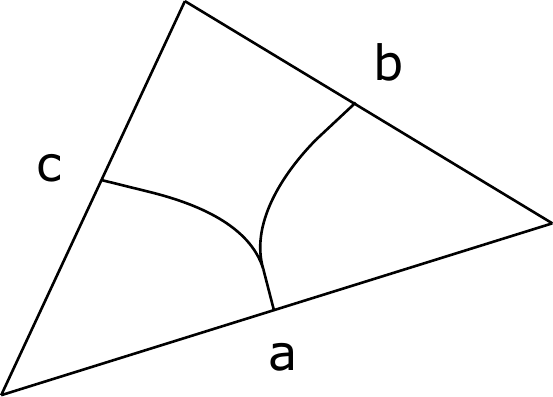}
		\caption{The dual train track in a triangle.}
		\label{fig:tt_1}
	\end{subfigure}
	\quad \quad \quad
	~ 
	\begin{subfigure}[b]{0.4\textwidth}
		\centering
		\includegraphics[width=0.7\textwidth]{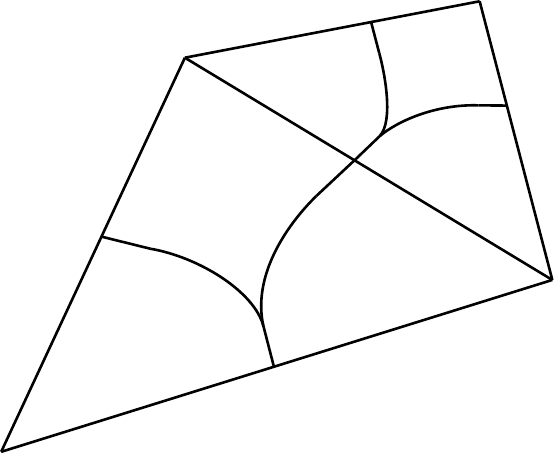}
		\caption{Joining the dual train tracks.}
		\label{fig:tt_2}
	\end{subfigure}
	\captionsetup{width=\linewidth}
	\caption{The dual train track carrying the vertical foliation of a quadratic differential.} 
	\label{fig:tt}
	\end{figure}

Directly from the discussion above and Propositions \ref{prop:delaunay_comp} and \ref{prop:curve_triang} we deduce the following result.

\begin{proposition}
	\label{lem:finite_tt}
	For every compact subset $\mathcal{K} \subseteq \mathcal{T}_g$ and every closed curve $\beta$ on $S_g$ there exists a finite collection of maximal train tracks $\{\tau_i\}_{i=1}^n$ on $S_g$ with the following property. Let $q \in \pi^{-1}(\mathcal{K}) \cap \mathcal{Q}^1$ and $\gamma$ be a saddle connection of $q$ that either attains the minimal flat length among saddle connections of $q$ or that is a saddle connection of a singular flat geodesic representative of $\beta$ on $q$. Then, there exists $i \in \{1,\dots,n\}$ such that $\tau_i$ carries $\Re(q)$ and such that the corresponding counting measure on the edges of $\tau_i$ gives weight $i(\gamma,\Re(q))$ to one of the edges.
\end{proposition}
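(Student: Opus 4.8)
\textbf{Proof plan for Proposition \ref{lem:finite_tt}.} The statement packages two previously established facts into a single finiteness assertion, so the plan is to reduce it to Propositions \ref{prop:delaunay_comp} and \ref{prop:curve_triang} together with the explicit construction of the dual train track carrying $\Re(q)$ described in the paragraphs preceding the statement. First I would invoke Proposition \ref{prop:delaunay_comp} for the compact subset $\mathcal{K}$ to obtain a constant $L_1 = L_1(\mathcal{K}) > 0$ and a finite collection of marked triangulations $\{\Delta_i^{(1)}\}_{i=1}^{n_1}$ on $S_g$ such that every $q \in \pi^{-1}(\mathcal{K})$ admits an $L_1$-bounded triangulation having a shortest saddle connection of $q$ as one of its edges and pulling back via the marking to one of the $\Delta_i^{(1)}$. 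Similarly, Proposition \ref{prop:curve_triang} applied to $\mathcal{K}$ and $\beta$ gives a constant $L_2 = L_2(\mathcal{K},\beta) > 0$ and a finite collection $\{\Delta_i^{(2)}\}_{i=1}^{n_2}$ such that for every $q \in \pi^{-1}(\mathcal{K})$ and every saddle connection $\beta'$ of a singular flat geodesic representative of $\beta$ on $q$, there is an $L_2$-bounded triangulation of $q$ having $\beta'$ as an edge and pulling back to one of the $\Delta_i^{(2)}$. Form the combined finite list $\{\Delta_i\}_{i=1}^{n}$ of all these marked triangulations.

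Next, I would recall from the discussion before the statement that for a fixed marked triangulation $\Delta$ on $S_g$, the dual train tracks of $\Delta$ form a finite set: each is obtained by deleting one inner edge in each triangle of the dual $1$-complex of $\Delta$, and there are finitely many triangles and finitely many choices per triangle. Hence the set $\mathcal{T}$ of all train tracks on $S_g$ dual to some $\Delta_i$ in our finite list is itself finite; moreover, when the relevant triangulation has $4g-4$ vertices (the generic case for $q \in \mathcal{Q}^1$), the dual train tracks are maximal, so I would take $\{\tau_i\}_{i=1}^n$ to be the subcollection of maximal train tracks in $\mathcal{T}$. Then, given $q \in \pi^{-1}(\mathcal{K}) \cap \mathcal{Q}^1$ and a saddle connection $\gamma$ of $q$ of the required type, choose via the appropriate proposition an $L$-bounded triangulation $\Delta'$ of $q$ with $\gamma$ as an edge pulling back to some $\Delta_i$ in the list. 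The construction recalled above (labelling the edges $a,b,c$ of each triangle of $\Delta'$ by the relation $i(a,\Re(q)) = i(b,\Re(q)) + i(c,\Re(q))$, deleting the inner edge opposite $a$) produces a train track $\tau$ dual to $\Delta_i$, maximal since $q \in \mathcal{Q}^1$, which carries $\Re(q)$ and assigns to the edge corresponding to each edge $e$ of $\Delta'$ the weight $i(e,\Re(q))$; in particular the edge corresponding to $\gamma$ gets weight $i(\gamma,\Re(q))$. Since $\tau$ is dual to $\Delta_i$ and maximal, it equals one of the $\tau_i$, which completes the argument.

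I do not expect a serious obstacle here, as the statement is essentially a bookkeeping consolidation; the one point requiring a little care is the ambiguity in the labelling of a triangle's edges when one edge of $\Delta'$ is vertical (so that $\Re(q)$-measures give two valid labellings), which produces two possible dual train tracks for that triangle. This only enlarges the finite collection $\{\tau_i\}$ and does not affect the conclusion, since each valid labelling still yields a maximal train track carrying $\Re(q)$ with the correct weight on $\gamma$; I would simply note that all such choices are included in the finite list. The only genuinely substantive inputs, Propositions \ref{prop:delaunay_comp} and \ref{prop:curve_triang}, are already available, so the proof is short.
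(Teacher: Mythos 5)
Your proposal is correct and follows exactly the route the paper intends: the paper deduces this proposition "directly" from Propositions \ref{prop:delaunay_comp} and \ref{prop:curve_triang} together with the dual-train-track construction described just before the statement, which is precisely your argument (including the harmless two-fold labelling ambiguity for vertical edges and the restriction to maximal duals for $q \in \mathcal{Q}^1$). No gaps; the bookkeeping consolidation is all that is needed.
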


\subsection*{Thurston measure estimates.}  Recall that $\nu$ denotes the Thurston measure on $\mf$ and that $\overline{\nu}$ denotes the function which to every measurable subset $A \subseteq \mf$ assigns the value $\overline{\nu}(A) := \nu((0,1] \cdot A)$. Recall the definition of the subsets $H_{\beta,\delta} \subseteq \mathcal{T}_g$ in (\ref{eq:H}). Proposition \ref{lem:finite_tt} leads to the following estimate. This estimate will play a crucial role in our proof of Theorem \ref{theo:slope_sector_meas}.

\begin{proposition}
	\label{prop:meas_bd}
	Let $\mathcal{K} \subseteq \mathcal{T}_g$ compact and $\beta$ be a closed curve on $S_g$. Then, for every $\delta > 0$, 
	\[
	\overline{\nu}(\Re(\pi^{-1}(\mathcal{K}) \setminus H_{\beta,\delta})) \preceq_\mathcal{K,\beta} \delta.
	\]
\end{proposition}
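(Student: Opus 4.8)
The strategy is to bound $\overline{\nu}(\Re(\pi^{-1}(\mathcal{K}) \setminus H_{\beta,\delta}))$ by decomposing the region $\pi^{-1}(\mathcal{K}) \setminus H_{\beta,\delta}$ according to whether the obstruction to membership in $H_{\beta,\delta}$ comes from a short saddle connection (i.e. $\ell_{\min}(q) < \delta$) or from a saddle connection $\beta_j$ of a flat geodesic representative of $\beta$ on $q$ with small $i(\beta_j,\Re(q))/\ell_{\beta_j}(q)$. In either case, Proposition \ref{lem:finite_tt} provides a finite collection $\{\tau_i\}_{i=1}^n$ of maximal train tracks on $S_g$ (depending only on $\mathcal{K}$ and $\beta$) such that $\Re(q)$ is carried by some $\tau_i$ and the associated counting measure assigns weight $i(\gamma,\Re(q))$ to a distinguished edge $e_i$, where $\gamma$ is the relevant short or $\beta$-arising saddle connection. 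Thus it suffices to bound, for each fixed train track $\tau_i$ and each distinguished edge $e_i$, the Thurston measure of the set of weight vectors $v \in V(\tau_i)$ that are ``small'' in a suitable sense along the projectivized cone.

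First I would reduce to a statement purely about the piecewise integral linear cones $V(\tau_i) \subseteq (\mathbf{R}_{\geq 0})^{18g-18}$. The quantity $\overline{\nu}(A) = \nu((0,1]\cdot A)$ for $A$ a cone-saturated subset of $\mf$ is, after passing to train track coordinates, Lebesgue measure of the corresponding ``solid'' cone truncated at total mass (or at the value of a fixed linear functional) equal to $1$. The key point is that if $q \in \pi^{-1}(\mathcal{K})$ with $\ell_{\min}(q) < \delta$ or $h_\beta(q) < \delta$, then the corresponding counting measure $v$ on the edges of $\tau_i$ satisfies $v(e_i) = i(\gamma,\Re(q)) < \delta$ in the first case (since $i(\gamma,\Re(q)) \leq \ell_{\gamma}(q) = \ell_{\min}(q)$), and in the second case $i(\beta_j,\Re(q)) < \delta\, \ell_{\beta_j}(q) \preceq_{\mathcal{K},\beta} \delta$ (since the relevant saddle connections have flat length bounded in terms of $\mathcal{K},\beta$ because $q \in \pi^{-1}(\mathcal{K})$, so $\ell_{\beta_j}(q) \leq \ell_\beta(q) \preceq_{\mathcal{K},\beta} 1$). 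Hence in all cases the distinguished edge carries weight $\preceq_{\mathcal{K},\beta} \delta$. Meanwhile $\Re(q)$, being the vertical foliation of a unit area quadratic differential with complex structure in the compact set $\mathcal{K}$, has total mass $\overline{\nu}$-comparable to a constant depending on $\mathcal{K}$; so the relevant set of foliations, after scaling to total mass $\leq 1$, is contained in $\{v \in V(\tau_i) : 0 \leq v(e_i) \preceq_{\mathcal{K},\beta}\delta\}$.

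Then the heart of the estimate is the elementary linear-algebra fact: in a finite-dimensional cone $V(\tau_i)$ cut out by linear switch conditions, if $\ell$ is a linear functional bounding total mass, the Lebesgue measure of the slab $\{v \in V(\tau_i) : \ell(v) \leq 1,\ v(e_i) \leq \delta\}$ is $\preceq_{\tau_i} \delta$. This follows because the coordinate $v \mapsto v(e_i)$ is (up to nonzero scaling and adding other coordinates, coming from the switch relations) one of the coordinate functions in a linear slice; the region of a fixed-volume simplex (or bounded polytope) where one affine coordinate lies in $[0,\delta]$ has volume $O(\delta)$. Summing over the finitely many $\tau_i$ and the finitely many distinguished edges gives $\overline{\nu}(\Re(\pi^{-1}(\mathcal{K}) \setminus H_{\beta,\delta})) \preceq_{\mathcal{K},\beta}\delta$, as desired.

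The main obstacle I anticipate is making the reduction in the previous paragraph fully rigorous: one must check that as $q$ ranges over $\pi^{-1}(\mathcal{K})$ the foliations $\Re(q)$ genuinely fall into the carried cones $U(\tau_i)$ with the distinguished edge weights as claimed (this is exactly the content of Proposition \ref{lem:finite_tt}, applied both to a shortest saddle connection and to a saddle connection of a flat geodesic representative of $\beta$), and that the projectivization/truncation used to define $\overline{\nu}$ interacts correctly with these train track charts, in particular that finitely many charts suffice and that the transition maps being piecewise integral linear does not create measure-theoretic pathologies (it does not, since $\nu$ is, by definition, the piecewise integral linear measure, and $\Re(q)$ avoids the measure-zero non-generic locus). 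Once this bookkeeping is in place the volume bound is routine. This is precisely the slope-refined analogue of the argument proving \cite[Proposition 7.13]{Ara20b} and \cite[Theorem 7.16]{Ara20b}, with $h_\beta$ playing the role that $\ell_{\min}$ plays there.
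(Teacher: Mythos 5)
Your proposal is correct and follows essentially the same route as the paper: discard the non-principal locus as $\nu$-null, apply Proposition \ref{lem:finite_tt} to a shortest saddle connection or a saddle connection of the flat representative of $\beta$ to get finitely many train track charts with a distinguished edge of weight $\preceq_{\mathcal{K},\beta}\delta$, and finish with the elementary bound on the Lebesgue volume of a thin slab in a bounded polytope. The only cosmetic difference is that the paper bounds the train track coordinates of $\Re(q)$ over $\pi^{-1}(\mathcal{K})$ via the extremal length identity $\mathrm{Ext}_{\Re(q)}(X)=\mathrm{Area}(q)=1$ rather than your appeal to compactness (and your aside that ``$\Re(q)$ avoids the non-generic locus'' should read that the non-principal contribution lies in that $\nu$-null locus), neither of which affects the argument.
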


\begin{proof}
	Fix a compact subset $\mathcal{K} \subseteq \mathcal{T}_g$, a closed curve $\beta$ on $S_g$, and $\delta > 0$. Recall that the Thurston measure $\nu$ on $\mf$ gives zero mass to the set of singular measured foliations having a singularity with more than three prongs. If $q \in \qut \setminus \mathcal{Q}^1$ then $\Re(q) \in \mf$ belongs to this set.	It follows that
	\begin{equation}
	\label{eq:tt1}
	\overline{\nu}(\Re(\pi^{-1}(\mathcal{K}) \setminus H_{\beta,\delta})) =\overline{\nu}(\Re(\pi^{-1}(\mathcal{K}) \cap \mathcal{Q}^1 \setminus H_{\beta,\delta})).
	\end{equation}
	
	Consider the finite collection of maximal train tracks $\{\tau_i\}_{i=1}^n$ on $S_g$ provided by Lemma \ref{lem:finite_tt} for the compact subset $\mathcal{K} \subseteq \tt$ and the closed curve $\beta$ on $S_g$. Recall that for every $i \in \{1,\dots,n\}$ the closed cone $V(\tau_i) \subseteq \smash{(\mathbf{R}_{\geq0})^{18g-18}}$ of counting measures on the edges of $\tau_i$ satisfying the switch conditions is naturally identified with the the closed cone $U(\tau_i) \subseteq \mf$ of singular measured foliations on $S_g$ carried by $\tau_i$. Denote by $\|\lambda \|_{\tau_i}$ the Euclidean norm of $\lambda \in U(\tau_i)$ via this identification. Denote by $\mu_i$ the Lebesgue measure on $V(\tau_i) \subseteq \smash{(\mathbf{R}_{\geq0})^{18g-18}}$. Let $\smash{\mathrm{Ext}}_\lambda(X) > 0$ be the extremal length of $\lambda \in \mf$ with respect to $X \in \mathcal{T}_g$ as defined in \cite{Ker80}. As $\mathcal{K}$ is compact and as $U(\tau_i) \subseteq \mf$ is projectively compact, $\| \lambda \|_{\tau_i} \preceq_{\mathcal{K},\beta} \smash{\sqrt{\mathrm{Ext}_\lambda(X)}}$ for every $i \in \{1,\dots,n\}$, every $\lambda \in U(\tau_i)$, and every $X \in \mathcal{K}$. For every $(X,q) \in \qut$ we have $\mathrm{Ext}_{\Re(q)}(X) = \mathrm{Area}(q)= 1$. These facts together with Lemma \ref{lem:finite_tt} and the definition of $H_{\beta,\delta} \subseteq \qut$ imply that, for some constant $C = C(\mathcal{K},\beta) > 0$,
	\begin{equation}
	\label{eq:tt2}
	\overline{\nu}(\Re(\pi^{-1}(\mathcal{K})\cap \mathcal{Q}^1\backslash H_{\beta,\delta}))
	\leq \sum_{i=1}^n\sum_{j=1}^{18g-18} \mu_i(\{v \in V(\tau_i) \ | \ \|v\|_{\tau_i} \leq C, \ v_j \leq \delta \}) \preceq_{\mathcal{K},\beta} \delta.
	\end{equation}
	
	Putting together (\ref{eq:tt1}) and (\ref{eq:tt2}) we conclude
	\[
	\overline{\nu}(\Re(\pi^{-1}(\mathcal{K}) \setminus H_{\beta,\delta})) \preceq_\mathcal{K,\beta} \delta. \qedhere
	\]
\end{proof}

\subsection*{Horizontally thin sectors of Teichmüller space.} Theorem \ref{theo:slope_sector_meas}, which we restate here for the reader's convenience, follows directly from Theorem \ref{theo:sector_general} and Propositions \ref{prop:thick} and \ref{prop:meas_bd}.

\begin{theorem}
	\label{theo:slope_sector_meas_new}
	There exist constants $r_0 = r_0(g) > 0$ and $\kappa_4 = \kappa_4(g) > 0$ such that the following holds. Let $\mathcal{K} \subseteq \mathcal{T}_g$ be a compact subset, $X \in \mathcal{K}$, $\beta$ be a closed curve on $S_g$, and $\delta \in (0,1)$. Denote $V_{\beta,\delta} := S(X) \setminus H_{\beta,\delta}$. Then, for every $0 < r < r_0$ and every $R > 0$,
	\[
	\mathbf{m}(\mathrm{Nbhd}_{r}(B_R(X) \cap \mathrm{Sect}_{V_{\beta,\delta}}(X) \cap \mathrm{Mod}_g \cdot \mathcal{K}))
	\preceq_{\mathcal{K},\beta} \delta \cdot e^{(6g-6)R} + \delta^{-1} \cdot e^{(6g-6 - \kappa_4) R}.
	\]
\end{theorem}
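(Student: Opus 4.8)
The statement to be proved is Theorem \ref{theo:slope_sector_meas_new}, which is simply a restatement of Theorem \ref{theo:slope_sector_meas}. The plan is to combine the general measure bound for sectors of Teichmüller space from Theorem \ref{theo:sector_general} with the two geometric inputs specific to this setting: Proposition \ref{prop:thick}, which controls how the $H_{\beta,\delta}$ condition degrades under the strongly unstable thickening operation $V \mapsto V(s)$, and Proposition \ref{prop:meas_bd}, which bounds the Thurston measure of the set of vertical foliations coming from quadratic differentials that fail the $H_{\beta,\delta}$ condition. Since these three ingredients are all available, the proof is short: it is an exercise in substitution and in choosing the free parameters appropriately.

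First I would invoke Theorem \ref{theo:sector_general} to obtain constants $r_0 = r_0(g) > 0$, $\useconcc{}$... (actually I will just call them $r_0, C_0, \kappa_0$ depending only on $g$) such that for any compact $\mathcal{K} \subseteq \tt$, any $X \in \mathcal{K}$, any measurable $V \subseteq S(X)$, any $0 < r < r_0$, and any $R > 0$,
\[
\mathbf{m}(\mathrm{Nbhd}_{r}(B_R(X) \cap \mathrm{Sect}_{V}(X) \cap \mathrm{Mod}_g \cdot \mathcal{K})) \preceq_\mathcal{K} \overline{\nu}(\Re(V( C_0 e^{-\kappa_0 R}))) \cdot e^{(6g-6)R} + e^{(6g-6-\kappa_0)R}.
\]
Now apply this with $V = V_{\beta,\delta} = S(X) \setminus H_{\beta,\delta}$. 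The task is to estimate $\overline{\nu}(\Re(V_{\beta,\delta}(s)))$ for $s = C_0 e^{-\kappa_0 R}$. For $R$ large enough (depending on $\mathcal{K}$ via the constant $s_1 = s_1(\mathcal{K})$ from Proposition \ref{prop:thick}), we have $s < s_1$, so Proposition \ref{prop:thick} gives
\[
V_{\beta,\delta}(s) \subseteq \pi^{-1}(B_1(X)) \setminus H_{\beta,\delta + C_1 \delta^{-1} s},
\]
where $C_1 = C_1(\mathcal{K},\beta) > 0$. Since $\pi^{-1}(B_1(X)) \subseteq \pi^{-1}(\mathrm{Nbhd}_1(\mathcal{K}))$ and $\mathrm{Nbhd}_1(\mathcal{K})$ is compact, Proposition \ref{prop:meas_bd} applied to the compact set $\mathrm{Nbhd}_1(\mathcal{K})$ and the curve $\beta$ yields
\[
\overline{\nu}(\Re(V_{\beta,\delta}(s))) \leq \overline{\nu}(\Re(\pi^{-1}(\mathrm{Nbhd}_1(\mathcal{K})) \setminus H_{\beta,\delta + C_1 \delta^{-1} s})) \preceq_{\mathcal{K},\beta} \delta + C_1 \delta^{-1} s \preceq_{\mathcal{K},\beta} \delta + \delta^{-1} e^{-\kappa_0 R}.
\]
Plugging this back in, the main term becomes $(\delta + \delta^{-1} e^{-\kappa_0 R}) \cdot e^{(6g-6)R} = \delta \cdot e^{(6g-6)R} + \delta^{-1} e^{(6g-6 - \kappa_0)R}$, and the error term $e^{(6g-6-\kappa_0)R}$ is absorbed into $\delta^{-1} e^{(6g-6-\kappa_0)R}$ since $\delta < 1$. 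Setting $\kappa_4 := \kappa_0$ gives the claimed bound for all $R$ above some threshold; for bounded $R$ the bound holds trivially after enlarging the implicit constant (the left-hand side is bounded on bounded $R$ by compactness, and $\delta^{-1} e^{(6g-6-\kappa_4)R} \succeq_{\mathcal{K},\beta} 1$ is not automatic since $\delta$ may be small — but $\delta \cdot e^{(6g-6)R} + \delta^{-1}e^{(6g-6-\kappa_4)R} \geq 2 e^{(3g-3-\kappa_4 R/2)R}$... more simply, for $R$ in a bounded range one uses that the left side is $\preceq_\mathcal{K} 1$ while the right side is $\succeq_\mathcal{K} e^{-\kappa_4 R} \succeq_\mathcal{K} 1$, absorbing constants). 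This completes the proof.

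The plan has essentially no obstacle: all the hard work — the metric hyperbolicity estimates behind Theorem \ref{theo:sector_general}, the $C^1$-type control of $h_\beta$ along Euclidean paths in Proposition \ref{prop:hbeta_euclidean} feeding into Proposition \ref{prop:thick}, and the train-track/Thurston-measure count in Proposition \ref{prop:meas_bd} — has already been carried out. The only mild care needed is the bookkeeping of which compact set ($\mathcal{K}$ versus $\mathrm{Nbhd}_1(\mathcal{K})$) each proposition is applied to, the restriction to $R$ large enough that $C_0 e^{-\kappa_0 R} < s_1(\mathcal{K})$, and handling the small-$R$ regime separately by a trivial compactness argument. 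If I were to flag a single delicate point, it is ensuring that the parameter $s = C_0 e^{-\kappa_0 R}$ fed into $V(s)$ matches exactly the thickening radius appearing in Theorem \ref{theo:sector_general}, so that no extra loss is incurred; this is immediate from the way $V(s)$ is defined in terms of the modified Hodge metric $d_H$.
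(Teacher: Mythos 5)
Your proposal is correct and is exactly the paper's argument: the paper proves Theorem \ref{theo:slope_sector_meas} by stating that it follows directly from Theorem \ref{theo:sector_general} together with Propositions \ref{prop:thick} and \ref{prop:meas_bd}, which is precisely the substitution you carry out (including the bookkeeping with $\mathrm{Nbhd}_1(\mathcal{K})$, the threshold $C_0e^{-\kappa_0 R}<s_1(\mathcal{K})$, and the trivial small-$R$ regime). No gaps.
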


\section{Lipschitz functions on the space of measured geodesic laminations}

\subsection*{Outline of this appendix.} In this appendix we give a complete proof of Proposition \ref{prop:ml_lip}. The proof will use train track coordinates and some general facts about convex functions in Euclidean spaces. We refer the reader to Appendix A for some of the notation and terminology that will be used with regards to train track coordinates.

\subsection*{Convex functions in train track coordinates.} For the rest of this appendix we fix an integer $g \geq 2$ and a connected, oriented, closed surface $S_g$ of genus $g$. Recall that $\mf$ denotes the space of singular measured foliations on $S_g$. Let $\tau$ be a maximal train track on $S_g$. Recall that the closed cone $V(\tau) \subseteq \smash{(\mathbf{R}_{\geq0})^{18g-18}}$ of counting measures on the edges of $\tau$ satisfying the switch conditions is naturally identified with the the closed cone $U(\tau) \subseteq \mf$ of singular measured foliations on $S_g$ carried by $\tau$. Denote by $\Phi_\tau \colon U(\tau) \to V(\tau)$ the corresponding identification. We refer to this identification as the \textit{train track chart} induced by $\tau$ on $\mf$. Recall that $\mathcal{C}_g$ denotes the space of geodesic currents on $S_g$ and that $i(\cdot,\cdot)$ denotes the geometric intersection number pairing on $\mathcal{C}_g$. The following theorem of Mirzakhani shows that geometric intersection numbers are convex in train track coordinates.

\begin{theorem}
	\cite[Theorem A.1]{Mir04}
	\label{theo:current_convex}
	Let $\tau$ be a maximal train track on $S_g$, $\Phi_\tau \colon U(\tau) \to V(\tau)$ be the  train track chart induced by $\tau$ on $\mf$, and $\alpha \in \mathcal{C}_g$ be a geodesic current on $S_g$. Then, the composition $i(\alpha,\cdot) \circ \Phi_\tau^{-1} \colon V(\tau) \to \mathbf{R}$ is convex.
\end{theorem}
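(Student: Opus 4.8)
The statement to prove is Theorem \ref{theo:current_convex}: for a maximal train track $\tau$, the composition $i(\alpha,\cdot) \circ \Phi_\tau^{-1} \colon V(\tau) \to \mathbf{R}$ is convex on the cone $V(\tau) \subseteq (\mathbf{R}_{\geq 0})^{18g-18}$, for any fixed geodesic current $\alpha$. Since this is attributed to Mirzakhani \cite[Theorem A.1]{Mir04}, the plan is to reproduce the core of that argument. The strategy is to first establish convexity when $\alpha$ is a weighted simple closed curve carried by a (possibly different) train track transverse to $\tau$, by giving an explicit combinatorial formula for the intersection number that is manifestly convex (a maximum of linear functionals), and then to pass to general currents by density and continuity.

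\textbf{Key steps.} First I would reduce to the case where $\alpha$ is a simple closed curve: by Bonahon's density theorem \cite[Proposition 2]{Bon88}, weighted simple closed curves are dense in $\mathcal{C}_g$, and for each fixed $v \in V(\tau)$ the map $\alpha \mapsto i(\alpha, \Phi_\tau^{-1}(v))$ is continuous in the weak-$\star$ topology by \cite[Proposition 3]{Bon88}; a pointwise limit of convex functions is convex, so it suffices to prove the theorem for $\alpha$ a simple closed curve (homogeneity then handles the weights). Second, I would fix a simple closed curve $\alpha$ and a measured foliation $\lambda$ carried by $\tau$ with weight vector $v = \Phi_\tau(\lambda)$. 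The intersection number $i(\alpha,\lambda)$ can be computed by putting $\alpha$ in minimal position with respect to the branches of $\tau$ and counting, with the transverse weights $v_e$, how many times $\alpha$ crosses each branch $e$; concretely, $i(\alpha,\lambda) = \sum_e v_e \cdot m_e(\alpha)$ where $m_e(\alpha)$ records the crossing data of $\alpha$ through branch $e$. The subtlety is that $\alpha$ is \emph{not} globally carried by $\tau$ and passes through the complementary trigons, so the crossing count is not a single linear functional of $v$: within each trigon, $\alpha$ can be routed in finitely many combinatorial ways, and the minimal-position count selects, at each trigon, the routing realizing a minimum. Thus $i(\alpha,\lambda)$ is obtained by summing over branches a linear function of $v$, then over trigons subtracting a quantity of the form $\min$ of finitely many nonnegative linear functions of $v$ (the "back-tracking" corrections forced by the triangle inequality at each trigon). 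A minimum of linear functions is concave, so its negative is convex, and a sum of a linear function and convex functions is convex; this yields convexity of $i(\alpha,\cdot)\circ\Phi_\tau^{-1}$ on $V(\tau)$. Third, I would verify that this combinatorial formula is independent of the choice of minimal-position representative and genuinely computes $i(\alpha,\lambda)$ — this is where one invokes the standard train track / measured foliation correspondence and the fact that intersection number is realized by geodesic (taut) representatives in each complementary region.

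\textbf{Main obstacle.} The hard part will be making the trigon-by-trigon analysis precise: one must show that the correction term at each complementary trigon is exactly a minimum over the finitely many admissible arc-types of a nonnegative linear functional in the weights $v_e$ of the three boundary branches, and that these local minima add up correctly to the global intersection number without interference between trigons. This requires the combinatorial lemma (essentially the content of Mirzakhani's appendix) that a curve in minimal position with a measured train track decomposes into arcs each lying in a single complementary region, with the weight of an arc bounded below by $|v_a - v_b|$-type expressions coming from the switch conditions, and equality achievable. Once this local model is in hand, convexity is immediate from the observation that on the cone $V(\tau)$ the functions $v \mapsto v_e$ are linear and $v \mapsto \min_k (\ell_k \cdot v)$ is concave for any finite family of linear functionals $\ell_k$. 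I would also note that continuity of $i(\alpha, \Phi_\tau^{-1}(\cdot))$ on $V(\tau)$ — needed for the density step — follows from \cite[Proposition 3]{Bon88} together with the continuity of the train track parametrization $\Phi_\tau^{-1}$.
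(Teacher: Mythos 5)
The paper does not actually prove this statement: it is quoted verbatim from Mirzakhani \cite[Theorem A.1]{Mir04}, so there is no internal proof to compare against. Judged on its own merits, your outer reduction is fine (weighted simple closed curves are dense in $\mathcal{C}_g$, $i(\cdot,\cdot)$ is continuous, $\Phi_\tau^{-1}$ is continuous, and a pointwise limit of convex functions is convex, so it suffices to treat $\alpha$ a simple closed curve), but the heart of your argument is a genuine gap. The claimed identity
$i(\alpha,\Phi_\tau^{-1}(v)) = \sum_e m_e(\alpha)\, v_e - \sum_{\text{trigons}} \min_k (\ell_k \cdot v)$,
with $m_e(\alpha)$ and the local linear functionals $\ell_k$ independent of $v$, is exactly the hard content and you do not establish it; it is not "essentially the content of Mirzakhani's appendix." The difficulty is structural: the minimal-position (taut) representative of $\alpha$ relative to $\lambda_v$ changes its combinatorics as $v$ moves through the chart, and these changes are not localized trigon by trigon -- a strand of $\alpha$ interacts with many branches and switches, and the corrections couple across complementary regions. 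Worse, the naive version of this computation points in the wrong direction: for each fixed combinatorial routing of a representative of $\alpha$ one gets (at best) a linear functional of $v$, and $i(\alpha,\lambda_v)$ is an infimum over routings, which would suggest concavity rather than convexity. Any correct proof must explain why the dependence on $v$ nevertheless organizes itself convexly, and your proposal simply asserts the needed PL structure. (Relatedly, if a single $v$-independent crossing vector $m_e(\alpha)$ valid on all of $V(\tau)$ existed with only nonnegative local corrections, one would have to reconcile this with curves $\alpha$ that are themselves carried by $\tau$, for which the function vanishes at an interior point of the cone; this already shows the formula cannot be set up casually.)

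The standard route -- and, to my knowledge, Mirzakhani's actual argument -- avoids any explicit formula. Since $i(\alpha,\Phi_\tau^{-1}(\cdot))$ is homogeneous of degree one, convexity on the cone $V(\tau)$ is equivalent to subadditivity: $i(\alpha,\lambda_{u+v}) \leq i(\alpha,\lambda_u) + i(\alpha,\lambda_v)$. One proves this first for integral weight vectors, where $\lambda_u,\lambda_v$ are multicurves carried by $\tau$ and the multicurve $\lambda_{u+v}$ is obtained from the union $\lambda_u \cup \lambda_v$ by resolving its intersection points in the direction dictated by the carrying; the resolved multicurve lives in a small neighborhood of $\lambda_u \cup \lambda_v$, so choosing a representative of $\alpha$ realizing $i(\alpha,\lambda_u)+i(\alpha,\lambda_v)$ transversally shows the resolution does not increase intersection with $\alpha$. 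One then extends to rational and real weights by homogeneity and continuity, and to arbitrary geodesic currents $\alpha$ by the same density/continuity argument you already use. If you want a self-contained proof rather than the citation, I would recommend replacing your combinatorial core with this subadditivity-plus-smoothing argument, which sidesteps the minimal-position bookkeeping entirely.
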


\subsection*{Convex functions are Lipschitz.} We now discuss some general facts about convex functions in Euclidean spaces. Let $\Omega \subseteq \mathbf{R}^n$ be an open, convex subset and $K \subseteq \Omega$ be a compact subset. Denote by $d(K,\partial\Omega)$ the Euclidean distance between $K$ and $\partial \Omega \subseteq \mathbf{R}^n$ with the convention that $d(K,\partial\Omega) = +\infty$ if $\Omega = \mathbf{R}^n$. Let $r = r(\Omega,K) := \min\{1,d(K,\partial\Omega) /2\}$. Denote by $\mathrm{Nbhd}_r(K) \subseteq \Omega$ the set of points in $\mathbf{R}^n$ at Euclidean distance at most $r$ from $K$. Let $f \colon \Omega \to \mathbf{R}$ be a convex function. Denote 
\[
L = L(\Omega,K,f) := \textstyle \smash{2 \cdot r^{-1} \cdot \sup_{x \in \mathrm{Nbhd}_r(K)} |f(x)|}.
\]
The following well known result shows that convex functions on Euclidean spaces are Lipschitz when restricted to compact subsets of their domain.

\begin{proposition}
	\label{prop:convex_lip}
	Let $\Omega \subseteq \mathbf{R}^n$ be an open, convex subset and $f \colon \Omega \to \mathbf{R}$ be a convex function. Then, for every $K \subseteq \Omega$ compact, the restriction $f|_K \colon K \to \mathbf{R}$ is $L$-Lipschitz for $L = L(\Omega,K,f)$ as above.
\end{proposition}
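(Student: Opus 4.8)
\textbf{Proof proposal for Proposition \ref{prop:convex_lip}.}

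The plan is to bound the slope of $f$ along any line segment inside $K$ by comparing it to the values of $f$ at nearby points of $\mathrm{Nbhd}_r(K)$, using the one-dimensional convexity inequality. First I would reduce to a one-dimensional statement: fix two distinct points $x, y \in K$ and let $u := (y - x)/\|y - x\|$ be the unit vector pointing from $x$ to $y$. Since $d(K,\partial\Omega) \geq 2r$ and $r \leq 1$, both $x - ru$ and $y + ru$ lie in $\mathrm{Nbhd}_r(K) \subseteq \Omega$, and the entire segment from $x - ru$ to $y + ru$ is contained in the convex set $\Omega$. Restricting $f$ to this segment gives a convex function $\varphi \colon [-r, \|y-x\| + r] \to \mathbf{R}$, $\varphi(t) := f(x + tu)$, and it suffices to show $|\varphi(\|y-x\|) - \varphi(0)| \leq L \|y - x\|$.

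The key step is the standard three-point estimate for a convex function $\varphi$ on an interval: for points $a < b < c$ in the domain, the difference quotient is monotone, namely
\[
\frac{\varphi(b) - \varphi(a)}{b - a} \leq \frac{\varphi(c) - \varphi(b)}{c - b}.
\]
Applying this with $a = -r$, $b = 0$, $c = \|y - x\|$ gives a lower bound on $(\varphi(\|y-x\|) - \varphi(0))/\|y-x\|$ in terms of $(\varphi(0) - \varphi(-r))/r$, and applying it with $a = 0$, $b = \|y-x\|$, $c = \|y-x\| + r$ gives an upper bound in terms of $(\varphi(\|y-x\|+r) - \varphi(\|y-x\|))/r$. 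Each of these one-variable difference quotients over an interval of length $r$ is bounded in absolute value by $\tfrac{2}{r}\sup_{z \in \mathrm{Nbhd}_r(K)}|f(z)| = L$, since the four evaluation points $x - ru$, $x$, $y$, $y + ru$ all lie in $\mathrm{Nbhd}_r(K)$. Combining the two bounds yields $|\varphi(\|y-x\|) - \varphi(0)| \leq L\|y-x\| = L\,\|y - x\|$, which is exactly the $L$-Lipschitz estimate $|f(y) - f(x)| \leq L\,\|y - x\|$.

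I do not expect any serious obstacle here; this is a classical fact and the only points requiring a little care are bookkeeping ones: checking that the enlarged segment stays in $\Omega$ (which uses $r \leq d(K,\partial\Omega)/2$ and the convexity of $\Omega$), and handling the degenerate case $\Omega = \mathbf{R}^n$, where $d(K,\partial\Omega) = +\infty$ and one simply takes $r = 1$ and notes $\mathrm{Nbhd}_1(K)$ is still a compact subset of $\Omega$ so the supremum defining $L$ is finite. The case $x = y$ is trivial. This completes the proof.
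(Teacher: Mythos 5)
Your argument is correct and complete: the reduction to a line segment, the inclusion of the enlarged segment in $\Omega$ via $r \leq d(K,\partial\Omega)/2$, and the monotonicity of difference quotients for the one-variable convex function $\varphi$ yield exactly the constant $L = 2 r^{-1} \sup_{\mathrm{Nbhd}_r(K)}|f|$ claimed in the statement. The paper does not supply a proof of this proposition (it is quoted as a well-known fact), and your three-slope argument is precisely the standard one it implicitly relies on, so there is nothing to add beyond the bookkeeping points you already noted.
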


Let $C \subseteq \smash{(\mathbf{R}_{>0})^n}$ be a cone cut out by homogeneous linear equations and $D \subseteq C$ be a projectively compact cone. Denote by $D' \subseteq  \smash{(\mathbf{R}_{>0})^n}$ the set of points in $D$ with Euclidean norm between $1$ and $2$. Let $\partial C \subseteq \smash{(\mathbf{R}_{\geq 0})^n}$ be the boundary of $C$ when considered as a subset of $\smash{(\mathbf{R}_{\geq 0})^n}$. Denote by $d(D',\partial C)$ the Euclidean distance between $D'$ and $\partial C$. Let $r' = r'(C,D) := \min\{1,d(D',\partial C) /2\}$. Denote by $\mathrm{Nbhd}_{r'}(D') \subseteq \smash{(\mathbf{R}_{>0})^n}$ the set of points in $ \smash{(\mathbf{R}_{>0})^n}$ at Euclidean distance at most $r'$ from $D'$. Let $f \colon C \to \mathbf{R}$ be a convex, homogeneous function. Denote 
\[
L' = L'(C,D,f) := \textstyle \smash{2 \cdot (r')^{-1} \cdot \sup_{x \in \mathrm{Nbhd}_{r'}(D')} |f(x)|}.
\]
Directly from Proposition \ref{prop:convex_lip} we deduce the following result.

\begin{proposition}
	\label{prop:convex_lip_2}
	Let $C \subseteq \smash{(\mathbf{R}_{>0})^n}$ be a cone cut out by homogeneous linear equations and $f \colon C \to \mathbf{R}$ be a convex, homogeneous function. Then, for every projectively compact cone $D \subseteq C$, the restriction $f|_{D} = \colon D \to \mathbf{R}$ is $L'$-Lipschitz for $L' = L'(C,D,f)$ as above.
\end{proposition}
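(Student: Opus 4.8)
The statement to prove is Proposition \ref{prop:convex_lip_2}: a convex, homogeneous function $f$ on a cone $C \subseteq (\mathbf{R}_{>0})^n$ cut out by homogeneous linear equations restricts to an $L'$-Lipschitz function on any projectively compact subcone $D \subseteq C$, with $L'$ the explicit constant given in terms of the sup of $|f|$ on a neighborhood of the normalized slice $D'$.

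The plan is to deduce this directly from Proposition \ref{prop:convex_lip}, the analogous statement for convex functions on open convex subsets of $\mathbf{R}^n$. The cone $C$ is not open in $\mathbf{R}^n$ (it lies in a linear subspace, being cut out by homogeneous linear equations), so the first step is to pass to the affine span. Let $W \subseteq \mathbf{R}^n$ be the linear subspace spanned by $C$; then $\Omega := \mathrm{int}_W(C)$, the relative interior, is an open convex subset of $W \cong \mathbf{R}^m$ for some $m \le n$, and $f$ restricts to a convex function on $\Omega$. The subcone $D$ lies in $C$; after intersecting with $\Omega$ (the boundary contributions are handled by continuity/homogeneity, or one simply notes $D \subseteq \Omega$ when $D$ is in the relative interior, which is the relevant case for the application) we may regard $D'$ as a compact subset of $\Omega$.

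Next I would invoke Proposition \ref{prop:convex_lip} with this $\Omega$ and the compact set $D'$: it gives that $f|_{D'}$ is $L$-Lipschitz where $L = 2 r^{-1} \sup_{\mathrm{Nbhd}_r(D')} |f|$ and $r = \min\{1, d(D',\partial\Omega)/2\}$. One checks that $d(D',\partial\Omega)$ (distance inside $W$) is comparable to, indeed at least, $d(D',\partial C)$ measured in $\mathbf{R}^n$ in the relevant sense, so that the constant $r'$ in the statement dominates the constant $r$ coming from Proposition \ref{prop:convex_lip}, and likewise the neighborhood $\mathrm{Nbhd}_{r'}(D')$ contains $\mathrm{Nbhd}_r(D') \cap W$; hence $L \le L'$ and $f|_{D'}$ is $L'$-Lipschitz. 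Finally I would use homogeneity to propagate the Lipschitz bound from the normalized slice $D'$ (points of Euclidean norm in $[1,2]$) to all of $D$: writing any $x, y \in D$ as $x = s\,\hat x$, $y = t\,\hat y$ with $\hat x, \hat y$ in the normalized slice and $s,t > 0$, and using $f(s\hat x) = s f(\hat x)$ together with the triangle inequality and the Lipschitz estimate on $D'$, a routine scaling computation shows $|f(x) - f(y)| \le L' |x - y|$ on $D$. Since $D$ is projectively compact, bounding the scaling factors $s,t$ causes no trouble.

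The only genuinely delicate point is the bookkeeping of constants across the reduction: making sure that the distance-to-boundary quantity defining $r$ in the ambient-open-set statement is correctly matched with the distance-to-boundary-of-cone quantity $d(D',\partial C)$ defining $r'$, and that the supremum of $|f|$ over the respective neighborhoods lines up so the final constant is exactly the $L'$ advertised. This is essentially a matter of unwinding the definitions carefully; the convexity input is entirely supplied by Proposition \ref{prop:convex_lip}, and the homogeneity argument is elementary. So I expect no real obstacle, only the need for care with the explicit constant.
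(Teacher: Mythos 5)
Your structural reduction is fine, and in fact simpler than you make it: since the positive orthant is open, $C = W \cap (\mathbf{R}_{>0})^n$ (with $W$ the subspace cut out by the homogeneous linear equations) is already relatively open in $W$, so you may take $\Omega = C \subseteq W \cong \mathbf{R}^m$ and $K = D'$ in Proposition \ref{prop:convex_lip} directly; no boundary contributions or continuity/homogeneity patching is needed there, and matching $d(D',\partial \Omega)$ with $d(D',\partial C)$ is immediate. Note also that the scaling factors $s,t$ in your last step are \emph{not} bounded ($D$ is a cone); projective compactness enters only through the compactness of $D'$, and it is homogeneity that handles arbitrary scales — a wording slip rather than a mathematical one.

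The genuine gap is in the final propagation step. Using only the Lipschitz bound on $D'$ from Proposition \ref{prop:convex_lip} together with homogeneity and the triangle inequality, the ``routine scaling computation'' does not return the constant $L'$: writing $x = s\hat{x}$, $y = t\hat{y}$ with $|\hat{x}|=|\hat{y}|=1$ and $s \ge t$, the best such an argument gives is $|f(x)-f(y)| \le (s-t)\sup_{D'}|f| + t\,L\,|\hat{x}-\hat{y}| \le \bigl(\sup_{D'}|f| + 2L\bigr)|x-y|$, i.e.\ a universal multiple of $L'$ (roughly $\tfrac{5}{2}L'$), not $L'$ itself. That weaker conclusion suffices for Proposition \ref{prop:int_lip}, but it does not prove the statement as phrased, and your claim that the constants ``line up exactly'' is precisely where the argument breaks. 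To obtain $L'$ on the nose, use convexity at the endpoints instead of the slice Lipschitz bound: for $x,y \in D$ the convex function $g(t) = f((1-t)x+ty)$ (defined since $C$ is convex) satisfies $|g(1)-g(0)| \le \max\{|g'(0^+)|,|g'(1^-)|\}$, and the one-sided directional derivatives of the $1$-homogeneous $f$ are invariant under radial scaling of the basepoint, so they may be evaluated at $x/|x|,\,y/|y| \in D'$; there, since $d(D',\partial C) \ge 2r'$ guarantees the segments of length $r'$ in the directions $\pm(y-x)/|y-x|$ stay in $C$, the convexity difference-quotient bound yields $|D_u f(p)| \le 2(r')^{-1}\sup_{\mathrm{Nbhd}_{r'}(D')}|f| = L'$. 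This endpoint-derivative argument is the intended ``direct'' deduction (it is the same mechanism that proves Proposition \ref{prop:convex_lip}) and delivers the advertised constant.
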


\subsection*{Lipschitz functions in train track coordinates.} Using Theorem \ref{theo:current_convex} and Proposition \ref{prop:convex_lip_2} we show that geometric intersection numbers are Lipschitz in train track coordinates.

\begin{proposition}
	\label{prop:int_lip}
	Let $\tau$ be a maximal train track on $S_g$, $\Phi_\tau \colon U(\tau) \to V(\tau)$ be the train track chart induced by $\tau$ on $\mf$, and $K \subseteq \mathcal{C}_g$ compact. Then, there exists a constant $L = L(\tau,K) > 0$ such that for every geodesic current $\alpha \in K$, the composition $i(\alpha,\cdot) \circ \Phi_\tau^{-1} \colon V(\tau) \to \mathbf{R}$ is $L$-Lipschitz.
\end{proposition}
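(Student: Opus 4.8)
The plan is to combine Theorem \ref{theo:current_convex} (convexity of geometric intersection numbers in train track coordinates) with Proposition \ref{prop:convex_lip_2} (convex homogeneous functions are Lipschitz on projectively compact subcones), using the compactness of $K$ to control the relevant Lipschitz constant uniformly over $\alpha \in K$. The key observation is that the function $f_\alpha := i(\alpha,\cdot)\circ\Phi_\tau^{-1}$ is both convex (by Theorem \ref{theo:current_convex}) and homogeneous of degree one (since $i(\alpha,\cdot)$ is homogeneous in its second argument and $\Phi_\tau$ is equivariant with respect to the $\mathbf{R}^+$-scaling actions on $U(\tau)$ and $V(\tau)$), so Proposition \ref{prop:convex_lip_2} applies directly to each $f_\alpha$ on the cone $C := \mathrm{int}\, V(\tau) \subseteq (\mathbf{R}_{>0})^{18g-18}$.

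First I would fix a projectively compact subcone $D \subseteq C$ — for instance, the cone over a large projectively compact set exhausting the projectivization of $V(\tau)$; since any compact subset of $V(\tau)$ is contained in such a $D$ (after possibly enlarging, and noting that on the boundary faces the coordinates vanish so there is nothing to prove there), it suffices to produce a uniform Lipschitz constant on each such $D$. By Proposition \ref{prop:convex_lip_2}, for each $\alpha \in K$ the restriction $f_\alpha|_D$ is $L'(C,D,f_\alpha)$-Lipschitz, where $L'(C,D,f_\alpha) = 2(r')^{-1}\sup_{x\in\mathrm{Nbhd}_{r'}(D')}|f_\alpha(x)|$ and $r' = r'(C,D)$ depends only on $\tau$ and $D$, not on $\alpha$. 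Thus the whole task reduces to bounding $\sup_{\alpha\in K}\sup_{x\in\mathrm{Nbhd}_{r'}(D')}|f_\alpha(x)|$ by a finite constant depending only on $\tau$ and $K$.

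The remaining point is then a joint continuity/compactness argument: the map $(\alpha,x)\mapsto i(\alpha,\Phi_\tau^{-1}(x))$ is continuous on $\mathcal{C}_g \times U(\tau)$ — the bilinear pairing $i(\cdot,\cdot)$ is continuous on $\mathcal{C}_g\times\mathcal{C}_g$ by Bonahon's work (recalled in \S6), $\Phi_\tau^{-1}$ is a homeomorphism, and $\mf$ embeds continuously into $\mathcal{C}_g$ — so it is bounded on the compact set $K \times \overline{\mathrm{Nbhd}_{r'}(D')}$, where the closure is taken inside $U(\tau)$ and is compact because $\mathrm{Nbhd}_{r'}(D')$ is a bounded subset of the closed cone $V(\tau)$. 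Calling this bound $\Lambda = \Lambda(\tau,K)$, we obtain that every $f_\alpha|_D$ is $L$-Lipschitz for $L := 2(r')^{-1}\Lambda$, a constant depending only on $\tau$ and $K$, which is exactly the assertion of Proposition \ref{prop:int_lip}.

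I do not expect a serious obstacle here; the one point requiring a little care is the passage from "$K$ compact in $\mathcal{C}_g$" to "uniform bound for the values of $f_\alpha$ near $D'$", which hinges on the continuity of the intersection pairing together with the fact that a bounded neighborhood of a projectively compact subcone, truncated to norms in $[1,2]$, has compact closure inside the carried cone $U(\tau)$; this is straightforward once one records that $V(\tau)$ is a closed subset of Euclidean space. Everything else is a direct citation of Theorem \ref{theo:current_convex} and Proposition \ref{prop:convex_lip_2} together with the homogeneity and equivariance of $\Phi_\tau$.
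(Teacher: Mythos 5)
There is a genuine gap, and it sits precisely at the point you flagged as routine: the passage from the open cone to all of $V(\tau)$. Your reduction claims that any compact subset of $V(\tau)$ lies in a projectively compact subcone $D$ of $C:=\mathrm{int}\,V(\tau)\subseteq(\mathbf{R}_{>0})^{18g-18}$ and that ``on the boundary faces the coordinates vanish so there is nothing to prove there.'' Both halves fail. A point of $V(\tau)$ with some vanishing edge weights is a perfectly good nonzero measured foliation carried by $\tau$; $i(\alpha,\cdot)$ is not identically zero on these faces, and the asserted Lipschitz bound must hold for pairs of points on or near them, so they cannot be discarded, nor are they contained in any subcone of $C$. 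Moreover, even restricted to the interior, Proposition \ref{prop:convex_lip_2} cannot be applied once and for all: its constant is $L'=2(r')^{-1}\sup_{\mathrm{Nbhd}_{r'}(D')}|f_\alpha|$ with $r'\leq d(D',\partial C)/2$, so as you exhaust $C$ by projectively compact subcones $D$ the factor $(r')^{-1}$ blows up, and $C$ itself is not projectively compact, so there is no single admissible choice of $D$ covering everything. Convexity plus boundedness does not rescue you at the boundary either: a convex bounded function need not be Lipschitz up to the boundary of its domain (consider $-\sqrt{x}$ on $[0,1]$), which is exactly why Propositions \ref{prop:convex_lip} and \ref{prop:convex_lip_2} carry the distance-to-boundary factor. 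Your uniformity-in-$\alpha$ step (continuity of the intersection pairing on $K$ times a compact set) is correct, but it is not where the difficulty lies.

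The missing idea, which is how the paper proceeds, is a change-of-chart argument: every $\lambda\in\mf$ is carried in the \emph{interior} of the cone of some (possibly different) maximal train track, so by projective compactness of $\mf$ one can choose finitely many maximal train tracks $\tau_i$ and open sets $W_i$ compactly contained in the interiors of the $U(\tau_i)$ covering $\mf$. Given $\lambda_0,\lambda_1\in U(\tau)$, one joins them by a path of $d_\tau$-length at most $2d_\tau(\lambda_0,\lambda_1)$, subdivides it via the Lebesgue number lemma so each piece lies in some $W_{i(j)}$, applies Proposition \ref{prop:convex_lip_2} in the chart $\tau_{i(j)}$ (where the distance to the boundary of that cone is uniformly positive and the sup of $|i(\alpha,\cdot)|$ is uniform over $\alpha\in K$ by the continuity you invoked), and compares the lengths measured in different charts using the fact that train-track coordinate changes are piecewise linear, hence bi-Lipschitz with constants depending only on the finite collection. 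Summing over the pieces gives the uniform constant $L(\tau,K)$ on all of $U(\tau)$, boundary faces included. Without some version of this multi-chart patching, the single-chart application of Proposition \ref{prop:convex_lip_2} in your proposal does not yield the stated conclusion.
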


\begin{proof}
	Given a maximal train track $\tau$ on $S_g$, denote by $d_{\tau}$ the Euclidean metric on $U(\tau)$ induced by the identification $\Phi_\tau \colon U(\tau) \to V(\tau)$. Furthermore, given a piecewise smooth path $\gamma \colon [0,1] \to U(\tau)$, denote by $\ell_{\tau}(\gamma)$ the length of $\gamma$ with respect to this metric. Every singular measured foliation $\lambda \in \mf$ is carried by a maximal train track $\tau$ on $S_g$ in such a way that $\lambda$ belongs to the interior of $U(\tau) \subseteq \mf$. In particular, as $\mf$ is projectively compact, there exist a finite collection $\{\tau_i\}_{i=1}^n$ of maximal train tracks on $S_g$ and a finite collection $\{W_i\}_{i=1}^n$ of open subsets of $\mf$ such that $W_i$ is compactly contained in the interior of $U(\tau_i)$ for every $i \in \{1,\dots,n\}$ and such that $\mf \subseteq \bigcup_{i=1}^n W_i$. 
	
	Fix a maximal train track $\tau$ on $S_g$, a compact subset $K \subseteq \mathcal{C}_g$, and a geodesic currents $\alpha \in K$. Let $\lambda_0,\lambda_1 \in U(\tau)$ arbitrary. Consider a piecewise smooth path $\gamma \colon [0,1] \to U(\tau)$ such that $\gamma(0) = \lambda_1$, $\gamma(1) = \lambda_1$, and $\ell_{\tau}(\gamma) \leq 2 d_\tau(\lambda_0,\lambda_1)$. By the Lebesgue number lemma, there exists a finite partition $0 =: t_0 < t_1 < \cdots < t_{m-1} < t_m := 1$ such that for every $j \in \{0,\dots,m-1\}$ there exists $i(j) \in \{1,\dots,n\}$ satisfying $\gamma([t_{j},t_{j+1}]) \subseteq W_{i(j)}$. As change of coordinates maps between train track coordinates are piecewise linear, the following estimate holds for every $j \in \{0,\dots,m-1\}$,
	\begin{equation}
	\label{eq:D1}
	\ell_{\tau_{i(j)}}\left(\gamma|_{[t_{j},t_{j+1}]}\right) \asymp_{\tau} \ell_{\tau}\left(\gamma|_{[t_{j},t_{j+1}]}\right).
	\end{equation}
	By Proposition \ref{prop:convex_lip_2}, for every $j \in \{0,\dots,m-1\}$,
	\begin{equation}
	\label{eq:D2}
	|i(\alpha,\gamma(t_j)) - i(\alpha,\gamma(t_{j+1}))| \preceq_K d_{\tau_{i(j)}}(\gamma(t_j), \gamma(t_{j+1})) \leq \ell_{\tau_{i(j)}}\left(\gamma|_{[t_{j},t_{j+1}]}\right).
	\end{equation}
	Using the triangle inequality, (\ref{eq:D2}), (\ref{eq:D1}), and the assumption $\ell_{\tau}(\gamma) \leq 2 d_\tau(\lambda_0,\lambda_1)$, we conclude
	\[
	|i(\alpha,\lambda_0) - i(\alpha,\lambda_1)| \leq \sum_{j=0}^{m-1} |i(\alpha,\gamma(t_j)) - i(\alpha,\gamma(t_{j+1}))| \preceq_{\tau,K} d_\tau(\lambda_0,\lambda_1). \qedhere
	\]
\end{proof}

\subsection*{Lipschitz functions in Dehn-Thurston coordinates.} Consider the piecewise linear manifold $\IT:= \mathbf{R}^2 / \langle-1\rangle$ endowed with the quotient Euclidean metric. The product $\IT^{3g-3}$ is a piecewise linear manifold which we endow with the product metric. Recall that any set of Dehn-Thurston coordinates provides a homeomorphism $F \colon \mf \to \IT^{3g-3}$ equivariant with respect to the natural $\mathbf{R}^+$ scaling actions on $\mf$ and $\IT^{3g-3}$. The change of coordinates maps between Dehn-Thurston coordinates and train track coordinates are  piecewise linear \cite[\S 2.6]{PH92}. Thus, Proposition \ref{prop:ml_lip}, which we restate here for the reader's convenience, can be proved using the same arguments as Proposition \ref{prop:int_lip}.  

\begin{proposition}
	\label{prop:ml_lip_new}
	Let $F \colon \mf \to \IT^{3g-3}$ be a set of Dehn-Thurston coordinates of $\mf$ and $K \subseteq \mathcal{C}_g$ be a compact subset of geodesic currents on $S_g$. Then, there exists a constant $L  = L(F,K) > 0$ such that for every $\alpha \in K$, the composition $i(\alpha,\cdot) \circ F^{-1} \colon \IT^{3g-3} \to \mathbf{R}$ is $L$-Lipschitz.
\end{proposition}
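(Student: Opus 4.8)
\textbf{Proof proposal for Proposition \ref{prop:ml_lip_new}.} The plan is to reduce the statement about Dehn-Thurston coordinates to the already-established statement about train track coordinates, Proposition \ref{prop:int_lip}, by exploiting the fact that the transition maps between the two coordinate systems are piecewise linear. Concretely, first I would recall that every $\lambda \in \mf$ lies in the interior of $U(\tau)$ for some maximal train track $\tau$ on $S_g$, and that $\mf$ is projectively compact; this produces a finite family $\{\tau_i\}_{i=1}^n$ of maximal train tracks together with open sets $\{W_i\}_{i=1}^n$, each $W_i$ compactly contained in the interior of $U(\tau_i)$, with $\mf \subseteq \bigcup_{i=1}^n W_i$ — exactly the setup already used in the proof of Proposition \ref{prop:int_lip}. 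Proposition \ref{prop:int_lip} then gives, for the fixed compact set $K \subseteq \mathcal{C}_g$, a constant $L_i = L(\tau_i, K)$ so that $i(\alpha,\cdot) \circ \Phi_{\tau_i}^{-1}$ is $L_i$-Lipschitz on $V(\tau_i)$ for every $\alpha \in K$.

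Next I would transfer this to the Dehn-Thurston chart $F \colon \mf \to \IT^{3g-3}$. Since the change-of-coordinates maps between Dehn-Thurston coordinates and train track coordinates are piecewise linear \cite[\S 2.6]{PH92}, on each $W_i$ the map $F \circ \Phi_{\tau_i}^{-1}$ is bi-Lipschitz onto its image with constants depending only on $\tau_i$ and the chosen Dehn-Thurston coordinates $F$; here one uses that $W_i$ is compactly contained in the interior of $U(\tau_i)$, so only finitely many linear pieces of the transition map are encountered and none of the degenerate behaviour at the boundary of the cone intervenes. Combining, the function $i(\alpha,\cdot) \circ F^{-1}$ is $L_i'$-Lipschitz on $F(W_i)$ for a constant $L_i'$ independent of $\alpha \in K$.

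Finally I would patch the local Lipschitz estimates into a global one. Given $\lambda_0, \lambda_1 \in \mf$, join $F(\lambda_0)$ to $F(\lambda_1)$ by a piecewise smooth path in $\IT^{3g-3}$ of length at most twice the distance between them (using that $\IT^{3g-3}$ with the product metric is a length space); pull this path back to $\mf$, apply the Lebesgue number lemma to subdivide it so that each subarc lies in a single $W_{i(j)}$, apply the local Lipschitz bound on each subarc, and sum using the triangle inequality. Taking $L = L(F,K) := \max_i L_i'$ up to the uniform multiplicative constants coming from the length-space comparison and the finitely many transition maps yields the claim. This is precisely the argument structure of the proof of Proposition \ref{prop:int_lip}, so the proof can simply say it proceeds ``using the same arguments.'' The only point requiring genuine care — and hence the main (if modest) obstacle — is verifying that the piecewise linearity of the Dehn-Thurston/train-track transition maps, together with the compact containment $W_i \Subset \operatorname{int} U(\tau_i)$, indeed gives uniform bi-Lipschitz control and does not interact badly with the quotient structure of $\IT = \mathbf{R}^2/\langle -1\rangle$; once that is granted the rest is routine.
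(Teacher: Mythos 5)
Your proposal is correct and is essentially the paper's own argument: the paper proves this proposition by noting that the Dehn-Thurston/train-track transition maps are piecewise linear and then repeating verbatim the chart-covering, path-subdivision, and triangle-inequality argument from Proposition \ref{prop:int_lip}. The only cosmetic difference is that you invoke Proposition \ref{prop:int_lip} as a black box on each chart while the paper re-runs the same argument (ultimately resting on Proposition \ref{prop:convex_lip_2}), which changes nothing of substance.
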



\bibliographystyle{amsalpha}


\bibliography{bibliography}

\end{document}